\newcommand{\hum}{\hat{\mu}_n}
\newcommand{\hun}{\hat{\nu}_n}
\newcommand{\EE}{\mathbb{E}}
\newcommand{\R}{\mathbb{R}}
\newcommand{\mcx}{\mathcal{X}}
\newcommand{\bnir}[1]{\mathscr{N}_{[\,]}(#1,\cF,\lVert\cdot\rVert_r)}
\newcommand{\benr}[1]{\mathscr{H}_{[\,]}(#1,\cF,\lVert \cdot \rVert_r)}
\newcommand{\bnif}[1]{\mathscr{N}_{[\,]}(#1,\cF,\lVert\cdot\rVert_{\infty})}
\newcommand{\benf}[1]{\mathscr{H}_{[\,]}(#1,\cF,\lVert \cdot \rVert_{\infty})}
\newcommand{\benfg}[1]{\mathscr{H}_{[\,]}(#1,\cG,\lVert \cdot \rVert_{\infty})}
\newcommand{\beno}[1]{\mathscr{H}_{[\,]}(#1,\cF,\lVert\cdot\rVert_{1})}
\newcommand{\benoc}[1]{\mathscr{H}_{[\,]}(#1,\{\mathbf{1}(C):\ C\in\mathcal{C}\},\lVert\cdot\rVert_{1})}
\newcommand{\bent}[1]{\mathscr{H}_{[\,]}(#1,\cF,\lVert\cdot\rVert_{2})}
\newcommand{\bentgd}[1]{\mathscr{H}_{[\,]}(#1,\cG_{\delta},\lVert\cdot\rVert_{2})}
\newcommand{\ornm}[1]{\left\lVert #1 \right\rVert_{\phi,P}}
\newcommand{\lrp}[1]{\left\lVert #1 \right\rVert_{r,P}}
\newcommand{\ltp}[1]{\left\lVert #1 \right\rVert_{2,P}}
\newcommand{\gjl}[1]{g_j^{#1,L}}
\newcommand{\gju}[1]{g_j^{#1,U}}
\newcommand{\gl}[1]{g_{j_f}^{#1,L}}
\newcommand{\gu}[1]{g_{j_f}^{#1,U}}
\newcommand{\tgl}[1]{\tilde{g}_{f}^{#1,L}}
\newcommand{\tgu}[1]{\tilde{g}_{f}^{#1,U}}
\newcommand{\ppbi}{\pi_{\infty,\beta}}
\newcommand{\mL}{\mathcal{L}}
\newcommand{\Rg}{\R^{\ge 0}}
\newcommand{\tq}[1]{\tilde{q}_{n,\beta}(#1)}
\newcommand{\tqg}[1]{\tilde{q}_{n,\gamma}(#1)}
\newcommand{\orin}[1]{\lVert #1 \rVert_{\infty,\mcx}}
\newcommand{\Lbp}{\Lambda_{\phi,\beta}}
\newcommand{\cph}{c_{\phi}}
\newcommand{\got}{\Theta^{(1)}_{f,s}}
\newcommand{\gotz}{\Theta^{(1)}_{f,0}}
\newcommand{\gtt}{\Theta^{(2)}_{f,s}}
\newcommand{\Ld}[1]{\Lambda^{\star}_n({#1})}
\newcommand{\Ldi}[1]{\Lambda^{\star}_{n,\infty}({#1})}
\newcommand{\mbH}[1]{\mathbb{H}_{\phi}({#1},\cF)}
\newcommand{\mbHi}[1]{\mathbb{H}_{\infty}({#1},\cF)}
\newcommand{\vep}{\varepsilon}
\newcommand{\ppb}{\pi_{\phi,\beta}}
\newcommand{\bnum}[1]{\mathscr{N}_{[\,]}(#1,\cF,\lVert\cdot\rVert_{\phi})}
\newcommand{\benp}[1]{\mathscr{H}_{[\,]}(#1,\cF,\lVert\cdot\rVert_{\phi})}
\newcommand{\df}[1]{\Delta_f^{#1}}
\newcommand{\rno}{R_{n,\beta}^{(1)}}
\newcommand{\rto}{R_{n,\beta}^{(2)}}
\newcommand{\indep}{\perp \!\!\! \perp}
\newcommand{\add}{\mathrm{add}}
\DeclarePairedDelimiter\floor{\lfloor}{\rfloor}
\newcounter{theo}[section]\setcounter{theo}{0}
\renewcommand{\thetheo}{\arabic{section}.\arabic{theo}}
\theoremstyle{plain}
\newtheorem{theorem}{Theorem}
\newtheorem{lemma}{Lemma}
\newtheorem{prop}{Proposition}
\newtheorem{assumption}{Assumption}
\newtheorem{cor}{Corollary}
\theoremstyle{remark}
\newtheorem{definition}{Definition}
\newtheorem{rem}{Remark}
\numberwithin{assumption}{section}
\numberwithin{theorem}{section}
\numberwithin{rem}{section}
\numberwithin{example}{section}
\numberwithin{definition}{section}
\numberwithin{lemma}{section}
\numberwithin{cor}{section}
\numberwithin{prop}{section}
\begin{document}
\begin{frontmatter}
\title{Trade-off Between Dependence and Complexity for   Nonparametric Learning --- an Empirical Process Approach}

\begin{aug}
\author[A]{\fnms{Nabarun} \snm{Deb}\thanksref{t1}\ead[label=e1]{nabarun.deb@chicagobooth.edu}}
\and
\author[B]{\fnms{Debarghya} \snm{Mukherjee}\thanksref{t1}\ead[label=e2]{
mdeb@bu.edu}}
\thankstext{t1}{Both authors have equal contribution}
\address[A]{Econometrics and Statistics,
University of Chicago Booth School of Business, 
\printead{e1}}

\address[B]{Department of Mathematics and Statistics,
Boston University,
\printead{e2}}
\end{aug}

\runtitle{Maximal inequalities for empirical processes under dependence}
\runauthor{Deb N. and Mukherjee D.}

\begin{abstract}
Empirical process theory for i.i.d.  observations has emerged as a ubiquitous tool for understanding the generalization properties of various statistical problems. However, in many  applications where the data exhibit \emph{temporal dependencies} (e.g., in finance, medical imaging, weather forecasting etc.), the corresponding empirical processes are much less understood. Motivated by this observation, we present a general bound on the expected supremum of empirical processes under standard $\beta/\rho$-mixing assumptions. Our bounds take the form of weighted square root bracketing entropy integrals where the weighing function captures the strength of dependence. Unlike most prior work, our results cover both the long and the short-range regimes of dependence. Our main result shows that the learning rate in a large class of nonparametric problems is characterized by a non-trivial trade-off between the complexity of the underlying function class and the dependence among the observations. This trade-off reveals a new phenomenon, namely that \emph{even under long-range dependence, it is possible to attain the same rates as in the i.i.d. setting, provided the underlying function class is complex enough}. We demonstrate the practical implications of our findings by analyzing various statistical estimators in both fixed and growing dimensions. Our main examples include a comprehensive case study of generalization error bounds in nonparametric regression over smoothness classes in \emph{fixed as well as growing dimension}  using neural nets, shape-restricted multivariate convex regression, estimating the optimal transport (Wasserstein) distance between two probability distributions, and classification under the Mammen-Tsybakov margin condition --- all under appropriate mixing assumptions. In the process, we also develop bounds on $L_r$ ($1\le r\le 2$)-localized empirical processes with  dependent observations, which we then leverage to get faster rates for (a) tuning-free adaptation, and (b) set-structured learning problems. 
\end{abstract}


\begin{keyword}[class=MSC]
\kwd[Primary ]{37A25}
\kwd{62G05}
\kwd[; Secondary ]{62M10}
\kwd{60E15}
\end{keyword}

\begin{keyword}
\kwd{$\beta$ and $\rho$-mixing}
\kwd{classification}
\kwd{empirical process for dependent data}
\kwd{empirical risk minimization}
\kwd{non-Donsker}
\kwd{rate of convergence}
\end{keyword}

\end{frontmatter}
\tableofcontents

\section{Introduction}
\label{sec:intro}
In modern statistical applications, empirical process theory is a vital tool for analyzing model generalization errors. It helps us understand the trade-off between the number of training samples ($n$) and the complexity of the underlying model (usually through a hypothesis function class $\cF$), which affects how well a fitted model performs on unseen test data. This typically involves maximal inequalities that provide finite sample bounds on 
\begin{equation}\label{eq:target}
    \EE\sup_{f\in\cF} \big|\mathbb{G}_n(f)\big|, \ \mbox{where} \ \mathbb{G}_n(f):=\frac{1}{\sqrt{n}}\sum_{i=1}^n (f(X_i)-\EE f(X_i)).
\end{equation}
Here $X_1,X_2,\ldots , X_n$ is drawn according to some data generating process (DGP). When this DGP yields  \emph{independent} observations, i.e., when the $Z_i$s are independent, tight bounds on \eqref{eq:target} are well understood; see \cite{van1996weak,van2000empirical,wainwright2019high,han2021set} and the references therein for details. Perhaps the most popular example of $\cF$ is $\{\mathbf{1}_{(-\infty,x]}(\cdot), \, \forall \, x\in\R\}$, in which case, bounding \eqref{eq:target} yields the Glivenko-Cantelli theorem~\cite[Theorem 19.1]{vaart_1998}. It states that the empirical distribution converges to the population distribution function uniformly at $n^{-1/2}$ rate. For more complex $\cF$, such bounds have been used extensively to obtain rates of convergence in various statistical problems such as classification \cite{tsybakov2004optimal,GineKoltchinskii2006}, nonparametric regression \cite{Birge1993,Sabyasachi2018}, metric estimation \cite{chizat2020faster,sriperumbudur2009integral}, generative adversarial networks \cite{liang2021well,singh2018nonparametric}, etc.

However, the \emph{independence assumption} on the underlying DGP can be quite restrictive in various practical learning scenarios where temporal dependencies exist among the observed samples, as seen in time-series data, weather reports, stock prices, subscriber counts of streaming platforms over time, etc., see \cite{boente1989robust,jiang2001robust,Vidyasagar2003,dundar2007learning,hanneke2021learning} and the references therein. Due to the \emph{lack of exchangeability} among the $Z_i$s, standard techniques for bounding \eqref{eq:target}, which typically proceed using symmetrization and Rademacher complexity bounds (see \cite{van1996weak}), are usually not feasible. Consequently, to the best of our knowledge, upper bounds of \eqref{eq:target} for \emph{general $\cF$} (both Donsker and non-Donsker classes, to be defined later) and across \emph{short and long-range} dependence (see \cref{def:ShortVsLongDep}) have not been studied in the literature in a \emph{fully nonparametric} setting. This raises the natural question:

\begin{center}
    \emph{How does the rate of convergence of \eqref{eq:target} depend on the complexity of $\cF$ and the strength of dependence among the $X_i$s?}
\end{center}

In this paper, we provide general upper bounds of \eqref{eq:target} under popular \emph{mixing assumptions}, namely $\beta$ and $\rho$-mixing (see Definitions \ref{def:beta_mixing} and \ref{def:rho_mixing} below), to address this question. Our maximal inequalities do not impose parametric modeling assumptions on the DGP and consequently cover function classes $\cF$ of all complexities. Furthermore, we do not assume the summability of the mixing coefficients, which enables us to cover both short and long-range dependence (see \cref{def:ShortVsLongDep}). In the process, we uncover a new phenomenon ---  \emph{even under long-range dependence, it is possible to attain the same rates as in the i.i.d. setting, provided the underlying function class is complex enough}. This is particularly common in nonparametric empirical risk minimization (ERM) problems when the underlying intrinsic dimension of the DGP is large. The flexibility of our general bounds allows us to cover a large class of statistical applications in both \emph{low and high dimensions}, such as regression, classification, estimating probability metrics, etc., under the appropriate mixing assumptions.

\subsection{Main contributions}
\label{sec:maincontrib}
As mentioned above, we study the problem of bounding the expected supremum of empirical processes involving data from a strictly stationary $\beta/\gamma$ mixing sequence, over function classes with general complexity quantified by their bracketing entropy (see \cref{def:bracketing}). 
Our results can cover various estimation problems simultaneously. 
An implication of our main results is that the more complex the function class, the more dependence the problem can withstand up to which the rates are exactly the same as in the i.i.d. case. For ease of exposition, we provide an informal description of the trade-off below, in the context of bounding empirical processes with uniform bracketing. 

\emph{An illustrative example of the trade-off.}  Suppose $\cF$ is a collection of bounded functions on some space $\mathcal{X}$. Assume that the bracketing entropy (see \cref{def:bracketing} for details) of $\cF$ is bounded above by $C\delta^{-\alpha}$ for some $\alpha,C>0$. Finally suppose that $X_1,\ldots , X_n$ is a strictly stationary $\beta$-mixing sequence (see \cref{def:beta_mixing} for details) with mixing coefficient $\beta_k\le c(1+k)^{-\beta}$ for some $\beta>0$. Then \cref{thm:infmainres}, in particular \cref{cor:infmainres}, yields the following: if $\beta>1$, then:
    $$
 \EE\sup_{f\in \cF}|\mathbb{G}_n(f)|\lesssim \begin{cases}
       1, & \text{ if } 0 < \alpha < 2 \,, \\
        n^{\frac{1}{2} - \frac{1}{\alpha}}, & \text{ if }\alpha > 2\,,
    \end{cases}
    $$
    and if $\beta< 1$, then: 
    $$
 \EE\sup_{f\in \cF}|\mathbb{G}_n(f)|\le \begin{cases}
        C n^{\frac{1-\beta}{2(1+\beta)}} , & \text{ if } 0 < \alpha < \frac{1+\beta}{\beta} \,, \\
        C n^{\frac{1}{2} - \frac{1}{\alpha}}, & \text{ if }\alpha > \frac{1+\beta}{\beta}\,.
    \end{cases}
    $$
Here, the $\lesssim$ hides constants free of $n$. Note that the above result implies that for $\alpha>2$ (which implies $1/(\alpha-1)<1$), and $\beta\in (1/(\alpha-1),1)$, we get the same rates of convergence $n^{\frac12-\frac1\alpha}$ as in the i.i.d. setting. Note that $\beta\in (1/(\alpha-1),1)$ falls in the long-range dependence regime as per Definition \ref{def:ShortVsLongDep}. Also, the seminal paper of \cite{Birge1993} shows that this rate is not improvable in general even in the i.i.d. setting. This phenomenon is also illustrated in \cref{fig:curve1}.
    
\begin{figure}[ht]
    \centering
    \includegraphics[width=0.60\textwidth]{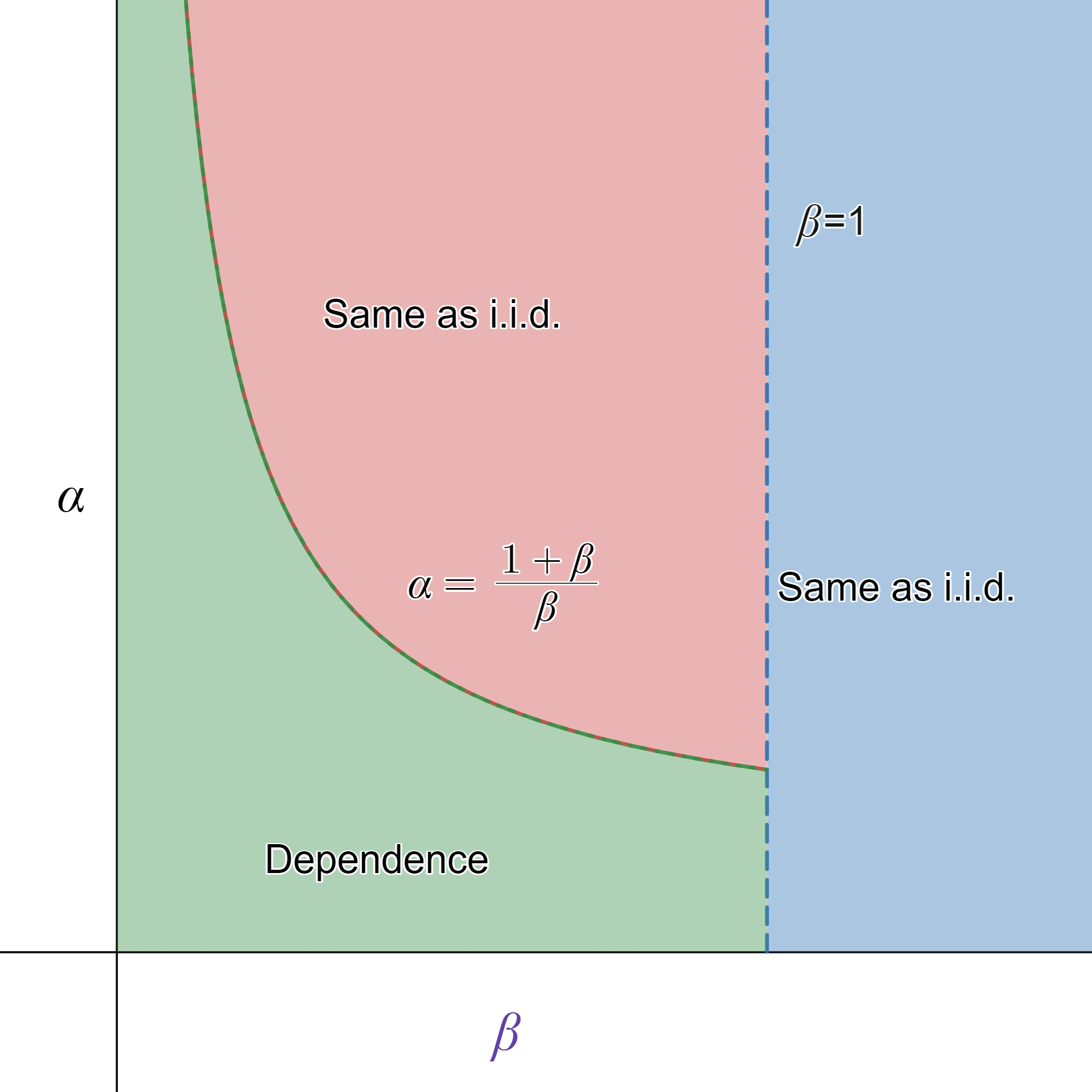}
    \caption{Phase transition curve in terms of $\alpha$ (complexity of the function class) and $\beta$ (the level of dependence) for $\cL_\infty$ covers. When the parameters fall in the red or the blue parts of the picture, the rates are the same as in the i.i.d. case whereas in the green region, the effect of dependence shows up. As is evident the phase transition curve is a smooth parabola up to $\beta=1$ after which the rates are always the same as for i.i.d. observations.}
    \label{fig:curve1}
\end{figure}

Let us now outline our main contributions. 
\begin{enumerate}
    \item \emph{Maximal inequalities under $\beta$-mixing assumptions  with $L_r$-bracketing, $2<r<\infty$}. In \cref{thm:phimainres}, we present a maximal inequality  when the underlying DGP is a strictly stationary $\beta$-mixing sequence, for a general function class with respect to its $L_r$ bracketing entropy, for $2<r<\infty$. In fact, the result is presented for brackets with respect to more general Orlicz type norms. We illustrate the trade-off between dependence and complexity in an interpretable setting in \cref{cor:nondonsk_global}. Note that $L_r$-bracketing for $r<\infty$ is of significant importance, as certain function classes such as the space of bounded multivariate convex functions (without Lipschitz constraints) does not have a finite bracketing number for $r=\infty$.
    \item \emph{Maximal inequalities under $\beta$-mixing assumptions  with $L_\infty$-bracketing}. In the special case where $r=\infty$, the trade-off simplifies significantly as illustrated in the example above. Many function classes of practical relevance such as the space of fully connected deep neural nets admit finite bracketing entropy with respect to the $L_\infty$ norm. The technical details are deferred to \cref{thm:infmainres} and \cref{cor:infmainres}. 
    \item \emph{Maximal inequalities under $\gamma$-mixing assumptions  with $L_r$-bracketing, $1\le r\le 2$}. An inspection of the proofs of Theorems \ref{thm:phimainres} and \ref{thm:infmainres} shows that the proof techniques break down for $1\le r\le 2$. This regime is of particular interest at the intersection of empirical process theory and empirical risk minimization (ERM) as it allows us to get faster rates for ERM estimators even under dependence, than by naive empirical process bounds. In \cref{thm:roc}, under a stronger $\gamma\equiv \beta\vee\rho$-mixing assumption, we show that the following equation 
    $$\sqrt{n}\delta_n^2\sim \EE\sup_{f\in\cF:\ \lVert f\rVert_2\le \delta_n}\mathbb|{G}_n(f)|,$$
    \emph{characterizes the rate of convergence of ERM estimators under $\gamma$-mixing}. 
    The occurrence of the $L_2$ norm is crucial in the above equation. This motivates us to study maximal inequalities under $\gamma$-mixing with $L_r$ bracketing, $1\le r\le 2$ in \cref{thm:gamma_mixing}; also see Corollaries \ref{cor:nonpbd} and \ref{cor:vcbd}. These results lead to rates of convergence faster than $n^{-1/2}$ in \cref{cor:fast_error} even for less complex function classes. Two other important corollaries that illustrate faster rates in learning problems are also provided: (a) in tuning-free adaptation, see \cref{cor:adaptbd} and (b) in set-structured problems, see \cref{cor:fasterl1}.
    \item We provide various examples to illustrate the  applicability of our results under appropriate mixing assumptions --- 
    
    (a) For smooth function estimation using neural nets in both fixed and growing dimension, see Theorems \ref{thm:dnn_mixing} and \ref{thm:additive_rate}. To the best of our knowledge, these are the first theoretical results for neural networks in a high dimensional setting, under dependence. 
    
    (b) For shape restricted multivariate convex least squares regression with tuning-free adaptation, see \cref{thm:shapeconv}. We show that the optimal adaptation rates for least squares estimators are attainable even under long range dependence. 
    
    (c) For Wasserstein distance estimation to highlight the benefits of regularization under strong dependence, see \cref{th:otbalance}.  
    
    (d) For classification under the Mammen-Tsybakov margin condition to highlight that our results are applicable for ERMs with non-convex and non-smooth loss functions, see \cref{prop:classerr}.

\end{enumerate}

\subsection{Literature review} 
Estimating parametric or nonparametric components of statistical models under temporal dependency is a longstanding problem, engaging the efforts of numerous statisticians over the years. There are various ways to quantify the dependence among the observations, of which the most popular notions are so-called strong mixing conditions, namely $\alpha$-mixing (\cite{rosenblatt1956central}), $\beta$-mixing (\cite{volkonskii1959some}), $\rho$-mixing (\cite{kolmogorov1960strong}), $\psi$-mixing (\cite{blum1963strong}), $\phi$-mixing (\cite{ibragimov1959some}), etc.  
Roughly speaking, these notions of mixing conditions quantify how dependence between two observations at two time points $(t_1, t_2)$ depends on $|t_1 - t_2|$, especially how it decays as $|t_1 - t_2| \to \infty$. 
A classical example is \cite{Roussas1990}, which analyzed nonparametric kernel regression under various mixing assumptions. 
In this paper, we mostly focus on $\beta$-mixing and $\rho$-mixing condition (see Definitions \ref{def:beta_mixing} and \ref{def:gammamix}) as they are popularly studied in the literature. 

For ease of presentation, we mostly focus here on $\beta$-mixing sequences. Informally speaking, $\beta_q$ denotes the dependence among two observations that are $q$ time units apart (i.e. say $X_j$ and $X_{q + j}$) (see \cite[Section 5]{Samorodnitsky2016}, \cite[Section 3.3]{Guegan2005}). Typically, the dependence is classified into two categories --- short-range or long-range depending on whether the mixing coefficients (say $\beta_q$s) are summable or not (see \cref{def:ShortVsLongDep}). Under short-range dependency, several results, that hold in i.i.d setup also continue to hold. 
For example, by \cite[Theorem 1.2]{rio1993covariance},  $\limsup_n n^{-1}\mbox{Var}(S_n)<\infty$ (where $S_n = \sum_{j = 1}^n X_i$), which implies that central limit theorem and Donsker Invariance principle type results (see \cite[Theorem 1]{doukhan1995invariance}) continue to hold.
On the other hand, there are examples of stationary $\beta$-mixing sequences with long-range dependence, where $\liminf_n n^{-1}\mbox{Var}(S_n)=\infty$ (see \cite{Davydov1969,Davydov1973}; also  \cite{Bradley2007rocky,Bradley1989}). In these examples, by Lamperti's Theorem (see \cite[Theorem 2.1.1]{Embrechts2002}), Donsker Invariance principle ceases to hold. 
Therefore, analyzing observations under long-range dependency is technically much harder than under short-range dependency. Note that long-range dependency is not only a mere technical construct, it also occurs in various practical applications such as hydrology and economics as has been argued in \cite[Chapter 12.4]{brockwell1991time}; also see \cite{hurst1951long,mcleod1978preservation,lawrance1977stochastic,granger1980long} and the references therein.

 
In modern applications, complex functions involving observed data beyond $S_n$ are often explored under generic nonparametric assumptions. Addressing inferential questions in this context involves bounding the supremum of specific empirical processes over function classes, yielding convergence rate results. The impact of dependence on these rates, especially influenced by the extent of $\beta$-mixing, becomes apparent under strong enough dependence. Initial exploration of dependence in empirical processes can be found in works like \cite{doukhan1994functional,yu1994rates,doukhan1995invariance,nobel1993note}. 
For example, \cite{nobel1993note} shows that any Glivenko-Cantelli (GC) classes of function under i.i.d. observations is also (GC) under any $\beta$-mixing sequence as long as $\beta_q \to 0$ as $q \uparrow \infty$. 
\cite{nobel1999limits} demonstrates that ergodicity alone is insufficient for obtaining consistent regression estimates or classification rules. We refer the reader to \cite{irle1997consistency,berti1997glivenko,steinwart2009learning,hansen2008uniform} for additional consistency results in regression settings under dependence. 

Theoretical machine learning problems under dependence have been investigated in various domains, including optimal transport \cite{Connor2022, Fournier2015}, neural networks \cite{kurisu2022adaptive,kengne2023deep, ma2022theoretical,kurisu2022adaptive,kengne2023deep}, and generalization bounds in regression, classification, support vector machines, etc \cite{Mohri2010,hanneke2019statistical}. 
In learning theory, \cite{hanneke2021learning,dawid2020learnability} establishes consistency results under minimal assumptions for data from a given stochastic process. \cite{hansen2008uniform} explores learning rates under geometric $\alpha$-mixing and \cite{roy2021empirical} obtains rates for the heavy-tailed setting under geometric $\beta$-mixing. In another line of work, authors have focused on non-stationary time series but with short-range $\beta$-mixing \cite{hanneke2019statistical,Kuznetsov2014,barrera2021generalization,Farahmand2012}. 
For a general suite of techniques at the intersection of empirical process theory and data dependency, readers can refer to \cite{Dehling2002book}. 
However most of the existing work either focuses on exponentially decaying mixing conditions (e.g. $\beta_q \sim e^{-cq}$) or/and on function classes of ``low" complexity (such as those with finite Vapnik-Chervonenkis (VC) dimension). 
While \cite{yu1994rates} explored the most general setting ($\beta_q \sim (1+q)^{-\beta}$ for any $\beta > 0$), our results provide a broader perspective on the trade-off between the complexity of the underlying function class and dependency (refer to \cref{rem:yu} for details). In a different paper, \cite{Chen2017} can handle long-range dependence but for structured linear time series models using martingale techniques. As mentioned in \cite[Remark 17]{Chen2017}, their assumptions are not comparable to mixing assumptions.

\subsection{Notation}
\label{sec:nota}
Define a class of function $\Phi$ as follows: 
\begin{align*}
\Phi:=\bigg\{\phi:\Rg\to& \Rg,\ \phi\ \mbox{is convex, increasing and differentiable},\\ & \phi(0)=0,\ \lim_{x\to\infty}\frac{\phi(x)}{x}\to\infty,\ \int_0^1 \phi^{-1}\left(\frac{1}{u}\right)\,du<\infty\bigg\}.
\end{align*}
Here $\Rg$ denotes the space of non-negative reals. 
Let $P\in\mathcal{P}(\mathcal{X})$ where $\mathcal{P}(\mcx)$ denotes the set of probability measures  supported on some Polish space $\mcx$. Given a function $h:\mcx\to\R$ and $\phi\in\Phi$, we define the Orlicz norm of $h$ as follows:
\begin{equation}
\label{eq:orlicz_norm}
\ornm{h}:=\inf\left\{t>0:\ \EE_{X\sim P}\left[\phi\left(\frac{h^2(X)}{t^2}\right)\right]\le 1\right\} \,.
\end{equation}
Here the infimum of an empty set is $\infty$. The Orlicz space corresponding to $\phi$ is then defined as:
$$\mL_{\phi}(P):=\{h:\mcx\to\R: \ornm{h}<\infty\}.$$
As as example, if we take $\phi(x) =  x^{r/2}$ for some $r >  2$, then 
$$
\|h\|_{\phi, P} = \|h\|_{L_r(P)} := \lrp{h}:=\left(\int |h(x)|^r\,d P(x)\right)^{\frac{1}{r}} \,
$$ 
and it is easy to check that $\int_0^1 \phi^{-1}(1/u)\,du<\infty$. 
Another useful norm for our purposes would be $\orin{\cdot}$ where $\orin{h}:=\sup_{x\in\mcx}|h(x)|$. 
Furthermore, given any nonincreasing and c\`{a}dl\`{a}g function $f:\R\to\R$, define its inverse $f^{-1}$ as follows:
$$
f^{-1}(u):=\inf\{t:f(t)\le u\} \,.
$$
The basic property of $f^{-1}$ we will use is that 
$$t<f^{-1}(u)\quad \mbox{if and only if}\quad f(t)>u.$$
For $\phi\in\Phi$ and $h\in \mL_{\phi}(P)$, we define the standard Legendre-Fenchel conjugate as
$$h^c(y)=\sup_{x\ge 0}\ (xy-h(x))$$
for $y\ge 0$. Given any $t\in\R$, we use $\floor*{t}$ to be the largest integer less than or equal to $t$. Given any function $h\in \mL_{\phi}(P)$ for $\phi\in \Phi$, define $Q_{h}(t)$ to be the inverse of the tail probability function $t\mapsto P(|f(X)|>t)$. We write $\overset{\mathscr{L}}{=}$ to denote equality in distribution. Given a natural number $n\in\bbN$, let $[n]:=\{1,2,\ldots ,n\}$. We also use $a\vee b$ and $a\wedge b$ to denote the maximum and the minimum between two real numbers $a$ and $b$. Given a real number $x$, set $x_{+}:=x\ \vee \ 0$. We use $a_n \lesssim b_n$ and $a_n \gtrsim b_n$ to denote $a_n<=C b_n$ and $a_n >= C b_n$, where $C>0$ hides constants free $n$. If $a_n<=C b_n$ and $a_n>=c b_n$, then we write $a_n\sim b_n$. Also $\tilde{O}$ will denote the standard ``Big O'' notation with logarithmic factors suppressed. We will write $\mathrm{Id}$ for the identity function and $\mathbf{1}$ for the indicator function. Further, for two probability measures $P$ and $Q$, let $\lVert P-Q\rVert_{\mathrm{TV}}$ denote the total variation distance between them.

\section{Maximal inequalities with $L_r$ bracketing, $2<r<\infty$}
\label{sec:mainres}
In this section, we present our main result(s) to bound the expected supremum of an empirical process under appropriate $\beta$-mixing conditions. We start with a couple of definitions.
\begin{definition}[$\beta$-mixing (see \cite{doukhan2012mixing})]
    \label{def:beta_mixing}
    For a paired random element $(X, Y)$ the $\beta$-dependence or $\beta$-mixing coefficient between them is defined as: 
    $$
    \beta(X, Y) := \|P_{X, Y} - P_X \otimes P_Y\|_{TV} \,
    $$
    where $P_{XY}$, $P_X$, and $P_Y$ denote the joint distribution of $X$ and $Y$, and the marginal distributions of $X$ and $Y$. 
    A strictly stationary (see \cite[Definition 1.3.3]{brockwell1991time}) sequence $\{X_n\}_{n \ge 0}$ is said to satisfy $\beta$-mixing condition if the sequence $\beta_q$ defined as: 
    $$
    \beta_q := \sup_k\left\|P_{\mathscr{F}_{0:k}, \mathscr{F}_{(k+q+1):\infty}} - P_{\mathscr{F}_{0:k} \otimes \mathscr{F}_{(k+q+1):\infty}}\right\|_{TV}\to 0, \, \mbox{as}\, q\to\infty,
    $$
    where $\mathscr{F}_{a:b}$ denotes the sigma-field generated by $X_a, X_{a+1}, \dots, X_b$. 
    \end{definition}
    In other words, the sequence $\beta_n$ denotes the total variation norm between the joint distribution of the sequence $\{X_1, \dots, X_k, X_{k+n+1}, \dots, X_\infty\}$ and the product of the distributions of $\{X_1, \dots, X_k\}$ and $\{X_{k+n+1}, \dots, X_\infty\}$. We also adopt the convention $\beta_0 = 1$. For other equivalent definitions of $\beta$-mixing, see \cite{bradley2005basic,yu1994rates}.
    Our main assumption on the data-generating process is as follows: 
\begin{assumption}
\label{assm:dgp}
The observed data $\{X_1, \dots, X_n\}$
is strictly stationary $\beta$-mixing with coefficients $\{\beta_q\}_{q\ge 0}$. 
\end{assumption}
In most standard datasets with temporal dependence, $\beta_q$ is a decreasing function of $q$. Depending on how fast $\beta_q$ decays as $q$ grows, the structure of dependency can be broadly categorized into two regimes: 
\begin{definition}[short-range vs long-range dependence]
\label{def:ShortVsLongDep}
    If the dependency decays sufficiently slow, i.e. $\sum_{j = 1}^\infty \beta_j = \infty$ then the dependency among the observations is termed as \emph{long-range dependency}. On the other hand, if  $\sum_{j = 1}^\infty \beta_j < \infty$, then it is called \emph{short-range dependency}. 
\end{definition}
One can similarly define short and long-range dependence with other mixing assumptions; see \cite{RACHEVAIOTOVA2003641}. Several researchers have extended the results available for i.i.d setup to exponentially $\beta$-mixing data (where $\beta_q \le Ce^{-cq}$ for some constant $C, c$, a particular instance of short-term dependency), e.g. \cite{krebs2018large, baraud2001adaptive, Mohri2010, Vidyasagar2003} to name a few. The crucial technical tool for analyzing stationary $\beta$-mixing sequences is a blocking technique usually attributed to \cite{yu1994rates}, along with Berbee's coupling lemma \cite{Berbee1979, Goldstein1979}, which we will also use in our analysis. 


As mentioned in the previous section, the aim of this paper is to 
understand how the size of \eqref{eq:target} changes depending on  the interplay between the dependency and the complexity of the underlying function class. We measure this complexity  by the growth of the bracketing number, a popular choice in empirical process theory, as defined below:

\begin{definition}[Bracketing number]
\label{def:bracketing} 
Given a collection of functions $\cF$ and any $\delta > 0$, the bracketing number $\bnum{\delta}$ with respect to Orlicz norm $\|\cdot\|_\phi$ as defined in \eqref{eq:orlicz_norm}, is the minimum number $N$ such that there exists $N$ pairs of measurable functions $\{F_i^{(L)},F_i^{(U)}\}_{i=1}^N$ satisfying:
    \begin{enumerate}
        \item Given any $f\in\cF$, $\exists \ i\in [N]$ with $F_i^{(L)}(x)\le f(x)\le F_i^{(U)}(x)$ for all $x\in\mcx$.
        \item $\|F_i^{(U)}-F_i^{(L)}\|_\phi \le \delta$ for all $1 \le i \le N$. 
    \end{enumerate}
\end{definition}
The $\delta$-bracketing entropy is defined as $\benp{\delta} :=\log{\bnum{\delta}}$. 
Furthermore, if $\|f\|_\infty \le b$ for all $f \in \cF$, then  given any $\delta$-bracket of $\cF$, we can replace each $F_i^{(L)}$ by $F_i^{(L)}\vee (-b)$ and $F_i^{(U)}$ by $F_i^{(U)}\wedge b$ to construct a new $\delta$-bracket. Therefore, without loss of generality, we can always assume that all functions in a $\delta$-bracket of $\cF$ are also uniformly bounded by $b$. When $\phi$ is replaced by the $L_r(P)$ norm, $1\le r\le \infty$, the corresponding  $\delta$-bracketing number (resp. $\delta$ bracketing entropy) is denoted by $\bnir{\delta}$ (resp. $\benr{\delta}$). For $r=\infty$, as both $\sigma$ and $b$ denote bounds on the $\infty$ norm, we simplify the notation as $\bnif{\delta}$ (resp. $\benf{\delta}$).

In practice, often relatively sharp upper bounds are known on the bracketing numbers as defined above. For example, if $\cF$ is the class of all bounded monotone functions on $[0, 1]$, then we know $\bent{\delta} \le C\delta^{-1}$ for some constant $C$. Henceforth, we assume that there exists a function $\mathbb{H}_{(\cdot)}(\cdot,\cdot)$ which is non-increasing in the first parameter such that 
\begin{equation}\label{eq:asnubd}
     \benp{\delta}\le \mbH{\delta}\qquad \mbox{for}\ \delta>0.
\end{equation}
For the collection of monotone functions $\bbH_{\phi}(\delta, \cF) = C_r\delta^{-1}$ for $\phi(x)=x^{r/2}$ for all $r>2$. The function $\bbH$ quantifies the complexity of $\cF$ through its bracketing entropy.

For dependency, we use $\beta$-mixing as defined in Definition \ref{def:beta_mixing}. 
We use the notation $\beta(t)$ to denote the following step function: $\beta(t)=\beta_{\floor*{t}}$ for $t\ge 0$. Given a function $\phi \in \Phi$ (see Subsection \ref{sec:nota}) and a function class $\cF$ with $\|f\|_{\phi, P} \le \sigma$ and $\|f\|_\infty \le b$ for all $f \in \cF$, we define two pivotal quantities, $\tq{\cdot}$ and $\Lbp(n)$ which are imperative to our analysis: 
\begin{align}
\label{eq:tqd}
\tq{\delta} & :=\inf\left\{0\le q\le n:\ \beta_q\le \frac{q}{n}\bigg(1+\sum_{k\ge 0:\ 2^{-k}\sigma\ge \delta} \mbH{2^{-k}\sigma}\bigg)\right\}\,, \\
\label{eq:piv}
\Lbp(n) & :=\sum_{i=0}^{n} \int_0^{\beta_i} \phi^{-1}\left(\frac{1}{u}\right)\,du \,.
\end{align}
Here $\tq{\cdot}$ is defined for $\delta\le \sigma$. 
The definition of $\tq{\delta}$ can be thought as a version of \emph{dependence-complexity} trade-off; as $q$ increases, $\beta_q$ decreases, however, the complexity term (right-hand side of the inequality in \eqref{eq:tqd}) increases and $\tq{\delta}$ balances these two opposite forces. 
The other function $\Lambda_{\phi, \beta}$ relates the $\beta$-mixing coefficients with the Orlicz norm $\phi$. The following lemma establishes some basic characteristics of $\tq{\cdot}$ and $\Lbp$:  
\begin{lemma}
\label{lem:basefn}
The function $\tq{\cdot}$ is well-defined, and always greater than or equal to $1$. 
Furthermore, both $\tq{\cdot}$ and $\Lbp(\tq{\cdot})$  are non-decreasing in $\delta$.
\end{lemma}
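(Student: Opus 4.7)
The proof is essentially a sequence of elementary checks built from the defining formulas, so my plan is to organize it into the three assertions and verify each in turn. The only nontrivial input needed from outside the definitions is the (standard) fact that the sequence $\{\beta_q\}$ is non-increasing in $q$, which follows from the data-processing inequality applied to the total variation distance when the right-most $\sigma$-field $\mathscr{F}_{(k+q+1):\infty}$ shrinks as $q$ grows.

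For well-definedness, I will show that $q=n$ always belongs to the feasibility set in \eqref{eq:tqd}. Since $\beta_q$ is a TV distance and the convention $\beta_0=1$ is adopted, we have $\beta_n \le \beta_0 \le 1$. On the other hand, at $q=n$ the right-hand side of the constraint equals
\[
1 + \sum_{k\ge 0:\ 2^{-k}\sigma\ge \delta} \mbH{2^{-k}\sigma} \ge 1 \ge \beta_n,
\]
because each bracketing-entropy term is non-negative. Hence the feasibility set contains $n$ and the infimum is attained in $\{0,1,\dots,n\}$. For the lower bound $\tq{\delta}\ge 1$, I will argue that $q=0$ is \emph{excluded}: the right-hand side of the constraint vanishes at $q=0$ (the factor $q/n$ kills everything), whereas $\beta_0=1>0$. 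Thus no $q<1$ is feasible, giving $\tq{\delta}\ge 1$.

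For monotonicity of $\tq{\cdot}$, fix $\delta_1\le \delta_2$. The index set $\{k\ge 0:\ 2^{-k}\sigma\ge \delta\}$ shrinks as $\delta$ increases, and since $\mbH{\cdot}$ is non-negative, the sum on the right-hand side of \eqref{eq:tqd} is non-increasing in $\delta$. Consequently, the full right-hand side (for each fixed $q$) is non-increasing in $\delta$, so the feasibility set for $\delta_2$ is contained in the feasibility set for $\delta_1$. Taking infima gives $\tq{\delta_1}\le \tq{\delta_2}$. Finally, for $\Lbp(\tq{\cdot})$, I note that $\Lbp(m)=\sum_{i=0}^m\int_0^{\beta_i}\phi^{-1}(1/u)\,du$ is a non-decreasing function of $m$ because $\phi^{-1}(1/u)\ge 0$ on $(0,1]$ and each term added is non-negative. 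Composing the non-decreasing map $m\mapsto \Lbp(m)$ with the non-decreasing map $\delta\mapsto \tq{\delta}$ yields monotonicity of $\Lbp(\tq{\cdot})$ in $\delta$. No step here poses a real obstacle; the whole lemma is a sanity check whose main purpose is to make the quantities in \eqref{eq:tqd}--\eqref{eq:piv} usable in the subsequent maximal inequalities.
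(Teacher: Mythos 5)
Your proposal is correct and follows essentially the same route as the paper's proof: check that $q=0$ is infeasible (forcing $\tq{\delta}\ge 1$) and $q=n$ is feasible (giving well-definedness), then observe that the right-hand side of the constraint in \eqref{eq:tqd} is pointwise non-increasing in $\delta$ so the feasibility set shrinks as $\delta$ grows, and finally compose with the evidently non-decreasing map $m\mapsto\Lbp(m)$. Your set-inclusion framing of the monotonicity step is if anything a bit cleaner than the paper's (which phrases it via $\mbH{\delta_1}\ge\mbH{\delta_2}$ even though what is really used is that the index set $\{k:2^{-k}\sigma\ge\delta\}$ shrinks), and your invocation of the data-processing inequality for monotonicity of $\beta_q$ is not actually needed — $\beta_n\le 1$ already holds because each $\beta_q$ is a supremum of total-variation distances, each bounded by one.
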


The following assumption imposes some mild restrictions on the product of $\Lbp(\tq{\cdot})$ and $\mbH{\cdot}$:

\begin{assumption}\label{assm:bsbd}
Let $\cF$ be a family of functions such that $\lVert f\rVert_{\phi}\le \sigma$, $\lVert f\rVert_{\infty}\le b$ for some $\phi\in\Phi$ and $\sigma>0$, $b\ge 1$.  
Assume that there exist functions $\Ld{\cdot}$ and  $\rno$ from $[0, \sigma]$ to $\reals_+$, 
with $\Ld{\cdot}$ being non-decreasing, and with  $\rno$ being non-increasing, which satisfy the following properties: 
\begin{align}
   \label{eq:asncall} &\Lbp(\tq{\delta})\le\Ld{\delta}\,,  \\
    \label{eq:asnbd1}
& \Ld{\delta}\left(1 + \mbH{\delta}\right)\le \rno(\delta)\,.
\end{align}
\end{assumption}
In \cref{assm:bsbd}, we may have as well taken $\Ld{\cdot}=\Lbp(\tq{\cdot})$. However, in practice, as $\tq{\cdot}$ needs to be an integer, due to rounding issues it is often convenient to apply \cref{thm:phimainres} with an upper bound, namely $\Ld{\cdot}$ in our case. The assumption of monotonicity on $\Ld{\cdot}$ is natural because $\Lbp(\tq{\cdot})$ is  monotonic by \cref{lem:basefn}. The assumption in \eqref{eq:asnbd1} is slightly more technical but it holds in all our problems of interest, as will be evident in the examples discussed throughout the rest of the paper. Roughly speaking, we need a product of non-decreasing function $\Ld{}$ and non-increasing function $\bbH$ to be bounded by some non-increasing function $\rno$, i.e. the growth of $\Ld{\cdot}$ is dominated by the decay of $\bbH$. 

We are now in a position to state our main theorem. The collection of functions $\cF$ is always assumed to be countable to avoid measurability digressions.

\begin{theorem}[Main theorem]
\label{thm:phimainres}
Consider Assumptions \ref{assm:dgp} and \ref{assm:bsbd}. Choose $a > 0$ such that: 
\begin{equation}
\label{eq:basedef1}
C_0\int_{\frac{a}{2^6\sqrt{n}}}^{\sigma} \sqrt{\rno(u)}\,du\ \le \ a \ \le \ 8\sqrt{n}\sigma \,,
\end{equation}
for some $C_0\ge 1$. Then we have:
$$
\EE\sup_{f\in\cF}\big|\mathbb{G}_n(f)\big|\leq C \left(a+b \tq{\sigma}\frac{1+\mbH{\sigma}}{\sqrt{n}}\right),
$$
where $C>0$ is an absolute constant depending only on $C_0$ and $\phi$.
\end{theorem}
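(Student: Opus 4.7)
The strategy is a chaining argument adapted to $\beta$-mixing observations via a block-coupling technique. The two main ingredients are (i) Berbee's coupling lemma, which allows us to replace consecutive blocks of length $q$ by an i.i.d.\ sequence of blocks at the cost of $\lfloor n/q\rfloor\beta_q$ in total variation, and (ii) an Orlicz-norm maximal inequality for sums of independent, uniformly bounded variables. The function $\Lambda_{\phi,\beta}(n)$ in \eqref{eq:piv} is precisely the quantity that arises when one expresses the Orlicz norm of a block-sum of $\beta$-mixing variables through the Rio-type moment bound, and $\tq{\delta}$ is the block-length that balances the coupling error $q/n\cdot N_{\mathrm{br}}$ against the mixing level $\beta_q$.

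\textbf{Step 1: chaining setup.} I will build a dyadic sequence of bracketing covers. Fix $K$ with $2^{-K}\sigma \asymp a/(2^6\sqrt{n})$. For each level $k=0,\dots,K$, choose a minimal $\|\cdot\|_\phi$-bracketing of $\cF$ at scale $2^{-k}\sigma$, with brackets $\{g_j^{k,L},g_j^{k,U}\}$, uniformly bounded in sup-norm by $b$ (by truncation, as the text notes). For each $f\in\cF$ and each $k$, let $j_f^k$ index the bracket containing $f$, and write the telescoping identity
\[
f - \mathbb{E} f \;=\; \bigl(\gl{K}-\mathbb{E}\gl{K}\bigr) \;+\; \sum_{k=1}^K \Delta_f^k \;+\; r_f,
\]
where $\Delta_f^k:=\gl{k}-\gl{k-1}$ is supported on at most $N_{k}N_{k-1}$ bracket-pairs, and $r_f$ is a residual bounded by $\gu{K}-\gl{K}$ in $\|\cdot\|_\phi$-norm.

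\textbf{Step 2: Berbee coupling at level $k$.} For each chaining level $k$, I will apply Berbee's lemma with block length $q_k:=\tq{2^{-k}\sigma}$ (nondecreasing in $k$ via \cref{lem:basefn}) to construct a coupled sequence of independent blocks. The total variation error is $\lesssim (n/q_k)\beta_{q_k}$, which by the defining inequality for $\tq{\cdot}$ in \eqref{eq:tqd} is controlled by the cumulative complexity $\sum_{j\le k}\mbH{2^{-j}\sigma}$. This is exactly what is needed so that the coupling error contributes only lower-order terms to the final chaining sum.

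\textbf{Step 3: maximal inequality on independent blocks.} On the coupled (independent) sequence, I will control $\mathbb{G}_n(\Delta_f^k)$ by a standard Orlicz-norm maximal inequality over $N_k N_{k-1}\le e^{2\mbH{2^{-k}\sigma}}$ choices of $\Delta_f^k$. The Orlicz norm of each blocked partial sum is bounded above by $\sqrt{\Lbp(q_k)}$ times a multiple of $\|\Delta_f^k\|_\phi\lesssim 2^{-k}\sigma$, using standard Young-inequality computations for $\phi\in\Phi$ together with the Rio-type identity that underlies $\Lbp$. Taking the maximum over the bracket indices yields an upper bound of the form
\[
2^{-k}\sigma\,\sqrt{\Ld{2^{-k}\sigma}\bigl(1+\mbH{2^{-k}\sigma}\bigr)} \;\le\; 2^{-k}\sigma\,\sqrt{\rno(2^{-k}\sigma)}
\]
by Assumption~\ref{assm:bsbd}. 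Summing over $k$ gives a Riemann sum that is dominated by the integral $\int_{a/(2^6\sqrt n)}^{\sigma}\sqrt{\rno(u)}\,du$, bounded by $a/C_0$ via \eqref{eq:basedef1}.

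\textbf{Step 4: coarsest-scale and residual terms.} The coarsest-scale term $\gl{K}-\mathbb{E}\gl{K}$, together with the sup-norm residual $r_f$, contributes the additive piece $b\tq{\sigma}(1+\mbH{\sigma})/\sqrt{n}$: the uniform bound $b$ enters because the brackets are truncated at level $b$, while the factor $\tq{\sigma}(1+\mbH{\sigma})/\sqrt{n}$ arises from the coupling error at the top of the chain (when we use the weakest, coarsest block length). Combining Steps 3 and 4 gives the claimed bound. The \emph{main obstacle} is Step 2/3: verifying that the same block length $q_k$ chosen by the dependence-complexity balance in \eqref{eq:tqd} simultaneously makes both the coupling error and the post-coupling Orlicz chaining increment of the right order at every scale, and checking that the telescoping of these errors adds only a geometric-series factor absorbable into the absolute constant $C$.
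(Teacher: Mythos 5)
Your plan captures the broad contours — chaining over nested brackets, Berbee coupling with level-dependent block length $q_k = \tq{2^{-k}\sigma}$, and a finite-class maximal inequality on the coupled sequence — and these are indeed the ingredients of the paper's argument. However, there is a genuine gap in Step 3, and it is exactly the point where the general (non-summable mixing) case diverges from Doukhan--Massart--Rio.

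Your telescoping decomposition $f = \gl{0} + \sum_k \Delta_f^k + r_f$ (your notation swaps coarse and fine, but let us read it the standard way) controls each chaining increment $\Delta_f^k$ only via its $\phi$-norm, implicitly leaving its $L_\infty$ bound at the blunt level $2b$. But the finite-class maximal inequality under $\beta$-mixing (Prop.~\ref{thm:maximal_finite_davy}) has \emph{three} terms: in addition to the variance term $\sigma\ppb(q)\sqrt{1+\log|\cF|}$ you describe, there is a Bernstein-type term $b\,q\,(1+\log|\cF|)/\sqrt{n}$ and a coupling-remainder term $b\beta_q\sqrt{n}$, both proportional to the $L_\infty$ envelope of the increments. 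At chaining level $k$ the cardinality is $\exp(2\sum_{j\le k}H_j)$, so the naive Bernstein term contributes
$b\,\tq{2^{-k}\sigma}\,\bigl(1+\sum_{j\le k}H_j\bigr)/\sqrt{n}$,
which does \emph{not} sum over $k$ unless the mixing coefficients are summable. Your sketch absorbs this by silently assuming only the variance piece matters, which is true in the summable (Donsker) regime but false precisely when $\beta < r/(r-2)$.

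The paper circumvents this with the adaptive truncation mechanism you do not introduce: the stopping index $\nu_f(x)$ of~\eqref{eq:adaptrun} and the level-dependent thresholds $K_s$ of~\eqref{eq:defks} are chosen so that the chaining increments $(\tgl{s}-\tgl{s-1})\mathds{1}(\nu_f\ge s)$ have $L_\infty$ norm bounded by $K_{s-1}$, not $b$. The explicit form of $K_s$ is tuned so that the Bernstein term becomes $K_{s-1}\tq{2^{-s}\sigma}(1+\sum_{j\le s}H_j)/\sqrt{n}\lesssim 2^{-s}\sigma\sqrt{\Ld{2^{-s}\sigma}(1+\sum_{j\le s}H_j)}$, which does sum to $\lesssim a/C_0$ via~\eqref{eq:basedef600st}. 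The residual you call $r_f$ is then split by the events $\{\nu_f=s\}$ and handled via Lemma~\ref{lem:l1phin}, where again the truncation level changes with $s$. Without this mechanism, your Step 3 and Step 4 cannot be completed in the long-range regime; the paper makes exactly this point in Remark~\ref{rem:comparison_doukhan} when contrasting with Doukhan et al.
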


\begin{rem}[Comparison in the i.i.d. case]\label{rem:comvdv}
In \cite[Theorem 8.13]{van2000}, the author presents a maximal inequality for empirical processes of independent variables. Our main result \cref{thm:phimainres} can be viewed as a generalization of this result to the non i.i.d. case under a $\beta$-mixing assumption. To see this, we note in particular that when $\beta(k)=0$ for all $k\ge 1$, it is easy to check that for all $\delta>0$, we can choose
$$\tq{\delta}=1,\qquad \mbox{and}\qquad \Ld{\delta}=1,$$
irrespective of the function $\benp{\cdot}$. Therefore, by \eqref{eq:asnbd1}, we have $\rno(u)\ge 1\vee \mbH{u}$. Hence, from the lower bound in \eqref{eq:basedef1}, we have: 
\begin{align*}
    a\ge C_0\int_{\frac{a}{2^6\sqrt{n}}}^{\sigma} \sqrt{\rno(u)}\,du \ge C_0\int_{\frac{a}{2^6\sqrt{n}}}^{\sigma} 1\,du\implies a\ge  C_0\sigma,
\end{align*}
and 
\begin{align*}
    a\ge C_0\int_{\frac{a}{2^6\sqrt{n}}}^{\sigma} \sqrt{\rno(u)}\,du \ge C_0\int_{\frac{a}{2^6\sqrt{n}}}^{\sigma} \sqrt{\mbH{u}}\,du \,.
\end{align*}
Combining both the bounds above, $a$ should satisfy: 
\begin{equation}
\label{eq:a_lower_bound}
a \ge C_0 \max\left\{\int_{\frac{a}{2^6\sqrt{n}}}^{\sigma} \sqrt{\mbH{u}}\,du, \ \ \sigma \right\} \,.
\end{equation}
As $b\tq{\sigma}(1+\mbH{\sigma})/\sqrt{n} = b (1+\mbH{\sigma})/\sqrt{n}$, \cref{thm:phimainres} then implies: 
\begin{align}
\label{eq:newbasedef2}
\EE\sup_{f\in\cF}\big|\mathbb{G}_n(f)\big|\leq C \left(a +b \frac{1+\mbH{\sigma}}{\sqrt{n}}\right), 
\end{align}
for all $ a\le 8\sqrt{n}\sigma$, satisfying \eqref{eq:a_lower_bound}. 
It is now easy to check that, with different implicit constants, \eqref{eq:basedef1} provides an analog of \cite[Theorem 8.13]{van2000}; see in particular the version in \cite[Lemma 7]{hanisotonic2019}. Hence one can recover the standard bound for the i.i.d. case from \cref{thm:phimainres} by simply putting $\beta_k = 0$.  
\end{rem}


    In practice, the $\beta$-mixing coefficients (or upper bounds thereof) are often unknown. In that case, one could estimate the coefficients based on the binning method in \cite{mcdonald2015estimating}. In combination with our results, this should enable us to get high probability bounds involving the estimated $\beta$-mixing coefficients. We reserve this direction for future research.

The above theorem is presented in a rather general form and is applicable in a variety of settings involving choices of $\phi$, $\beta$, and $\mbH{\delta}$. For ease of exposition let us now illustrate \cref{thm:phimainres} in a popular example widely used in learning theory. More specific examples will be provided in \cref{sec:app}. We begin with some informal computations. In the sequel, we will use the $\approx$ sign for natural approximations (that hide universal constants). Consider, as before, that the observed sequence $\{X_1, \dots, X_n\}$ satisfies~\cref{assm:dgp} with $\beta_q \approx (1+q)^{-\beta}$   and $\benp{\delta} \approx \delta^{-\alpha} :=\mbH{\delta}$, $\sigma,b$ bounded away from $0$, and $\phi(x) = x^{r/2}$ with $r > 2$. Assume that we are in the non-Donsker sub regime $\alpha>4$ and the long-range dependence regime $$0<\frac{r}{\alpha r - \alpha - r}<\beta<1.$$
Then for $u>0$ not ``too small or large", we obtain $\tq{u}$ by solving 
$$\beta_q\approx \frac{q}{n}\left(1+\sum_{k\ge 0:\ 2^{-k}\sigma\ge u} \mbH{2^{-k}\sigma}\right)\, \implies \tq{u}\approx \left(n u^{\alpha}\right)^{\frac{1}{1+\beta}}.$$
Consequently, 
$$\Ld{u}\lesssim \left(n u^{\alpha}\right)^{\frac{1-\beta\left(1-\frac{2}{r}\right)}{1+\beta}}, \quad \mbox{and}\quad \rno(u)\lesssim n^{\frac{1-\beta\left(1-\frac{2}{r}\right)}{1+\beta}}u^{-\frac{2\alpha \beta (r-1)}{r(1+\beta)}}.$$
As $b\tq{\sigma}(1+\mbH{\sigma})/\sqrt{n}\lesssim n^{\frac{1}{2}-\frac{\beta}{1+\beta}}$ and 
$$a\ge \int_{\frac{a}{\sqrt{n}}}^{\sigma} \sqrt{\rno(u)}\,du,\, \implies \ a\geq \bigg(\frac{a}{\sqrt{n}}\bigg)^{1-\frac{\alpha \beta (r-1)}{r (1+\beta)}} n^{\frac{1-\beta\left(1-\frac{2}{r}\right)}{2(1+\beta)}},\, \implies \ a\approx n^{\frac{1}{2}-\frac{1}{\alpha}},$$
we get from \cref{thm:phimainres} that 
$$
\EE\sup_{f\in\cF_{\phi, \sigma, b}}\big|\mathbb{G}_n(f)\big|\lesssim n^{\frac{1}{2}-\frac{1}{\alpha}}.
$$
The right hand side above is free of $\beta$ and is in fact the same rates as one would get if the underlying DGP yielded i.i.d. observations (see \cite{Birge1993,gine2021mathematical}). This reveals the striking phenomenon that it is possible to get i.i.d. like rates even under long-range dependence provided the complexity of the underlying hypothesis class $\cF$ is large enough. The above informal computations are formalized in the following corollary which provides quantitative bounds on \eqref{eq:target} for a much larger class of DGPs. 

\begin{cor}
\label{cor:nondonsk_1}
Consider the same notation as in \cref{thm:phimainres} with $\sigma<1$ and $b\ge 1$, and suppose that \cref{assm:dgp} holds. Further, assume that $\phi(x)=x^{r/2}$ for some $r>2$. Also suppose that 
$\benp{\delta}\le K\delta^{-\alpha}$ for all $\delta>0$ and some $K>0$ depending on $b$ but not on $\delta$ or $\sigma$. 
Finally let $\beta_q\le c (1+q)^{-\beta}$ for some $c, \beta>0$ and all integers $q\ge 0$. 
Then there exists $C>0$ depending only on $c$ such that the following conclusions hold for all $\sigma \gtrsim n^{-1/\alpha}$:

\begin{enumerate}
    \item If $\beta>r/(r-2)$, then 
    $$
  \EE\sup_{f\in \cF}\big|\mathbb{G}_n(f)\big|\le \begin{cases}
        C\max\left\{\sigma^{1 - \frac{\alpha}{2}}, \ n^{\frac{1 - \beta}{2(1 + \beta)}} \sigma^{-\frac{\alpha \beta}{1 + \beta}}\right\}, & \text{ if }\ 0 < \alpha < 2 \,, \\
        C \max\left\{n^{\frac12 - \frac{1}{\alpha}}, n^{\frac{1 - \beta}{2(1 + \beta)}} \sigma^{-\frac{\alpha \beta}{1 + \beta}}\right\}, & \text{ if }\ \alpha > 2\,.
    \end{cases}
    $$
    \item If $\beta< r/(r - 2)$, then: 
    $$
 \EE\sup_{f\in \cF}\big|\mathbb{G}_n(f)\big|\le \begin{cases}
       C\max\left\{n^{\frac{1-\beta\left(1-\frac{2}{r}\right)}{2(1+\beta)}} \sigma^{1-\frac{\alpha(r-1)\beta}{r(1+\beta)}}, n^{\frac{1 - \beta}{2(1 + \beta)}} \sigma^{-\frac{\alpha \beta}{1 + \beta}}\right\} & \text{ if }\ 0 < \alpha < \frac{r(1+\beta)}{\beta(r-1)} \,, \\
        C \max\left\{n^{\frac12 - \frac{1}{\alpha}}, n^{\frac{1 - \beta}{2(1 + \beta)}} \sigma^{-\frac{\alpha \beta}{1 + \beta}}\right\}, & \text{ if }\ \alpha > \frac{r(1+\beta)}{\beta(r-1)}\,.
    \end{cases}
    $$
\end{enumerate}
    In both parts $C>0$ is a constant depending only on $b$, $K$, $r$, $\alpha$ and $\beta$.
\end{cor}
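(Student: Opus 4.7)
The plan is to apply \cref{thm:phimainres} after explicitly computing the auxiliary quantities $\tq{\cdot}$, $\Ld{\cdot}$, $\rno(\cdot)$, and the entropy-integral bound $a$ for the specific choices in the hypothesis.

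\textbf{Step 1 (computing $\tq{\cdot}$).} Plug $\mbH{\delta}=K\delta^{-\alpha}$ into \eqref{eq:tqd}. The dyadic sum $\sum_{k\ge 0:\ 2^{-k}\sigma\ge \delta}(2^{-k}\sigma)^{-\alpha}$ is a geometric series dominated by its largest term, hence of order $\delta^{-\alpha}$. Combined with $\beta_q\le c(1+q)^{-\beta}$, solving $(1+q)^{-\beta}\asymp (q/n)\delta^{-\alpha}$ yields $\tq{\delta}\lesssim (n\delta^{\alpha})^{1/(1+\beta)}$. The hypothesis $\sigma\gtrsim n^{-1/\alpha}$ guarantees $\tq{\sigma}\ge 1$, consistent with \cref{lem:basefn}.

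\textbf{Step 2 (computing $\Lbp$).} With $\phi(x)=x^{r/2}$ one has $\phi^{-1}(1/u)=u^{-2/r}$ and $\int_0^{\beta_i}u^{-2/r}\,du=\tfrac{r}{r-2}\beta_i^{1-2/r}$, so
\begin{equation*}
\Lbp(q)\lesssim \sum_{i=0}^{q}(1+i)^{-\beta(1-2/r)}.
\end{equation*}
This converges precisely when $\beta(1-2/r)>1$, i.e.\ $\beta>r/(r-2)$, giving an admissible $\Ld{\delta}\asymp 1$; otherwise $\Ld{\delta}\asymp (n\delta^{\alpha})^{(1-\beta(1-2/r))/(1+\beta)}$. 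Multiplying by $1+K\delta^{-\alpha}$ in each case produces an admissible $\rno(\delta)$ meeting \cref{assm:bsbd}.

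\textbf{Step 3 (solving the entropy integral).} In either case $\sqrt{\rno(u)}$ is (up to an $n$-dependent constant) a negative power of $u$, and whether this power is integrable at zero dictates the two-way split in the statement. The threshold is $\alpha=2$ in Case 1 (integrand $\sim u^{-\alpha/2}$) and $\alpha=r(1+\beta)/(\beta(r-1))$ in Case 2 (integrand $\sim u^{-\alpha\beta(r-1)/(r(1+\beta))}$). When the integral converges at $0$, the upper limit $\sigma$ dominates, producing either $\sigma^{1-\alpha/2}$ or $n^{(1-\beta(1-2/r))/(2(1+\beta))}\sigma^{1-\alpha\beta(r-1)/(r(1+\beta))}$. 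When it diverges, the lower limit $a/(64\sqrt{n})$ dominates; a short fixed-point calculation shows that $a\sim n^{1/2-1/\alpha}$ solves the inequality in both divergent regimes. This yields the first entry inside each $\max\{\cdot,\cdot\}$ in the conclusion. The upper constraint $a\le 8\sqrt{n}\sigma$ in \eqref{eq:basedef1} is satisfied since $\sqrt{n}\sigma\gtrsim n^{1/2-1/\alpha}$ under the hypothesis on $\sigma$.

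\textbf{Step 4 (combining).} Substituting the computed $\tq{\sigma}$ and $\mbH{\sigma}$, the residual term in \cref{thm:phimainres} becomes
\begin{equation*}
\frac{b\,\tq{\sigma}\,(1+\mbH{\sigma})}{\sqrt{n}} \lesssim n^{(1-\beta)/(2(1+\beta))}\,\sigma^{-\alpha\beta/(1+\beta)},
\end{equation*}
which is exactly the second entry inside each $\max$. Plugging $a$ and this residual into \cref{thm:phimainres} concludes the proof. The main bookkeeping obstacle is Case 2: verifying that $a\sim n^{1/2-1/\alpha}$ solves the divergent-integral inequality requires checking that the net exponent of $n$, after combining the $n$-power hidden in $\Ld{\cdot}$ with the lower-limit contribution of the integral, cleanly telescopes to $(1+\beta)(1/2-1/\alpha)$, eventually delivering $n^{1/2-1/\alpha}$ after dividing by $\gamma=\alpha\beta(r-1)/(r(1+\beta))$.
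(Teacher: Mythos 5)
Your overall strategy---explicitly compute $\tq{}$, $\Lbp$, $\Ld{}$, $\rno$, split on whether the entropy integral converges, then solve a fixed point for $a$ in the divergent case---is the same as the paper's, and Steps 1, 2, and 4 are executed correctly. The gap is in Step~3, Case~2.

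You describe the integrand in Case 2 ($\beta<r/(r-2)$) as a single power $u^{-\alpha\beta(r-1)/(r(1+\beta))}$ with integrability threshold $\alpha=r(1+\beta)/(\beta(r-1))$. But $\rno(u)$ is a \emph{sum}
\[
\rno(u)\lesssim n^{\frac{1-\beta(1-2/r)}{1+\beta}}\,u^{-\frac{2\alpha\beta(r-1)}{r(1+\beta)}}+u^{-\alpha},
\]
and since $\beta<r/(r-2)$ is equivalent to $\alpha\beta(r-1)/(r(1+\beta))<\alpha/2$, it is the \emph{second} term $u^{-\alpha/2}$ that dominates as $u\to 0$. Hence the full integral actually diverges at $0$ whenever $\alpha>2$, not $\alpha>r(1+\beta)/(\beta(r-1))$. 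This creates a third, intermediate sub-case $2<\alpha<r(1+\beta)/(\beta(r-1))$ that your two-way classification misses: there the first term's antiderivative converges and its endpoint value is $n^{(1-\beta(1-2/r))/(2(1+\beta))}\sigma^{1-\alpha(r-1)\beta/(r(1+\beta))}$, while the second term contributes a divergent $(a/\sqrt{n})^{1-\alpha/2}$ piece. Your Step~3 prescribes $a\sim n^{1/2-1/\alpha}$ in every divergent sub-case, but for this middle range that $a$ fails the constraint $C_0\int\le a$: plugging it in, the first (convergent) part of the integral is $\gtrsim n^{1/2-1/\alpha}$ when $\sigma\gtrsim n^{-1/\alpha}$ (the exponents telescope to exactly $1/2-1/\alpha$), so $C_0\int>a$ as soon as $C_0>1$. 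The paper instead sets $a\gtrsim n^{(1-\beta(1-2/r))/(2(1+\beta))}\sigma^{1-\alpha(r-1)\beta/(r(1+\beta))}$ in this sub-case and then verifies the divergent $(a/\sqrt{n})^{1-\alpha/2}$ contribution is dominated by this larger $a$, again via the telescoping identity and $\sigma\gtrsim n^{-1/\alpha}$. You need to spell out this intermediate sub-case; once you do, the rest of your argument goes through.
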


For ease of interpretation, let us consider the global version of \cref{cor:nondonsk_1} where $\sigma$ is assumed to be bounded above and below (away from $0$). For example, when $\lVert f\rVert_{\phi,P}\le 1$ for all $f\in\cF$. Then we have the following analogue of \cref{cor:nondonsk_1}: 
\begin{cor}
\label{cor:nondonsk_global}
Consider the same setting as Corollary \ref{cor:nondonsk_1}. Then we have: 

\begin{enumerate}
    \item If $\beta>r/(r-2)$, then 
    $$
  \EE\sup_{f\in \cF}\big|\mathbb{G}_n(f)\big| \le \begin{cases}
        C\max\left\{1, \ n^{-\frac{1}{2} + \frac{1}{1 + \beta}}\right\} \equiv C, & \text{ if }\ 0 < \alpha < 2 \,, \\
        C n^{\frac12 - \frac{1}{\alpha}}, & \text{ if }\ \alpha > 2\,.
    \end{cases}
    $$
    \item If $\beta< r/(r - 2)$, then: 
    $$
  \EE\sup_{f\in \cF}\big|\mathbb{G}_n(f)\big|\le \begin{cases}
       C\max\left\{n^{\frac{1-\beta\left(1-\frac{2}{r}\right)}{2(1+\beta)}},  n^{-\frac{1}{2} + \frac{1}{1 + \beta}} \right\} \equiv Cn^{\frac{1-\beta\left(1-\frac{2}{r}\right)}{2(1+\beta)}} & \text{ if }\ 0 < \alpha < \frac{r(1+\beta)}{\beta(r-1)} \,, \\
        C n^{\frac12 - \frac{1}{\alpha}}, & \text{ if }\ \alpha > \frac{r(1+\beta)}{\beta(r-1)}\,.
    \end{cases}
    $$
\end{enumerate}
    In both parts $C>0$ is a constant depending only on $C_1$, $C_2$, $r$, $\alpha$ and $\beta$.
\end{cor}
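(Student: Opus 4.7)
The plan is to derive Corollary 2.2 (\cref{cor:nondonsk_global}) as a direct specialization of Corollary 2.1 (\cref{cor:nondonsk_1}) under the extra assumption that $\sigma$ is bounded above and below by positive constants (say $\sigma \asymp 1$). Since $\sigma$-independent bounds are obtained by absorbing any $\sigma^{c}$ factor into the constant $C$, the entire task reduces to simplifying the two-term maxima appearing in \cref{cor:nondonsk_1} and determining the dominant term in each of the four regimes.

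First, I would treat \textbf{Case 1} ($\beta > r/(r-2)$). For $0<\alpha<2$, setting $\sigma \asymp 1$ kills both $\sigma^{1-\alpha/2}$ and $\sigma^{-\alpha\beta/(1+\beta)}$, leaving $C\max\{1, n^{(1-\beta)/(2(1+\beta))}\}$. A short algebraic rearrangement gives $(1-\beta)/(2(1+\beta)) = -1/2 + 1/(1+\beta)$, which matches the stated exponent. Since $\beta > r/(r-2) > 1$ (because $r>2$), this exponent is strictly negative, so the max collapses to $C$. For $\alpha>2$, after setting $\sigma\asymp 1$, the bound becomes $C\max\{n^{1/2-1/\alpha},\, n^{-1/2+1/(1+\beta)}\}$; the first term is positive in the exponent while the second is negative, so the maximum equals $Cn^{1/2-1/\alpha}$ as claimed.

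Next, I would handle \textbf{Case 2} ($\beta < r/(r-2)$). For $0<\alpha< r(1+\beta)/(\beta(r-1))$, setting $\sigma\asymp 1$ yields $C\max\{n^{(1-\beta(1-2/r))/(2(1+\beta))},\, n^{-1/2+1/(1+\beta)}\}$. Comparing the exponents requires checking
\[
\frac{1-\beta(1-2/r)}{2(1+\beta)} \;\ge\; \frac{1-\beta}{2(1+\beta)},
\]
which is equivalent to $2\beta/r \ge 0$ and hence trivially true, so the first term dominates. For $\alpha > r(1+\beta)/(\beta(r-1))$, after setting $\sigma\asymp 1$ the bound becomes $C\max\{n^{1/2-1/\alpha},\, n^{-1/2+1/(1+\beta)}\}$. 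Here I would show $n^{1/2-1/\alpha}$ dominates by checking $1/\alpha \le \beta/(1+\beta)$, i.e., $\alpha \ge (1+\beta)/\beta$; this follows because in this regime $\alpha > r(1+\beta)/(\beta(r-1)) > (1+\beta)/\beta$ (since $r/(r-1)>1$ for $r>2$).

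None of the steps above is substantive; the argument is essentially arithmetic once \cref{cor:nondonsk_1} is in hand. The only place where one must be slightly careful is the sign and magnitude comparison of $(1-\beta)/(2(1+\beta))$ against $1/2 - 1/\alpha$ in each regime, so that the correct term is identified as dominant and the threshold on $\alpha$ separating the two subcases is the same as in \cref{cor:nondonsk_1}. Thus, there is no real obstacle: the proof of \cref{cor:nondonsk_global} amounts to applying \cref{cor:nondonsk_1} with $\sigma\asymp 1$, absorbing powers of $\sigma$ into the constant, and simplifying the two-term maxima using the elementary exponent comparisons outlined above.
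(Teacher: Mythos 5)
Your proposal is correct and matches the intended derivation: \cref{cor:nondonsk_global} is nothing but \cref{cor:nondonsk_1} specialized to $\sigma \asymp 1$, after which each two-term maximum collapses via the elementary exponent comparisons you give (using $\beta > 1$ in Case 1, $2\beta/r \ge 0$ in the first branch of Case 2, and $\alpha > r(1+\beta)/(\beta(r-1)) > (1+\beta)/\beta$ in the second branch). The paper does not even record a separate proof for this corollary precisely because it is this immediate; your checks of the sign conditions and thresholds are all valid.
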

The above Corollary quantifies the trade-off between dependency and complexity of the underlying function class for a fixed $\ornm{\cdot}$ norm. The upshot is simple and intuitive; if $\beta$ is larger than a threshold (i.e. $(r/(r-2)$), then the level of dependence is rather weak, and we recover the same rates as with independent observations. When $\beta$ is smaller than $r/(r-2)$, then the picture is more subtle. In this case, whether or not we observe the same rates as for independent data depends on an interplay between complexity and the dependence governed by the following hyperbola in terms of $\alpha$ and $\beta$:
$$\alpha\equiv \alpha(\beta)=\frac{r}{r-1}\left(1+\frac{1}{\beta}\right).$$
As the right-hand side is decreasing in $\beta$, \cref{cor:nondonsk_global} implies that stronger dependence (smaller values of $\beta$) requires more complex  function classes (larger $\alpha$) to get the same rates as in the i.i.d. setting. On the other hand, when $\beta < r/(r-2)$ and $\alpha < r(1+\beta)/(\beta(r-1))$, i.e. we have both strong dependency and a relatively simple function class, then the effect of dependence dominates the size of the function class and the rates are slower than in the i.i.d. setting. 

\begin{rem}[Boundary cases]
We avoid the cases $\alpha=2$ in part 1, and $\alpha=\frac{r(1+\beta)}{\beta(r-1)}$ in part 2 of the above corollary. In these cases the rates differ from the corresponding $\alpha>2$ and $\alpha>\frac{r(1+\beta)}{\beta(r-1)}$ only up to logarithmic factors. One can also obtain rates for exponentially decaying $\beta$-mixing coefficients, i.e., $\beta_q\le \exp(-c q)$ for some $c>0$. In that case, the bounds are the same as in part 1 above. 
\end{rem}

\begin{rem}[On the $r/(r-2)$ threshold]
  In \cite{bradley1985central}, the author shows that given any $\beta<r/(r-2)$, there exists a strictly stationary sequence with mixing coefficient $\beta_q\le (1+q)^{-\beta}$ and a function $f$ with $\EE|f(X_1)|^r<\infty$ such that $\bbG_n(f)$ does not converge in distribution, and consequently the Donsker Invariance principle cannot hold in general. This suggests that it is only reasonable to expect i.i.d. like behavior (irrespective of the size of the function class), when $\beta>r/(r-2)$. In this sense, the threshold $r/(r-2)$ obtained in \cref{cor:nondonsk_1} is tight.
\end{rem}
\begin{rem}
\label{rem:comparison_doukhan}
In \cite[Thorem 1]{doukhan1995invariance}, the authors prove a functional Donsker's Invariance principle under a  $\beta$-mixing assumption by obtaining a maximal inequality in \cite[Theorem 2]{doukhan1995invariance}. In the setting of \cref{thm:phimainres}, their maximal inequality is meaningful if $\beta>r/(r-2)$ and $\int_0^1 \sqrt{\benp{u}}\,u<\infty$. It can be used to derive the first part of \cref{cor:nondonsk_1}, part 1. In contrast, \cref{cor:nondonsk_1} can deal with much larger function classes that arise in standard nonparametric regression for instance, where $\int_0^1 \sqrt{\benp{u}}\,u=\infty$. Moreover, the more interesting part of \cref{cor:nondonsk_1} is in part 2 with $\beta<r/(r-2)$ (strong dependence), where there is a non-trivial trade-off between dependence and complexity as explained above. This setting is not covered in \cite[Theorem 2]{doukhan1995invariance}. 
To be able to cover the case when $\beta$ is small, there are a number of new  technical challenges that arise compared to the proof of \cite[Theorem 2]{doukhan1995invariance}. On the surface, our proof of \cref{thm:phimainres} relies on a chaining argument via adaptive truncation; a technique potentially pioneered in \cite{Bass1985} which has since been used extensively in empirical process theory; see e.g.~\cite{Ossiander1987,van2000,pollard2002maximal} and the references therein. However, due to the lack of summability of the mixing coefficients for small $\beta$, several steps in the proof of \cite[Theorem 2]{doukhan1995invariance} break down. Overcoming them requires important technical innovations. Two of these include: 

(i) A new maximal inequality for function classes of finite cardinality in \cref{thm:maximal_finite_davy}. In contrast to a related result \cite[Lemma 3]{doukhan1995invariance}, we track the potential inflation for small values of $\beta$ explicitly in \cref{thm:maximal_finite_davy}.

(ii) A new upper bound on the $L_1(P)$ norm of a function in terms of a truncated sequence of weak norms involving the mixing coefficients; see \cref{lem:l1phin}. The level of truncation changes at each step of the chaining argument. This is not required in the setting of \cite[see e.g. Lemma 4]{doukhan1995invariance} which only covers mixing coefficients that are summable in an appropriate sense.
\end{rem}

\section{Maximal inequalities with Uniform $L_{\infty}$ bracketing}
\label{sec:unifbrckt}
In many applications of empirical process theory, both in statistics and machine learning, brackets (see \cref{def:bracketing}) are available in the $L_{\infty}$ norm, in which case some of the computations for bounding the maximal inequality simplify. Interested readers may consider chapter 2.7 of the second edition of \cite{van1996weak} for several examples of function classes and bound on their $L_\infty$ covering/bracketing numbers. 
The main result of this section captures the resulting scenario. Let us first state the modified set of assumptions below. As in the previous section, consider a function class $\cF$ such that $\lVert f\rVert_{\infty}\equiv \lVert f\rVert_{\infty,\mcx}\le \sigma$ for some $0<\sigma\le 1$. Define a function $\mbHi{\cdot}$ such that 
\begin{equation}
\label{eq:asnubin}
\benf{\delta} \le \mbHi{\delta} \qquad \mbox{for}\ \delta>0.
\end{equation}
With a notational abuse, we redefine $\tq{\delta}$ as in \eqref{eq:tqd}, with $\mbH{\cdot}$ replaced with $\mbHi{\cdot}$. The following assumption is analogous to Assumption \ref{assm:bsbd} tailor-made for the $L_\infty$ norm: 

\begin{assumption}\label{assm:bsbdinf}
 Let $\Ldi{\cdot}:[0,\sigma]\to \Rg$ and $\rto:[0,\sigma]\to\Rg$ be two functions such that $\Ldi{\cdot}$ is non-decreasing an $\rto$ is non-increasing. Further, 
 \begin{equation}\label{eq:asnbd23}
     \sum_{k=1}^{\tq{\delta}} \beta(k)\le \Ldi{\delta}
 \end{equation}
 \begin{equation}\label{eq:asnbd3}
 \Ldi{\delta}\left(1 + \mbHi{\delta}\right)\le \rto(\delta).
 \end{equation}
 Here $\mbHi{\cdot}$ is constructed so as to \eqref{eq:asnubin}.
 \end{assumption}
 
 With Assumption \ref{assm:bsbdinf} at our disposal, let us present the modified version of \cref{thm:phimainres}.
\begin{theorem}
\label{thm:infmainres}
Suppose Assumptions \ref{assm:dgp} and  \ref{assm:bsbdinf} hold. Choose $a>0$ satisfying 
\begin{equation}\label{eq:baseinf}
C_0 \int_{\frac{a}{2^6\sqrt{n}}}^{\sigma} \sqrt{\rto(u)}\,du \, \le a \le 8\sqrt{n}\sigma 
\end{equation}
for some $C_0\ge 1$. Then we have:
$$\EE\left[\sup_{f\in\cF}\big|\mathbb{G}_n(f)\big|\right]\leq C \left(a+ \sigma\tq{\sigma}\frac{1 + \mbHi{\sigma}}{\sqrt{n}}\right),$$
where $C>0$ is an absolute constant depending only on $C_0$.
\end{theorem}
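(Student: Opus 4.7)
The proof follows the same broad chaining + blocking + coupling architecture as Theorem~\ref{thm:phimainres}, but the uniform $L_\infty$ control substantially simplifies matters: no adaptive truncation is needed, Bernstein's inequality applies directly to bounded block sums, and the Orlicz machinery collapses. Morally, one can view the statement as the $r\to\infty$ limit of Theorem~\ref{thm:phimainres} with $\phi(x)=x^{r/2}$, under which $\Lbp(\tq{\delta})$ converges (up to constants) to $\sum_{i\le \tq{\delta}}\beta_i$, explaining why the quantity appearing in \eqref{eq:asnbd23} now takes the place of $\Lbp(\tq{\delta})$. Still, giving a direct argument is cleaner.

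\textbf{Step 1: Dyadic chain.} Set $\delta_k=2^{-k}\sigma$ for $k=0,1,\dots,K$, where $K$ is chosen so that $\delta_K \asymp a/(2^6\sqrt n)$. For each $k$ fix a minimal $L_\infty$-bracket of $\cF$ at resolution $\delta_k$ of cardinality $N_k\le\exp(\mbHi{\delta_k})$, and let $\pi_k f$ denote the upper bracket covering $f$. Telescoping gives
\[
\mathbb G_n(f) \;=\; \mathbb G_n(\pi_0 f) \;+\; \sum_{k=1}^{K}\mathbb G_n(\pi_k f-\pi_{k-1}f)\;+\;\mathbb G_n(f-\pi_K f).
\]
The residual satisfies $\|f-\pi_K f\|_\infty\le\delta_K$, so $|\mathbb G_n(f-\pi_K f)|\le 2\sqrt{n}\delta_K\lesssim a$ uniformly in $f$. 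It remains to bound the sup of the finite-cardinality pieces at each level.

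\textbf{Step 2: Blocking and coupling at level $k$.} Partition $\{1,\dots,n\}$ into alternating even/odd blocks of length $q_k:=\tq{\delta_k}$. By Berbee's lemma (e.g.\ \cite{Berbee1979}), we can construct an independent copy of the even (resp.\ odd) block sequence agreeing with the original with probability at least $1-(n/q_k)\beta_{q_k}$. By the definition \eqref{eq:tqd} of $\tq{\delta_k}$ (with $\mbH$ replaced by $\mbHi$),
\[
\frac{n}{q_k}\beta_{q_k}\;\le\; 1+\sum_{k'\le k}\mbHi{\delta_{k'}}\;\le\;1+\mbHi{\delta_k}.
\]
On the coupled (independent) sequence, the block sums $Z_j^{(k)}$ associated with the difference $g_k:=\pi_k f-\pi_{k-1}f$ are independent, mean zero, satisfy $|Z_j^{(k)}|\le 3 q_k\delta_{k-1}$, and by a standard mixing covariance bound (Rio's inequality \cite{rio1993covariance}) have variance
\[
\mathrm{Var}(Z_j^{(k)})\;\lesssim\;q_k\delta_{k-1}^2\Bigl(1+\sum_{j=1}^{q_k}\beta_j\Bigr)\;\lesssim\;q_k\delta_{k-1}^2\,\Ldi{\delta_k}
\]
by \eqref{eq:asnbd23}.

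\textbf{Step 3: Bernstein with union bound.} The differences $g_k$ range over a finite class of size $\le N_{k-1}N_k\le\exp(2\mbHi{\delta_k})$. Bernstein's inequality applied to the independent block sums, combined with a union bound, yields
\[
\EE\max_{f}\bigl|\mathbb G_n^{\mathrm{ind}}(g_k)\bigr|\;\lesssim\;\delta_{k-1}\sqrt{(1+\mbHi{\delta_k})\,\Ldi{\delta_k}}\;+\;\frac{q_k\,\delta_{k-1}\bigl(1+\mbHi{\delta_k}\bigr)}{\sqrt n}.
\]
By \eqref{eq:asnbd3}, $(1+\mbHi{\delta_k})\Ldi{\delta_k}\le\rto(\delta_k)$, so summing the first piece from $k=1$ to $K$ and comparing the Riemann sum with the integral gives $\sum_{k}\delta_{k-1}\sqrt{\rto(\delta_k)}\lesssim\int_{\delta_K}^{\sigma}\sqrt{\rto(u)}\,du\le a/C_0$ by hypothesis \eqref{eq:baseinf}. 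The coupling-error contribution $\sqrt n\cdot\delta_{k-1}\cdot(n/q_k)\beta_{q_k}\lesssim \sqrt n\delta_{k-1}(1+\mbHi{\delta_k})$ is absorbed into the second Bernstein term, since the latter dominates it at every level.

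\textbf{Step 4: The coarsest level.} Handle $\mathbb G_n(\pi_0 f)$ separately: the class $\{\pi_0 f:f\in\cF\}$ has cardinality $\le\exp(\mbHi{\sigma})$ and its elements have $L_\infty$ norm $\le\sigma$. Running one round of Berbee coupling with block length $\tq{\sigma}$ and Bernstein produces a bound $\lesssim\sigma\sqrt{(1+\mbHi{\sigma})\Ldi{\sigma}}+\sigma\tq{\sigma}(1+\mbHi{\sigma})/\sqrt n$. The first term is again $\lesssim\int_{\delta_K}^\sigma\sqrt{\rto(u)}\,du\lesssim a$, while the second is the stated ``Bernstein floor'' in the theorem. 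Monotonicity of $\tq{\cdot}$ and $\mbHi{\cdot}$ (the latter being nonincreasing in its argument) ensures the analogous second-order terms at finer levels $k\ge 1$ are dominated by this coarsest contribution. Combining everything gives the claimed bound.

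\textbf{Main obstacle.} The delicate point is the coordination between (a) the block length $q_k=\tq{\delta_k}$, (b) the coupling cost $(n/q_k)\beta_{q_k}$, and (c) the Bernstein sub-exponential term $q_k\delta_{k-1}(1+\mbHi{\delta_k})/\sqrt n$. The definition of $\tq{\delta}$ is engineered precisely so that the coupling cost is bounded by the partial entropy sum at level $\delta$, which makes it harmless on the Bernstein side; but one must then argue that the sub-exponential Bernstein term, although nominally present at every level, is dominated (uniformly in $k$) by its value at $k=0$, so that only the single tail contribution $\sigma\tq{\sigma}(1+\mbHi{\sigma})/\sqrt n$ appears in the final bound. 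This telescoping relies on the fact that $\delta_k q_k(1+\mbHi{\delta_k})$ need not be monotone in $k$, and one has to use the inequality $q_k\le\tq{\sigma}$ (which follows from \cref{lem:basefn}) together with $\delta_k\le\sigma$ to collapse the sum.
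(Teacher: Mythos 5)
Your plan differs from the paper's proof in a substantive way: you drop the adaptive truncation (the $\nu_f$ construction) and rely on a plain dyadic chain with Bernstein on the coupled block sums at every level. This is where the argument breaks. In Step 2 you write $\tfrac{n}{q_k}\beta_{q_k}\le 1+\sum_{k'\le k}\mbHi{\delta_{k'}}\le 1+\mbHi{\delta_k}$; the second inequality is reversed, since $\mbHi{\cdot}$ is non-increasing in its argument and therefore $\mbHi{\delta_k}$ is the \emph{largest} summand of the partial sum, not an upper bound for it. Consequently the coupling error per level is of order $\delta_{k-1}q_k\big(1+\sum_{k'\le k}\mbHi{\delta_{k'}}\big)/\sqrt n$, which is at least as big as — not dominated by — the non-nested Bernstein sub-exponential term $\delta_{k-1}q_k\big(1+\mbHi{\delta_k}\big)/\sqrt n$. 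More seriously, Step 4's claim that monotonicity of $\tq{\cdot}$ and $\mbHi{\cdot}$ lets the sub-exponential terms at finer levels be absorbed into the coarsest floor $\sigma\tq{\sigma}(1+\mbHi{\sigma})/\sqrt n$ is false: while $\delta_{k-1}$ and $q_k=\tq{\delta_k}$ decrease with $k$, the factor $1+\mbHi{\delta_k}$ \emph{increases}, and in the non-Donsker regime (e.g.\ $\mbHi{\delta}\asymp\delta^{-\alpha}$ with $\alpha>(1+\beta)/\beta$) the product $\delta_{k-1}q_k(1+\mbHi{\delta_k})$ is actually increasing in $k$, so the finest level dominates and the sum over $k$ cannot be collapsed to the $k=0$ term. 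Your own remark that ``$\delta_k q_k(1+\mbHi{\delta_k})$ need not be monotone'' followed by the assertion that $q_k\le\tq{\sigma}$ and $\delta_k\le\sigma$ still collapse the sum leaves $\sum_k(1+\mbHi{\delta_k})$ uncontrolled.

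This is exactly the problem the paper's adaptive truncation solves, and it is why the $L_\infty$ assumption does not actually let you dispense with it. Truncation is not used to make the increments bounded (they already are); it is used to shrink the sup-norm of the level-$s$ increment to the carefully tuned level $K_{s-1}$ so that the Bernstein sub-exponential term $K_{s-1}q_s(1+\sum_{k\le s}H_k)/\sqrt n$ is forced to equal, up to a constant, the sub-Gaussian term $\delta_{s-1}\sqrt{\Ldi{\delta_s}(1+\sum_{k\le s}H_k)}$, whose sum across levels is controlled by the entropy integral $\lesssim a$ via \eqref{eq:baseinf}. The price — the expectation of the clipped tails — is a genuinely new term ($T_{2,3}$ in the paper's decomposition) which is bounded by a separate argument (the $L_\infty$ analogue of \cref{lem:l1phin}, namely \cref{lem:liphin2}) and is what produces only a single occurrence of $\sigma\tq{\sigma}(1+\mbHi{\sigma})/\sqrt n$ in the final bound. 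Without this step your chain yields a sum of sub-exponential terms across levels that, in the regime of most interest for the theorem, is strictly larger than $a+\sigma\tq{\sigma}(1+\mbHi{\sigma})/\sqrt n$.
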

Theorem \ref{thm:infmainres} is analogue of Theorem \ref{thm:phimainres}, the only difference is the norm of the localization; in the previous section $\cF_\sigma$ was a  sub-collection of $f$  such that $\|f\|_\phi \le \sigma$ and here $\cF$ is a sub-collection of $f$ such that $\|f\|_\infty \le \sigma$. Although \cref{thm:infmainres} relies on a stronger assumption, one advantage of working with $L_\infty$ entropy is that we do not pay a price for small $\sigma$ in the last term of the above display.

To get a better sense of this implication of \cref{thm:infmainres}, we present the analog of \cref{cor:nondonsk_1} below.

\begin{cor}\label{cor:infmainres}
Consider the same setting as in \cref{thm:infmainres}.  Assume that $\benf{\delta} \le C_1 \delta^{-\alpha}$ for some $C_1>0$ and $\beta_q\le C_2 (1+q)^{-\beta}$ for some $C_2,\beta>0$ and all integers $q\ge 0$. Then the following conclusions hold:

\begin{enumerate}
    \item If $\beta>1$, then:
    $$
 \EE\sup_{f\in \cF}|\mathbb{G}_n(f)|\le \begin{cases}
        C \left(\sigma n^{\frac{1-\beta}{2(1+\beta)}} +  \sigma^{1-\frac{\alpha}{2}}\right), & \text{ if } 0 < \alpha < 2 \,, \\
        C n^{\frac{1}{2} - \frac{1}{\alpha}}, & \text{ if }\alpha > 2\,.
    \end{cases}
    $$
    \item If $\beta< 1$, then: 
    $$
 \EE\sup_{f\in \cF}|\mathbb{G}_n(f)|\le \begin{cases}
        C n^{\frac{1-\beta}{2(1+\beta)}} \sigma^{1-\frac{\alpha\beta}{1+\beta}}, & \text{ if } 0 < \alpha < \frac{1+\beta}{\beta} \,, \\
        C n^{\frac{1}{2} - \frac{1}{\alpha}}, & \text{ if }\alpha > \frac{1+\beta}{\beta}\,.
    \end{cases}
    $$
\end{enumerate}
    In both parts $C>0$ is a constant depending only on $C_1$, $C_2$, $\alpha$ and $\beta$.
\end{cor}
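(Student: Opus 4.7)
The plan is to apply \cref{thm:infmainres} with the specific choices of auxiliary objects forced by the polynomial entropy $\mbHi{\delta}=C_1\delta^{-\alpha}$ and the polynomial mixing rate $\beta_q \le C_2(1+q)^{-\beta}$, and then solve the integral inequality in \eqref{eq:baseinf} case by case. The entire argument is one long but largely mechanical calculation split across four sub-cases.

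First I will obtain a closed-form upper bound on $\tq{\delta}$. The geometric sum $\sum_{k\ge 0:\ 2^{-k}\sigma\ge \delta}C_1(2^{-k}\sigma)^{-\alpha}$ appearing in \eqref{eq:tqd} has ratio $2^{\alpha}>1$ and is therefore of the same order as its largest term $C_1\delta^{-\alpha}$. Substituting the polynomial mixing rate into the defining inequality reduces it to $(1+q)^{-\beta}\lesssim q\delta^{-\alpha}/n$, which yields $\tq{\delta}\lesssim (n\delta^{\alpha})^{1/(1+\beta)}$, with the lower-bound floor of $1$ from \cref{lem:basefn}.

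Next I will define $\Ldi{\delta}:=\sum_{k=1}^{\tq{\delta}}\beta_k$, which splits into two regimes. When $\beta>1$, the series $\sum_{k\ge 1}(1+k)^{-\beta}$ converges, so $\Ldi{\delta}$ is bounded by an absolute constant. When $\beta<1$, truncating the tail at $\tq{\delta}$ gives $\Ldi{\delta}\asymp \tq{\delta}^{1-\beta}\asymp (n\delta^{\alpha})^{(1-\beta)/(1+\beta)}$. Setting $\rto(\delta):=\Ldi{\delta}(1+C_1\delta^{-\alpha})$ then produces a non-increasing function as required by \cref{assm:bsbdinf}. In case 1 this gives $\sqrt{\rto(u)}\asymp u^{-\alpha/2}$, and in case 2 it gives $\sqrt{\rto(u)}\asymp n^{(1-\beta)/(4(1+\beta))}\,u^{-\alpha\beta/(1+\beta)}$.

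With $\rto$ in hand, I will choose the smallest $a$ satisfying the lower bound in \eqref{eq:baseinf}. Since $\sqrt{\rto(u)}$ is always a pure power of $u$ (up to an $n$-dependent prefactor), the integral is elementary and the qualitative behavior flips depending on whether the power of $u$ is greater or smaller than $-1$. This produces the threshold $\alpha=2$ in case 1 and the threshold $\alpha=(1+\beta)/\beta$ in case 2. In the sub-cases where the power of $u$ exceeds $-1$, the integral is controlled by the upper limit $\sigma$ and gives a $\sigma$-dependent value of $a$; in the other sub-cases the integral is controlled by the lower limit $a/(2^{6}\sqrt n)$ and a self-consistency argument forces $a\asymp n^{1/2-1/\alpha}$. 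Plugging the resulting $a$ into \cref{thm:infmainres} and comparing against the residual term $\sigma\tq{\sigma}(1+\mbHi{\sigma})/\sqrt n\asymp n^{(1-\beta)/(2(1+\beta))}\sigma^{1-\alpha\beta/(1+\beta)}$ yields the four stated bounds, after taking the larger of the two contributions.

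I expect the main obstacle to be the case-by-case bookkeeping rather than any single technical subtlety: in each of the four regimes one has to verify simultaneously that (i) the chosen $a$ meets the lower integral condition in \eqref{eq:baseinf}, (ii) $a\le 8\sqrt n\sigma$ so that the upper bound in \eqref{eq:baseinf} is not violated, and (iii) the residual term $\sigma\tq{\sigma}(1+\mbHi{\sigma})/\sqrt n$ does not dominate the stated rate. The most delicate point is (iii) in the "hard" sub-case of part 2, where both $a$ and the residual carry non-trivial exponents of both $n$ and $\sigma$, and one needs to check that the exponent $1-\alpha\beta/(1+\beta)$ on $\sigma$ has the right sign for the residual to be controlled uniformly over $\sigma\le 1$.
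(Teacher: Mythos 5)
Your plan follows essentially the same route as the paper's proof --- instantiate $\tq$, $\Ldi$, $\rto$, solve the integral inequality \eqref{eq:baseinf}, and compare with the residual term --- and it would compile to a correct argument after fixing a few details, but there are three points you should be aware of.

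First, a computational slip: with $\Ldi{\delta}\asymp(n\delta^{\alpha})^{(1-\beta)/(1+\beta)}$ and $\mbHi{\delta}\asymp\delta^{-\alpha}$ one has $\rto(\delta)\asymp n^{(1-\beta)/(1+\beta)}\delta^{-2\alpha\beta/(1+\beta)}$, so $\sqrt{\rto(u)}\asymp n^{(1-\beta)/(2(1+\beta))}u^{-\alpha\beta/(1+\beta)}$; the $4$ in your denominator should be a $2$. With the $4$, the integral contribution would read $n^{(1-\beta)/(4(1+\beta))}\sigma^{1-\alpha\beta/(1+\beta)}$, which is not the stated bound.

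Second, your assertion that $\rto(\delta):=\Ldi{\delta}\bigl(1+C_1\delta^{-\alpha}\bigr)$ is automatically non-increasing is not correct. Since $\Ldi$ is non-decreasing and $1+\mbHi{\delta}$ is non-increasing, the product need not be monotone; e.g.\ near $\delta=\sigma$ the $\Ldi$ factor can make it increase. \cref{assm:bsbdinf} asks for a non-increasing \emph{upper bound} of the product, and the paper supplies this by expanding the product and keeping the two dominant non-increasing terms $n^{(1-\beta)/(1+\beta)}u^{-2\alpha\beta/(1+\beta)}+u^{-\alpha}$. You need the same step.

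Third, and most substantively, the paper's proof begins by decomposing $f=f_{+}-f_{-}$ and working separately with $\cF_{+}$ and $\cF_{-}$, for which $\mathscr H_{[\,]}(\sigma,\cF_{\pm},\lVert\cdot\rVert_{\infty})=0$ because the whole class fits in the single bracket $[0,\sigma]$. This makes $\mbHi{\sigma}=0$, so the residual $\sigma\tq{\sigma}(1+\mbHi{\sigma})/\sqrt n$ becomes $\sigma\,n^{(1-\beta)/(2(1+\beta))}$ --- the term that appears verbatim in part~1 of the corollary. Without this trick (as in your plan), the residual is the cruder $n^{(1-\beta)/(2(1+\beta))}\sigma^{1-\alpha\beta/(1+\beta)}$, which has an extra $\sigma^{-\alpha\beta/(1+\beta)}$ factor. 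Your argument can still close because this cruder residual is dominated by the integral term under the (implicit but necessary) constraint $\sigma\gtrsim n^{-1/\alpha}$ --- a short calculation shows $n^{(1-\beta)/(2(1+\beta))}\sigma^{1-\alpha\beta/(1+\beta)}\lesssim\sigma^{1-\alpha/2}$ exactly when $\sigma\gtrsim n^{-1/\alpha}$, regardless of the sign of $\beta-1$. But you should make this domination step explicit: for part~1 it is what allows you to obtain the stated form $\sigma n^{(1-\beta)/(2(1+\beta))}+\sigma^{1-\alpha/2}$ (your bound is in fact a bit sharper, being just $\sigma^{1-\alpha/2}$), and for part~1 with $\alpha>2$ and for part~2 with $\alpha>(1+\beta)/\beta$ the same constraint is what keeps the residual at most $n^{1/2-1/\alpha}$. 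You do flag verifying $a\le 8\sqrt n\sigma$, which is equivalent to $\sigma\gtrsim n^{-1/\alpha}$ in the hard sub-cases, so the constraint is present in your plan --- just make sure to use it when you dominate the residual.
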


\begin{rem}[Comparison with \cite{yu1994rates}]
\label{rem:yu}
In \cite[Theorem 3.1 and Corollary 3.2]{yu1994rates}, the author considers the case of VC-type function classes, i.e., where $\benf{\delta}\le C \log(1/\delta)$ (which is equivalent to the setup of Corollary \ref{cor:infmainres} with $\alpha = 0$ and an additional log factor) and shows that $$ \sup_{f \in \cF_{r, \sigma}}\left|\bbG_n(f)\right| = O_P(n^{\frac{1-s}{2(1+s)}}),$$ for all $s<\beta$. Observe that \cref{thm:infmainres} provides a more comprehensive insight in multiple ways. Firstly, we can get the exact rate $n^{\frac{1-\beta}{2(1+\beta)}}$ in addition to providing finite sample bounds as opposed to the asymptotic guarantees in \cite{yu1994rates}. Moreover \cite{yu1994rates} heavily relies on the function class being VC whereas \cref{thm:infmainres} can handle much more complex function classes, many of which arise in popular nonparametric problems. Consequently \cite{yu1994rates} does not reflect the trade-off between complexity and dependence that can be seen in the $\beta<1$ regime.
\end{rem}

\section{Localization with $L_r$ bracketing, $1\le r\le 2$}
\label{sec:erm}
One shortcoming of Theorems \ref{thm:phimainres} and \ref{thm:infmainres} is that they do not cover the case where we only have $L_2\equiv L_2(P)$ brackets, as the function class $\Phi$ (defined in \cref{sec:nota}) does not include the function $\phi(x)=x$, (because $\phi^{-1}$ will not be integrable near $0$ and consequently \eqref{eq:piv} will be undefined). However, a control on \eqref{eq:target} based on the $L_2(P)$ bracketing entropy is often needed, especially for quantifying the rate of convergence, say $\delta_n$, of least squares estimators which can be obtained by solving 
$$\int_{\delta_n^2}^{\delta_n} \sqrt{\bent{u}}\,du\sim \sqrt{n} \delta_n^2$$
in the i.i.d. setting, see \cite{Birge1993,GineKoltchinskii2006,van2000empirical}. A careful inspection of our proof reveals that the key difficulty in extending Theorems \ref{thm:phimainres} and \ref{thm:infmainres} to the $L_2(P)$ bracketing setting is that  we cannot bound $\mbox{Cov}(f(X_i), f(X_j))$ directly in terms of the $\beta(|i - j|)$ and $\mbox{Var}(f(X_1))$ (see the proof of Theorem \ref{thm:maximal_finite_davy}).
However, this issue can be taken care of if we resort to another notion of dependency, the $\rho$-mixing coefficient of  \cite{kolmogorov1960strong}, which is defined below. 

\begin{definition}[$\rho$-mixing]\label{def:rho_mixing}
    Given a paired random element $(X,Y)$ with law $P$ and marginals $P_X$, $P_Y$, we define the $\rho$-mixing coefficient between them as: 
    $$
\rho(X,Y) := \sup\left\{\left|\mathrm{cor}(f(X), g(Y))\right|, f \in L_2(P_X), g \in L_2(P_Y)\right\} \,.
$$

Similar to the $\beta$-mixing coefficient in \cref{def:beta_mixing}, we can define the $\rho$-mixing coefficient as: 
$$
\rho(n) := \sup_{k} \rho\left(\cF_{(-\infty, k]}, \cF_{[k+n, \infty)}\right)
$$
where $\cF_{[a, b]}$ is the $\sigma$-field generated by $\{X_a, X_{a+1}, \dots, X_b\}$.
\end{definition}

Note that, now we can bound $\mbox{Cov}(f(X_i), f(X_j))$ by $\rho(|i-j|)$ and $\mbox{Var}(f(X_1))$. Although $\rho$-mixing is advantageous in bounding the correlation directly, one main drawback is that we do not have any analog of Berbee's coupling lemma \cite{Berbee1979,Goldstein1979} for $\rho$-mixing coefficient, a key pillar for establishing maximal inequalities. Therefore, we resort to a new mixing coefficient to utilize the best of both worlds.

\begin{definition}[$\gamma$-mixing]\label{def:gammamix}
    Given a paired random element $(X,Y)$, we define the $\gamma$-mixing coefficient between them as: 
    $$
\gamma(X,Y) := \beta(X,Y)\vee \rho(X,Y)\,.
$$

Similar to the $\beta$-mixing coefficient in \cref{def:beta_mixing}, we can define the $\gamma$-mixing coefficient as: 
$$
\gamma_n := \beta_n \vee \rho_n.
$$
\end{definition}

We are now in position to state the new maximal inequality for bounding \eqref{eq:target} under a stronger $\gamma$-mixing assumption. While the focus is on $L_2(P)$-bracketing, we will nevertheless state the result for $L_r(P)$ bracketing, $r\ge 1$, for the sake of generality. The case $r=1$ is also of interest in some ``set-structured" problems (see \cref{rem:setstruc} and \cref{sec:fastratestruc}). Before stating the theorem, let us redefine a few notations to move from $\beta$-mixing to $\gamma$-mixing. As before, we work with a function class $\cF \equiv \cF_{\sigma, b}$ satisfying   $\|f\|_\infty \le b$ and $\|f\|_r \le \sigma$ for all $f \in \cF$. Recall that we denote by $\benr{\delta}$, the $\delta$-bracketing entropy of $\cF\equiv\cF_{\sigma, b}$ with respect to $L_r(P)$ norm at level $\delta$ (see \cref{def:bracketing}) and $\bbH_r(\delta, \cF)$ be such that $$\benr{\delta}\le \bbH_r(\delta, \cF).$$ As in Section \ref{sec:mainres}, we define $\tilde q_{n, \gamma}(\delta)$ as: 
\begin{equation}
     \label{eq:tq_gamma}
    \tqg{\delta} =\inf\left\{0\le q\le n:\ \gamma_q\le \frac{q}{n}\bigg(1+\sum_{k\ge 0:\ 2^{-k}\sigma\ge \delta} \bbH_r(2^{-k}\sigma,\cF)\bigg)\right\} \,.
\end{equation}
Given these new notations, we are ready to present our main results. 

\begin{theorem}
    \label{thm:gamma_mixing}
Suppose the observed data $\{X_1,X_2,\ldots ,X_n\}$ is strictly stationary $\gamma$-mixing with mixing coefficients $\gamma_q\le c(1+q)^{-\gamma}$ for some $c,\gamma>0$ and all integers $\gamma\ge 0$. Also assume that there exist $r\ge 1$, $B\ge b\vee \sigma \vee e$, $\theta\ge \sigma$, $D\ge 1$, $\theta\ge\sigma$, $K_r>0$, and $V,\alpha\ge 0$, such that 
$$\bbH_r(u,\cF)\le K_r D \left(\frac{\theta}{u}\right)^{\alpha}\left(\log{\left(\frac{B}{u}\right)}\right)^V,$$
for $0<u\le \sigma$. Finally assume that the above parameters satisfy 
\begin{align}
\label{eq:basedef2_gamma}
D\left(\frac{\theta}{\sigma}\right)^{\alpha} \left(\log{\left(\frac{B}{\sigma}\right)}\right)^V\lesssim n.
\end{align}
Define $\tilde{r}:=r\wedge 2$. Then $\EE\sup_{f\in\cF}|\mathbb{G}_n(f)|\lesssim \Pi_n(\cF)$ where $\Pi_n(\cF)$ is defined below.

\begin{enumerate}

    \item If $0\le \alpha<\tilde{r}$, then 
    \begin{align*}
        \Pi_n&(\cF):=\sigma^{\frac{\tilde{r}}{2}}\left[D\left(\frac{\theta}{\sigma}\right)^{\alpha}\right]^{\frac{\gamma}{2(\gamma+1)}}n^{\frac{1}{2(1+\gamma)}}\left(\log{\left(\frac{B}{\sigma}\right)}\right)^{\frac{V\gamma}{2(\gamma+1)}}\\ &+n^{\frac{1}{2}-\frac{\gamma}{1+\gamma}}\left(D\left(\frac{\theta}{\sigma}\right)^{\alpha}\left(\log{\left(\frac{B}{\sigma}\right)}\right)^V\right)^{\frac{\gamma}{1+\gamma}}.
    \end{align*}

    \item If $\sigma\gtrsim n^{-\frac{1}{\alpha+2-\tilde{r}}}$ and $\tilde{r}<\alpha < \tilde{r}(1+\gamma^{-1})$, then 
    \begin{align*}
        \Pi_n&(\cF):= \sigma^{\frac{\tilde{r}}{2}}\left[D\left(\frac{\theta}{\sigma}\right)^{\alpha}\right]^{\frac{\gamma}{2(\gamma+1)}}n^{\frac{1}{2(1+\gamma)}}\left(\log{\left(\frac{B}{\sigma}\right)}\right)^{\frac{V\gamma}{2(\gamma+1)}}\\ &+n^{\frac{1}{2}-\frac{\gamma}{1+\gamma}}\left(D\left(\frac{\theta}{\sigma}\right)^{\alpha}\left(\log{\left(\frac{B}{\sigma}\right)}\right)^V\right)^{\frac{\gamma}{1+\gamma}}+\left(D\theta^{\alpha}n^{\frac{\tilde{r}-\alpha}{2}}\right)^{-\frac{1}{\alpha+2-\tilde{r}}}\left(\log{\left(\frac{B}{\sigma}\right)}\right)^{\frac{V}{2}}.
    \end{align*}

    \item If $\sigma\gtrsim n^{-\frac{1}{\alpha+(2-\tilde{r})(1+\gamma^{-1})}}$ and $\alpha > \tilde{r}(1+\gamma^{-1})$, then 
    \begin{align*}
        \Pi_n&(\cF):=n^{\frac{\gamma(\alpha-\tilde{r})+(2-\tilde{r})}{2(\alpha \gamma+(2-\tilde{r})(\gamma+1))}}\left( D \theta^{\alpha}\right)^{\frac{\gamma}{\alpha \gamma+(2-\tilde{r})(\gamma+1)}}\left(\log{\left(\frac{B}{\sigma}\right)}\right)^{\frac{V\gamma}{2(\gamma+1)}}\\ &+n^{\frac{1}{2}-\frac{\gamma}{1+\gamma}}\left(D\left(\frac{\theta}{\sigma}\right)^{\alpha}\left(\log{\left(\frac{B}{\sigma}\right)}\right)^V\right)^{\frac{\gamma}{\gamma+1}}+n^{\frac{1}{2(1+\gamma)}}\sigma^{\frac{\tilde{r}}{2}}\left(D\left(\frac{\theta}{\sigma}\right)^{\alpha}\left(\log{\left(\frac{B}{\sigma}\right)}\right)^V\right)^{\frac{\gamma}{2(\gamma+1)}}\\ &+n^{\frac{\gamma(\alpha-\tilde{r})}{2(\alpha \gamma + (2-\tilde{r})(\gamma+1))}}\left( D \theta^{\alpha}\right)^{\frac{2\gamma+2-\tilde{r}}{2(\alpha \gamma + (2-\tilde{r})(\gamma+1))}}\left(\log{\left(\frac{B}{\sigma}\right)}\right)^{\frac{V}{2}}.
    \end{align*}
\end{enumerate}

    Here $\lesssim$ hides constants that depend on $b$, $r$, $K_r$, $c$, $\gamma$, $\alpha$, and $V$.
\end{theorem}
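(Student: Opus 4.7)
The overall strategy is to run a chaining argument analogous to the proofs of \cref{thm:phimainres,thm:infmainres}, but adapted to exploit $\gamma$-mixing so that we can work with $L_r$-brackets for $r \le 2$. The obstruction in the previous theorems was that the Orlicz function $\phi(x)=x^{r/2}$ for $r\le 2$ does not belong to $\Phi$, which in turn was needed to bound the variance of the sum $\sum_{i} h(X_i)$ for bracket differences $h$ using $\beta$-mixing alone via a covariance inequality through $Q_h$ and $\phi^{-1}$. The key new ingredient is that under $\rho$-mixing one has the direct covariance bound $|\mathrm{Cov}(f(X_i),f(X_j))|\le \rho_{|i-j|}\,\mathrm{Var}(f(X_1))$, so the variance of block sums of a bracket difference of $L_r$-size $\delta$ is controlled (up to the size of $\sum_k \rho_k$ truncated at the appropriate block length) in terms of $\|h\|_r$ directly, without invoking $\phi$.

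First, I would build a nested sequence of $L_r$-brackets at dyadic scales $\delta_k = 2^{-k}\sigma$, truncated at an adaptive level depending on $k$ (as in \cref{thm:phimainres}), and define the usual chaining links $\pi_k f - \pi_{k-1} f$ together with a ``deep'' residual piece. At each scale, I would apply Berbee's coupling to partition the index set into blocks of length $q_k$ and replace the $\beta$-mixing blocks by independent blocks at a total-variation cost $\lesssim (n/q_k)\beta_{q_k}$. For the coupled independent blocks, I would use a finite-class maximal inequality (the analog of \cref{thm:maximal_finite_davy}) where the variance bound now comes through the $\rho$-mixing coefficient: this is exactly the step that fails for $r\le 2$ in the $\beta$-only setting. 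The block length $q_k$ is chosen to be $\tilde q_{n,\gamma}(\delta_k)$, which balances the coupling error against the finite-class complexity inflation at scale $\delta_k$.

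With this set-up in hand, the chaining bound decouples into three contributions: (i) the ``main'' chaining integral $\int \sqrt{\bbH_r(u,\cF)\cdot \Lambda_{n,\gamma}^\star(u)}\,du$ (analogous to \eqref{eq:asnbd1}), (ii) a ``top-scale'' boundary term involving $\sigma\tilde q_{n,\gamma}(\sigma)(1+\bbH_r(\sigma,\cF))/\sqrt{n}$, and (iii) a ``deep-scale'' truncation term arising from summing the $L^r$-tails below the chaining resolution, which is where the extra factor $\sigma^{\tilde r/2}$ (rather than $\sigma$) shows up through Hölder. Substituting the polynomial bounds $\bbH_r(u,\cF)\lesssim D(\theta/u)^\alpha (\log(B/u))^V$ and $\gamma_q\lesssim (1+q)^{-\gamma}$, one obtains $\tilde q_{n,\gamma}(u)\asymp (nu^\alpha/D)^{1/(1+\gamma)}$ up to logs, and $\sum_{k\le \tilde q_{n,\gamma}(u)}\gamma_k \asymp [\tilde q_{n,\gamma}(u)]^{1-\gamma}\vee 1$ up to logs, giving the explicit expression for $\Lambda_{n,\gamma}^\star(u)$. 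Plugging these into the three contributions above and identifying which one dominates produces the case split on $\alpha$: when $\alpha<\tilde r$ the main integral is finite so the top-scale term dominates (case 1); when $\tilde r<\alpha<\tilde r(1+\gamma^{-1})$ the chaining integral diverges but slowly (case 2); and when $\alpha>\tilde r(1+\gamma^{-1})$ the entropy wins outright and an additional non-Donsker term of order $n^{\gamma(\alpha-\tilde r)/(2(\alpha\gamma+(2-\tilde r)(\gamma+1)))}$ enters (case 3).

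The main obstacle I anticipate is carrying through the finite-class step under $\rho$-mixing cleanly enough to track the correct power of $\sigma$ (namely $\sigma^{\tilde r/2}$, not $\sigma$) while simultaneously keeping the mixing-inflation factor tight; a naive application of the $L_2$ variance bound costs $\sigma$ and would give a suboptimal exponent when $r<2$. To get the sharp $\sigma^{\tilde r/2}$ one must apply Hölder's inequality to pass from an $L_{\tilde r}$-norm of the bracket difference to the required second moment after truncation, and this interpolation interacts nontrivially with the $\rho$-mixing covariance bound because the truncation level itself depends on the chaining scale. The secondary obstacle is bookkeeping: each of the three dominating regimes produces a distinct ``deep-scale'' residual, and verifying that the lower-bound constraints $\sigma\gtrsim n^{-1/(\alpha+2-\tilde r)}$ (case 2) and $\sigma\gtrsim n^{-1/(\alpha+(2-\tilde r)(1+\gamma^{-1}))}$ (case 3) are exactly what is needed to guarantee that the chaining resolution $a/\sqrt n$ lies below $\sigma$ in \eqref{eq:basedef1}. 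Once these two hurdles are cleared, the rest is algebraic optimization of the form already encountered in the informal derivation preceding \cref{cor:nondonsk_1}.
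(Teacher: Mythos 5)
Your proposal is essentially correct and follows the same route as the paper: nested $L_r$-brackets with dyadic scales, Berbee's coupling to reduce to a finite-class maximal inequality, the $\rho$-mixing covariance bound replacing the Orlicz-based variance bound, an adaptive truncation level $K_s$ at each chaining stage, the block length $q_s = \tqg{2^{-s}\sigma}$, and a case split on $\alpha$ governed by which contribution dominates after the chaining depth $S$ is optimized. You correctly identify the $L_\infty$--$L_{\tilde r}$ interpolation $\int h^2\,dP \le \lVert h\rVert_\infty^{2-\tilde r}\int |h|^{\tilde r}\,dP$ as the source of the factor $\sigma^{\tilde r/2}$, and the role of the $\sigma$ lower bounds as guaranteeing $S\ge 0$ in the chosen chaining depth.

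One place where the paper's argument is noticeably more elementary than what you sketch: you describe the deep-scale residual as involving ``summing $L^r$-tails below the chaining resolution,'' suggesting a $\gamma$-mixing analog of \cref{lem:l1phin}. The paper does not need one. Because the $L_2$ norm of the bracket differences is now controlled directly (via the $\rho$-mixing covariance inequality plus the interpolation), the paper bounds $T_{2,3}$ with the single elementary inequality $\EE[\lvert X\rvert\mathbf{1}(\lvert X\rvert\ge a)]\le a^{-1}\EE X^2$ applied with $a=K_s$, giving $\sqrt{n}\sum_s (2^{-s}\sigma)^{\tilde r}/K_s$ without any $\omega_{q,h}$-type weak-norm machinery or tail-quantile estimates. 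This is a worthwhile simplification to notice: the adaptive-truncation lemma was needed in the $\beta$-only proof precisely because the $L_2$ norm was not under control, and once you have the $\rho$-mixing covariance bound that obstruction disappears. Everything else in your sketch is the right recipe; the remaining work is exactly the bookkeeping you flag in your last paragraph, choosing $S$ per regime and verifying the $\sigma$-constraints.
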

Although we have stated \cref{thm:gamma_mixing} for general $r$, note that the relevant exponent is $\tilde{r}=r\wedge 2$. Therefore, the above result yields its most useful implications when $1\le r\le 2$, which as we pointed out earlier, falls beyond the scope of our earlier results, say Theorems \ref{thm:phimainres} and \ref{thm:infmainres}. Let us provide some implications of \cref{thm:gamma_mixing} for ease of exposition. The first corollary below can be viewed as an analogue of \cref{cor:nondonsk_1} but with $r=2$. To avoid repetition, we only focus on the case where $\alpha$ is ``large" enough or $\cF$ is complex enough.

\begin{cor}[Non-Donsker classes and long-range dependence]\label{cor:nonpbd}
    Consider the same assumption on the data generating mechanism as in \cref{thm:gamma_mixing}. Choose $r=2$ and assume that $\bbH_2(u,\cF)\le K u^{-\alpha}$ for $K>0$, $\alpha>0$, $\sigma\sim 1$, and $b,K\lesssim 1$. Then the following conclusion holds for $\alpha>2(1+\gamma^{-1})$:
     \begin{align*}
        \EE\sup_{f\in\cF}|\mathbb{G}_n(f)|\lesssim n^{\frac{1}{2}-\frac{1}{\alpha}}.
    \end{align*}
\end{cor}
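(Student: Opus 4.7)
The plan is to apply Part 3 of \cref{thm:gamma_mixing} directly, since the assumption $\alpha > 2(1+\gamma^{-1})$ with $\tilde{r}=r\wedge 2 = 2$ puts us exactly in the regime $\alpha > \tilde{r}(1+\gamma^{-1})$. First I would instantiate the abstract parameters of \cref{thm:gamma_mixing} from the hypotheses: with $r=2$ and $\bbH_2(u,\cF)\le K u^{-\alpha}$, I take $D = K$, $\theta = \sigma$, $V=0$, and $B = b\vee\sigma\vee e$, all of which are $\lesssim 1$ by hypothesis. Then I would verify the two side conditions required by Part 3. The pre-condition \eqref{eq:basedef2_gamma} reads $D(\theta/\sigma)^{\alpha}(\log(B/\sigma))^{V}\lesssim n$, which reduces to $K\lesssim n$ and holds since $K\lesssim 1$. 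The localization condition $\sigma\gtrsim n^{-1/(\alpha+(2-\tilde{r})(1+\gamma^{-1}))}$ becomes $\sigma\gtrsim n^{-1/\alpha}$ with $\tilde{r}=2$, which holds because $\sigma\sim 1$.

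Next I would simplify each of the four summands in $\Pi_n(\cF)$ under $\tilde{r}=2$, $V=0$, $\theta=\sigma\sim 1$, $D\lesssim 1$, so that all log factors are $O(1)$ and all powers of $\sigma$ and $D\theta^{\alpha}$ are $O(1)$. With $\tilde{r}=2$ the factor $2-\tilde{r}$ vanishes, so the denominator $\alpha\gamma + (2-\tilde{r})(\gamma+1)$ collapses to $\alpha\gamma$, and the first summand's exponent of $n$ simplifies to $\gamma(\alpha-2)/(2\alpha\gamma) = 1/2 - 1/\alpha$. The fourth summand, by the same cancellation, also reduces to the exponent $1/2 - 1/\alpha$. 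Hence both the first and the fourth contributions are $\lesssim n^{1/2-1/\alpha}$.

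It then remains to argue that the second and third summands are dominated by $n^{1/2-1/\alpha}$. The second summand has exponent $1/2 - \gamma/(1+\gamma) = (1-\gamma)/(2(1+\gamma))$, and the third has exponent $1/(2(1+\gamma))$. A short calculation shows
\begin{equation*}
\frac{1}{2}-\frac{1}{\alpha} \;>\; \frac{1}{2(1+\gamma)} \;\ge\; \frac{1-\gamma}{2(1+\gamma)},
\end{equation*}
where the first inequality uses precisely the hypothesis $\alpha > 2(1+\gamma^{-1})$, equivalently $1/\alpha < \gamma/(2(\gamma+1))$. Putting all four bounds together yields $\Pi_n(\cF)\lesssim n^{1/2-1/\alpha}$, which is the desired conclusion via \cref{thm:gamma_mixing}.

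There is no real obstacle here; the proof is an exercise in parameter-chasing in \cref{thm:gamma_mixing}. The only point that requires a moment of care is recognizing that the first and fourth summands of $\Pi_n(\cF)$ collapse to the same exponent once $\tilde{r}=2$, and verifying that the non-Donsker threshold $\alpha > 2(1+\gamma^{-1})$ is exactly what makes $1/2-1/\alpha$ larger than the short-range-like exponents in the remaining two summands.
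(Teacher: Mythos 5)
Your proof is correct and follows essentially the same route as the paper: instantiate Part 3 of \cref{thm:gamma_mixing} with $\tilde r = 2$, $V=0$ and constant-order $\sigma$, $\theta$, $D$, check that the first and fourth summands both give exponent $\tfrac12-\tfrac1\alpha$, and use $\alpha>2(1+\gamma^{-1})$ to show the second and third summands (with exponents $\tfrac{1-\gamma}{2(1+\gamma)}$ and $\tfrac{1}{2(1+\gamma)}$) are dominated. The only cosmetic difference is the parameter assignment ($D=K$, $\theta=\sigma$ versus the paper's $D=\theta=1$, $K_r=K$), which is immaterial since all these are order-one constants under the stated hypotheses.
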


Therefore, if $\alpha>4$, then in the \emph{long-range regime}  $(2/(\alpha-2),1)$, it is possible to attain i.i.d. like rates of convergence once we have appropriate control on the size of the $L_2$-brackets of $\cF$. 

\begin{rem}[Comparison with \cref{cor:nondonsk_1}]
    Informally speaking, if we assume $\beta_k\sim \rho_k\lesssim (1+k)^{-\gamma}$ and plug-in $r=2$ in \cref{cor:nondonsk_1}, part 2, then we get the same rates as in \cref{cor:nonpbd}. This is only informal as the constants in \cref{cor:nondonsk_1} diverge as $r\to 2$. Nevertheless, this observation suggests that we did not loose out on statistical accuracy while extending our quantitative results to the $L_2$ bracketing case in this section. Of course there is a trade-off as $\gamma$-mixing is stronger than $\beta$-mixing by definition.
\end{rem}

The final corollary we highlight here is when $\cF$ is a VC (Vapnik-Chervonenkis) class of functions (see \cite{vapnik1999nature}) with VC dimension $D$. These are classes of low complexity but nevertheless very useful in learning theory as they can  approximate various other function classes of interest. Some examples of VC classes include deep neural nets \cite{schmidt2020nonparametric}, reproducing kernel Hilbert spaces \cite{yang2020predicting}, wavelets \cite{schutt1984entropy}, etc. 
\begin{cor}[VC classes and weak dependence]\label{cor:vcbd}
    Consider the same assumption on the data generating mechanism as in \cref{thm:gamma_mixing}. Choose $r=2$ and assume that $\bbH_2(u,\cF)\le D (\log{(B/u)})^V$ for $B\ge \sigma \vee b \vee e$, $V>0$, $D\ge 1$ and $0<u\le \sigma$. Also assume that $D(\log{(B/\sigma)})^V\lesssim n$. Then the following conclusion holds:
     \begin{align}\label{eq:vcbd1}
        \EE\sup_{f\in\cF}|\mathbb{G}_n(f)|\lesssim \sigma \left(n D^{\gamma}(\log{(B/\sigma)})^{V\gamma}\right)^{\frac{1}{2(\gamma+1)}}+n^{\frac{1}{2}-\frac{\gamma}{\gamma+1}}\left(D(\log{(B/\sigma)})^V\right)^{\frac{\gamma}{\gamma+1}}.
    \end{align}
    In particular if $\sigma\gtrsim 1$, then 
    \begin{align}\label{eq:vcbd2}
        \EE\sup_{f\in\cF}|\mathbb{G}_n(f)|\lesssim n^{\frac{1}{2(\gamma+1)}}D^{\frac{\gamma}{2(\gamma+1)}}(\log{(B)})^{\frac{V\gamma}{2(1+\gamma)}}.
    \end{align}
\end{cor}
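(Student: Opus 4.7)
The plan is to derive \cref{cor:vcbd} as a direct specialization of Case 1 of \cref{thm:gamma_mixing}. I would match the VC-type bracketing bound $\bbH_2(u,\cF)\le D(\log(B/u))^V$ to the general template $\bbH_r(u,\cF)\le K_r D(\theta/u)^{\alpha}(\log(B/u))^V$ by taking $r=2$, $\alpha=0$, $K_2=1$, and $\theta=\sigma$ (permitted since the only constraint on $\theta$ is $\theta\ge\sigma$). Then $\tilde r = r\wedge 2 = 2$, so $0\le \alpha<\tilde r$ and Case 1 of \cref{thm:gamma_mixing} applies. Moreover, the hypothesis $D(\theta/\sigma)^{\alpha}(\log(B/\sigma))^V\lesssim n$ appearing in \eqref{eq:basedef2_gamma} reduces exactly to the standing assumption $D(\log(B/\sigma))^V\lesssim n$.

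With these substitutions, every factor $(\theta/\sigma)^{\alpha}$ in the Case 1 expression for $\Pi_n(\cF)$ becomes $1$, and $\sigma^{\tilde r / 2}$ becomes $\sigma$. A direct simplification then shows that the two summands of $\Pi_n(\cF)$ coincide with the two summands on the right-hand side of \eqref{eq:vcbd1}, proving the first statement.

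For \eqref{eq:vcbd2}, let $T_1$ and $T_2$ denote the two summands of \eqref{eq:vcbd1}. A short exponent computation gives
$$\frac{T_2}{T_1} = \sigma^{-1}\left(\frac{D\bigl(\log(B/\sigma)\bigr)^V}{n}\right)^{\!\frac{\gamma}{2(\gamma+1)}}.$$
Under $\sigma\gtrsim 1$ together with the standing assumption $D(\log(B/\sigma))^V\lesssim n$, this ratio is $O(1)$, so $T_2\lesssim T_1$. Since also $\sigma\gtrsim 1$ and $B\ge e$ force $\log(B/\sigma)\lesssim\log B$, the prefactor $\sigma$ in $T_1$ can be absorbed into the $\lesssim$-constant (treating $\sigma$ as of constant order in this VC-type regime), yielding \eqref{eq:vcbd2}.

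The entire argument is mechanical, so I do not anticipate any substantial obstacle; the only care required lies in bookkeeping the exponents when specializing the three-summand Case 1 formula to $\alpha=0$ and in the absorption of $\sigma$-dependent prefactors into the $\lesssim$-notation.
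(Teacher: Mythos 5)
Your proposal is correct and mirrors the paper's own proof, which likewise derives \eqref{eq:vcbd1} by specializing \cref{thm:gamma_mixing}, Case 1, to $r=2$, $\alpha=0$, $K_r=1$, and $\theta=\sigma$. Your exponent bookkeeping for \eqref{eq:vcbd1} and the ratio $T_2/T_1$ for \eqref{eq:vcbd2} are both accurate, and the absorption of the $\sigma$ and $\log(B/\sigma)$ prefactors under $\sigma\gtrsim 1$ and $D(\log(B/\sigma))^V\lesssim n$ is exactly the intended argument.
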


It is easy to check from \cref{thm:infmainres} that \eqref{eq:vcbd2} holds even under the weaker $\beta$-mixing assumption. However \eqref{eq:vcbd1} is stronger as it tracks the size of the $L_2$ radius of $\cF$. As the $L_r$ norm is larger than the $L_2$ norm for $r\ge 2$, \eqref{eq:vcbd1} cannot be derived from \cref{thm:infmainres}. In the following Section, we will see how tracking the size of this $L_2$ radius can improve rates of convergence in learning theory by a certain ``localization" argument.

\begin{rem}\label{rem:setstruc}
    The case of $r=1$ in \cref{thm:gamma_mixing} is also of interest in some ``set-structured" problems (a term we borrow from \cite{han2021set}, also see \cite{kur2019optimality}). We will see in \cref{sec:fastratestruc} that in such problems, the $r=1$ version of \cref{thm:gamma_mixing} can be used to derive faster rates than a direct $L_2$-bracketing entropy bound.
\end{rem}
\section{Faster rates with $L_r$-bracketing, $1\le r \le 2$}\label{sec:fastratestruc}
While our focus so far has mostly been on non-Donsker classes of functions, \cref{thm:gamma_mixing} can be used to get faster rates in various structured statistical inference problems. We provide three such ways in this section: (a) In \cref{sec:localrate}, we provide a rate theorem with $L_2$ localization for empirical risk minimization (ERM) problems, (b) In \cref{sec:adaptcom}, we provide adaptation bounds which yield faster rates when the underlying true DGP is simple, and (c) In \cref{sec:setstruc}, we provide faster bounds for \eqref{eq:target} when the functions in $\cF$ have ``structured" level sets.


\subsection{Rate theorem and localization in learning theory  under dependence}\label{sec:localrate}

Learning theory typically refers to a genre of problems where we establish a bound on the generalization error of a classifier/regressor trained on the training data. Broadly speaking, suppose we have a closed, convex collection of hypotheses (functions) $\cF$, a loss function $\ell(\cdot, \cdot)$ and $n$ data points $\{(X_1, Y_1), \dots, (X_n, Y_n)\}$ where each $(X_i,Y_i)\sim P$ with the marginal distribution of being $P_X$ and $P_Y$. The aim is to find a predictor of $Y$ given $X$. 
This typically involves minimization of the risk function associated with a loss function $\ell$, i.e.,   $f_\star:=\argmin_{f\in\cF} \EE \ell(Y,f(X))$. 
In the sequel, we will always assume such a $f_\star$ exists. A natural way to estimate $f_\star$ based on the observed sample is via ERM: 
\begin{equation*}
\hat{f}_n \in \argmin_{f \in \cF} \,\, \frac1n \sum_i \ell(Y_i, f(X_i)) \,.
\end{equation*}
The statistical risk of $\hat{f}_n$ is usually quantified via  the \emph{generalization error} or \emph{excess risk} $\cE(\hat{f}_n)$ which is defined as:  
\begin{equation}
\label{eq:excess_risk}
\cE(f) := \bbE_{P}[\ell(Y, f(X))] - \EE_{P}[\ell(Y, f_\star(X))],\quad \quad f\in\cF\,.
\end{equation}
As an example, consider a standard nonparametric regression model with an additive noise: 
\begin{equation}\label{eq:nonpiid}
Y = f_\star(X) + \xi, \ \ \text{ with } \ \ \bbE[\xi \mid X] = 0 \,.
\end{equation}
In this case, the generalization error for squared error loss simplifies to: 
\begin{equation}
\label{eq:quad_curvature}
\mathcal{E}(\hat{f}_n) = \EE_{P}[(Y - \hat{f}_n(X))^2] - \EE_{P}[(Y - f_\star(X))^2] =\EE_{P_X}[(\hat{f}_n(X) - f_\star(X))^2] \,.
\end{equation}

Typically the observations $\{(X_i, Y_i)\}_{i\in [n]}$ are assumed to be independent to derive bounds on $\cE(\hat{f}_n)$. 
However, much attention has been devoted in recent years to weakening the independence assumption and replacing it with \emph{exponentially decaying} mixing conditions; see \cite{kengne2023deep,ma2022theoretical,roy2021empirical,kurisu2022adaptive}. Here we show that our general theory of empirical processes can be used to extend generalization bounds in learning theory to much weaker polynomial dependence. 

We start with the following preliminary bound on the generalization error that follows from \cref{thm:infmainres}. To convey the main message of this section, we will focus only on the short-range regime of dependence. Define a new function class $\cG$  as: 
\begin{equation}
    \label{eq:loss_f_def}
    \cG := \{\ell(y, f(x)) - \ell(y, f_\star(x)): f \in \cF\} \,,
\end{equation}
The following corollary yields a rate of convergence for the generalization error when $\cG$ is relatively simple: 
\begin{cor}
\label{cor:gen_error}
Suppose that \cref{assm:dgp} holds with $\beta_k\le (1+k)^{-\beta}$ for some $\beta>1$. Let $\cG\equiv \cG_{\sigma,b}$ be the class of functions in \eqref{eq:loss_f_def} with $\lVert g\rVert_{L_2(P)}\le \sigma$ and $\lVert g\rVert_{\infty}\le b$. Also assume $\cG$ is a VC-type class of functions with VC dimension $D$, i.e., $L_\infty$ bracketing number of $\cG$ satisfies $\benfg{u}\le D(\log(B/u))^V$ for $u\le b \le B$ and some $V>0$. Then we have $$\EE[\cE(\hat{f}_n)]\lesssim \sqrt{\frac{D}{n}},$$
up to some polylogarithmic factor, where the implied constant depends on $b$, $C$, $\beta$, and $V$. 
\end{cor}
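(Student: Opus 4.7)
The starting point will be the standard basic inequality for empirical risk minimization. Since $\hat f_n$ minimizes the empirical loss, the random function $\hat g_n(x,y) := \ell(y,\hat f_n(x)) - \ell(y,f_\star(x))$ lies in $\cG$ and satisfies $\hat P_n \hat g_n \le 0$, while $P\hat g_n = \cE(\hat f_n)\ge 0$. Consequently
$$
\cE(\hat f_n) \;\le\; (P-\hat P_n)\hat g_n \;\le\; \sup_{g\in\cG}\bigl|\hat P_n g - Pg\bigr| \;=\; \frac{1}{\sqrt n}\sup_{g\in\cG}|\bbG_n(g)|,
$$
so, taking expectations, the corollary reduces to showing $\EE\sup_{g\in\cG}|\bbG_n(g)|\lesssim \sqrt D$ up to polylogarithmic factors in $B$ and $\sigma$.

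To that end I would apply \cref{thm:infmainres} to $\cG$ with the uniform bound $\|g\|_\infty\le b$ and with the bracketing-entropy envelope $\mbHi{\delta} = D(\log(B/\delta))^V$. The key simplification in the short-range regime $\beta>1$ is that the $\beta$-mixing coefficients are summable, so the choice $\Ldi{\delta}\equiv C_\beta := \sum_{k\ge 0}(1+k)^{-\beta}<\infty$ satisfies \eqref{eq:asnbd23}. Then \eqref{eq:asnbd3} yields $\rto(\delta)\lesssim 1 + D(\log(B/\delta))^V$, and the entropy integral in \eqref{eq:baseinf} is controlled by
$$
\int_{a/(2^6\sqrt n)}^{\sigma}\sqrt{\rto(u)}\,du \;\lesssim\; \sigma\sqrt{D}\bigl(\log(B/\sigma)\bigr)^{V/2},
$$
up to additive polylog factors coming from the logarithmic weight (this can be made rigorous by a substitution $u= Bv$ and the fact that $\int_0^1 (\log(1/v))^{V/2}\,dv<\infty$). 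Hence the choice $a \sim \sigma\sqrt D\,(\log(B/\sigma))^{V/2}$ is admissible, and automatically satisfies $a\le 8\sqrt n\sigma$ under the mild assumption $D\lesssim n$.

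The remaining task is to show that the second term $\sigma\,\tq{\sigma}(1+\mbHi{\sigma})/\sqrt n$ in \cref{thm:infmainres} is dominated by $a$. Since $\mbHi{\sigma}\lesssim D\log^V(B/\sigma)$, the defining inequality \eqref{eq:tqd} with $\beta_q\le (1+q)^{-\beta}$ gives
$$
\tq{\sigma}\;\lesssim\;\left(\frac{n}{D\log^V(B/\sigma)}\right)^{1/(1+\beta)},
$$
so the second term is of order $\sigma\,(D\log^V(B/\sigma))^{\beta/(1+\beta)}\,n^{(1-\beta)/(2(1+\beta))}$. Because $\beta>1$, the exponent $(1-\beta)/(2(1+\beta))$ is strictly negative and this expression is of strictly smaller order than $\sqrt D$ (polylog factors aside). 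Combining, $\EE\sup_{g\in\cG}|\bbG_n(g)|\lesssim \sqrt D$ up to a polylog of $B/\sigma$, and dividing by $\sqrt n$ yields the claim.

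The main obstacle will be bookkeeping of the polylogarithmic factors and verifying carefully that the ``finite-sample'' remainder coming from $\tq{\sigma}$ is indeed absorbed; the decisive input is the summability of $\{\beta_k\}$ in the short-range regime, which is what allows $\Ldi{\cdot}$ to be treated as a constant and decouples the dependence structure from the complexity of $\cG$.
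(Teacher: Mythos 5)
Your proof is correct and takes essentially the same route as the paper: both start from the standard ERM basic inequality $\cE(\hat f_n)\le \sup_{g\in\cG}|(\bbP_n-P)g|$ and then control the empirical process via the $L_\infty$-bracketing maximal inequality of Section 3. The paper simply cites \cref{cor:infmainres} with $\sigma\sim 1$ (absorbing the $D(\log(B/u))^V$ envelope into the constant of a small-$\alpha$ polynomial bound), whereas you instead verify the hypotheses of \cref{thm:infmainres} directly with $\mbHi{\delta}=D(\log(B/\delta))^V$ and $\Ldi{\cdot}\equiv\sum_{k}(1+k)^{-\beta}$; your route is the more careful one, since it makes the $\sqrt{D}$ factor and the role of the summability of $\{\beta_k\}$ explicit rather than hidden in constants.
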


Typically for most interesting loss functions like the squared error loss, logistic loss, hinge loss, etc., the condition on the entropy of $\cG$ can be replaced with the same on the entropy of $\cF$ with respect to $L_2(P_X)$ norm. 
The problem with writing a general result with an entropy bound on $\cF$ is that standard contraction inequalities on Rademacher complexity (see \cite{mendelson2003few}) can no longer be applied due to the lack of exchangeability in a mixing DGP, unless we put strong technical assumptions on $\ell(\cdot,\cdot)$. 

Suppose $\cG$ is a parametric class, i.e., it can be indexed by $\Theta\subset \R^k$ for some $k\le d$, then the above corollary typically gives a rate of $\sqrt{k/n}$ on the generalization error for all $\beta$ (as $D=k$ in such settings). However, at least for i.i.d. data from model \eqref{eq:nonpiid}, it is well known from standard parametric model theory; see \cite[Chapter 3.2]{van1996weak}, that the faster rate $k/n$ is achievable under minimal assumptions. This raises the following natural questions: \emph{Can rates faster than $\sqrt{k/n}$ be achieved under dependence? 
Is it possible to approach the $k/n$ rate as the dependence gets weaker?} 
In the rest of this section, we will answer these questions in the affirmative under the $\gamma$-mixing assumption. The crucial tool will be \cref{thm:gamma_mixing} which will enable us to get faster rates via localization.

Towards that end, we use Theorem 3.2.1 of \cite{van1996weak}, which we state here for the ease of the reader. 
\begin{theorem}[Rate theorem with localization]
\label{thm:roc}
    Suppose that the observed data \\ $(X_1,Y_1),\ldots , (X_n,Y_n)$ is strictly stationary $\gamma$-mixing with mixing coefficients $\gamma_q\le C(1+q)^{-\gamma}$ for $\gamma>0$. Consider $\cG_{\sigma,b}$ as in \cref{cor:gen_error}. Further suppose that $\ell(\cdot,\cdot)$ is Lipschitz and the excess risk $\cE(f)$ (defined in \eqref{eq:excess_risk}) satisfies a quadratic curvature conditions $\cE(f) \ge c \lVert f-f_\star\rVert^2_{L_2(P_X)}$ for some $c\in (0,\infty)$ and all $f\in\cF$. Define the local $L_2(P)$ ball around $f_\star$ as follows:
    \begin{align}\label{eq:modcont}
    \cG_{\delta}:=\{g:\ g\in\cG,\, \lVert g\rVert_{L_2(P)}\le \delta\}.
    \end{align}
    Recall the definition of $\Pi_n(\cdot)$ from \cref{thm:gamma_mixing}. If $\Pi_n(\cG_{\delta})/\delta^{t}$ is non-increasing for some $0<t<2$ and if $\{\delta_n\}$ is a sequence that satisfies: $\Pi_n(\cG_{\delta_n}) \le \sqrt{n}\delta_n^{2}$, then for any $\lambda<2-t$, we have $$\EE\big| \cE(\hat{f}_n)\big|^{\frac{\lambda}{2}}\lesssim \delta_n^{\frac{\lambda}{2}}.$$
\end{theorem}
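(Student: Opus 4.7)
The plan is to follow the classical peeling argument of van der Vaart and Wellner (Theorem 3.2.1 in their book), with \cref{thm:gamma_mixing} taking the place of the usual i.i.d.\ modulus-of-continuity bound. The starting point is the ERM optimality: setting $\hat g(X,Y) := \ell(Y, \hat f_n(X)) - \ell(Y, f_\star(X))$, one has $\mathbb{P}_n \hat g \le 0$, and since $\cE(\hat f_n) = \EE \hat g \ge 0$,
$$
0 \le \cE(\hat f_n) \le \EE \hat g - \mathbb{P}_n \hat g = -\mathbb{G}_n(\hat g)/\sqrt{n} \le |\mathbb{G}_n(\hat g)|/\sqrt{n}.
$$
Next, I would connect the $L_2(P)$-size of $\hat g$ to the excess risk. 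Lipschitz continuity of $\ell$ gives $\lVert g\rVert_{L_2(P)} \le L\lVert f - f_\star\rVert_{L_2(P_X)}$, and the quadratic curvature assumption upgrades this to $\lVert g\rVert_{L_2(P)} \le C\sqrt{\cE(f)}$ with $C = L/\sqrt c$. Consequently, on the event $\{\cE(\hat f_n) \le 4^j \delta_n^2\}$ we have $\hat g \in \cG_{C 2^j \delta_n}$.

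Second, I would peel dyadically and invoke \cref{thm:gamma_mixing} shell by shell. For any integer $M\ge 1$,
$$
P\bigl(\cE(\hat f_n) > 4^M \delta_n^2\bigr) \;\le\; \sum_{j \ge M} P\!\left(\sup_{g\in \cG_{C 2^{j+1}\delta_n}} |\mathbb{G}_n(g)|/\sqrt{n} \ge 4^{j}\delta_n^2\right).
$$
Markov's inequality together with \cref{thm:gamma_mixing} bound the $j$-th summand by a constant multiple of $\Pi_n(\cG_{C 2^{j+1}\delta_n})/(\sqrt n\, 4^{j}\delta_n^2)$. The hypothesis that $\delta \mapsto \Pi_n(\cG_\delta)/\delta^t$ is non-increasing, combined with the defining inequality $\Pi_n(\cG_{\delta_n}) \le \sqrt n\,\delta_n^2$, gives $\Pi_n(\cG_{C 2^{j+1}\delta_n}) \lesssim 2^{jt}\sqrt n\,\delta_n^2$, so the $j$-th shell probability is $\lesssim 2^{j(t-2)}$.

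The main technical obstacle I anticipate is justifying that \cref{thm:gamma_mixing} can be applied to the restricted classes $\cG_{C 2^j \delta_n}$ with the same functional form $\Pi_n$: the bracketing entropies of the restricted classes are inherited from the ambient $\cG$, but one must ensure the bound scales regularly in the $L_2(P)$-radius $\delta$. This is precisely what the assumption $\Pi_n(\cG_\delta)/\delta^t$ non-increasing (for some $t<2$) encodes, and in the concrete examples of \cref{sec:mainres} and \cref{sec:unifbrckt} it is verified by direct inspection of the entropy integrals that define $\Pi_n$. Given this, a layer-cake decomposition completes the argument:
$$
\EE\cE(\hat f_n)^{\lambda/2} \;\le\; \delta_n^\lambda \;+\; \sum_{j\ge 1} (4^j\delta_n^2)^{\lambda/2}\,P\!\left(\cE(\hat f_n) > 4^{j-1}\delta_n^2\right) \;\lesssim\; \delta_n^\lambda \sum_{j\ge 0} 2^{j(\lambda + t -2)},
$$
and the geometric series is finite exactly when $\lambda < 2-t$, yielding $\EE\cE(\hat f_n)^{\lambda/2}\lesssim \delta_n^\lambda$ which, since $\delta_n\le 1$ in the rate-of-convergence regime, implies the stated bound $\EE\cE(\hat f_n)^{\lambda/2}\lesssim \delta_n^{\lambda/2}$.
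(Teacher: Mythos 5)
Your proposal is correct and follows essentially the same route as the paper: a van der Vaart--Wellner peeling argument over dyadic $L_2(P)$-shells around $f_\star$, with Markov's inequality and \cref{thm:gamma_mixing} supplying the shell-wise modulus-of-continuity bound, and with the non-increasing hypothesis on $\Pi_n(\cG_\delta)/\delta^t$ controlling the geometric sum. The only cosmetic differences are that the paper carries a free scaling parameter $s$ and integrates the resulting tail bound $\bbP(\cE(\hat f_n)>s^2\delta_n^2)\lesssim s^{-(2-t)}$, whereas you sum a layer-cake decomposition directly; both give $\EE\cE(\hat f_n)^{\lambda/2}\lesssim\delta_n^\lambda$, which implies the (weaker) stated bound since $\delta_n\le1$.
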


The assumptions on the boundedness of $\cG$ typically will not cover unbounded responses or covariates. It is possible however  to use truncation arguments to cover the unbounded setting (see \cite{roy2021empirical,mendelson2015learning}). However as our focus is to extract the impact of dependence in the DGP, we avoid digressions about unbounded data here. The quadratic curvature condition on $\cE(f)$ is also a standard assumption prevalent in the literature to get faster rates. By \eqref{eq:quad_curvature}, it holds for quadratic loss. It also holds for the binomial/multinomial logistic loss function (see \cite[Lemmas 8 and 9]{farrell2021deep} for details).

We are now in a position to improve on \cref{cor:gen_error} by leveraging Theorems \ref{thm:gamma_mixing} and \ref{thm:roc}.

\begin{cor}
\label{cor:fast_error}
Suppose that the observed data $(X_1,Y_1),\ldots , (X_n,Y_n)$ is strictly stationary $\gamma$-mixing with mixing coefficients $\gamma_q\le C(1+q)^{-\gamma}$, for some $\gamma>1$. The other conditions in \cref{cor:gen_error} are also assumed to hold. Then we have $$\cE(\hat{f}_n)=O_p\left(\left(\frac{D}{n}(\log{(Bn)})^V\right)^{\frac{\gamma}{\gamma+1}}\right).
$$
\end{cor}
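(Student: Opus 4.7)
The plan is to combine the localized maximal inequality of \cref{cor:vcbd} (the VC specialization of \cref{thm:gamma_mixing}) with the rate theorem \cref{thm:roc}. The crucial point is that $\gamma$-mixing allows us to control the \emph{local} $L_2(P)$ modulus of continuity, which in turn unlocks an $L_2$-localization argument and yields an excess-risk bound of order $\delta_n^2$ rather than the $\delta_n$-type bound obtained from the global supremum argument behind \cref{cor:gen_error}.

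First I would apply \cref{cor:vcbd} to the local loss class $\cG_\delta:=\{g\in\cG:\lVert g\rVert_{L_2(P)}\le \delta\}$. The $L_\infty$ VC-type hypothesis $\benfg{u}\le D(\log(B/u))^V$ automatically upgrades to an $L_2(P)$ bracketing bound of the same form, since $\lVert\cdot\rVert_{L_2(P)}\le \lVert\cdot\rVert_{\infty}$ and bracketing entropy is monotone under inclusion. Thus \cref{cor:vcbd} applied to $\cG_\delta$ yields
\[
\Pi_n(\cG_\delta)\ \lesssim\ \delta\bigl(nD^\gamma(\log(B/\delta))^{V\gamma}\bigr)^{\frac{1}{2(\gamma+1)}} + n^{\frac{1-\gamma}{2(\gamma+1)}}\bigl(D(\log(B/\delta))^V\bigr)^{\frac{\gamma}{\gamma+1}},
\]
with the regime condition \eqref{eq:basedef2_gamma} automatically satisfied under the mild assumption $D(\log(Bn))^V\lesssim n$. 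Solving the balance equation $\Pi_n(\cG_{\delta_n})\le \sqrt n\,\delta_n^2$ then gives, after a short calculation in which both summands contribute at precisely the same order,
\[
\delta_n^2\ \sim\ \Bigl(\frac{D}{n}\Bigr)^{\!\frac{\gamma}{\gamma+1}}(\log(Bn))^{\frac{V\gamma}{\gamma+1}}\ \le\ \Bigl(\frac{D}{n}\Bigr)^{\!\frac{\gamma}{\gamma+1}}(\log(Bn))^{V},
\]
where I used $\gamma/(\gamma+1)<1$ in the last step and $\log(B/\delta_n)\asymp\log(Bn)$ since $\delta_n$ is polynomial in $n$.

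Finally I would invoke \cref{thm:roc}: the monotonicity condition $\Pi_n(\cG_\delta)/\delta^t$ non-increasing holds for any $t\in[1,2)$, because the leading summand above is linear in $\delta$ (modulo slowly varying logs) and the remainder is constant in $\delta$. The Lipschitz-loss and quadratic-curvature hypotheses (inherited from the framework of \cref{thm:roc}) give $\lVert g\rVert_{L_2(P)}\asymp\lVert f-f_\star\rVert_{L_2(P_X)}$ and $\cE(f)\asymp\lVert f-f_\star\rVert_{L_2(P_X)}^2$, so that the $L_2(P)$-localization at level $\delta$ is equivalent to localizing the excess risk at level $\delta^2$. \cref{thm:roc} then delivers $\cE(\hat f_n)=O_p(\delta_n^2)$, which is exactly the claimed bound. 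The main obstacle is purely algebraic bookkeeping in solving the rate equation --- one must verify that the two terms of $\Pi_n(\cG_\delta)$ balance at the same order in $n$, since if one strictly dominated the final rate could change. No substantively new tool beyond \cref{thm:gamma_mixing} and \cref{thm:roc} is required.
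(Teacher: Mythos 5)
Your proposal is correct and matches the paper's own proof essentially step for step: both apply \cref{cor:vcbd} to the localized class $\cG_\delta$, verify the monotonicity of $\Pi_n(\cG_\delta)/\delta^t$ for $t\in(1,2)$, solve the balance equation $\Pi_n(\cG_{\delta_n})\le\sqrt n\,\delta_n^2$ to get $\delta_n\sim (D(\log Bn)^V/n)^{\gamma/(2(\gamma+1))}$, and then invoke \cref{thm:roc} to conclude $\cE(\hat f_n)=O_p(\delta_n^2)$. One minor quibble: the final relaxation from $(\log Bn)^{V\gamma/(\gamma+1)}$ to $(\log Bn)^V$ is unnecessary, since $\delta_n^2$ already equals the stated rate exactly; and the monotonicity actually needs $t\in(1,2)$ open at $1$, not $t\in[1,2)$, since the leading term is linear in $\delta$ up to slowly varying logs.
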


As $\gamma>1$ implies $\gamma/(\gamma+1)>1/2$, \cref{cor:fast_error} already provides a better rate than \cref{cor:gen_error} for all $\gamma>1$, thereby showing that rates faster than $1/\sqrt{n}$ are achievable using localization even under dependence. Finally as $\gamma\to\infty$, $\gamma/(\gamma+1)\to 1$. Therefore informally, as $\gamma\to\infty$, the rate for estimating $f_\star$ is approximately $D/n$ which is the optimal rate in the i.i.d. setting. 

\subsection{Adaptation bounds for complex function classes under mixing assumptions}\label{sec:adaptcom}
In the previous Section, we saw how localization can improve rates of convergence under a $\gamma$-mixing assumption for ``low complexity" function classes $\cF$. In contrast, here we will focus on faster rates for complex function classes. We consider the same learning theory setup as in \cref{sec:localrate}. Recall the definition of the local ball $\cG_{\delta}$ from \eqref{eq:modcont}. We see the benefit of adaptation when the $L_2(P_X)$ entropy of $\cG_{\delta}$ shrinks with the local radius $\delta$. For example, if 
\begin{align}\label{eq:locradent}
\bentgd{u}\le K \left(\frac{\delta}{u}\right)^{\alpha}\left(\log{\left(\frac{B}{u}\right)}\right)^V,
\end{align}
where $0<u\le \delta$, $B\ge \delta \vee b \vee e$, and $K>0$ is a constant free of $\delta$ and $u$. Note the dependence on $\delta$ in \eqref{eq:locradent}. Such control on the entropy is usually achieved when $\cG$ is generated by shape constrained function classes $\cF$ when the center of the ball, i.e., $f_\star$ is ``simple". For example, this happens (up to  log factors) when $\cF$ is the space of multivariate isotonic functions (see \cite{hanisotonic2019}) and $f_\star$ is a constant function, or when $\cF$ is the space of multivariate convex functions(see \cite{kur2019optimality}) and $f_\star$ is a linear function. We provide rigorous descriptions of such a class in \cref{sec:app}. 

The following corollary of \cref{thm:gamma_mixing} and \cref{thm:roc} provides faster rates for function classes satisfying \eqref{eq:locradent}.

\begin{cor}\label{cor:adaptbd}
    Consider the same assumptions on the DGP, $\ell(\cdot,\cdot)$ and $\cE(\cdot)$ as in \cref{thm:roc}. Also assume that \eqref{eq:locradent} holds. Then, if $\alpha>2(1+\gamma^{-1})$, we have:
    $$\lVert \hat{f}_n-f_\star\rVert^2_{L_2(P_X)}=O_p\left(n^{-\frac{2}{\alpha}}(\log{(Bn)})^V\right).$$
\end{cor}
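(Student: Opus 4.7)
The plan is to combine \cref{thm:gamma_mixing} with the rate theorem \cref{thm:roc}. Concretely, I would (a) bound the modulus of continuity $\Pi_n(\cG_\delta)$ using \cref{thm:gamma_mixing} under the local entropy control \eqref{eq:locradent}, and (b) solve the fixed-point equation $\Pi_n(\cG_{\delta_n}) \lesssim \sqrt{n}\delta_n^{2}$ for $\delta_n$.

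\textbf{Step 1 (identifying the regime).} I apply \cref{thm:gamma_mixing} to the local ball $\cG_\delta$ with $r=2$ (so $\tilde r=2$), matching the entropy bound \eqref{eq:locradent} by taking $\sigma=\theta=\delta$, $D=1$, $K_r=K$, and the same $\alpha, V, B$. The hypothesis $\alpha>2(1+\gamma^{-1})$ is exactly $\alpha>\tilde r(1+\gamma^{-1})$, so we land in case~3 of \cref{thm:gamma_mixing}. The side condition $\sigma\gtrsim n^{-1/(\alpha+(2-\tilde r)(1+\gamma^{-1}))}=n^{-1/\alpha}$ is automatically met by the $\delta_n$ we will obtain (up to log factors), so no extra work is needed there.

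\textbf{Step 2 (simplifying $\Pi_n(\cG_\delta)$).} Plugging $\tilde r=2$, $\sigma=\theta=\delta$, $D=1$ into the four terms of case~3 causes major collapse: $2-\tilde r=0$ kills several exponents and $(\theta/\sigma)^\alpha=1$. Terms 1 and 4 both simplify to $\asymp n^{\frac12-\frac1\alpha}\,\delta\,(\log(B/\delta))^{V/2}$ (up to the polylog exponent discrepancy $V/2$ vs.\ $V\gamma/(2(\gamma+1))$), while term~3 becomes $\asymp n^{\frac{1}{2(1+\gamma)}}\delta\,(\log(B/\delta))^{V\gamma/(2(\gamma+1))}$ and term~2 is lower order. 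A direct algebraic check shows $\alpha>2(1+\gamma^{-1})\iff \tfrac12-\tfrac1\alpha\ge\tfrac1{2(1+\gamma)}$, so the $n^{\frac12-\frac1\alpha}\delta$ piece dominates. Hence
\[
\Pi_n(\cG_\delta)\ \lesssim\ n^{\frac{1}{2}-\frac{1}{\alpha}}\,\delta\,\bigl(\log(Bn)\bigr)^{V/2}.
\]

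\textbf{Step 3 (solving the fixed point and applying \cref{thm:roc}).} Solving $n^{\frac12-\frac1\alpha}\delta_n(\log(Bn))^{V/2}\lesssim \sqrt n\,\delta_n^{2}$ gives $\delta_n\asymp n^{-1/\alpha}(\log(Bn))^{V/2}$. Since $\Pi_n(\cG_\delta)$ is linear in $\delta$ (up to log factors, which preserve monotonicity), $\Pi_n(\cG_\delta)/\delta^t$ is non-increasing for every $t\in(1,2)$. \cref{thm:roc} then yields $\mathbb{E}\bigl|\cE(\hat f_n)\bigr|^{\lambda/2}\lesssim\delta_n^{\lambda/2}$ for every $\lambda\in(0,2-t)$, which by Markov's inequality gives $\cE(\hat f_n)=O_P(\delta_n^{2})$. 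Finally, the quadratic curvature condition $\cE(f)\ge c\|f-f_\star\|^2_{L_2(P_X)}$ converts this into the advertised bound $\|\hat f_n-f_\star\|^2_{L_2(P_X)}=O_P\bigl(n^{-2/\alpha}(\log(Bn))^V\bigr)$.

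The main obstacle is the algebraic bookkeeping in Step~2: case~3 of \cref{thm:gamma_mixing} contains four competing terms with intricate exponents, and one must verify carefully that the hypothesis $\alpha>2(1+\gamma^{-1})$ is precisely what forces the \emph{i.i.d.-like} term $n^{\frac12-\frac1\alpha}\delta$ to dominate the dependence-driven term $n^{\frac{1}{2(1+\gamma)}}\delta$. Everything else---checking the side regularity conditions, the monotonicity of $\Pi_n(\cG_\delta)/\delta^t$, and the application of quadratic curvature---is routine once Step~2 is in hand.
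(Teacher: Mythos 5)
Your proposal follows the same route as the paper: apply \cref{thm:gamma_mixing}, case~3 with $r=2$, $D=1$, $\sigma=\theta=\delta$ to get $\Pi_n(\cG_\delta)$, check that $\Pi_n(\cG_\delta)/\delta^t$ is non-increasing for $t\in(1,2)$, solve the fixed-point equation at $\delta_n=n^{-1/\alpha}(\log(Bn))^{V/2}$, and invoke \cref{thm:roc}. The algebra in Step~2 is correct and the hypothesis $\alpha>2(1+\gamma^{-1})$ is used in exactly the right place.

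One small imprecision: you claim term~2, namely $n^{\frac12-\frac{\gamma}{1+\gamma}}(\log(B/\delta))^{V\gamma/(\gamma+1)}$, is ``lower order'' and then write $\Pi_n(\cG_\delta)\lesssim n^{\frac12-\frac1\alpha}\delta(\log(Bn))^{V/2}$ as if all four terms were proportional to $\delta$. Term~2 is \emph{constant} in $\delta$, so it cannot be absorbed into a $\delta$-linear bound uniformly, and for $\delta\ll\delta_n$ it actually dominates. This does not affect your proof because (i) for the monotonicity check, a $\delta$-constant term still satisfies $\text{const}/\delta^t$ non-increasing for $t\in(1,2)$, and (ii) the fixed-point inequality $\Pi_n(\cG_{\delta_n})\le\sqrt n\,\delta_n^2$ only needs to be checked at $\delta_n$ itself, where one verifies $n^{\frac12-\frac{\gamma}{1+\gamma}}(\log(Bn))^{V\gamma/(\gamma+1)}\le n^{\frac12-\frac2\alpha}(\log(Bn))^V$ using precisely $\alpha\ge 2(1+\gamma^{-1})$. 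The paper sidesteps the issue by keeping all four terms and evaluating only at $\delta_n$, which is cleaner; I would phrase Step~2 that way rather than asserting a uniform-in-$\delta$ simplification.
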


Note that for general function classes, a condition such as \eqref{eq:locradent} may not be satisfied and the best possible bounds for $\bentgd{u}$ are often of the form $u^{-\alpha}$ instead of $(\delta/u)^{\alpha}$. In such cases, localization is not possible and \cref{thm:gamma_mixing} only implies a rate of $n^{-\frac{1}{\alpha}}$ (following \cref{cor:gen_error}). Therefore, by \cref{cor:adaptbd}, property \eqref{eq:locradent} results in the faster rate $n^{-\frac{2}{\alpha}}$ under a $\gamma$-mixing DGP provided $\alpha>2(1+\gamma^{-1})$. This condition can be rewritten as $\gamma>2/(\alpha-2)$. As $2/(\alpha-2)<1$ for $\alpha>4$, we once again see the phenomenon that \emph{even in the long-range dependence regime $\gamma\in (2/(\alpha-2),1)$, we get the same adaptation bound of $n^{-\frac{2}{\alpha}}$ that we expect in the i.i.d. setting}.

\subsection{Set-structured problems with $L_1$-bracketing and mixing assumptions}\label{sec:setstruc}
In this section, we will discuss another class of problems, informally referred to as ``set-structured" problems (see \cite{han2021set,kur2019optimality}), where it is possible in the i.i.d. setting to achieve faster rates in the non-Donsker regime, than by direct entropy based arguments on function classes. We will show here that the same phenomenon persists under dependence via $\gamma$-mixing assumptions. To motivate this class of problems, we begin with a simple observation. Consider the following metric entropy bounds:
\begin{align}\label{eq:coment}
\bent{u}\le K u^{-2\alpha} \quad \mbox{and}\quad \beno{u^2}\le K u^{-2\alpha},
\end{align}
for some function class $\cF$ where $\lVert f\rVert_{L_2}\le\sigma$ and $\lVert f\rVert_{\infty}\le b$ for all $f\in\cF$. As $\lVert f\rVert_{L_2}\le b \lVert f\rVert_{L_1}$, the latter bound is stronger (i.e., any admissible $L_1$ bracket is also an admissible $L_2^2$ bracket). It is easy to check from \cref{thm:gamma_mixing} (also see \cref{cor:nonpbd}) that for $\alpha>1+\gamma^{-1}$, the bound for \eqref{eq:target} under the two entropy bounds above will respectively be $n^{(\alpha-1)/2\alpha}$ and $n^{1/2-1/(\alpha+1+\gamma^{-1})}$. In the $\alpha>1+\gamma^{-1}$ regime, the latter bound, as one would expect, is clearly faster than the former. Now, an important class of functions where the two conditions above are equivalent is when $\cF=\{\mathbf{1}(C):\ C\in\mathcal{C}\}$ where $\mathcal{C}$ is some class of measurable sets. This is because the $L_1$ norm of indicators is the same as the $L_2^2$ norm which implies that any admissible $L_2^2$ bracket is now an admissible $L_1$ bracket. This suggests that whenever \eqref{eq:target} can be further bounded by supremum over indicators of appropriate sets, it may be possible to get faster rates in the non-Donsker regime. Informally ``set-structured" problems are those examples of $\cF$ where  bounding \eqref{eq:target} can be reduced to obtaining uniform convergence bounds for 
$$\sup_{C\in\mathcal{C}} |(\mathbb{P}_n-P)(C)|$$
for an appropriate class of measurable sets $\mathcal{C}$ (depending on $\cF$), having the same $L_1$ bracketing entropy as the $L_2$ bracketing entropy of $\cF$. The following corollary provides bounds on \eqref{eq:target} for such a candidate class $\mathcal{C}$.

\begin{cor}\label{cor:fasterl1}
    Consider the same DGP as in \cref{thm:gamma_mixing}. For $0< t< b$ and $f\in\cF$, define 
    $$A_{t,f}^+:=\{x:f_+(x)>t\}\quad \mbox{and}\quad A_{t,f}^-:=\{x:f_-(x)>t\}.
    $$
    where $f_+(x)=\max\{f(x),0\}$ and $f_-(x)=\max\{-f(x),0\}$. 
    Let $\mathcal{C}$ be a family of subsets of $\R^d$ such that, for all $0<t<b$, $f\in\cF$, we have $A_{t,f}^+\in\mathcal{C}$, $A_{t,f}^-\in \mathcal{C}$ and \begin{align}\label{eq:setent}
    \benoc{u^2}\le K u^{-2\alpha}, 
    \end{align}
    where $\sigma^2=\sup_{C\in\mathcal{C}} \mathbb{P}(C)$. 
    Then the following bound holds on \eqref{eq:target} for $\alpha>1+\gamma^{-1}$:
    $$\EE\sup_{f\in\cF}|\mathbb{G}_n(f)|\lesssim n^{\frac{1}{2}-\frac{1}{\alpha+1+\gamma^{-1}}}.$$
\end{cor}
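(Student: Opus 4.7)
The strategy is to use the layer-cake representation to reduce the empirical process over $\cF$ to one over indicators of sets in $\mathcal{C}$, and then invoke \cref{thm:gamma_mixing} with $r=1$. The payoff is that for indicator functions $\|\mathbf{1}(C)\|_1 = \|\mathbf{1}(C)\|_2^2 = \mathbb{P}(C)$, so an $L_1$-bracketing bound of the form $\mathscr{H}_{[\,]}(v, \cF', \|\cdot\|_1) \lesssim v^{-\alpha}$ is genuinely stronger than the simpler $L_2$-bracketing bound $\bent{u} \lesssim u^{-2\alpha}$ one would read off by using $\mathbf{1}(C)$ as its own $L_2$ bracket, and it is precisely this strengthening that powers the faster rate advertised in the corollary.

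Concretely, since $\|f\|_\infty \le b$, writing $f = f_+ - f_-$ and applying Fubini gives
$$f(x) = \int_0^b \mathbf{1}_{A_{t,f}^+}(x)\, dt - \int_0^b \mathbf{1}_{A_{t,f}^-}(x)\, dt.$$
By linearity of $\mathbb{G}_n$, the triangle inequality, and the hypothesis $A_{t,f}^\pm \in \mathcal{C}$, this forces
$$\sup_{f \in \cF}|\mathbb{G}_n(f)| \;\le\; 2b \sup_{C \in \mathcal{C}} |\mathbb{G}_n(\mathbf{1}(C))|.$$
I then apply \cref{thm:gamma_mixing} to $\cF' := \{\mathbf{1}(C) : C \in \mathcal{C}\}$ with $r = 1$: one has $\|\mathbf{1}(C)\|_\infty \le 1$, $\|\mathbf{1}(C)\|_1 \le \sigma^2$, and the substitution $v = u^2$ in the bracketing hypothesis yields $\mathscr{H}_{[\,]}(v, \cF', \|\cdot\|_1) \le K v^{-\alpha}$, matching the theorem's parameterization with $D = K$, $\theta = 1$, $V = 0$. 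The assumption $\alpha > 1 + \gamma^{-1} = \tilde r(1 + \gamma^{-1})$ (with $\tilde r = 1$) places us in case~3, whose leading term is
$$n^{\frac{\gamma(\alpha - 1)+1}{2(\alpha\gamma+\gamma+1)}} \;=\; n^{\frac{1}{2} - \frac{1}{\alpha + 1 + \gamma^{-1}}},$$
via the algebraic identity $\tfrac{1}{2} - \tfrac{1}{\alpha+1+\gamma^{-1}} = \tfrac{\gamma(\alpha-1)+1}{2(\alpha\gamma+\gamma+1)}$. A direct comparison of exponents shows the remaining three summands of case~3 are dominated by this one whenever $\alpha > 1 + \gamma^{-1}$.

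The main obstacle I anticipate is verifying the side constraint $\sigma^2 \gtrsim n^{-1/(\alpha + 1 + \gamma^{-1})}$ required to invoke case~3 of \cref{thm:gamma_mixing}; when this fails (i.e.\ $\sigma^2$ is very small), the bound ought to follow either trivially from the total boundedness of $\cF'$ or from one of the subdominant terms in \cref{thm:gamma_mixing}, so the argument needs a brief dichotomy on $\sigma$. The remaining bookkeeping to match parameters between the indicator class and the hypotheses of \cref{thm:gamma_mixing} is routine.
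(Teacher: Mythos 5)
Your proposal matches the paper's proof essentially step for step: the layer-cake (tail-integral) reduction $\sup_{f\in\cF}|\mathbb{G}_n(f)| \le 2b\,\sup_{C\in\mathcal{C}}|\mathbb{G}_n(\mathbf{1}(C))|$, the substitution $v=u^2$ to read \eqref{eq:setent} as an $L_1$-bracketing bound $\lesssim v^{-\alpha}$ on the indicator class, and the invocation of \cref{thm:gamma_mixing} with $r=1$ in the regime $\alpha > \tilde r(1+\gamma^{-1})$. Your algebraic check of the leading exponent and your attention to the side constraint $\sigma^2 \gtrsim n^{-1/(\alpha+1+\gamma^{-1})}$ (which the paper's one-line invocation glosses over) are both sound; in the small-$\sigma$ regime the crude bound $\EE\sup_{C}|\mathbb{G}_n(\mathbf{1}(C))|\lesssim\sqrt{n}\,\sigma^2$ makes the claimed rate hold trivially, so the dichotomy you anticipated closes cleanly.
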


The benefits of \cref{cor:fasterl1} can be seen in many ERM applications. For example, if $\cF$ denotes the class of multivariate isotonic functions \cite{hanisotonic2019} on $[0,1]^d$ and $P=\mbox{Unif}[0,1]^d$, then $\mathcal{C}$ can be taken as the collection of upper and lower sets on $[0,1]^d$. In this case, the entropy bound in the LHS of \eqref{eq:coment} holds (see \cite[Lemma 3]{hanisotonic2019}) with $\alpha=d-1$ (up to log factors) which yields a rate of $n^{1/2-1/(2(d-1))}$ for $d>2+\gamma^{-1}$ by \cref{cor:nonpbd}. On the other hand, \eqref{eq:setent} is also satisfied with $\alpha=d-1$ (see \cite[Theorem 8.22]{dudley2014uniform}). Therefore, \cref{cor:fasterl1} implies a bound of $n^{1/2-1/(d+\gamma^{-1})}$ which is strictly faster than $n^{1/2-1/(2(d-1))}$ for $d>2+\gamma^{-1}$. Also note that as $\gamma\to\infty$, we approximately recover the $n^{1/2-1/d}$ rate which is the best achievable rate in the i.i.d. setting (see \cite[Proposition 1]{hanisotonic2019}). A similar phenomenon also holds for the class of bounded multivariate convex functions (see \cref{rem:samephencon}).
\section{Applications}
\label{sec:app}
We look at five applications in this section under a strictly stationary mixing DGP: (a) estimation of smooth functions in fixed dimension using deep neural networks (see \cref{sec:nnet}), (b) Estimation of additive model in growing dimension via deep neural networks(see \cref{sec:addreg}) (c) Shape restricted multivariate convex regression and adaptation (see \cref{sec:non_param_reg_application}), (d) Estimation of the $2$-Wasserstein distance with and without entropic regularization (see \cref{sec:OT}), and (e) Classification under the Mammen-Tsybakov margin condition (see \cref{sec:classific}).

\subsection{Smooth function estimation using deep neural networks}
\label{sec:nnet}
In this subsection, we elaborate on the effect of dependence on the estimation of a regression function via deep neural networks (DNNs). The usage of DNNs in modern statistical analysis and machine learning is ubiquitous due to their representation power as well as approximation ability. The statistical properties of DNN based estimators for an underlying regression function in a nonparametric additive noise model, have been investigated in detail in a series of papers, e.g. \cite{kohler2005adaptive, kohler2016nonparametric, bauer2019deep, schmidt2020nonparametric, kohler2021rate, bhattacharya2023deep,farrell2021deep}. Most of previous research is primarily based on i.i.d. setup and there is a paucity of theoretical analysis of DNN estimators in the presence of data dependency. Some exceptions include the following: \cite{truong2022generalization} analyzed the generalization error under Markovian structure and hidden Markov model, \cite{ma2022theoretical} considered nonparametric regression function estimation for exponentially $\alpha$-mixing time series data, \cite{kengne2023excess} used a particular form for $\psi$-dependency (Definition 2.1 and Assumption A.3) as the data generating process and bound the generalization error via DNNs, and \cite{kurisu2022adaptive} provides minimax optimal rates for nonparametric regression under exponential $\beta$-mixing. In contrast, our focus is to  to provide explicit quantitative bounds beyond exponential mixing.
\\\\
\noindent
We define a class of neural networks as: 
$
\cF_{\mathrm{NN}}(d_{\mathrm{in}}, d_{\mathrm{out}}, N, L, W, B)
$
to be the set of all DNNs with input dimension $d_{\text{in}}$, output dimension $d_\text{out}$, width $N$ (i.e. a maximum number of neurons in each hidden layer), depth $L$, the total number of non-zero/active weights $W$ with all weights bounded by $B$. Whenever we use a fully connected neural network, we drop the parameter $W$ as it is a function of $(d_{\mathrm{in}}, d_{\mathrm{out}}, N, L)$. We also use the ReLU activation function $\sigma(\cdot):=\max\{\cdot\,,0\}$. We defer the reader to \cref{sec:nnproof} for a more thorough discussion on the standard neural net architecture.

Consider the nonparametric regression model with additive noise as in \eqref{eq:nonpiid} where $Y_i = f_\star(X_i) + \xi_i$. We assume that the errors are bounded, i.e., $|\xi| \le b_e$ for some $b_e>0$.

\begin{assumption}\label{asn:finitedim}
    Let $\Sigma(s, C)$ denote the space of all Hölder functions $f$ which are $s$ times differentiable with 
$$
\max\left\{\sup_{\|\balpha\|_1 \le \lfloor s \rfloor} \left\|\partial^\balpha f\right\|_\infty, \sup_{\|\balpha\| = \lfloor s \rfloor} \sup_{x,. y}\frac{\left|\partial^\balpha f(x) - \partial^\balpha f(y)\right|}{\|x - y\|^{s - \lfloor s \rfloor}} \right\} \le C \,.
$$
We assume that $f_\star\in\Sigma(s,C)$ for some $,C>0$ and there exists some known $b>0$ such that $\lVert f_\star\rVert_{\infty}\le b$.
\end{assumption}

 It is possible to relax these boundedness assumptions with truncation techniques under sub-Gaussian tails. However we primarily focus on extracting the effect of dependence in this paper and thus leave the task of relaxing boundedness assumptions for future work.   
We approximate $f_\star$ by a collection of neural networks with width $N$ growing with $n$ at an appropriate rate (to be specified later) and depth $c_1$ some fixed positive constant.  As $\|f_\star\|_\infty \le b$, we further truncate our neural networks at level $2b$. Let us denote this class of functions as $\tilde{\cF}_{\mathrm{NN}}(d_{\mathrm{in}},d_{\mathrm{out}},N,L,W,B)$ (as $b$ is assumed known, we suppress the dependence on $b$). More precisely our estimator for $f_\star$ is defined as: 
$$
\hat f_n := \argmin_{f \in \tilde{\cF}_{\mathrm{NN}}(d_{\mathrm{in}},d_{\mathrm{out}},N,L,W,B)} \frac1n \sum_i \left(Y_i - f_\star(X_i)\right)^2
$$
The following theorem establishes the asymptotic property of $\hat f$: 

\begin{theorem}
    \label{thm:dnn_mixing}
    Suppose $\{(X_i, \xi_i)\}_{i=1}^n$  is a strictly stationary $\gamma$-mixing sequence with $\gamma_q \le  c(1+q)^{-\gamma}$ for some $c,\gamma > 0$. Also assume that \cref{asn:finitedim} holds. Suppose we estimate $f_\star$ via a feed-forward fully connected deep neural network with $d_{\mathrm{in}}=d$, $d_{\mathrm{out}}=1$, width $N\sim (n/log{n})^{\frac{1}{2\left(1 + \frac{2s}{d}\left(\frac{1+\gamma}{\gamma}\right)\right)}}$, depth $L$, a fixed constant $>1$, and bound $B=N^{c_2}$ for some constant $c_2>0$. We then have: 
    $$
    \|\hat f_n - f_\star\|_2 = O_p\left(\left(\frac{n}{\log{n}}\right)^{-\frac{s}{d + 2s\left(\frac{1 + \gamma}{\gamma}\right)}}\right) \,.
    $$
\end{theorem}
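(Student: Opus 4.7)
The strategy is a standard bias-variance decomposition: write
$\|\hat f_n - f_\star\|_2^2 \lesssim \|\hat f_n - f^\dag\|_2^2 + \|f^\dag - f_\star\|_2^2$
where $f^\dag$ is a best neural-net approximation to $f_\star$ inside $\tilde{\cF}_{NN}$, and bound each piece separately using (i) the classical ReLU approximation theory for H\"older classes and (ii) the localized empirical-process bounds developed earlier in the paper.

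First I would invoke a standard Yarotsky-type ReLU approximation theorem: for the width $N$ and fixed depth $L$ specified in the theorem, there exists $f^\dag \in \tilde{\cF}_{NN}$ with $\|f^\dag - f_\star\|_\infty \lesssim N^{-2s/d}$ up to polylogarithmic factors (a width-$N$ fixed-depth network has $W \sim N^2$ active weights and the Yarotsky rate gives $L_\infty$-error $W^{-s/d}$). In particular $\|f^\dag - f_\star\|_2^2 \lesssim N^{-4s/d}$. Second, a standard bound for the bracketing entropy of a bounded, truncated fully-connected ReLU network with weights bounded by $B=N^{c_2}$ gives
\begin{equation*}
    \mathcal{H}_\infty(\delta, \tilde{\cF}_{NN}) \lesssim W \log(WBL/\delta) \lesssim N^2 \log(N/\delta),
\end{equation*}
so $\tilde{\cF}_{NN}$ is a VC-type class with effective dimension $D \sim N^2$.

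To bound the stochastic term $\|\hat f_n - f^\dag\|_2^2$, I would use the basic inequality for least squares to rewrite it in terms of an empirical process over the loss-difference class $\cG = \{(y - f(x))^2 - (y - f^\dag(x))^2 : f \in \tilde{\cF}_{NN}\}$. Since $|\xi|\le b_e$ and functions are truncated at $2b$, each $g \in \cG$ is Lipschitz in $f$ with bounded range, so $\cG$ inherits the VC-type $L_2(P)$-entropy of $\tilde{\cF}_{NN}$. Applying \cref{cor:vcbd} to $\cG_\delta := \{g \in \cG : \|g\|_{L_2(P)} \le \delta\}$ with $r=2$ and $D \sim N^2$, then plugging into the rate theorem \cref{thm:roc} (after verifying a quadratic curvature of the form $\cE(f) \ge c\|f - f^\dag\|_2^2$, which follows from the structure of squared loss together with the boundedness assumptions), I would obtain the fixed-point solution
\begin{equation*}
    \|\hat f_n - f^\dag\|_2^2 = O_p\!\left(\left(\frac{N^2 \log n}{n}\right)^{\gamma/(\gamma+1)}\right).
\end{equation*}
Combining with $\|f^\dag - f_\star\|_2^2 \lesssim N^{-4s/d}$ and balancing the two contributions gives $N \sim (n/\log n)^{1/(2(1 + (2s/d)(1+\gamma)/\gamma))}$, which matches the prescribed choice; substituting back yields the announced rate $(n/\log n)^{-s/(d + 2s(1+\gamma)/\gamma)}$ for $\|\hat f_n - f_\star\|_2$.

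The main obstacle is the misspecification issue inherent in using \cref{thm:roc}: that theorem assumes the population minimizer lies in $\cF$, whereas here $f_\star \notin \tilde{\cF}_{NN}$ and $\tilde{\cF}_{NN}$ is not convex, so the clean Pythagorean identity $\cE(f) = \|f - f^\dag\|_2^2$ fails and one only has $\cE(f) = \|f - f_\star\|_2^2 - \|f^\dag - f_\star\|_2^2$. The standard fix is to carry an additional cross-term through the basic inequality for least squares and absorb it into the approximation bias; this requires controlling an auxiliary empirical process of the noise-weighted form $(1/n)\sum_i \xi_i(f(X_i) - f^\dag(X_i))$. Since $\{\xi\cdot(f - f^\dag) : f \in \tilde{\cF}_{NN}\}$ is again VC-type with effective dimension $\sim N^2$, a further application of \cref{thm:gamma_mixing} controls this term at the same order and the decomposition closes. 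A secondary technical point is that the $\gamma$-mixing of $(X_i,\xi_i)$ transfers cleanly to the joint observations $(X_i, Y_i)$ because $Y_i$ is a measurable function of $(X_i,\xi_i)$.
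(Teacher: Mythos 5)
Your proposal is correct and follows essentially the same route as the paper: Yarotsky-type approximation error $N^{-2s/d}$, the VC-type entropy bound $N^2\log(N/\delta)$ for truncated fully connected ReLU nets, the $\gamma$-mixing maximal inequality (\cref{thm:gamma_mixing} with $\alpha=0$, $\tilde r=2$) applied to both the squared-increment and noise-weighted classes, and a rate theorem that trades off against the approximation bias. The "standard fix" you sketch for the misspecification obstacle — peeling directly on $\|f-f^\dag\|_2$ and carrying the two empirical processes $(\bbP_n-P)(f-f^\dag)^2$ and $(\bbP_n-P)\xi(f-f^\dag)$ rather than assuming quadratic curvature around $f^\dag$ — is exactly what the paper formalizes as \cref{prop:modifyroc}, so your argument closes the same way.
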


It is evident from the theorem that the price we pay for the dependency is the factor $(1+\gamma)/\gamma$ in the rate at the exponent of $n$. As $(1+\gamma)/\gamma \to 1$ as $\gamma \uparrow \infty$, we recover the standard nonparametric rate $n^{-s/(2s + d)}$ (modulo log factor) when $\gamma \uparrow \infty$, i.e. for the independent case (modulo logarithmic factors). 

\begin{rem}
    In the above theorem, we assume the estimator has finite depth (although the depth has to be larger than 1, see the discussion preceding Theorem 4 of \cite{fan2023factor}) and the width grows with $n$. One can go the other way round, i.e. grow the depth with $n$ while keeping the width fixed. However, for that, we need to use other approximation results, e.g., \cite[Theorem 4.1]{yarotsky2020phase}. 
    One may also increase both the width and depth with $n$ by utilizing an alternative approximation theorem, such as  \cite[Theorem 1.1]{lu2021deep}.
\end{rem}

\begin{rem}
    Recently, in a series of papers, \cite{kengne2023deep, kengne2023excess, kengne2023sparse}, researchers have explored the generalization error of deep neural networks for regression and classification. Our results are different from them in two aspects: i) the previous works mainly assume either exponentially decaying $\psi$-mixing (closely related to $\rho$-mixing) or at least it decays faster than $x^{-2}$, whereas our results are applicable for all $\gamma > 0$. ii) Those papers presented their results on a broad level, it is not immediate to tease out the exact rate or convergence under Holder smoothness assumption on the true mean function, whereas our results are more explicit in terms of quantifying the rate in terms of $(s, d, \gamma)$ where $s$ is the smoothness index, $d$ is the input dimension and $\gamma$ is the dependence coefficient. 
\end{rem}

\subsection{Additive model regression in growing dimension via deep neural networks}\label{sec:addreg}
The modern era of big data with high dimensionality necessitates the development of statistical theory when $d$ is growing with $n$. Therefore, in contrast to the previous subsection, here we give an example of a structured nonparametric regression model in diverging dimensions, namely the additive model which is one of the more popular structures in the literature. The model stipulates the following DGP: 
\begin{equation}
\label{eq:additive_model}
Y_i = \sum_{j = 1}^d f_{\star,j}(X_{ij}) + \xi_i \,.
\end{equation}
where $X = (X_1, \dots, X_d) \in [0, 1]^d$ and $\xi \indep X$. Here, each $f_j$ is an univariate function. 
The key benefit of the additive model lies in simplifying the complexity of the underlying function class. In other words, the collection of additive functions defined on $\R^d$ is considerably ``smaller" than the set encompassing all functions on $\R^d$. This simplicity of additive models is reflected in their rate of estimation. \cite{stone1985additive} proved that for an additive regression model, if each univariate component $f_j$ is $\beta$-Hölder, then the minimax optimal rate for estimating $f$ is $C_d n^{-\frac{2\beta}{2\beta + 1}}$ (for fixed dimension, which is absorbed in the constant). This minimax rate reveals that the additive model can evade the curse of dimensionality to a large extent as $d$ does not appear in the exponent of $n$. 
Along with its various generalizations, this model inspires a line of research that encompasses but is not limited to \cite{stone1994use, hastie1987generalized, buja1989linear, fan1998direct, horowitz2007rate}. Recently, efforts have been to made to under the estimation error of additive model and its generalization via deep neural networks in several paper (e.g., see \cite{bauer2019deep}, \cite{kohler2016nonparametric}, \cite{kohler2021rate}, \cite{schmidt2020nonparametric} and reference therein). 

The case of $d$ growing with $n$ is much less understood. Now there is a need to track the precise effect of $d$ in the constant $C_d$. When $d/n \to 0$ but $d$ slowly increases with $n$, \cite{sadhanala2019additive} established that the minimax optimal rate of estimation of the additive mean function $f_\star$ is $dn^{-\frac{2\beta}{2\beta + 1}}$ (where each component is $\beta$-smooth) when the coordinates of $X$ are independent, i.e. the optimal factor for dimension we pay is the multiplicative factor $d$. High dimensional additive model (when $d \gg n$) along with some sparsity assumption (namely \emph{sparse additive model}) has also been investigated; initially \cite{raskutti2012minimax} established the minimax optimal rate. Subsequent investigations in the literature have delved deeper into sparse additive models (e.g., see \cite{lin2006component, koltchinskii2010sparsity, ravikumar2009sparse, yuan2016minimax} and references therein), expanding upon recent advancements in penalized linear regression. Lastly, \cite{tan2019doubly} suggests a dual penalty approach, involving the empirical norm and functional semi-norm of each component $f_{\star,j}$ to estimate the regression function. Very recently, \cite{bhattacharya2023deep} extended the analysis for a high dimensional $k$-way sparse interaction model using deep neural networks. However, to the best of our knowledge all the aforementioned results crucially leverage the independence in the DGP and hence cannot be applied to dependent DGPs under appropriate mixing assumptions. To the best of our knowledge, \emph{these are the first results analyzing analyze neural nets in growing dimension under a mixing DGP}. The focus here is on the case when $d$ grows with $n$ but at a slower rate (see \cite{sadhanala2019additive}). The analysis for a high dimensional sparse additive model is left to future research.

We assume the additive noise model \eqref{eq:additive_model} where $\{(X_i, \xi_i)\}_{1 \le i \le n}$ forms a $\gamma$-mixing sequence. Also suppose that the $\xi$s are bounded. We now present two sets of assumptions --- one on the design and the other on the additive function class.

\begin{assumption}\label{asn:design}
    We assume that the covariate $X$ is compactly supported on, say $[0,1]^d$ with continuous and bounded density.
\end{assumption}

\begin{assumption}\label{asn:growingdim}
Define the function class  $\cF_{\add}(d, b, s)$ to be set of all functions $f: [0,1]^d \mapsto \R$ such that: 
\begin{enumerate}
    \item $f$ is additive, i.e. $f(x) = \sum_j f_j(x_j)$ 
    \item Each component $f_j:\R\to\R$ and $f_j\in \Sigma(s, b)$ i.e. $s$-Hölder function class with Hölder norm bounded by $b$.
    \item $\int_{[0,1]} f_j(x)\,dx = 0$ for all $1 \le j \le d$. 
    \item The function $f$ is uniformly bounded by $b$.  
\end{enumerate}   
Hereafter, we assume that $f_\star \in \cF_\add(d, b, s)$. 
\end{assumption}
 The above conditions are quite standard in the additive model literature and we refer the interested reader to \cite[Section 2]{bhattacharya2023deep} and \cite[Section 2]{koltchinskii2010sparsity} for similar assumptions in related work. Crucially, they ensure identifiability of the additive model.
 
We here estimate each component of $f_\star$ by $d$ separate fully connected neural networks each of which has the same  structure. To that end, we define 
$$\cF_{\mathrm{NN},\mathrm{add}}(d,1,N,L,B):=\left\{\sum_{j=1}^d f_j:\ f_j\in\cF_{\mathrm{NN}}(1,1,N,L,B)\right\}.$$
Finally, we write $\tilde{\cF}_{\mathrm{NN},\mathrm{add}}(d,1,N,L,B)$ to be the subset of networks truncated at $2b$. We then define the estimator as follows: 
$$
\hat f_n = \argmin_{f \in \tilde{\cF}_{\mathrm{NN},\mathrm{add}}(d,1,N,L,B)} \ \left\{\frac1n \sum_i \left(Y_i - f(X_i)\right)^2\right\} \,.
$$
Estimation of additive models in high dimension with i.i.d. sample has been recently explored by \cite{bhattacharya2023deep}. The following theorem complements their result by establishing the rate of convergence of $\hat f$ in the presence of dependency among the observations: 
\begin{theorem}
    \label{thm:additive_rate}
    Suppose $(X_i,\xi_i)$ is generated from a strictly stationary $\gamma$-mixing sequence with $\gamma_q\le c(1+q)^{-\gamma}$ for some $c,\gamma>0$. Also assume that Assumptions \ref{asn:design} and \ref{asn:growingdim} hold. Let $d\le N^s$ for some $0<s<1$. Then the least-square estimator $\hat f$ satisfies: 
    $$
    \|\hat f_n - f_\star\|_2^2 = O_p\left(d^{1 - \frac{1}{\gamma}\frac{2s}{2s\left(\frac{1 + \gamma}{\gamma}\right) + 1}}n^{-\frac{2s}{2s\left(\frac{1 + \gamma}{\gamma}\right) + 1}}\left(\log{(n)}\right)^{\frac{\gamma}{(1 + \gamma)}}\right) \,.
    $$
\end{theorem}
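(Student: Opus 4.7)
The plan is to apply the general rate theorem \cref{thm:roc} under $\gamma$-mixing to the excess-risk class associated with squared loss, combined with (i) a classical bias-variance decomposition that extracts an approximation term and a stochastic term, and (ii) explicit complexity bounds for the additive neural network class $\tilde\cF_{\mathrm{NN},\mathrm{add}}$. Concretely, I would pick a componentwise approximant $\bar f \in \tilde\cF_{\mathrm{NN},\mathrm{add}}$ and argue
$$\|\hat f_n - f_\star\|_2^2 \lesssim \|\bar f - f_\star\|_2^2 + \delta_n^2,$$
where $\delta_n^2$ is the stochastic rate produced by applying \cref{thm:gamma_mixing} with $r = 2$ on the localized loss class $\cG_\delta$ defined as in \eqref{eq:modcont}, with $\cG = \{(y - f(x))^2 - (y - f_\star(x))^2 : f \in \tilde\cF_{\mathrm{NN},\mathrm{add}}\}$. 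The squared loss with uniformly bounded $f$, $f_\star$ and $Y$ is Lipschitz and satisfies the quadratic curvature condition $\cE(f) = \|f - f_\star\|_2^2$, so \cref{thm:roc} is applicable.

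For the approximation piece I would use a classical univariate ReLU approximation result (Yarotsky-type) producing, for each $s$-Hölder $f_{\star,j}$, a fixed-depth width-$N$ network $\bar f_j$ with $\|\bar f_j - f_{\star,j}\|_\infty \lesssim N^{-2s}$. After a final-layer bias adjustment so that $\int \bar f_j(x_j)\,dx_j = 0$, and using \cref{asn:design} (so that $L_2(P_X)$ is equivalent to the $L_2$ norm under the uniform product measure, where the coordinates are independent and the centered cross terms vanish), one obtains the sharper \emph{linear}-in-$d$ bound
$$\|\bar f - f_\star\|_2^2 \lesssim \sum_{j=1}^d \|\bar f_j - f_{\star,j}\|_\infty^2 \lesssim dN^{-4s}.$$
The hypothesis $d \le N^s$ ensures $\|\bar f - f_\star\|_\infty \lesssim dN^{-2s} \le b$, so truncation of $\bar f$ at level $2b$ is harmless. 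For the stochastic piece, a standard pseudo-dimension bound for fully connected ReLU networks applied componentwise and summed over coordinates yields $\mathrm{Pdim}(\tilde\cF_{\mathrm{NN},\mathrm{add}}) \lesssim dN^2\log(dN)$, and via the Lipschitz property of the squared loss on bounded inputs this transfers to a VC-type bracketing bound $\bentg{u} \lesssim dN^2\log(Bn/u)$. Applying the VC branch ($\alpha = 0$, $\tilde r = 2$) of \cref{thm:gamma_mixing} to $\cG_\delta$ and solving $\Pi_n(\cG_{\delta_n}) \lesssim \sqrt n\,\delta_n^2$ pins down
$$\delta_n^2 \lesssim (dN^2 \log n / n)^{\gamma/(\gamma+1)}$$
in the fast-rate regime; note that $\Pi_n(\cG_\delta)/\delta$ is non-increasing, so the modulus hypothesis of \cref{thm:roc} is met.

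Combining,
$$\|\hat f_n - f_\star\|_2^2 \lesssim dN^{-4s} + (dN^2 \log n / n)^{\gamma/(\gamma+1)},$$
and balancing by setting $dN^{-4s} \asymp (dN^2/n)^{\gamma/(\gamma+1)}$ solves to $N_\star \asymp (d^{1/\gamma}\,n)^{1/(2 + 4s(1+\gamma)/\gamma)}$. Substituting back and simplifying via the identity $4s/(2 + 4s(1+\gamma)/\gamma) = 2s/(1 + 2s(1+\gamma)/\gamma)$ reproduces the claimed rate, with the stated $(\log n)^{\gamma/(\gamma+1)}$ absorbed from the stochastic term. The main obstacle is the sharp linear-in-$d$ bias estimate: the naive bound $\|\sum_j \epsilon_j\|_\infty^2 \le d^2 N^{-4s}$ gives the wrong $d^2$ power and spoils the final $d$-exponent, so one must carefully exploit the additive centering and the bounded-density hypothesis to kill the cross terms. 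The remainder is essentially bookkeeping around \cref{thm:gamma_mixing}, \cref{thm:roc}, and standard neural-net approximation/capacity bounds.
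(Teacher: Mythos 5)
Your proposal follows essentially the same route as the paper: the same linear-in-$d$ bias estimate (via centered univariate approximants, the bounded-density hypothesis, and vanishing cross terms under the Lebesgue product measure), the same $dN^2\log(\cdot)$ entropy bound for the additive network class, the same VC branch ($\alpha=0$, $\tilde r=2$) of Theorem \ref{thm:gamma_mixing}, and the same balancing that yields the stated rate. You also correctly flag the central trap of the naive $d^2 N^{-4s}$ bias bound and how the centering/product-measure structure repairs it. One small mismatch: you invoke Theorem \ref{thm:roc} directly, but that theorem assumes the risk minimizer $f_\star$ lies in the hypothesis class, which fails for a sieve of networks; the paper instead uses the adapted Proposition \ref{prop:modifyroc}, which centers the peeling at the best approximant $f_n$ and carries the approximation bias $\omega_n$ through the tail bound (hence the condition $r_n\omega_n\lesssim 1$). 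Your ``bias-variance decomposition'' $\|\hat f_n - f_\star\|_2^2 \lesssim \|\bar f - f_\star\|_2^2 + \delta_n^2$ is exactly what that modified proposition delivers, so the idea is right, but as written the reference to Theorem \ref{thm:roc} alone does not close the argument — you would need to re-derive the sieve version of the peeling bound as the paper does.
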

Let us now compare the rate obtained here with that for the i.i.d. case which is $dn^{-2s/(2s + 1)}$. 
Taking $\gamma \uparrow \infty$, the rate obtained in Theorem \ref{thm:additive_rate} coincides with the i.i.d. rate and consequently we can recover the minimax optimal rate of estimation for independent observations in the limit. For a fixed $\gamma < \infty$, we unravel an interesting dichotomy; although the power of $n$ is compromised due to dependency (it is smaller than what it would have been for the i.i.d. case), the power of $d$ is boosted as it is strictly $< 1$. At first glance, it may be tempting to think that there is some sort of blessing of dimensionality, i.e. higher the dependence, the lower the effect of dimension on the estimation. However, as we argue now this is not the case. For simplicity, assume $d = n^\alpha$ for some $\alpha < 1$. Then (ignoring the log factor), Theorem \ref{thm:additive_rate} provides the following rate of convergence: 
$$
\|\hat f_n - f_0\|_2^2 \approx O_p\left(n^{\alpha - \frac{2s}{2s\left(\frac{1 + \gamma}{\gamma}\right) + 1}\left(1 + \frac{\alpha}{\gamma}\right)}\right) = O_p\left(n^{- \frac{2s(1 - \alpha) - \alpha}{2s\left(\frac{1 + \gamma}{\gamma}\right) + 1}}\right) \,.
$$
Now, under the same setup, the rate for under i.i.d. observations simplifies to:
$$
\|\hat f_n - f_0\|_2^2 \approx O_p(dn^{-\frac{2s}{2s + 1}}) = O_p\left(n^{-\frac{2s(1 - \alpha) - \alpha}{2s + 1}}\right) \,.
$$
Therefore it is immediate that we overall pay a price for the dependency through a factor of $(1+\gamma)/\gamma$ in the rate of convergence. Whether this factor is optimal or not is an intriguing question and is beyond the scope of this paper. 

\subsection{Shape constrained multivariate convex least squares}
\label{sec:non_param_reg_application}
Consider a strictly stationary sequence $(X_1,\xi_1),\ldots , (X_n,\xi_n)$ satisfying $\beta/\gamma$-mixing property (specific assumptions to be provided in the results), where marginally $X\sim P_X$ has a continuous density bounded away from $0$ and supported on some compact, convex body $\Omega\subset \R^d$, and $\xi_i$s are assumed bounded. We observe data from the regression model $Y_i=f_\star(X_i)+\xi_i$, $\EE[\xi_i|X_i]=0$, see \eqref{eq:nonpiid}, where $f_\star$ is a convex function on $\Omega$ and is bounded by $1$. The goal of this section is to \emph{estimate $f_\star$ using a natural least squares approach (below) and quantify the rate of convergence under mixing assumptions} instead of the well understood i.i.d. setting. 

Nonparametric convex regression has a long history in the statistics \cite{hildreth1954point,hanson1976consistency,Min2016,groeneboom2001estimation} and economics \cite{kuosmanen2008representation,matzkin1991semiparametric,varian1982nonparametric,banker1992maximum} literature. Our estimator of choice is the bounded convex least squares (BCLS) estimator \cite{han2016multivariate,Seijo2011,guntuboyina2015global} defined as follows: 
$$\hat{f}_n\in\argmin_{f\in \cF}\,\, \frac{1}{n}\sum_{i=1}^n (Y_i-f(X_i))^2,$$
where $\cF$ is the class of convex functions on $\Omega$ bounded by $1$. The above definition fixes $\hat{f}_n$ only at the design points. It can be extended to the whole of $\omega$ by an interpolation technique; see \cite[Equations 3.2---3.4]{han2016multivariate} and \cite[Lemmas 2.3 and 2.4]{Seijo2011}; also see \cref{sec:pfshapeconv}. While the computational and statistical properties of $\hat{f}_n$ are well understood in the i.i.d. setting, we will show that interestingly the same bounds can be achieved even under long-range dependence provided $d$ is large enough. To wit, we focus on polytopal domains $\Omega$, i.e., 
$\Omega=\{x\in\R^d:\, a_i\le v_i^{\top}x\le b_i,\, \mbox{for}\, i=1,2,\ldots ,F\},$
for some positive integer $F$ (not changing with $n$), unit vectors $v_1,\ldots ,v_F$, and real numbers $a_1,\ldots , a_F,b_1,\ldots , b_F$. A typical example is $\Omega=[0,1]^d$.

\begin{theorem}\label{thm:shapeconv}
 (a). (Worst case risk) Assume that the DGP is strictly stationary $\beta$-mixing with $\beta_q\le c(1+q)^{-\beta}$ for some $c\ge 0$, $\beta>0$. 
    Then provided $\Omega$ is a polytope as described above, $d>4$ and $\beta\in (2/(d-2),\infty)\setminus\{1\}$, we have:
    \begin{align}
        \EE\lVert \hat{f}_n-f_\star\rVert^2_{L_2(P_X)}\lesssim n^{-\frac{2}{d}}(\log{(n)})^{d+1}.
    \end{align}

    (b). (Adaptation) Assume that the DGP is strictly $\gamma$-mixing with $\gamma_q\le c(1+q)^{-\gamma}$ for some $c\ge 0$ and $\gamma>0$. Suppose $d>8$, $\gamma>(4/(d-4),\infty)\setminus \{1\}$, and $f_\star$, $\Omega$ satisfy the following property: there exists $k$ simplices $\Delta_1,\ldots ,\Delta_k$ such that $\cup_{j=1}^k \Delta_j$ and $f_\star$ is affine on each $\Delta_j$.
    we then have
    \begin{align}
        \lVert \hat{f}_n-f_\star\rVert^2_{L_2(P_X)}\,=\, O_P\left(n^{-\frac{4}{d}}(\log{(n)})^{d+1}\right).
    \end{align}
\end{theorem}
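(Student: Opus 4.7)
The overall plan is to feed known bracketing–entropy bounds for bounded convex functions on a polytopal domain into the general maximal inequalities and rate theorems from \cref{sec:mainres,sec:erm}. Throughout, let $\cF$ denote the class of convex functions on $\Omega$ bounded by $1$. The starting observation, which underpins both parts, is the bracketing–entropy estimate
$$
\benr{u}\ \lesssim\ u^{-d/2}\qquad \text{for every}\ r\in[1,\infty),
$$
up to logarithmic factors; this follows from the Bronshtein–type covering estimates refined in \cite{guntuboyina2015global,han2016multivariate}, with the polytopal structure of $\Omega$ preventing the boundary pathologies that would otherwise make $L_\infty$-bracketing infinite. Since $\|\xi\|_\infty<\infty$, the product class $\{\xi\cdot(f-f_\star):f\in\cF\}$ inherits the same bracketing-entropy exponent, so the maximal inequalities of \cref{sec:mainres,sec:erm} apply verbatim after renaming.

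For part (a), I would combine the standard least–squares basic inequality
$$
\|\hat{f}_n-f_\star\|_{L_2(P_n)}^2\ \le\ \frac{2}{n}\sum_{i=1}^n \xi_i\bigl(\hat{f}_n-f_\star\bigr)(X_i)
$$
with a dyadic peeling argument on the shells $\{f\in\cF:2^{-t-1}<\|f-f_\star\|_{L_2(P_X)}\le 2^{-t}\}$, and on each shell control the empirical process via \cref{cor:nondonsk_1} with $\alpha=d/2$, $\sigma\sim 2^{-t}$ and Orlicz index $r$. Taking $r\uparrow\infty$, the threshold $r/(r-2)\downarrow 1$ and the competing cutoff $r(1+\beta)/(\beta(r-1))\downarrow 1+1/\beta$, so the hypothesis $\beta>2/(d-2)$ is precisely what places $\alpha=d/2$ into the ``large $\alpha$'' branch of the corollary for every sufficiently large $r$. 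The shell bound is $n^{1/2-2/d}$, and the resulting peeling equation $\sqrt{n}\delta_n^2\sim n^{1/2-2/d}$ yields $\|\hat{f}_n-f_\star\|_{L_2(P_X)}^2\lesssim n^{-2/d}$ up to logarithmic factors; the bounded design density of $P_X$ lets us move freely between empirical and population $L_2$ norms.

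For part (b) the additional ingredient is the local bracketing–entropy bound for bounded convex functions around a piecewise affine center: when $f_\star$ is affine on each of finitely many simplices tiling $\Omega$,
$$
\bentgd{u}\ \lesssim\ (\delta/u)^{d/2}\qquad\text{for}\ 0<u\le\delta,
$$
up to logarithmic factors, where $\cG_\delta=\{f-f_\star:f\in\cF,\ \|f-f_\star\|_{L_2(P_X)}\le\delta\}$. This scale–invariant local entropy is the content of the adaptation analysis in \cite{kur2019optimality}. With this bound in hand I would invoke \cref{cor:adaptbd} with $\alpha=d/2$, $\tilde r=2$, quadratic curvature $\cE(f)=\|f-f_\star\|_{L_2(P_X)}^2$ inherited from \eqref{eq:quad_curvature}, and the squared loss (which is Lipschitz on the bounded class). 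The assumption $\alpha>2(1+\gamma^{-1})$ of that corollary simplifies to $\gamma>4/(d-4)$, which together with $d>8$ is precisely the hypothesis of part (b), and the conclusion $n^{-2/\alpha}=n^{-4/d}$ matches the claimed rate.

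The main technical obstacles I anticipate are: (i) translating the existing covering/metric–entropy estimates for bounded convex functions into bracketing form with the right constants in $L_r$ norm ($r>2$), together with the localized $(\delta/u)^{d/2}$ analogue around piecewise affine centers—these appear in the literature for $L_2$ or for metric (covering) entropy and require some bookkeeping to port; and (ii) carrying out the peeling in part (a) directly at the level of \cref{cor:nondonsk_1}, since the $\beta$-mixing hypothesis precludes any direct appeal to the rate theorem \cref{thm:roc} (which requires $\gamma$-mixing). Contraction/symmetrization for Rademacher complexity is unavailable under mixing, so the passage from $(f-f_\star)$ to $\xi(f-f_\star)$ must be handled by direct $L_r$-bracketing of the product class, which is immediate from the uniform boundedness of $\xi$.
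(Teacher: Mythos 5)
Your proposal is correct and tracks the paper's proof closely.  For part (b) it is exactly what the paper does: combine the local bracketing–entropy estimate for convex functions around a piecewise-affine center (this is exactly \cref{prop:kurlemma} with $f_0$ affine on each $\Delta_j$, giving the $(\delta/u)^{d/2}$ scaling up to a $(\log(1/u))^{d+1}$ factor) and feed it into \cref{cor:adaptbd} with $\alpha=d/2$; your translation of the hypothesis $\alpha>2(1+\gamma^{-1})$ into $\gamma>4/(d-4)$, $d>8$ is the same algebra the paper performs.

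For part (a) your route is the same in substance, and you have correctly spotted the one delicate point: the paper's statement ``invoke \cref{thm:roc}'' cannot be taken literally, since \cref{thm:roc} is a $\gamma$-mixing statement while part (a) only grants $\beta$-mixing. What the paper actually uses is the mixing-agnostic peeling lemma \cref{prop:modifyroc} (whose hypotheses are simply bounds $\phi_{n,1},\phi_{n,2}$ on the two localized processes, obtained here from \cref{cor:nondonsk_1}), which is the same dyadic peeling you sketch.  Two small refinements to your phrasing. First, ``taking $r\uparrow\infty$'' should be ``fix $r$ large enough, with the choice depending on $\beta$'': the implied constant in \cref{cor:nondonsk_1} depends on $r$, so $r$ must stay fixed, and what you are really using is that for any $\beta>2/(d-2)$ there exists a \emph{fixed} $r>2$ placing $d/2$ in the large-$\alpha$ branch. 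Second, the paper splits into $\beta<1$ (use \cref{cor:nondonsk_1}, part 2, after choosing $r$ so that $d/2 > r(1+\beta)/(\beta(r-1))$) and $\beta>1$ (use part 1, after choosing $r$ so that $\beta>r/(r-2)$), which is why the statement excludes $\beta=1$; you blend these into one limiting argument, which gives the right answer for each fixed $\beta\neq 1$ but elides why the boundary is excluded.  Finally, your use of the least-squares basic inequality phrased in the empirical $L_2(P_n)$ norm introduces an empirical-to-population transition that the paper avoids by working directly with the population excess risk inside \cref{prop:modifyroc}; your appeal to ``bounded design density'' isn't quite the right justification (under dependence the empirical and population norms are not comparable pointwise), but the modulus-of-continuity bound $\phi_{n,1}$ you already invoke is precisely what closes that gap, so the argument is salvageable as stated.
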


The above result is interesting from two fronts.

(i) Note that the $n^{-2/d}$ rate in part (a) is the optimal achievable rate for the BCLS estimator as matching lower bounds have been established in \cite[Theorem 3.1]{kur2020convex}. This highlights the optimality of our upper bounds in a worst case sense even under $beta$-mixing type dependence in the long-range regime.

(ii) The adaptation rate of $n^{-4/d}$ in part (b) is also known to be the best achievable rate for the BCLS estimator \cite[Theorem 2.5]{kur2020convex}. This highlights that our bounds are tight enough to recover optimal tuning-free adaptive rates under long-range dependence.

We note that for technical reasons it is necessary to work with BCLS instead of the usual least squares (LS) because the usual  convex LS estimator is not well-behaved near the boundary of the domain $\Omega$ (see \cite[Section 4.1.4]{han2016multivariate} for details). 

\begin{rem}[On the domain assumption]\label{rem:samephencon}
The polytopal assumption on the domain is due to the fact that both the rates for BCLS and the minimax risk for estimating $f_\star$ are heavily domain sensitive even in the i.i.d. setting as has been pointed out in numerous influential works \cite{kur2019optimality,kur2020convex,han2016multivariate}. For example, if $\Omega$ is a general compact, smooth domain (see \cite[Section 1.2.2]{han2016multivariate} for definition) which includes, then the BCLS has a slower rate of convergence $n^{-2/(d+1)}$ for i.i.d. data. Strikingly, this rate is tight and it is achieved, for example, when $P_X$ is uniform on the $d$-dimensional Euclidean ball (see \cite[Remark 2]{kur2019optimality}, \cite[Section 2.2]{han2016multivariate}). For such domains in the presence of $\gamma$-mixing (as in \cref{thm:shapeconv}, part (b)), we can leverage \cref{cor:fasterl1} to get the rate $n^{-2/(d+1+2\gamma^{-1})}$ which converges (informally) to the minimax optimal rate $n^{-2/(d+1)}$ as $\gamma\to\infty$.
\end{rem}

\subsection{Estimating Wasserstein distances --- benefits of regularization under dependence}
\label{sec:OT}

Given two compactly supported absolutely continuous probability measures $\mu$ and $\nu$ on $\R^d$, define the $2$-optimal transport/Wasserstein distance between $\mu$ and $\nu$ as follows:
\begin{align}\label{eq:wass}
    \mathcal{W}_2^2(\mu,\nu):=\min_{\pi\in\Gamma(\mu,\nu)} \int  \lVert x-y\rVert^2\,d\pi(x,y),
\end{align}
where $\Gamma(\mu,\nu)$ is the space of all probability measures on $\R^d\times \R^d$ with the first and second marginals fixed at $\mu$ and $\nu$ respectively. The existence of an optimizer in \eqref{eq:wass} follows from \cite[Theorem 4.1]{Villani2009}. The quantity $\mathcal{W}_2(\mu,\nu)$ has attracted a significant amount of attention in recent years with applications in structured prediction \cite{frogner2015learning,luise2018differential}, image analysis \cite{bonneel2011displacement,glaunes2004diffeomorphic}, nonparametric testing \cite{boeckel2018multivariate,deb2023multivariate}, generative modeling \cite{bernton2017inference,mohamed2016learning}, etc. Accordingly, the goal of this section is to estimate $\mathcal{W}_2(\mu,\nu)$. 

In practice, an important hurdle in estimating $\mathcal{W}_2(\mu,\nu)$ is that explicit functional or parametric forms of $\mu$ and $\nu$ are not usually available, and instead, the practitioner only has access to samples from $\mu$ and $\nu$. In other words, suppose that $\{X_1,\ldots ,X_n\}$ and $\{Y_1,\ldots ,Y_n\}$ are strictly stationary and $\beta$-mixing with stationary distributions $\mu$ and $\nu$ respectively. The goal is to estimate $\mathcal{W}_2(\mu,\nu)$ based on the observed samples. The case where $X_i$'s and $Y_j$'s are independent samples is well understood thanks to many quantitative results in the recent years, see e.g. \cite{chizat2020faster,deb2021rates,groppe2023lower,hundrieser2022empirical,manole2021sharp} and the references therein. The case of strictly stationary $\beta$-mixing sequences with summable mixing coefficients has also been considered, see e.g.~\cite{bernton2017inference,goldfeld2022statistical}, although with different goals in mind. However, to the best of our knowledge, the more interesting case where the $\beta$-mixing coefficients are not summable remains unexplored. In the sequel, we will describe and analyze the two most popular estimators of $\mathcal{W}_2(\mu,\nu)$ which will help us compare the two estimators across different regimes of dependence and dimension $d$.

We now describe the two estimators of $\mathcal{W}_2(\mu,\nu)$. First, we define 
$$\hum:=\frac{1}{n}\sum_{i=1}^n \delta_{X_i}, \quad \mbox{and}\quad \hun:=\frac{1}{n}\sum_{j=1}^n \delta_{Y_j},$$
which are the natural empirical measures based on the samples. Both the estimators will be based on the plug-in principle which simply suggests replacing $\mu$ and $\nu$ with $\hum$ and $\hun$ respectively, in an appropriate objective function.

\begin{itemize}
    \item \textbf{\emph{Standard plug-in estimator}}: The first estimator is $\mathcal{W}_2(\hum,\hun)$ defined as in \eqref{eq:wass}. In the sequel, we will provide quantitative finite sample bounds on the quantity $$\EE|\mathcal{W}_2^2(\hum,\hun)-\mathcal{W}_2^2(\mu,\nu)|.$$ Although $\mathcal{W}_2(\hum,\hun)$ is a very natural estimator, it is computationally challenging with a best known worst-case complexity of $\tilde{O}(n^3)$ when $m=n$ (up to logarithmic factors); see e.g. the Hungarian algorithm \cite{jonker1988shortest}. This motivates the need for computationally faster estimators with comparable statistical accuracy. 
    \item \textbf{\emph{Regularized plug-in estimator with Sinkhorn algorithm}}: Motivated by the computational issues mentioned above, we follow \cite{chizat2020faster} and consider the entropic regularized optimal transport problem as a way to estimate $\mathcal{W}_2(\mu,\nu)$. To wit, consider the entropic optimal transport cost:
    \begin{equation}\label{eq:enwass}
    \mathcal{T}_{\vep,2}(\mu,\nu):=\min_{\gamma\in\Gamma(\mu,\nu)} \left(\int\int \lVert x-y\rVert^2\,d\gamma(x,y)+2\vep \mathrm{KL}(\gamma|\mu\otimes\nu)\right),
    \end{equation}
    where $\mathrm{KL}(\cdot|\cdot)$ denotes the standard Kullback-Leibler divergence, and $\vep>0$. Also define the associated Sinkhorn divergence:
    \begin{equation}\label{eq:sinkwass}
    \mathcal{S}_{\vep,2}(\mu,\nu)=\mathcal{T}_{\vep,2}(\mu,\nu)-\frac{1}{2}\left(\mathcal{T}_{\vep,2}(\mu,\mu)+\mathcal{T}_{\vep,2}(\nu,\nu)\right).
    \end{equation}
    Note that $\mathcal{S}_{0,2}(\mu,\nu)=\mathcal{W}_2^2(\mu,\nu)$. It is well-known that the Sinkhorn divergence defined in \eqref{eq:sinkwass} is always non-negative~\cite{genevay2018learning}. The plug-in principle would then suggest using $\mathcal{S}_{\vep,2}(\hum,\hun)$ with a ``small" $\vep$ for estimating $\mathcal{W}_2^2(\mu,\nu)$. This has inspired a line of work studying \eqref{eq:enwass} for $\vep\to 0$ (see \cite{deb2023wasserstein,carlier2017convergence,pooladian2021entropic}). A major breakthrough for solving the optimization problem in \eqref{eq:enwass} can be attributed to \cite{cuturi2013sinkhorn} where the author proposed the Sinkhorn algorithm or IPFP (iterative proportional fitting procedure) for solving \eqref{eq:enwass}; also see \cite{altschuler2017near,dvurechensky2018computational} for related work. As \eqref{eq:enwass} cannot be solved explicitly, we will instead construct our estimator based on an appropriate number of Sinkhorn iterates as described below.

    Set $v_j^{(0)}=0$ for $j\in [n]$ and for $k\ge 1$, $i\in [m]$, let
    $$u_i^{(k)}:=-\vep \log\left(n^{-1}\sum_{j=1}^n \exp\left(\vep^{-1}(v_j^{(k-1)}-\lVert X_i-Y_j\rVert^2)\right)\right), \quad \mbox{and}$$ 
    $$v_j^{(k)}:=-\vep \log\left(m^{-1}\sum_{i=1}^m \exp\left(\vep^{-1}(u_i^{(k)}-\lVert X_i-Y_j\rVert^2)\right)\right).$$
    Then the approximation for the estimator $\mathcal{T}_{\vep,2}(\hum,\hun)$ after $k$ iterations of the Sinkhorn algorithm is given by $\mathcal{T}_{\vep,2}^{(k)}(\hum,\hun):=2m^{-1}\sum_{i=1}^m u_i^{(k)}+n^{-1}\sum_{j=1}^n v_j^{(k)}$. One can similarly define $\mathcal{S}_{\vep,2}^{(k)}(\hum,\hun)$ as in \eqref{eq:sinkwass} with $\mathcal{T}_{\vep,2}(\hum,\hun)$ replaced by  $\mathcal{T}_{\vep,2}^{(k)}(\hum,\hun)$. We shall use $\mathcal{S}_{\vep,2}^{(k)}(\hum,\hun)$ as our estimator for $\mathcal{W}_2^2(\mu,\nu)$ for an appropriately chosen $k$ and $\vep$, and provide finite sample bounds on 
    $$\EE\big|\mathcal{S}_{\vep,2}^{(k)}(\hum,\hun)-\mathcal{W}_2^2(\hum,\hun)\big|.$$
\end{itemize}

The goal of this section is to address the following question:

\begin{center}
\emph{How do the above estimators $\mathcal{W}_2^2(\hum,\hun)$ and $\mathcal{S}_{\vep,2}^{(k)}(\hum,\hun)$ compare based on their statistical accuracy and computational complexity in the presence of $\beta$-mixing?}
\end{center}

\begin{theorem}\label{th:otbalance}
    Assume that $\mu$ and $\nu$ are compactly supported on a convex set, absolutely continuous distributions on $\R^d$.  Fix some $d\ge 4$ and say $\beta_q\leq C(1+q)^{-\beta}$ for some $\beta,C>0$,  $q\in \mathbb{N}\cup \{0\}$. Then the following conclusions hold: 

    \begin{itemize}
        \item[(I)] The unregularized estimator $\mathcal{W}_2^2(\hum,\hun)$ satisfies 
        \begin{equation}\label{eq:wassproof}
        \EE\Bigg|\mathcal{W}_2^2(\hum,\hun)-\mathcal{W}_2^2(\mu,\nu)\bigg|\lesssim  \begin{cases} \hfill n^{-\frac{2}{d}} & \mbox{if}\ \beta> \frac{2}{d-2} \\ n^{-\frac{\beta}{\beta+1}} & \mbox{if}\ \beta<\frac{2}{d-2} \end{cases}.
        \end{equation}
        \item[(II)] Next suppose that both $\mu$ and $\nu$ have a finite Fisher information. Also assume that the Fisher information of the Wasserstein geodesic between $\mu$ and $\nu$ is finite (for all relevant technical definitions, see \cref{sec:pfotbalance}). The regularized estimator $\mathcal{S}_{\vep,2}^{(k)}(\hum,\hun)$ then satisfies 
        \begin{equation}\label{eq:sinkproof}
        \EE\Bigg|\mathcal{S}_{\vep_n,2}^{(k_n)}(\hum,\hun)-\mathcal{W}_2^2(\mu,\nu)\bigg|\lesssim  \begin{cases} \hfill n^{-\frac{2}{d}} & \mbox{if}\ \beta> \frac{2}{d-2} \\ n^{-\frac{\beta}{\beta+1}} & \mbox{if}\ \beta<\frac{2}{d-2} \end{cases},
        \end{equation}
        where 
        $$(k_n,\vep_n)=\begin{cases}\hfill \left(n^{\frac{3}{d}}, n^{-\frac{1}{d}}\right) & \mbox{if}\ \beta> \frac{2}{d-2} \\ \left(n^{\frac{3\beta}{2(\beta+1)}}, n^{-\frac{\beta}{2(\beta+1)}}\right) & \mbox{if}\ \beta<\frac{2}{d-2} \end{cases}.$$
    \end{itemize}
    With the above choices of $(k_n,\vep_n)$, it follows that $\mathcal{S}^{(k_n)}_{\vep_n,2}(\hum,\hun)$ can be computed in time $\tilde{O}(n^{2+\frac{3}{d}})$ for $\beta>2/(d-2)$, and in time $\tilde{O}(n^{2+\frac{3\beta}{2(\beta+1)}})$ for $\beta<2/(d-2)$.
\end{theorem}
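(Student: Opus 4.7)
My plan for Part~(I) is to reduce the estimation problem to a maximal inequality over the class of c-concave Kantorovich potentials, to which \cref{cor:infmainres} applies directly. By Kantorovich duality for the squared Euclidean cost,
\begin{equation*}
\mathcal{W}_2^2(\mu,\nu) \;=\; \sup_{\varphi}\left\{\int \varphi\,d\mu + \int \varphi^c\,d\nu\right\},
\end{equation*}
where $\varphi^c$ is the c-conjugate; on the common compact convex support of $\mu$ and $\nu$, the admissible potentials may be taken to be uniformly bounded and Lipschitz with constants depending only on the diameter of the support. Writing $\mathcal{G}$ for this class, a standard comparison argument yields
\begin{equation*}
\bigl|\mathcal{W}_2^2(\hum,\hun) - \mathcal{W}_2^2(\mu,\nu)\bigr| \;\le\; \sup_{\varphi\in\mathcal{G}}|(\hum-\mu)(\varphi)| \;+\; \sup_{\varphi\in\mathcal{G}}|(\hun-\nu)(\varphi^c)|,
\end{equation*}
and since $\varphi^c \in \mathcal{G}$ as well, it suffices to control a single empirical process indexed by $\mathcal{G}$. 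Bronshtein's classical bracketing entropy estimate for bounded convex functions on a bounded convex domain gives $\benf{\delta}\lesssim \delta^{-d/2}$, so the hypotheses of \cref{cor:infmainres} hold with $\alpha = d/2$ and $\sigma\asymp 1$. The critical threshold $\beta = 1/(\alpha-1) = 2/(d-2)$ reproduces exactly the dichotomy stated in the theorem, and dividing the resulting bound on $\EE\sup_{\varphi\in\mathcal{G}}|\mathbb{G}_n(\varphi)|$ by $\sqrt{n}$ gives the claimed rates $n^{-2/d}$ and $n^{-\beta/(\beta+1)}$.

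For Part~(II), I would split the error into three pieces,
\begin{equation*}
\bigl|\mathcal{S}_{\vep,2}^{(k)}(\hum,\hun) - \mathcal{W}_2^2(\mu,\nu)\bigr| \;\le\; T_1 + T_2 + T_3,
\end{equation*}
where $T_1 := |\mathcal{S}_{\vep,2}^{(k)}(\hum,\hun) - \mathcal{S}_{\vep,2}(\hum,\hun)|$ is the Sinkhorn optimization error, $T_2 := |\mathcal{S}_{\vep,2}(\hum,\hun) - \mathcal{S}_{\vep,2}(\mu,\nu)|$ the statistical error, and $T_3 := |\mathcal{S}_{\vep,2}(\mu,\nu) - \mathcal{W}_2^2(\mu,\nu)|$ the regularization bias. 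I would control $T_1$ via the linear contraction of the Sinkhorn operator in the Hilbert projective metric \cite{altschuler2017near}, giving $T_1 \lesssim e^{-ck}$ for some constant $c>0$; $T_3$ via the second-order expansion of the Sinkhorn divergence under finite Fisher information on $\mu$, $\nu$ and their Wasserstein geodesic \cite{chizat2020faster}, giving $T_3 \lesssim \vep^2$; and $T_2$ by applying \cref{cor:infmainres} to the class of optimal entropic potentials. The crucial point, in contrast to Brenier potentials, is that Sinkhorn potentials are analytic with $C^k$ norms controlled by inverse powers of $\vep$, so the induced function class has $L_\infty$ bracketing entropy mildly (polylogarithmically) behaved in $\delta$ with polynomial prefactors in $\vep^{-1}$.

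The final step is to balance $T_1, T_2, T_3$ against the Part~(I) rate. Matching $T_3 \asymp \vep_n^2$ with the Part~(I) rate forces $\vep_n = n^{-1/d}$ in the short-range case and $\vep_n = n^{-\beta/(2(\beta+1))}$ in the long-range case, while $k_n = n^{3/d}$ (resp.\ $n^{3\beta/(2(\beta+1))}$) makes $T_1$ exponentially small. Each Sinkhorn iterate costs $O(n^2)$ kernel evaluations, so the total runtime is $\tilde O(n^2 k_n)$, reproducing the complexity bounds in the theorem. The main obstacle I anticipate is a careful tracking of the $\vep$-dependence of the statistical term $T_2$: one must verify that the $L_\infty$ bracketing entropy of the Sinkhorn potential class, inserted into \cref{cor:infmainres} and then evaluated at $\vep = \vep_n$, yields an empirical process bound that stays at or below the target rate after dividing by $\sqrt{n}$. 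Since the polynomial dependence on $\vep^{-1}$ is exactly what determines whether the regularized estimator preserves the Part~(I) rate while achieving better computational complexity, this book-keeping is where the trade-off between statistical accuracy and computational cost described in the theorem is most directly reflected.
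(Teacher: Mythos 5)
Your Part~(I) is essentially the paper's argument: dualize, reduce to an empirical-process bound over a class of bounded, Lipschitz convex/concave potentials, invoke Bronshtein's $L_\infty$ bracketing bound $\delta^{-d/2}$, and apply \cref{cor:infmainres} with $\alpha=d/2$, $\sigma\asymp 1$; the threshold $\beta=2/(d-2)$ and the two rates fall out exactly as you describe. The only wrinkle you gloss over is the normalization: the paper shifts potentials by a constant (which leaves the dual objective invariant) before claiming a uniform $L_\infty$ bound, and then uses $(f^*)^*=f$ to transfer Lipschitzness to both legs of the pair. Minor, but worth stating explicitly.

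For Part~(II), your three-term split $T_1+T_2+T_3$, the bias bound $T_3\lesssim\vep^2$ via finite Fisher information of $\mu,\nu$ and the geodesic, and the balancing $\vep_n^2\asymp$ Part~(I) rate are all in line with the paper. Where you diverge is in how you control the statistical term $T_2$, and this is a genuine gap. You propose to exploit analyticity of the Sinkhorn potentials to get a VC-type (polylogarithmic) bracketing entropy with polynomial prefactors in $\vep^{-1}$; you then acknowledge that tracking the $\vep^{-1}$ dependence at $\vep=\vep_n\to 0$ is the ``main obstacle.'' It is: with $\vep_n$ as small as $n^{-1/d}$, those prefactors can easily overwhelm the logarithmic entropy and spoil the rate, and you have not shown they do not. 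The paper sidesteps this entirely. Its key step, via \cite[Theorem 3.3]{groppe2023lower}, is that the optimal entropic potentials $\mathfrak{F}_\vep$ and $\mathfrak{F}_\vep^{(\vep,\hum)}$ are contained in the class of bounded, $1$-Lipschitz, $1$-semiconcave functions on the compact support --- a class whose $L_\infty$ bracketing entropy is again $\delta^{-d/2}$, \emph{independent of $\vep$}. This makes $T_2$ obey exactly the same bound as Part~(I) and reduces the whole of Part~(II) to the trivial balancing you already describe. You should also note that the Sinkhorn divergence brings in the self-comparison terms $\mathcal{T}_{\vep,2}(\hum,\hum)$ and $\mathcal{T}_{\vep,2}(\hun,\hun)$, which need the same $\vep$-free treatment. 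Your route for $T_1$ (Hilbert metric contraction) also needs care: the contraction factor deteriorates as $\vep\to 0$, so ``$T_1\lesssim e^{-ck}$ for a constant $c>0$'' is only legitimate once the $\vep$-dependence of $c$ is accounted for; the paper instead invokes \cite[Proposition 2]{chizat2020faster}, which gives the needed polynomial-in-$k$ bound directly with the stated choices of $(k_n,\vep_n)$.
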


\begin{rem}[Benefits of entropic regularization under long-range dependence]\label{rem:benenreg}
    Based on \cref{th:otbalance}, we note that both $\mathcal{W}_2^2(\hum,\hun)$ and $\mathcal{S}_{\vep,2}^{(k)}(\hum,\hun)$ attain the same statistical upper bounds for estimating $\mathcal{W}_2^2(\mu,\nu)$. On the other hand, there is a marked difference in their computational complexities. As mentioned before, the worst case complexity for computing $\mathcal{W}_2^2(\hum,\hun)$ is $\tilde{O}(n^3)$. Note that the computational complexity of $\mathcal{S}_{\vep,2}^{(k)}(\hum,\hun)$ is always faster than $\tilde{O}(n^3)$.  In particular, informally speaking, as $\beta$ gets smaller with fixed $d$, the complexity of the regularized estimator $\mathcal{S}_{\vep,2}^{(k)}(\hum,\hun)$ also decreases and approaches $\tilde{O}(n^2)$. Similarly, if $d$ increases for fixed $\beta$, then again the computational complexity of  $\mathcal{S}_{\vep,2}^{(k)}(\hum,\hun)$ decreases and approaches $\tilde{O}(n^2)$. This suggests that $\mathcal{S}_{\vep,2}^{(k)}(\hum,\hun)$ can be a more favorable estimator than $\mathcal{W}_2^2(\hum,\hun)$, particularly under strong dependence or large values of $d$. We also note that the benefits of regularization for large $d$ was already pointed out in \cite{chizat2020faster}. Our main contribution in this section is to extract the benefits of dependence.
\end{rem}

\begin{rem}[Assumptions on $\mu$, $\nu$]\label{rem:assnext}
    There are a number of natural extensions of \cref{th:otbalance} possible with different assumptions on $\mu$, $\nu$, and other cost functions beyond the squared distance. A particularly interesting one is when say, one of $\mu$ or $\nu$, is supported on a discrete set of bounded size (in $n$). In this case, using the same proof technique as employed for proving \cref{th:otbalance}, coupled with \cite[Lemma 2.1]{hundrieser2022empirical}, it is possible to show 
    $$\EE\Bigg|\mathcal{W}_2^2(\hum,\hun)-\mathcal{W}_2^2(\mu,\nu)\bigg|\lesssim  \begin{cases} \hfill n^{-\frac{1}{2}} & \mbox{if}\ \beta> 1 \\ n^{-\frac{\beta}{\beta+1}} & \mbox{if}\ \beta<1 \end{cases}.$$ 
\end{rem}

\subsection{Classification under Mammen-Tsybakov margin condition}\label{sec:classific}
In classification problems, one observes $(X_1,Y_1),\ldots , (X_n,Y_n)$ where each $(X_i,Y_i)\sim P$ (unknown), a probability measure supported on $\R^d\times \{0,1\}$. We will write the marginal of $X$ as $P_X$ and we refer to \emph{functions $g:\R^d\to\{0,1\}$ as classifiers}. The goal is to find $g$ that minimizes the \emph{generalization error} defined as 
\begin{align}\label{eq:classgener}
\cE(g):=\mathbb{P}(Y\neq g(X))-\inf_g \mathbb{P}(Y\neq g(X)).
\end{align}
It is well known that the infimum above is attained by the Bayes optimal classifier $g_\star(x):=\mathbf{1}(\EE[Y|X=x]\ge 1/2)$ (see \cite{devroye2013probabilistic}). As $P$ is unknown, neither $\EE[Y|X=x]$ nor $\cE(\cdot)$ can be computed explicitly. 

There are two predominant approaches to tackling the classification problem --- (a) Plug-in principle where the strategy is to estimate $\EE[Y|X=x]$ under smoothness assumptions (see), and (b) ERM where $\cE_P(\cdot)$ is replaced by its empirical analog $\cE_{\mathbb{P}_n}(\cdot)$; see \cite{devroye2013probabilistic,vapnik1999nature,tsybakov2004optimal,Audibert2007} and the references therein. In keeping with the rest of the paper, we will adopt the latter approach here and relax the standard i.i.d. assumption by allowing for dependence in the DGP via mixing assumptions.

As the second term in \eqref{eq:classgener} is free of $g$, a natural estimator can be constructed by minimizing the empirical training error, i.e., 
$$\hat{g}_n\in \argmin_{g\in\mathcal{G}}\,\,\frac{1}{n}\sum_{i=1}^n \mathbf{1}(Y_i\neq g(X_i)),$$
where $\mathcal{G}$ is some prespecified hypothesis class. We assume $g_\star\in\mathcal{G}$ to avoid digressions. The global ERM defined above has been studied previously in the i.i.d. setting; see e.g., \cite{GineKoltchinskii2006,Massart2006,Kol06,tsybakov2004optimal} but to the best of our knowledge, no quantitative bounds on the generalization error $\cE_P(\hat{g}_n)$ exists when the independence assumption in the DGP is removed. The result below aims to fill this gap in the literature. 

Before stating the main result, we require an important assumption, namely the ``Mammen-Tsybakov margin (low noise) condition'' (see \cite{mammen1999smooth,tsybakov2004optimal}): there exists $c>0$ such that 
\begin{equation}\label{eq:margincon}
    \cE(g)\ge c\lVert g-g_\star\rVert_{L_2(P_X)}^2.
\end{equation}
Following the notation of \cite{tsybakov2004optimal}, \eqref{eq:margincon} assumes the margin condition with $\kappa=2$. We refer the reader to the discussion around \cite[Proposition 1]{tsybakov2004optimal} for motivation on the margin condition and its relation with the behavior of $\EE[Y|X=x]$ near $1/2$. For larger values of $\kappa$, it is possible to derive faster rates (even under dependence) when $\mathcal{G}$ is of low complexity (see \cite{tsybakov2004optimal,GineKoltchinskii2006}). However to keep the discussion streamlined, we only focus on $\mathcal{G}$ of high complexity and $\kappa=1$ in the following result.

\begin{theorem}\label{prop:classerr}
Suppose that the DGP is stationary $\gamma$-mixing with $\gamma_q\le C(1+q)^{-\gamma}$ for some $C>0$. Assume that $\mathcal{G}:=\{\mathbf{1}(C):\ C\in\mathcal{C}\}$ and $\mathcal{C}$ satisfies the following entropy condition: there exists $\alpha>1+\gamma^{-1}$ such that for all $u>0$, $\benoc{u}\le Lu^{-\alpha}$. Finally assume that $\mathcal{G}$ satisfies the margin condition \eqref{eq:margincon}. Then given any $\{r_n\}){n\ge 1}$ such that $r_n^2 n^{\frac{1}{\alpha+1+\gamma^{-1}}}\to\infty$, we have  $$\cE(\hat{g}_n)=o_P(r_n^2).$$
\end{theorem}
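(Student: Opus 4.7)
The plan is to invoke Theorem~\ref{thm:roc} (the localized rate theorem) applied to $\hat g_n$ under the 0-1 loss, with the margin condition~\eqref{eq:margincon} supplying the required quadratic curvature. The principal work is to bound the local empirical process associated with the excess-loss class at $L_2(P)$-radius $\delta$, and then solve the rate equation $\Pi_n(\cdot)\lesssim \sqrt{n}\delta_n^2$.

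The key structural observation is that the excess loss
$$h_g(x,y) := \mathbf{1}(y\ne g(x)) - \mathbf{1}(y\ne g_\star(x)) = (2y-1)(g_\star(x) - g(x))$$
takes values in $\{-1,0,1\}$, so $h_g^2 = |h_g|$ pointwise and $\|h_g\|_{L_2(P)}^2 = \|h_g\|_{L_1(P)}$. In particular, the $L_2(P)$-ball of excess losses of radius $\delta$ coincides with the $L_1(P)$-ball of radius $\delta^2$. Moreover, any $L_1(P_X)$-bracket $[\mathbf{1}_{C_L}, \mathbf{1}_{C_U}]$ for an element $\mathbf{1}_C \in \cG$ with $P_X(C_U\setminus C_L)\le u$ lifts to an $L_1(P)$-bracket $[L,U]$ for $h_g$ by choosing the tighter endpoint according to the sign of $2y-1$; a direct check gives $\|U - L\|_{L_1(P)} = P_X(C_U\setminus C_L)\le u$. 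Hence the $L_1(P)$-bracketing entropy of the excess-loss class is at most $Lu^{-\alpha}$.

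We then apply Theorem~\ref{thm:gamma_mixing} to the localized excess-loss class with $r = \tilde r = 1$, $\sigma = \delta^2$, $\theta = 1$, $V = 0$, and $D \asymp L$. Since $\alpha > 1+\gamma^{-1} = \tilde r(1+\gamma^{-1})$, we fall in Case~3. Balancing the leading term $n^{1/(2(1+\gamma))}\,\delta^{1 - \alpha\gamma/(\gamma+1)}$ against $\sqrt n\,\delta_n^2$ yields $\delta_n^2 \asymp n^{-1/(\alpha + 1 + \gamma^{-1})}$ (using the identity $\alpha\gamma + \gamma + 1 = \gamma(\alpha + 1 + \gamma^{-1})$); a routine computation confirms that the remaining three Case~3 terms are of no larger order at $\delta = \delta_n$. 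Because every term of $\Pi_n$ has a non-positive exponent in $\delta$, the map $\delta \mapsto \Pi_n/\delta^t$ is non-increasing for every $t \in (0, 2)$, so the monotonicity hypothesis of Theorem~\ref{thm:roc} is satisfied. Theorem~\ref{thm:roc} then gives $\cE(\hat g_n) = O_P(\delta_n^2) = O_P(n^{-1/(\alpha + 1 + \gamma^{-1})})$, and Markov's inequality closes the argument: for any $\{r_n\}$ with $r_n^2/\delta_n^2 \to \infty$, i.e., $r_n^2 n^{1/(\alpha+1+\gamma^{-1})}\to\infty$, we have $\cE(\hat g_n) = o_P(r_n^2)$.

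The main technical burden is the bookkeeping in Case~3 of Theorem~\ref{thm:gamma_mixing}: one must verify that the candidate rate $\delta_n^2 = n^{-1/(\alpha+1+\gamma^{-1})}$ simultaneously dominates all four terms relative to $\sqrt n \,\delta_n^2$, which reduces to checking the exponents of $n$ and $\delta$ individually. A secondary subtlety is that Theorem~\ref{thm:roc} is nominally stated under a Lipschitz hypothesis on $\ell$, which the 0-1 loss does not satisfy; but because we have direct $L_1(P)$-bracketing control on the excess-loss class (and not merely on $\cG$), the Lipschitz/contraction step in the rate theorem can be bypassed by invoking Theorem~\ref{thm:gamma_mixing} directly on the loss class.
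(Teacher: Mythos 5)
Your argument is correct and rests on the same ingredients as the paper's proof: margin-induced localization, $L_1(P)$-bracketing control of the excess-loss class inherited from $\cC$, \cref{thm:gamma_mixing} with $r=\tilde r=1$ and $\sigma=\delta^2$, and a peeling argument. The packaging differs. The paper runs the peeling explicitly in the style of Gin\'e--Koltchinskii/Han --- shells $\cF_j$ at scales $r_n^2 2^j$, the good/bad event $E_n$, with the event argument handling what \cref{thm:roc} otherwise packages --- and transfers entropy through the symmetric-difference class $\cS$, using $\bbP(S_1\Delta S_2)\le \bbP(C_1\Delta C_2)$. You instead invoke \cref{thm:roc} as a black box, and your observation that $h_g^2 = |h_g|$ (so the $L_2(P)$ ball of radius $\delta$ is exactly the $L_1(P)$ ball of radius $\delta^2$, and $L_1$-brackets of $\cC$ lift losslessly to $L_1$-brackets of the excess-loss class) is a cleaner way to express the same entropy transfer. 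These two routes buy essentially the same thing; yours is slightly more modular, the paper's slightly more self-contained. One small inaccuracy in your reasoning: you flag that the $0$--$1$ loss fails the Lipschitz hypothesis of \cref{thm:roc}, but on the discrete range $\{0,1\}$ we have $\ell(y,u)=|y-u|$, which is $1$-Lipschitz in $u$, and the proof of \cref{thm:roc} uses Lipschitzness only to pass from the $\cE$-level set to the $L_2(P)$ ball $\cG_\delta$ --- a step that holds here with $L=1$ since $\|h_g\|_{L_2(P)}=\|g-g_\star\|_{L_2(P_X)}$. So the bypass you sketch is unnecessary, though it does no harm. Your balance $\delta_n^2\asymp n^{-1/(\alpha+1+\gamma^{-1})}$ and the check that $\Pi_n/\delta^t$ is non-increasing and that the remaining Case~3 terms are of the same or lower order are all correct.
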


Note that as $\gamma\to\infty$, the generalization error informally converges to $n^{-1/(\alpha+1)}$ which is the minimax optimal rate in the i.i.d. setting (see \cite{han2021set}). This shows that our empirical process bounds can be used to derive meaning rates even with non-smooth loss functions. The presentation and proof of \cref{prop:classerr} follows that of \cite[Theorem 3.5]{han2021set} and \cite[Theorem 7.1]{GineKoltchinskii2006} with appropriate modifications to incorporate dependence. We also refer the reader to the discussion around \cite[Theorem 1]{mammen1999smooth} for examples of $\mathcal{G}$ that satisfy the entropy condition in \cref{prop:classerr}.



\section{Conclusion and future research}
In this paper, we provide the first quantifiable trade-offs between dependence among the observed data and the complexity of function/hypothesis classes.
A major finding is that i.i.d. like rates hold under long-range dependence for complex enough function classes. 
Our approach is broadly applicable as we provide dependence-sensitive bounds on empirical processes across all levels of dependence (both long and short-range), which can be applied directly to various learning problems of interest (See Section \ref{sec:app} for some applications). 
Several interesting future research directions originate from our work.
\begin{itemize}
    \item {\bf Minimax lower bounds} --- A natural question is whether our trade-offs are optimal in a minimax sense. A first step towards that should be to construct sequences for which the covariance inequality in \cref{lem:var_bound_improved_davy} is tight. It seems that an extension of the lower bound in \cite[Theorem 1.1 (b)]{rio1993covariance} to infinite sequences would be helpful.
    \item {\bf Concentration inequality} --- While we obtain upper bounds for the expected supremum of empirical processes, the question of obtaining Talagrand type concentration inequalities (see \cite{talagrand1996new,bousquet2002bennett}) under polynomial dependence remains open. In this case, we do not expect sub-exponential decay and it seems challenging to extract the correct polynomial tail without enforcing strong model assumptions or without restricting to VC classes of functions.
    \item {\bf Extension to other notions of dependency} --- While our results apply to $\beta/\gamma$ mixing sequences, it remains challenging to establish same bounds for weaker notions of dependence, such as $\alpha$-mixing \cite{rosenblatt1956central}, $\tau$-mixing \cite{dedecker2004coupling}, nonlinear system theory based dependence \cite{wu2005nonlinear}, etc.
\end{itemize}

\bibliographystyle{ims}
\bibliography{sample}

\newpage

\appendix 

\section{Proof of main results}
\label{sec:pfmain}

This section is organized as follows: In \cref{sec:lemaux}, we present some preparatory lemmas which will be used in the proofs of our main results. The proofs of Theorems \ref{thm:phimainres}, \ref{thm:infmainres}, \ref{thm:gamma_mixing}, and \ref{thm:roc} will be presented in Sections \ref{sec:pfphithm}, \ref{sec:pfinfthm}, \ref{sec:pfgamthm}, and \ref{sec:proof_roc}.
\subsection{Auxiliary Lemmas}\label{sec:lemaux}
\label{sec:auxlem}
In this section, we collect all the auxiliary results needed for our main theorems. The proofs of these results are deferred to \cref{sec:auxlem}. We would also refer the reader to \cref{sec:nota} for all the notation used in the sequel.

The first result relates two important norms $\lVert\cdot\rVert_{2,P}$ and $\lVert \cdot\rVert_{\phi,P}$ for $\phi\in \Phi$.
\begin{lemma}\label{lem:qdbd}
Given a measurable $h:\mcx\to\R$ and probability measure $P$ supported on $\mcx$ such that $h\in \mL_{\phi}^P$, for $\phi\in\Phi$, there exists a constant $\cph>1$ such that
$$\ltp{h}\le \cph \ornm{h}.$$
\end{lemma}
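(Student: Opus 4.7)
The plan is to invoke Jensen's inequality directly, exploiting the convexity of $\phi$. First, handle the trivial case: if $\ornm{h}=0$, then $\mathbb{E}_P[\phi(h^2/t^2)]\le 1$ for every $t>0$, and since $\phi(x)/x\to\infty$ with $\phi(0)=0$, letting $t\downarrow 0$ and applying Fatou's lemma on $\{h\neq 0\}$ forces $h=0$ $P$-a.s., so the bound holds trivially. Otherwise set $t_0:=\ornm{h}>0$.

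Next I would verify that the inequality defining the Orlicz norm is attained at the infimum, i.e.\ $\mathbb{E}_P[\phi(h^2(X)/t_0^2)]\le 1$. This uses monotone convergence: taking a sequence $t_k\downarrow t_0$ with $\mathbb{E}_P[\phi(h^2/t_k^2)]\le 1$, the integrands $\phi(h^2/t_k^2)$ increase monotonically (since $\phi$ is increasing and $t_k\downarrow t_0$) to $\phi(h^2/t_0^2)$, so the limit also satisfies the inequality.

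Now apply Jensen's inequality to the convex $\phi$:
$$\phi\!\left(\mathbb{E}_P\!\left[\frac{h^2(X)}{t_0^2}\right]\right)\le \mathbb{E}_P\!\left[\phi\!\left(\frac{h^2(X)}{t_0^2}\right)\right]\le 1.$$
Since $\phi\in\Phi$ is continuous, strictly increasing (being convex with $\phi(0)=0$ and $\phi(x)/x\to\infty$), and surjective onto $\Rg$, $\phi^{-1}:\Rg\to\Rg$ is well-defined and monotone. Applying $\phi^{-1}$ to both sides gives
$$\frac{\mathbb{E}_P[h^2(X)]}{t_0^2}\le \phi^{-1}(1),\qquad\text{i.e.}\qquad \ltp{h}\le \sqrt{\phi^{-1}(1)}\,\ornm{h}.$$
Setting $\cph:=1+\sqrt{\phi^{-1}(1)}$ (or any constant $>\max\{1,\sqrt{\phi^{-1}(1)}\}$) yields $\cph>1$ and the desired bound.

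There is no real obstacle here; the only subtle point is the attainment of the infimum in the definition of $\ornm{h}$, which is handled by monotone convergence as above. The constant $\cph$ depends only on $\phi$ through $\phi^{-1}(1)$, which is consistent with the dependence claimed elsewhere in the paper.
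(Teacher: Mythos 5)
Your proof is correct, but it takes a genuinely different route from the paper's. You invoke Jensen's inequality on the convex $\phi$, obtaining $\phi\bigl(\EE[h^2/t_0^2]\bigr)\le\EE[\phi(h^2/t_0^2)]\le 1$ and hence the constant $\cph=\sqrt{\phi^{-1}(1)}$ (adjusted to be $>1$). The paper instead uses the Young--Fenchel inequality with the conjugate $\phi^c$: taking $y=1$ in $xy\le\phi(x)+\phi^c(y)$ gives $h^2(x)/t^2\le\phi(h^2(x)/t^2)+\phi^c(1)$ pointwise, so $\EE h^2\le t^2(1+\phi^c(1))$ for every $t>\ornm{h}$, and $\cph=\sqrt{1+\phi^c(1)}$. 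Both constants depend only on $\phi$, as required. The paper's route is a bit cleaner: it needs neither the $\ornm{h}=0$ degenerate case, nor attainment of the infimum (one simply works with $t>\ornm{h}$ and takes an infimum at the end), nor a separate check that $\EE[h^2]<\infty$, since the pointwise bound delivers finiteness at once. Your argument has a small gap at the last point: to apply Jensen non-vacuously you should first observe that $\EE[h^2/t_0^2]<\infty$ (e.g.\ pick $M$ with $\phi(x)>x$ for $x>M$, split at $M$, and use $\EE[\phi(h^2/t_0^2)]\le 1$); this is easy but omitted. Your computation of the infimum-attainment via monotone convergence and the degenerate-case treatment via Fatou are both correct.
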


The next lemma provides a generic variance bound under a $\beta$-mixing assumption.

\begin{lemma}
\label{lem:var_bound_improved_davy}
Suppose $\{X_i\}_{i \in \bbN}$ be a stationary $\beta$-mixing sequence with marginal distribution $P$, taking values in some Polish space $\mcx$. Recall the definition of $\Lbp$ from \eqref{eq:piv}. Let $h:\mcx\to\R$,  $\phi\in\Phi$, and $\cph$ be taken from \cref{lem:qdbd}. Then 
$$
\var\left(\sum_{i=1}^q h(X_i)\right) \le  q \ornm{h}^2\left(\cph^2+2\Lbp(q)\right) \,.
$$
\end{lemma}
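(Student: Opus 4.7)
The strategy is to expand the variance into diagonal plus off-diagonal terms, control the diagonal via the $L^2$-to-Orlicz comparison from \cref{lem:qdbd}, and handle the off-diagonal terms with a sharp covariance inequality tailored to $\beta$-mixing.

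First, using bilinearity of variance and strict stationarity, I would write
\begin{equation*}
\var\left(\sum_{i=1}^q h(X_i)\right) = q\,\var(h(X_1)) + 2\sum_{k=1}^{q-1}(q-k)\,\mathrm{Cov}(h(X_1),h(X_{k+1})).
\end{equation*}
\cref{lem:qdbd} immediately gives $\var(h(X_1)) \le \ltp{h}^2 \le \cph^{2}\,\ornm{h}^{2}$, which accounts for the $\cph^{2}$ term in the claimed bound.

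The main work is in controlling the covariances. For this I would invoke a Rio--Viennet--Dedecker type covariance inequality for $\beta$-mixing: if $U,V$ are real random variables measurable with respect to $\sigma$-fields $\mathscr{A},\mathscr{B}$ with $\beta$-mixing coefficient $\beta$, then
\begin{equation*}
|\mathrm{Cov}(U,V)| \lesssim \int_{0}^{\beta} Q_{|U|}(u)\,Q_{|V|}(u)\,du,
\end{equation*}
where $Q_{|U|}$ denotes the quantile function of $|U|$ (the inverse of its tail probability function as introduced in \cref{sec:nota}). Applied with $U=h(X_1)$ and $V=h(X_{k+1})$, stationarity turns the right-hand side into a multiple of $\int_{0}^{\beta_k} Q_{|h(X_1)|}(u)^{2}\,du$.

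The next step is to pass from this quantile integral to the Orlicz norm. By the definition of $\ornm{\cdot}$ in \eqref{eq:orlicz_norm}, the non-negative random variable $\phi\bigl(h(X_1)^{2}/\ornm{h}^{2}\bigr)$ has expectation at most $1$. A one-line Markov bound then yields the pointwise quantile estimate
\begin{equation*}
Q_{|h(X_1)|}(u) \le \ornm{h}\,\sqrt{\phi^{-1}(1/u)},\qquad 0<u\le 1,
\end{equation*}
so that $\int_{0}^{\beta_k} Q_{|h(X_1)|}(u)^{2}\,du \le \ornm{h}^{2}\int_{0}^{\beta_k}\phi^{-1}(1/u)\,du$. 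Summing over $k$, bounding $q-k\le q$ in the double sum, and recognising the resulting sum as (bounded by) $\Lbp(q)$ via \eqref{eq:piv} delivers
\begin{equation*}
\var\left(\sum_{i=1}^q h(X_i)\right) \le q\,\ornm{h}^{2}\bigl(\cph^{2}+2\Lbp(q)\bigr),
\end{equation*}
as desired.

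The main obstacle, and the place where real care is required, is obtaining the sharp numerical constant $2$ in front of $\Lbp(q)$. The naive route of applying Rio's covariance inequality stated for $\alpha$-mixing together with $2\alpha \le \beta$ picks up an extra factor of $2$ and would yield $4\,\Lbp(q)$. To recover the sharp $2$, I would instead argue directly from Berbee's coupling lemma, producing for each $k$ an independent copy $\tilde X_{k+1}$ of $X_{k+1}$ that agrees with $X_{k+1}$ off an event of probability at most $\beta_k$, rewriting $\mathrm{Cov}(h(X_1),h(X_{k+1}))=\mathbb{E}[h(X_1)(h(X_{k+1})-h(\tilde X_{k+1}))]$, and then applying the quantile/Orlicz estimate above on the indicator of the disagreement set. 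The remainder of the argument is routine bookkeeping.
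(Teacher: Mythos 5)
Your proposal follows the same route as the paper's own proof: decompose the variance into diagonal and off-diagonal parts, control the diagonal with \cref{lem:qdbd}, bound the off-diagonal covariances via Rio's quantile covariance inequality for mixing processes, and pass from the quantile integral to $\Lbp(q)$ through the Markov estimate $Q_h^2(u)\le \ornm{h}^2\phi^{-1}(1/u)$. The only place you diverge is the worry about the numerical constant, and it is a tangent: the paper simply quotes Rio's Theorem 1.1 as giving $\int_0^{\beta(|i-j|)} Q_h^2(u)\,du$ for the covariance and proceeds to $2\Lbp(q)$, and whether a more conservative reading of Rio (the $\alpha$-mixing bound together with $2\alpha\le\beta$, carrying an extra factor of $2$) would force $4\Lbp(q)$ is immaterial for every downstream application, all of which invoke the lemma only up to absolute constants. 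Your Berbee-coupling fallback would not sharpen things anyway: after you split $h(X_{k+1})-h(\tilde X_{k+1})$ by the triangle inequality across the disagreement event, the rearrangement step needed to convert the resulting expectation into $\int_0^{\beta_k}Q_h^2(u)\,du$ reintroduces the same factor of $2$, so that route lands you in the same place as the direct Rio argument.
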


We will now state an important maximal inequality for a finite set of functions which plays a very important role in the proof of \cref{thm:phimainres}.

\begin{prop}
\label{thm:maximal_finite_davy}
Consider the same setting as in \cref{lem:var_bound_improved_davy}. Let $\cF\subseteq \mL_{\phi}(P)$ be a finite collection of functions $f$ with $\|f\|_\infty \le b$ and $\ornm{f}\le \sigma$ for some $b,\sigma>0$ and all $f \in \cF$. Then we have: 
\begin{align*}
&\;\;\;\;\;\;\EE\left[\max_{f \in \cF} \left|\frac{1}{\sqrt{n}}\sum_{i=1}^n\left(f(X_i) - Pf\right)\right|\right] \\ & \le K\inf_{1 \le q \le n}\left(\sigma \ppb(q) \sqrt{1+\log{|\cF|}} + b q \frac{1+\log{|\cF|}}{\sqrt{n}} + b \beta_q\sqrt{n}\right),
\end{align*}
for some absolute constant $K>0$, where 
\begin{equation}\label{eq:piq}
\ppb(q) := \sqrt{\cph^2+2\Lbp(q)}.
\end{equation}
\end{prop}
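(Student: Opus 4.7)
The plan is to use Yu's blocking technique combined with Berbee's coupling lemma to reduce the problem to a maximal inequality for independent sums, and then invoke Bernstein's inequality with the variance control coming from \cref{lem:var_bound_improved_davy}.

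Fix $1\le q\le n$ and, for simplicity, assume $n=2Kq$ (remainders contribute lower-order terms absorbable into $K$). Partition $\{1,\ldots,n\}$ into $2K$ consecutive blocks of length $q$ and label them alternately as \emph{odd} ($B_1^o,\ldots,B_K^o$) and \emph{even} ($B_1^e,\ldots,B_K^e$). For each $f\in\cF$, set
$$S_k^o(f):=\sum_{i\in B_k^o}(f(X_i)-Pf), \qquad S_k^e(f):=\sum_{i\in B_k^e}(f(X_i)-Pf),$$
so that $\sum_{i=1}^n (f(X_i)-Pf)=\sum_k S_k^o(f)+\sum_k S_k^e(f)$. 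By stationarity and \cref{lem:var_bound_improved_davy} applied to each block,
$$\var(S_k^o(f))\le q\,\sigma^2\,\ppb(q)^2,$$
and trivially $|S_k^o(f)|\le 2qb$. The same holds for $S_k^e(f)$.

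Next I would invoke Berbee's coupling lemma (see \cite{Berbee1979,Goldstein1979}): since any two consecutive odd blocks are separated by exactly one even block of length $q$, the total variation distance between the joint law of $(S_1^o(\cdot),\ldots,S_K^o(\cdot))$ and the product of its marginals is bounded by $(K-1)\beta_q$. Thus there exist random vectors $(\tilde S_k^o(\cdot))_{k=1}^K$, independent across $k$, each equal in law to $S_1^o(\cdot)$, such that
$$\Pr\Bigl(\exists\,k:\ \tilde S_k^o(\cdot)\ne S_k^o(\cdot)\Bigr)\le K\beta_q.$$
On the event where the coupling fails, $\max_f|\sum_k(S_k^o(f)-\tilde S_k^o(f))|\le 4qKb\le 2nb$, so
$$\EE\Bigl[\max_{f\in\cF}\Bigl|\sum_k\bigl(S_k^o(f)-\tilde S_k^o(f)\bigr)\Bigr|\Bigr]\le 2nb\cdot K\beta_q,$$
which after dividing by $\sqrt{n}$ produces (up to an absorbable constant) the third summand $b\beta_q\sqrt{n}$ in the theorem, once the analogous bound for the even blocks is added.

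For the coupled independent sums, I would apply a classical Bernstein-type maximal inequality for finite families of centered independent sums (e.g., \cite[Lemma 2.2.10]{van1996weak} together with the standard argument that $\EE[\max Z_i]\lesssim \sqrt{V\log N}+M\log N$ when each $Z_i$ is sub-exponential with Bernstein parameters $(V,M)$). Since $\sum_k \var(\tilde S_k^o(f))\le Kq\sigma^2\ppb(q)^2\le n\sigma^2\ppb(q)^2$ and $\|\tilde S_k^o(f)\|_\infty\le 2qb$, this yields
$$\EE\Bigl[\max_{f\in\cF}\Bigl|\sum_k\tilde S_k^o(f)\Bigr|\Bigr]\lesssim \sigma\ppb(q)\sqrt{n(1+\log|\cF|)}+qb(1+\log|\cF|).$$
Adding the analogous bound for even blocks, dividing by $\sqrt{n}$, and combining with the coupling error gives the claimed inequality for the fixed value of $q$; finally taking $\inf_{1\le q\le n}$ completes the proof.

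The main obstacle is a careful bookkeeping step: ensuring that Berbee's coupling is applied only \emph{within} the odd (respectively even) blocks so that the effective lag governing the coupling is exactly $q$, and verifying that the variance bound of \cref{lem:var_bound_improved_davy} combined with the block-boundedness $2qb$ plugs correctly into the Bernstein maximal inequality. A secondary technical point is that the independent coupled blocks are not simply scalar i.i.d. variables but rather $|\cF|$-dimensional vectors, so one must argue coordinate-wise that $\tilde S_k^o(f)$ are independent (across $k$) for each fixed $f$, which follows because the coupling produces joint independence across blocks.
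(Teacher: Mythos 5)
Your overall strategy — Yu's blocking into alternating $q$-blocks, Berbee's coupling to obtain an i.i.d.\ copy, a Bernstein maximal inequality on the coupled independent blocks, and the block-variance bound from \cref{lem:var_bound_improved_davy} — is exactly the route the paper takes. The Bernstein step and the variance bookkeeping are correct: $\sum_k\var(\tilde S_k^o(f))\le n\sigma^2\ppb(q)^2$ and $\|\tilde S_k^o(f)\|_\infty\le 2qb$ plug into the standard maximal inequality and deliver the first two summands after dividing by $\sqrt n$.

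However, your bound on the coupling error has a genuine gap. You bound
\[
\EE\Bigl[\max_{f}\Bigl|\sum_k\bigl(S_k^o(f)-\tilde S_k^o(f)\bigr)\Bigr|\Bigr]\le 2nb\cdot\Pr\bigl(\exists\,k:\ \tilde S_k^o\neq S_k^o\bigr)\le 2nb\cdot K\beta_q ,
\]
i.e.\ worst-case magnitude times probability of the \emph{global} bad event. With $K=n/(2q)$ this is $\tfrac{n^2}{q}b\beta_q$, and after dividing by $\sqrt n$ you get $\tfrac{n^{3/2}}{q}b\beta_q$, which exceeds the target $b\beta_q\sqrt n$ by a factor of $n/q$ — not an absorbable constant (take $q=O(1)$ and the bound is off by a factor of $n$). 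The fix is to keep the per-block structure: since $|S_k^o(f)-\tilde S_k^o(f)|\le 2qb\,\mathds{1}(\bX_k^o\neq\tilde\bX_k^o)$ uniformly in $f$, linearity of expectation gives
\[
\EE\Bigl[\max_{f}\Bigl|\sum_k\bigl(S_k^o(f)-\tilde S_k^o(f)\bigr)\Bigr|\Bigr]\le \sum_{k=1}^K 2qb\,\Pr\bigl(\bX_k^o\neq\tilde\bX_k^o\bigr)\le 2qbK\beta_q = nb\beta_q ,
\]
and dividing by $\sqrt n$ yields $b\beta_q\sqrt n$ as required. Equivalently (and this is how the paper phrases it) one can work at the level of single coordinates: $|f(X_i)-f(X_i^0)|\le 2b\,\mathds{1}(X_i\neq X_i^0)$ and $\EE\sum_i\mathds{1}(X_i\neq X_i^0)\le n\beta_q$. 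The key point is that the per-element (or per-block) coupling-failure probability is $\beta_q$, and you must exploit this via linearity rather than via a union bound over all $K$ blocks.
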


The next result is a technical bound for the upper tail $L_1(P)$ integral of a function and is a generalization of 
 \cite[Lemma 4]{doukhan1995invariance}. We need to define two relevant quantities. Fix any $\vep\in (0,1)$, $q\in \mathbb{N}$, and $h\in L_1(P)$. Recall that $Q_h$ is the inverse of $t \mapsto P(|f(X)|>t\cdot)$ and define,
\begin{equation}\label{eq:bq}
B_q(t)=\int_0^t \bigg(\sum_{k=0}^{q}\mathds{1}(u\le \beta(k))\bigg)\,du=\sum_{k=0}^{q} (\beta(k)\wedge t)\,,
\end{equation}
and
\begin{equation}\label{eq:omqh}
\omega_{q,h}(\vep):=\sup_{t\le \vep}\  Q_h(t)\sqrt{B_q(t)}.
\end{equation}
We are now ready to state the required result.
\begin{lemma}
\label{lem:l1phin}
Fix any $\vep\in (0,1)$, $q\in \mathbb{N}$, and let $h\in L_1(P)$.  Then 
\begin{align}\label{eq:phin1}
    \lVert h\mathds{1}(|h|> Q_h(\vep))\rVert_{L_1(P)}\le \frac{2\vep\omega_{q,h}(\vep)}{\sqrt{B_q(\vep)}}.
\end{align}
Suppose now $h\in\mL_{\phi}(P)$ and $\upsilon>0$ satisfies 
\begin{align}\label{eq:phin2}
\upsilon\sqrt{\vep\sum_{k=0}^q \mathds{1}(\vep\le \beta_k)}\ge \lVert h\rVert_{\phi,P}\sqrt{\Lbp(q)}.
\end{align}
Then, we have:
\begin{align}\label{eq:phin3}
    \lVert h\mathds{1}(|h|> \upsilon)\rVert_{L_1(P)}\le 2\lVert h\rVert_{\phi,P}\sqrt{\Lbp(q)}\sqrt{\frac{\vep}{\sum_{k=0}^{q} \mathds{1}(\vep\le \beta_k)}}.
\end{align}
\end{lemma}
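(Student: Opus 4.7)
For Part 1, the plan is to start from the layer-cake identity
\begin{equation*}
\EE[|h|\mathds{1}(|h|>Q_h(\vep))] \,=\, \int_0^{P(|h|>Q_h(\vep))} Q_h(u)\,du \,\le\, \int_0^\vep Q_h(u)\,du,
\end{equation*}
which follows from $|h|\overset{\mathscr{L}}{=} Q_h(U)$ with $U\sim\mathrm{Unif}(0,1)$ together with $P(|h|>Q_h(\vep))\le\vep$ by the definition of $Q_h$. The definition of $\omega_{q,h}(\vep)$ gives $Q_h(u)\le \omega_{q,h}(\vep)/\sqrt{B_q(u)}$ for all $u\in(0,\vep]$. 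Because $B_q$ is the integral of the nonincreasing function $u\mapsto\sum_{k=0}^q\mathds{1}(u\le \beta_k)$ with $B_q(0)=0$, it is concave and satisfies $B_q(u)\ge (u/\vep)\,B_q(\vep)$ on $[0,\vep]$. Plugging in and computing an elementary integral,
\begin{equation*}
\int_0^\vep Q_h(u)\,du \,\le\, \frac{\omega_{q,h}(\vep)\sqrt{\vep}}{\sqrt{B_q(\vep)}}\int_0^\vep u^{-1/2}\,du \,=\, \frac{2\vep\,\omega_{q,h}(\vep)}{\sqrt{B_q(\vep)}},
\end{equation*}
which is \eqref{eq:phin1}.

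For Part 2, the plan is first to deduce from \eqref{eq:phin2} that $\upsilon\ge Q_h(\vep)$, so that the same quantile identity gives $\|h\mathds{1}(|h|>\upsilon)\|_{L_1(P)}\le \int_0^\vep Q_h(u)\,du$. To see the implication, Markov's inequality applied to $\EE[\phi(h^2/\ornm{h}^2)]\le 1$ yields $Q_h(\vep)\le \ornm{h}\sqrt{\phi^{-1}(1/\vep)}$. Writing $N(\vep):=\sum_{k=0}^q\mathds{1}(\vep\le \beta_k)$ and using monotonicity of $\phi^{-1}(1/\cdot)$,
\begin{equation*}
\Lbp(q) \,=\, \sum_{k=0}^q\int_0^{\beta_k}\phi^{-1}(1/u)\,du \,\ge\, N(\vep)\int_0^\vep \phi^{-1}(1/u)\,du \,\ge\, N(\vep)\,\vep\,\phi^{-1}(1/\vep),
\end{equation*}
so $\ornm{h}\sqrt{\Lbp(q)/(\vep N(\vep))}\ge Q_h(\vep)$, and the rearranged hypothesis $\upsilon\ge\ornm{h}\sqrt{\Lbp(q)/(\vep N(\vep))}$ forces $\upsilon\ge Q_h(\vep)$.

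The final step is to bound $\int_0^\vep Q_h(u)\,du$ by the right-hand side of \eqref{eq:phin3} via a Cauchy--Schwarz split weighted by $N$:
\begin{equation*}
\int_0^\vep Q_h(u)\,du \,\le\, \bigg(\int_0^\vep Q_h(u)^2\, N(u)\,du\bigg)^{1/2}\bigg(\int_0^\vep \frac{du}{N(u)}\bigg)^{1/2}.
\end{equation*}
Interchanging sum and integral in the first factor and invoking $Q_h(u)^2\le \ornm{h}^2\phi^{-1}(1/u)$,
\begin{equation*}
\int_0^\vep Q_h(u)^2 N(u)\,du \,=\, \sum_{k=0}^q\int_0^{\vep\wedge\beta_k} Q_h(u)^2\,du \,\le\, \ornm{h}^2\sum_{k=0}^q\int_0^{\beta_k}\phi^{-1}(1/u)\,du \,=\, \ornm{h}^2\,\Lbp(q),
\end{equation*}
while monotonicity of $N$ yields $\int_0^\vep du/N(u)\le \vep/N(\vep)$ for the second factor. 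Combining gives $\int_0^\vep Q_h(u)\,du\le \ornm{h}\sqrt{\Lbp(q)\,\vep/N(\vep)}$, which is well within the factor of $2$ permitted by \eqref{eq:phin3}.

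The most subtle point is the implication \eqref{eq:phin2}$\Rightarrow \upsilon\ge Q_h(\vep)$: absent this, one would have to bound the additional tail mass on $(\vep,P(|h|>\upsilon))$, which does not admit a clean expression in terms of $\Lbp(q)$. The comparison $\Lbp(q)\ge \vep\,\phi^{-1}(1/\vep)\,N(\vep)$, which hinges only on the monotonicity of $\phi^{-1}(1/\cdot)$, is precisely what closes this gap.
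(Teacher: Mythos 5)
Your Part~1 is essentially the paper's argument: the same layer-cake reduction to $\int_0^\vep Q_h$, the same observation that $Q_h(u)\le \omega_{q,h}(\vep)/\sqrt{B_q(u)}$ on $(0,\vep]$, and the same use of concavity of $B_q$ to get $B_q(u)\ge (u/\vep)B_q(\vep)$ and integrate $u^{-1/2}$.

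Your Part~2 is correct but organized differently from the paper. The paper first establishes the two quantitative facts $\omega_{q,h}(\vep)\le\lVert h\rVert_{\phi,P}\sqrt{\Lbp(q)}$ and $B_q(\vep)\ge\vep\sum_{k\le q}\mathds{1}(\vep\le\beta_k)$, uses them together with the definition of $\omega_{q,h}$ to deduce $\upsilon\ge Q_h(\vep)$, and then recycles the Part~1 bound \eqref{eq:phin1} by substituting those same two facts into its right-hand side; this is where the factor $2$ in \eqref{eq:phin3} comes from. You instead (i) deduce $\upsilon\ge Q_h(\vep)$ by a more direct comparison based on the pointwise quantile bound $Q_h(\vep)\le\ornm{h}\sqrt{\phi^{-1}(1/\vep)}$ and the lower bound $\Lbp(q)\ge N(\vep)\,\vep\,\phi^{-1}(1/\vep)$ (which uses only the monotonicity of $\phi^{-1}(1/\cdot)$), and (ii) bound $\int_0^\vep Q_h$ directly by an $N$-weighted Cauchy--Schwarz split rather than going through $\omega_{q,h}$ and $B_q$. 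Both routes rest on the same two ingredients --- the bound $Q_h^2\le\ornm{h}^2\phi^{-1}(1/\cdot)$ (the paper's \eqref{eq:quanbd}) and the monotonicity/concavity structure coming from the step function $\sum_k\mathds{1}(\cdot\le\beta_k)$ --- but your version avoids bouncing back through \eqref{eq:phin1}, is a bit more self-contained for Part~2, and in fact gives the bound without the factor of $2$, so it lands comfortably inside \eqref{eq:phin3}. One small caveat worth noting explicitly if you write this up: the second Cauchy--Schwarz factor $\int_0^\vep du/N(u)$ requires $N(u)\ge 1$ on $(0,\vep)$, which holds here because $\beta_0=1$ by convention and $\vep<1$.
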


\subsection{Proof of \cref{thm:phimainres}}\label{sec:pfphithm}
\begin{proof}
Throughout this proof, we will use $M$ to denote universal constants that might change from line to line. In order to extend \cref{thm:maximal_finite_davy} to the case where $|\cF|$ is infinite, we need further notation.

First, we define 
$$
S:=\min\left\{s\ge 0: 2^{-s}\le \frac{a}{2^4\sqrt{n}\sigma}\right\}.$$
By \eqref{eq:basedef1}, we have
$$\frac{a}{2^4\sqrt{n}\sigma}\le \frac{1}{2},
$$
and so $S\ge 1$, and $\sigma>2^{-(S+1)}\sigma$. By definition of $S$, $2^{-(S-1)}>a/(2^4\sqrt{n}\sigma)$ and so $\sigma>2^{-(S+1)}\sigma>a/(2^6\sqrt{n})$. This implies: 
\begin{align*}
    \int_{\frac{a}{2^6\sqrt{n}}}^{\sigma} \sqrt{1+\mbH{u}}\,du&\ge \int_{2^{-(S+1)}\sigma}^{\sigma}\sqrt{1+\mbH{u}}\,du\\ &=\sum_{s=1}^{S+1} \int_{2^{-s}\sigma}^{2^{-s+1}\sigma} \sqrt{1+\mbH{u}}\,du\\ &\ge \sum_{s=1}^{S+1} 2^{-s}\sigma \sqrt{1+\mbH{2^{-s+1}\sigma}}\\ &=\frac{1}{2}\sum_{s=0}^S 2^{-s}\sigma \sqrt{1+H_s},
\end{align*}
where $H_s:=\mbH{2^{-s}\sigma}$, $s=0,1,\ldots ,S$. Above, we have used the non-increasing nature of $\mbH{\cdot}$ in the first argument. By \eqref{eq:basedef1} and noting that 
$$\rno(\delta)\ge 1+ \mbH{\delta}$$ 
for $\delta>0$, we then have:
\begin{equation}\label{eq:basedef3}
\sum_{s=0}^S 2^{-s}\sigma \sqrt{1+H_s}\le \frac{2a}{C_0}.
\end{equation}

An implication of \eqref{eq:basedef3} is also that:

\begin{align}\label{eq:basedef4}
    \sum_{s=1}^S 2^{-s}\sigma \sqrt{\sum_{k=0}^s (1+H_k)}&\le \sum_{k=0}^S \sqrt{1+H_k}\sigma \sum_{s=k}^S 2^{-s}\nonumber \\ &\le 2\sum_{k=0}^S 2^{-k}\sigma\sqrt{1+H_k}\le \frac{4a}{C_0}.
\end{align}
Here the first inequality uses the fact that $\sqrt{\sum_j a_j} \le \sum_j \sqrt{a_j}$. 
Similarly, by using the non-increasing nature of $\rno$, we also have:

\begin{align}\label{eq:basedef600}
    \sum_{s=0}^S 2^{-s}\sigma \sqrt{\rno(2^{-s}\sigma)}\le \frac{2a}{C_0}.
\end{align}

\begin{align}\label{eq:basedef600st}
    \sum_{s=1}^S 2^{-s}\sigma \sqrt{\Ld{2^{-s}\sigma}}\sqrt{\sum_{k=0}^s (1+H_k)}&\le \sum_{k=0}^S \sqrt{1+H_k}\sigma \sum_{s=k}^S 2^{-s}\sqrt{\Ld{2^{-s}\sigma}}\nonumber \\&\le 2\sum_{k=0}^S 2^{-k}\sigma \sqrt{1+H_k}\sqrt{\Ld{2^{-k}\sigma}}\le \frac{4a}{C_0}.
\end{align}

In the last display above, we used the non-decreasing nature of $\Ld{\cdot}$ from \cref{assm:bsbd}, followed by~\eqref{eq:asnbd1} and \eqref{eq:basedef600}.\par

Keeping the above observations in mind, for $s=0,1,\ldots ,S$, let $[\gjl{s},\gju{s}]_{j=1}^{N_s}$ be a $2^{-s}\sigma$-bracket of $\cF$, where $N_s=\bnum{2^{-s}\sigma}$ is the $2^{-s}\sigma$-bracketing number of $\cF$. Given any $f\in\cF$, and any $s\in \{0,1,\ldots ,S\}$, there exists $[\gl{s},\gu{s}]$ such that 
$
\gl{s}(x)\le f(x)\le \gu{s}(x),\, \mbox{for all} \ x\in\mcx$, and $\lVert\gu{s}-\gl{s}\rVert_{L_1(P)}\le 2^{-s}\sigma$.

Further define the following finer (and nested) brackets: 
$$\tgl{s}(x):=\max_{0\le k\le s}\gl{k}(x),\quad \tgu{s}(x):=\min_{0\le k\le s}\gu{k}(x),$$
and 
$$\df{s}(x):=\tgu{s}(x)-\tgl{s}(x).$$
It is immediate from the definition that: 
\begin{equation}\label{eq:obs1}
    \tgl{s}(x)\le f(x)\le \tgu{s}(x).
\end{equation}
Define 
\begin{align}\label{eq:defks}
K_s:= \frac{2^{-s+1}\sqrt{n}\sigma \sqrt{\Ld{2^{-s-1}\sigma}}}{\tq{2^{-s-1}\sigma}\sqrt{1+\sum_{k=0}^{s+1} H_k}}, \quad \mbox{for}\ 0\le s\le S-1.
\end{align}
Finally, define 
\begin{align}\label{eq:adaptrun}
\nu_f(x):=\begin{cases} \min\{0\le s\le S-1:\ \df{s}(x)\ge K_s\} & \mbox{if}\ \df{s}(x)\ge K_s\ \mbox{for some}\ 0\le s\le S-1,\\ S & \mbox{otherwise}.\end{cases}
\end{align}

With this in view, given any $f\in\cF$, we begin with the following decomposition:

\begin{align}\label{eq:chaindec}
    f(x)=\tgl{0}(x)+\sum_{s=0}^S (f(x)-\tgl{s}(x))\mathds{1}(\nu_f(x)=s)+\sum_{s=1}^S (\tgl{s}(x)-\tgl{s-1}(x))\mathds{1}(\nu_f(x)\ge s),
\end{align}
which is easy to verify. For notational convenience, define:  

\begin{equation}\label{eq:THETAS}
\begin{split}
\got(x) & :=(f(x)-\tgl{s}(x))\mathds{1}(\nu_f(x)=s), \\
\gtt(x) & :=(\tgl{s}(x)-\tgl{s-1}(x))\mathds{1}(\nu_f(x)\ge s).
\end{split}
\end{equation}
Consequently the following bound holds:
\begin{align*}
    &\;\;\;\;\;\EE\left[\frac{1}{\sqrt{n}}\sup_{f\in\cF}\bigg|\sum_{i=1}^n (f(X_i)-\EE f(X_1))\bigg|\right] \\ 
    &\le \EE\left[\frac{1}{\sqrt{n}}\sup_{f\in\cF}\bigg|\sum_{i=1}^n (\tgl{0}(X_i)-\EE \tgl{0}(X_1))\bigg|\right]+\EE\left[\frac{1}{\sqrt{n}}\sup_{f\in\cF}\bigg|\sum_{s=0}^S \sum_{i=1}^n \big(\got(X_i)-\EE\got(X_1)\big)\bigg|\right]\\ 
    & +\EE\left[\frac{1}{\sqrt{n}}\sup_{f\in\cF}\bigg|\sum_{s=1}^S \sum_{i=1}^n \big(\gtt(X_i)-\EE\gtt(X_1)\big)\bigg|\right]\\ 
    & =: T_1+T_2+T_3.
\end{align*}
We now bound the three terms $T_1$, $T_2$, and $T_3$ individually.

{\bf Bound for $T_1$: } As $f\in\cF$ varies, note that $\tgl{0}$ varies over a finite set of cardinality $\exp(H_0)$.  With this observation, we bound $T_1$ directly using \cref{lem:var_bound_improved_davy} to get:
\begin{equation}\label{eq:t11}
T_1\le M\inf_{1 \le q \le n}\left(\sigma \ppb(q) \sqrt{1+H_0} + b q \frac{1+H_0}{\sqrt{n}} + b \beta_q\sqrt{n}\right)
\end{equation}
for the same constant as in \cref{lem:var_bound_improved_davy}. We now plug in $q\equiv \tq{\sigma}$ in \eqref{eq:t11} and observe that:
$$
b\tq{\sigma} \frac{1+H_0}{\sqrt{n}} + b \beta(\tq{\sigma})\sqrt{n}\le 2b\tq{\sigma}\frac{1+H_0}{n}.$$

Combining the above observations with \eqref{eq:basedef3}, \eqref{eq:basedef600} and \eqref{eq:t11}, we get:
\begin{align}\label{eq:t12}
    T_1&\le M\left(\sigma\cph\sqrt{1+H_0}+2\sigma\sqrt{\Ld{\sigma}(1+H_0)}+ 2b \tq{\sigma}\frac{1+H_0}{\sqrt{n}}\right)\nonumber  \\&\le \boxed{M\left(\frac{2a\cph}{C_0}+\frac{4a}{C_0}+ 2b \tq{\sigma}\frac{1+H_0}{\sqrt{n}}\right) \le \frac{2aM}{C_0}(\cph+4)+2b M \tq{\sigma}\frac{1+H_0}{\sqrt{n}}}.
\end{align}

{\bf Bound on $T_2$: }
Recall the definition of $T_2$: 
$$
\EE\left[\frac{1}{\sqrt{n}}\sup_{f\in\cF}\bigg|\sum_{s=0}^S \sum_{i=1}^n \big(\got(X_i)-\EE\got(X_1)\big)\bigg|\right] 
$$
where $\got(x) = (f(x)-\tgl{s}(x))\mathds{1}(\nu_f(x)=s)$. It is immediate that: 
\begin{align}\label{eq:T2-1}
    T_2 & \le \EE\left[\frac{1}{\sqrt{n}}\sup_{f\in\cF}\bigg| \sum_{i=1}^n \big(\gotz(X_i)-\EE\gotz(X_1)\big)\bigg|\right] \nonumber \\ 
    & \qquad \qquad + \sum_{s = 1}^S \EE\left[\frac{1}{\sqrt{n}}\sup_{f\in\cF}\bigg|\sum_{i=1}^n \big(\got(X_i)-\EE\got(X_1)\big)\bigg|\right] 
\end{align}
Note that, by \eqref{eq:obs1}, we have the following for any $0\le s\le S$: 
\begin{align*}
    & \got(X) - \bbE[\got(X)] \\
    & = (f(X)-\tgl{s}(X))\mathds{1}(\nu_f(X)=s) - \bbE\left[(f(X)-\tgl{s}(X))\mathds{1}(\nu_f(X)=s)\right] \\
    & \le \Delta_f^s(X)\mathds{1}(\nu_f(X)=s) - \bbE\left[(f(X)-\tgl{s}(X))\mathds{1}(\nu_f(X)=s)\right] \\
    & = \Delta_f^s(X)\mathds{1}(\nu_f(X)=s) - \bbE[\Delta_f^s(X)\mathds{1}(\nu_f(X)=s)] \\
    & \qquad \qquad + \bbE\left[\left\{\Delta_f^s(X) - (f(X)-\tgl{s}(X))\right\}\mathds{1}(\nu_f(X)=s)\right] \\
    & \le \Delta_f^s(X)\mathds{1}(\nu_f(X)=s) - \bbE[\Delta_f^s(X)\mathds{1}(\nu_f(X)=s)] + \bbE[\Delta_f^s(X)\mathds{1}(\nu_f(X)=s)]
\end{align*}
Therefore from \eqref{eq:T2-1}, we have: 
\begin{align}\label{eq:T2-2}
    T_2 & \le \sum_{s = 1}^S \EE\left[\frac{1}{\sqrt{n}}\sup_{f\in\cF}\bigg|\sum_{i=1}^n \big(\Delta_f^s(X_i)\mathds{1}(\nu_f(X_i)=s) -\EE\Delta_f^s(X_1)\mathds{1}(\nu_f(X_1)=s)\big)\bigg|\right]\nonumber \\ &+  \EE\left[\frac{1}{\sqrt{n}}\sup_{f\in\cF}\bigg|\sum_{i=1}^n \big(\Delta_f^0(X_i)\mathds{1}(\nu_f(X_i)=0) -\EE\Delta_f^0(X_1)\mathds{1}(\nu_f(X_1)=0)\big)\bigg|\right]\nonumber \\
    & +\sqrt{n}\sum_{s = 0}^{S-1} \sup_{f \in \cF}\EE\big|\Delta_f^s(X_1)\mathds{1}(\nu_f(X_1)=s)\big)\big|+\sqrt{n}\sup_{f \in \cF}\EE\big|\Delta_f^S(X_1)\mathds{1}(\nu_f(X_1)=S)\big)\big|\,\nonumber \\ & =: T_{2,1} + T_{2,2} + T_{2,3} + T_{2,4}.
\end{align}
We now bound each of the four terms above. Towards this direction, define a function class $$\cF_s := \left\{\Delta_f^s\mathds{1}(\nu_f=s): f \in \cF\right\}.$$ We then have the following observations: 
\begin{equation}
\label{eq:obs2}
   |\cF_s| \le \left(\Pi_{k = 0}^s N_k\right)^2 \implies \log{|\cF_s|} \le 2\sum_{k = 0}^s H_k.
\end{equation}
  
\begin{equation}\label{eq:obs3}
\lVert \Delta_f^0 \rVert_{\infty}\le 2b.
\end{equation}

  By the definition of $\nu_f$,

\begin{equation}\label{eq:obs4}
\left\|\Delta_f^s \mathds{1}(\nu_f=s)\right\|_\infty \le \left\|\Delta_f^{s-1} \mathds{1}(\nu_f=s)\right\|_\infty \le K_{s-1} \quad \mbox{for}\, s=1,\ldots ,S.
\end{equation}

\begin{equation}\label{eq:obs5}
   \left\|\Delta_f^s \mathds{1}(\nu_f=s)\right\|_{\phi, P}\le \left\|\Delta_f^s \right\|_{\phi, P} \le 2^{-s}\sigma.
\end{equation}

To bound $T_2$, we begin with the first term on the right hand side of \eqref{eq:T2-2}. Again, as application of maximal inequality for finite $\cF$ from \cref{thm:maximal_finite_davy} coupled with \eqref{eq:obs2}, \eqref{eq:obs4}, and \eqref{eq:obs5}, yields: 
\begin{align}\label{eq:T21_init}
    T_{2,1} & =\sum_{s = 1}^S \EE\left[\frac{1}{\sqrt{n}}\sup_{f\in\cF}\bigg|\sum_{i=1}^n \big(\Delta_f^s(X_i)\mathds{1}(\nu_f(X_i)=s) -\EE\Delta_f^s(X_1)\mathds{1}(\nu_f(X_1)=s)\big)\bigg|\right]\nonumber \\
    & \le M\sum_{s = 1}^S\inf_{1 \le q \le n}\left(2^{-s}\sigma \ppb(q) \sqrt{1+\sum_{k = 0}^s H_k} + K_{s-1} q \frac{1+\sum_{k = 0}^s H_k}{\sqrt{n}} + K_{s-1} \beta_q\sqrt{n}\right)
\end{align}
For $1\le s\le S$, by choosing $q\equiv \tq{2^{-s}\sigma}$, we observe that
$$q \frac{1+\sum_{k = 0}^s H_k}{\sqrt{n}} +  \beta_q\sqrt{n}\le 2\tq{2^{-s}\sigma} \frac{1+\sum_{k = 0}^s H_k}{\sqrt{n}},$$
and 
$$K_{s-1}\tq{2^{-s}\sigma} \frac{1+\sum_{k = 0}^s H_k}{\sqrt{n}}=2^{-s+2}\sigma\sqrt{\Ld{2^{-s}\sigma}}\sqrt{1+\sum_{k=0}^s H_k}.$$
Combining the two displays above with \eqref{eq:basedef4}, \eqref{eq:basedef600st}, and \eqref{eq:T21_init}, we get the following:
\begin{align}\label{eq:T21_final}
    T_{2,1} &\le M \sum_{s=1}^S \bigg(2^{-s}\sigma \cph \sqrt{1+\sum_{k = 0}^s H_k} + 2^{-s+4}\sigma \sqrt{\Ld{2^{-s}\sigma}} \sqrt{1+\sum_{k = 0}^s H_k}\bigg)\nonumber \\ &\le \boxed{M\left(\frac{4a\cph}{C_0}+\frac{64a}{C_0}\right)=\frac{4aM}{C_0}(\cph+16)}.
\end{align}

We move on to the second term on the right hand side of \eqref{eq:T2-2}. By invoking \cref{thm:maximal_finite_davy} combined with \eqref{eq:obs3}, we have:

\begin{align*}
    T_{2,2}&=\EE\left[\frac{1}{\sqrt{n}}\sup_{f\in\cF}\bigg|\sum_{i=1}^n \big(\Delta_f^0(X_i)\mathds{1}(\nu_f(X_i)=0) -\EE\Delta_f^0(X_1)\mathds{1}(\nu_f(X_1)=0)\big)\bigg|\right]\\ &\le M\inf_{1 \le q \le n}\left(\sigma \ppb(q) \sqrt{1+H_0} + 2b q \frac{1+H_0}{\sqrt{n}} + 2b \beta_q\sqrt{n}\right)\\ &\le \boxed{\frac{4aM}{C_0}(\cph+4)+4 b M q \frac{1+H_0}{\sqrt{n}}} ,
\end{align*}
where the last bound follows from \eqref{eq:t12} by choosing $q\equiv \tq{\sigma}$. \par 

We now move on to the third term on the right hand side of \eqref{eq:T2-2}. The main technical tool for bounding this  term, i.e. $T_{2,3}$, is \cref{lem:l1phin}. We invoke it with the following choices of parameters:
$$\upsilon \equiv \upsilon_s := K_{s} \qquad \mbox{and} \qquad  \vep \equiv \vep_s := ((\tq{2^{-s-1}\sigma}-1)\vee 1)\frac{1+\sum_{k=0}^{s+1} H_k}{n}.$$
By the definition of $\tq{2^{-s}\sigma}$, we have 
$$
\beta(\tq{2^{-s-1}\sigma}-1)\ge \vep_s.
$$

In other words, we have:
$$\sum_{k=0}^{\tq{2^{-s-1}\sigma}} \mathds{1}(\vep_s\le \beta(k)) \ge (\tq{2^{-s-1}\sigma}-1)\vee 1\ge \frac{1}{2}\tq{2^{-s-1}\sigma}.$$
Using the above displays, we conclude that:
\begin{align*}
    &\;\;\;\;\upsilon \sqrt{\vep_s\sum_{k=0}^{\tq{2^{-s-1}\sigma}} \mathds{1}(\vep_s\le \beta(k))}\\ 
    &=\frac{2^{-s+1}\sqrt{n}\sigma \sqrt{\Ld{2^{-s-1}\sigma}}}{\tq{2^{-s-1}\sigma}\sqrt{1+\sum_{k=0}^{s+1} H_k}}\sqrt{\vep_s}\sqrt{\sum_{k=0}^{\tq{2^{-s-1}\sigma}} \mathds{1}(\vep_s\le \beta(k))}\\ &\ge \frac{2^{-s+1}\sqrt{n}\sigma \sqrt{\Ld{2^{-s-1}\sigma}}}{\tq{2^{-s-1}\sigma}\sqrt{1+\sum_{k=0}^{s+1} H_k}} \sqrt{\frac{\tq{2^{-s-1}\sigma}\big(1+\sum_{k=0}^{s+1} H_k\big)}{2n}}\sqrt{\frac{1}{2}\tq{2^{-s-1}\sigma}}\\ &=\cdot 2^{-s}\sigma \sqrt{\Ld{2^{-s-1}\sigma}}\\ &\ge \lVert \Delta_f^s\rVert_{\phi,P}\sqrt{\Ld{2^{-s-1}\sigma}}\\ &\ge \lVert \Delta_f^s\rVert_{\phi,P}\sqrt{\Lbp(\tq{2^{-s-1}\sigma})}.
\end{align*}
Here the last two inequalities follow from \eqref{eq:obs5} and \eqref{eq:asncall} respectively. The above inequality verifies \eqref{eq:phin2} with 
$$q\equiv \tq{2^{-s-1}\sigma},\qquad \mbox{and}\qquad h\equiv \Delta_f^s.$$
Therefore by applying \eqref{eq:phin3}, combined with \eqref{eq:obs5}, we get:
\begin{align*}
    \EE\big|\Delta_f^s(X_1)\mathds{1}(\nu_f(X_1)=s)\big)\big|& \le 2^{-s+1}\sigma \sqrt{\Ld{2^{-s-1}\sigma}}\sqrt{\frac{\vep_s}{\sum_{k=0}^{\tq{2^{-s-1}\sigma}} \mathds{1}(\vep_s\le \beta(k))}}\\ &\le \sqrt{2} 2^{-s+1}\sigma \sqrt{\Ld{2^{-s-1}\sigma}}\sqrt{\frac{1+\sum_{k=0}^{s+1} H_k}{n}}.
\end{align*}
With the above observation, we can now bound $T_{2,3}$ as follows:
\begin{align*}
    T_{2,3}&=\sqrt{n}\sum_{s = 0}^{S-1} \sup_{f \in \cF}\EE\big|\Delta_f^s(X_1)\mathds{1}(\nu_f(X_1)=s)\big)\big|\\ &\le \boxed{8\sqrt{n}\sum_{s=1}^S 2^{-s}\sigma \sqrt{\Ld{2^{-s}\sigma}}\sqrt{\frac{1+\sum_{k=0}^{s} H_k}{n}}\le \frac{64a}{C_0}}, 
\end{align*}
where the final two inequalities follow from \eqref{eq:asnbd1} and \eqref{eq:basedef600st} respectively.
\par
Finally, we arrive at the fourth term on the right hand side of \eqref{eq:T2-2}. Note that, by using \cref{lem:qdbd} and \eqref{eq:obs5}, we have: 
\begin{align*}
T_{2,4}=\sqrt{n}\sup_{f \in \cF}\EE\big|\Delta_f^S(X_1)\mathds{1}(\nu_f(X_1)=S)\big)\big| & \le \cph\sqrt{n}\sup_{f \in \cF}\|\Delta_f^S(X_1)\mathds{1}(\nu_f(X_1)=S)\big)\|_{\phi, P} \\
& \le \boxed{\sqrt{n} 2^{-S}\sigma \le \frac{a\cph}{2^4}} \,. 
\end{align*}
Here the last bound follows from the definition of $S$. \par 
Combining all the bounds, we get: 
$$
T_2 \le \boxed{\frac{aM}{16}\left(\cph+\frac{\cph+1}{C_0}\right)+b M q \frac{1+H_0}{\sqrt{n}}} \,.
$$
{\bf Bound on $T_3$: } Recall the definition of $T_3$: 
\begin{align*}
T_3 & = \EE\left[\frac{1}{\sqrt{n}}\sup_{f\in\cF}\bigg|\sum_{s=1}^S \sum_{i=1}^n \big(\gtt(X_i)-\EE\gtt(X_1)\big)\bigg|\right] \\
& \le \sum_{s = 1}^S  \EE\left[\sup_{f\in\cF}\frac{1}{\sqrt{n}}\bigg|\sum_{i=1}^n \big(\gtt(X_i)-\EE\gtt(X_1)\big)\bigg|\right] 
\end{align*}
where $\gtt(x)=(\tgl{s}(x)-\tgl{s-1}(x))\mathds{1}(\nu_f(x)\ge s)$. Following similar argument as used in $T_2$, we have: 
\begin{enumerate}
    \item $|\tilde \cF_s| \le \left(\Pi_{k = 0}^s N_k\right)^2 \implies \log{|\tilde \cF_s|} \le 2\sum_{k = 0}^s H_k$. 
    \item $|\gtt(x)| \le K_{s-1}$. 
    \item $\|\gtt(X)\|_{\phi, P} \le 2^{-(s-1)}\sigma$. 
\end{enumerate}
Therefore, again applying the maximal inequality for the finite collection of functions, we have: 
\begin{align*}
T_3  &\; \le M\sum_{s = 1}^S\inf_{1 \le q \le n}\left(2^{-(s-1)}\sigma \ppb(q) \sqrt{1+2\sum_{k = 0}^s H_k} + K_{s-1} q \frac{1+2\sum_{k = 0}^s H_k}{\sqrt{n}} + K_{s-1} \beta_q\sqrt{n}\right)\\ &\le \boxed{\frac{8aM}{C_0}(\cph+16) }\,, 
\end{align*}
where the last bound follows from~\eqref{eq:T21_final}.  Combining the bounds for $T_1$, $T_2$, and $T_3$, completes the proof.
\end{proof}

\subsection{Proof of Theorem \ref{thm:infmainres}}\label{sec:pfinfthm}
The proof of this Theorem is very similar to that of \cref{thm:phimainres}. We only highlight the key differences here. 
In particular, we need slightly different forms of two results used earlier, namely \cref{thm:maximal_finite_davy} and \cref{lem:l1phin}. 

\begin{prop}\label{prop:maxfindavy2}
    Let $\{X_i\}_{i\in\mathbb{N}}$ be a stationary $\beta$-mixing sequence with marginal distribution $P$, taking values in some Polish space $\mcx$. Also let $\cF$ be a finite collection of functions $f$ with $\|f\|_\infty \le \sigma$ for some $\sigma>0$ and all $f \in \cF$. Then we have: 
\begin{align*}
&\;\;\;\;\;\;\EE\left[\max_{f \in \cF} \left|\frac{1}{\sqrt{n}}\sum_{i=1}^n\left(f(X_i) - Pf\right)\right|\right] \\ & \le K\sigma\inf_{1 \le q \le n}\left( \ppbi(q) \sqrt{1+\log{|\cF|}} +  q \frac{1+\log{|\cF|}}{\sqrt{n}} +  \beta_q\sqrt{n}\right),
\end{align*}
for some absolute constant $K>0$, where 
\begin{equation}\label{eq:piqi}
\ppbi(q) := \sqrt{1+2\sum_{k=0}^q \beta_k}.
\end{equation}

\end{prop}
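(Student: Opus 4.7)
This proposition is the sup-norm analogue of \cref{thm:maximal_finite_davy}, and the plan is to follow the same three-step template: Yu's blocking technique, Berbee's coupling lemma to reduce to independent blocks, and a Bernstein-type maximal inequality for a finite family of independent bounded random variables. The simplification relative to \cref{thm:maximal_finite_davy} is that the uniform bound $\|f\|_\infty \le \sigma$ yields a direct covariance estimate via the Davydov--Rio inequality, which bypasses the more delicate Orlicz machinery encoded in $\Lbp$.

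\textbf{Blocking and coupling.} For a fixed $q \in \{1, \ldots, n\}$ I would partition $\{1, \ldots, n\}$ into $2\mu$ consecutive blocks of length $q$ (and a residual block of length $< 2q$), where $\mu := \lfloor n/(2q)\rfloor$. Writing $H_j(f) := \sum_{i \in I_j^{\mathrm{odd}}}(f(X_i) - Pf)$ and $T_j(f)$ analogously for the even blocks, Berbee's lemma produces, on a richer probability space, variables $\tilde H_j$ that are i.i.d.\ across $j$ with the same marginal law as $H_1$ and satisfy $\mathbb{P}(\tilde H_j \neq H_j) \le \beta_q$; similarly for $T_j$. Since $|H_j(f)| \le 2q\sigma$ deterministically, the coupling error is bounded uniformly in $f$ by $2\mu \cdot 2q\sigma \cdot \beta_q \lesssim n\sigma \beta_q$, which after normalization by $\sqrt n$ produces the $\sigma\sqrt n\,\beta_q$ term in the claimed bound. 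The residual block contributes at most $2q\sigma/\sqrt n$, which is absorbed into the $q\sigma(1+\log|\cF|)/\sqrt n$ term.

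\textbf{Variance bound and finite-class Bernstein.} By the Davydov--Rio covariance inequality applied to uniformly bounded functions under $\beta$-mixing, $|\mathrm{Cov}(f(X_i), f(X_{i+k}))| \le 2\sigma^2 \beta_k$, and hence
\begin{align*}
\mathrm{Var}\bigl(\tilde H_1(f)\bigr) = \mathrm{Var}\bigl(H_1(f)\bigr) \le q\sigma^2\Bigl(1 + 4\sum_{k=1}^{q} \beta_k\Bigr) \le 2q\sigma^2\, \ppbi(q)^2.
\end{align*}
Applying Bernstein's maximal inequality to $\sum_{j=1}^\mu \tilde H_j$ over the finite family $\cF$, with total variance bounded by $2\mu q\sigma^2\ppbi(q)^2 \le n\sigma^2\ppbi(q)^2$ and a.s.\ per-summand bound $2q\sigma$, yields
\begin{align*}
\EE\max_{f \in \cF}\Bigl|\sum_{j=1}^\mu \tilde H_j(f)\Bigr| \lesssim \sigma\,\ppbi(q)\sqrt{n(1+\log|\cF|)} + q\sigma\,(1+\log|\cF|).
\end{align*}
Dividing by $\sqrt n$ yields the first two terms in the target bound, and the even blocks are handled identically.

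\textbf{Combining and the main obstacle.} Summing the four contributions (odd blocks, even blocks, coupling error, residual) and taking $\inf_{1 \le q \le n}$ produces the claimed inequality, with the $1+\log|\cF|$ (rather than $\log|\cF|$) factor accounting for the degenerate case $|\cF|=1$. The only nontrivial step is to verify the Davydov--Rio covariance estimate under the sup-norm bound and feed it into the finite-class Bernstein maximal inequality so that the linear-in-$\log|\cF|$ ``bounded'' correction appears cleanly alongside the sub-Gaussian $\sqrt{\log|\cF|}$ term; beyond that, no substantive obstacle is expected, since the blocking and coupling scheme is identical to the one underlying \cref{thm:maximal_finite_davy}.
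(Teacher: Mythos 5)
Your proposal is correct and follows essentially the same route the paper takes: block, invoke Berbee's coupling, bound the block variance by a covariance inequality for bounded functions under $\beta$-mixing, and then apply Bernstein's inequality with a union bound over the finite class. The only cosmetic difference is in the covariance step: you invoke the Davydov form $|\mathrm{Cov}(f(X_i),f(X_{i+k}))|\le 2\sigma^2\beta_k$ directly, while the paper applies Rio's quantile-based inequality $|\mathrm{Cov}|\le\int_0^{\beta(|i-j|)}Q_h^2(u)\,du$ and then bounds $Q_h\le\|h\|_\infty=\sigma$ — both yield the same variance estimate up to constants, so the arguments coincide.
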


\begin{lemma}\label{lem:liphin2}
    Fix any $\vep\in (0,1)$, $q\in \mathbb{N}$, and let $h\in L_{\infty}(\mcx)$. Recall the definition of $Q_h$, $B_q$, and $\omega_{q,h}$ from \cref{sec:nota}, \eqref{eq:bq}, and \eqref{eq:omqh} respectively. Then 
\begin{align}\label{eq:phin1new}
    \lVert h\mathds{1}(|h|> Q_h(\vep))\rVert_{L_1(P)}\le \frac{2\vep\omega_{q,h}(\vep)}{\sqrt{B_q(\vep)}}.
\end{align}
Suppose now $\upsilon>0$ satisfies 
\begin{align}\label{eq:phin2new}
\upsilon\sqrt{\vep\sum_{k=0}^q \mathds{1}(\vep\le \beta_k)}\ge \lVert h\rVert_{\infty,\mcx}\sqrt{\sum_{k=0}^q \beta_k}.
\end{align}
Then, we have:
\begin{align}\label{eq:phin3new}
    \lVert h\mathds{1}(|h|> \upsilon)\rVert_{L_1(P)}\le 2\lVert h\rVert_{\infty,\mcx}\sqrt{\sum_{k=0}^q \beta_k}\sqrt{\frac{\vep}{\sum_{k=0}^{q} \mathds{1}(\vep\le \beta_k)}}.
\end{align}
\end{lemma}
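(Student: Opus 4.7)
The overall strategy is to mirror the proof of \cref{lem:l1phin}, replacing the Orlicz-norm control on the quantile function by the trivial sup-norm bound $Q_h(u)\le\lVert h\rVert_{\infty,\mcx}$ that is available when $h\in L_\infty(\mcx)$. The two parts of \cref{lem:liphin2} will be handled in the same order as in \cref{lem:l1phin}, with the first inequality proved verbatim and the second inequality simpler thanks to the boundedness hypothesis.

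For \eqref{eq:phin1new}, I begin from the standard quantile representation $\EE|h|\mathds{1}(|h|>Q_h(\vep))\le\int_0^\vep Q_h(u)\,du$, valid for any non-negative integrable random variable. From the definition of $\omega_{q,h}$ in \eqref{eq:omqh}, I obtain $Q_h(u)\le\omega_{q,h}(\vep)/\sqrt{B_q(u)}$ for $u\in(0,\vep]$. A brief check shows that $B_q$ is concave with $B_q(0)=0$, being a sum of the concave functions $u\mapsto \beta_k\wedge u$; this yields $B_q(u)\ge uB_q(\vep)/\vep$ on $(0,\vep]$, hence $1/\sqrt{B_q(u)}\le \sqrt{\vep/(uB_q(\vep))}$. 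Integrating $u^{-1/2}$ from $0$ to $\vep$ then produces the factor $2\vep/\sqrt{B_q(\vep)}$ and delivers \eqref{eq:phin1new}.

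For \eqref{eq:phin3new}, the first step is to verify that the hypothesis \eqref{eq:phin2new} forces $\upsilon\ge Q_h(\vep)$. This follows from the termwise inequality $\beta_k\ge \vep\,\mathds{1}(\vep\le\beta_k)$, which on summation gives $\sum_{k=0}^q\beta_k\ge \vep\sum_{k=0}^q\mathds{1}(\vep\le\beta_k)$, so \eqref{eq:phin2new} implies $\upsilon\ge\lVert h\rVert_{\infty,\mcx}\ge Q_h(\vep)$. Consequently $\{|h|>\upsilon\}\subseteq\{|h|>Q_h(\vep)\}$ and the first part bounds the left-hand side of \eqref{eq:phin3new} by $2\vep\omega_{q,h}(\vep)/\sqrt{B_q(\vep)}$. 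In that expression I would bound the numerator using $\omega_{q,h}(\vep)\le\lVert h\rVert_{\infty,\mcx}\sqrt{B_q(\vep)}\le \lVert h\rVert_{\infty,\mcx}\sqrt{\sum_{k=0}^q\beta_k}$ (from $Q_h\le\lVert h\rVert_{\infty,\mcx}$ together with $B_q(\vep)\le\sum_k\beta_k$), and bound the denominator using $B_q(\vep)\ge\vep\sum_{k=0}^q\mathds{1}(\vep\le\beta_k)$. Substituting both yields exactly the right-hand side of \eqref{eq:phin3new}.

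I do not expect any substantive technical obstacle, since the argument is essentially a bounded-range specialization of \cref{lem:l1phin}. The only point requiring care is to pair the specific numerator bound $\sqrt{\sum_{k=0}^q\beta_k}$ with the specific denominator bound $\vep\sum_{k=0}^q\mathds{1}(\vep\le\beta_k)$: in some regimes tighter estimates such as $2\vep\lVert h\rVert_{\infty,\mcx}$ are also valid, but they do not slot correctly into the adaptive truncation of the chaining argument in the proof of \cref{thm:infmainres}, where the exact form in \eqref{eq:phin3new} is what cancels against the quantities $K_s$ and $\vep_s$ arising at each chaining level.
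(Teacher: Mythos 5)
Your proposal is correct and follows essentially the same approach as the paper's proof, which explicitly reduces \cref{lem:liphin2} to the argument of \cref{lem:l1phin} with the single substitution $\omega_{q,h}(\vep)\le \lVert h\rVert_{\infty,\mcx}\sqrt{\sum_{k=0}^q\beta_k}$ in place of the Orlicz-norm bound \eqref{eq:phin102}. Your route to $\upsilon\ge Q_h(\vep)$ is a mild simplification of the paper's (you exploit $\sum_k\beta_k\ge\vep\sum_k\mathds{1}(\vep\le\beta_k)$ directly and conclude $\upsilon\ge\lVert h\rVert_{\infty,\mcx}\ge Q_h(\vep)$ without passing through the definition of $\omega_{q,h}$ and the chain involving $B_q(\vep)$ on both sides), and your closing remark about deliberately pairing the looser numerator bound with the denominator bound to match the form required by the chaining in \cref{thm:infmainres} correctly anticipates how \eqref{eq:phin3new} is consumed downstream.
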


Let us now complete the proof of \cref{thm:infmainres} using the above results. We proceed exactly as in the proof of \cref{thm:phimainres} by decomposing an upper bound on the LHS of \eqref{eq:target} into terms $T_1$, $T_2$, $T_3$, and further decomposing $T_2$ into $\{T_{2,i}\}_{i\in [4]}$. Instead of invoking \cref{thm:maximal_finite_davy} for bounding $T_1$, $T_{2,1}$, $T_{2,2}$ and $T_3$, we will now use \cref{prop:maxfindavy2}. Similarly, instead of invoking \cref{lem:l1phin} to bound $T_{2,3}$, we will now use \cref{lem:liphin2}. The bound on $T_{2,4}$ stays exactly the same. The rest of the proof is verbatim that of \cref{thm:phimainres}. We leave the rest of the bookkeeping to the reader.

\subsection{Proof of Theorem \ref{thm:gamma_mixing}}\label{sec:pfgamthm}
In this section, we provide a proof of Theorem \ref{thm:gamma_mixing}. While we will still use the blocking and adaptive truncation strategy that was used for the proof of Theorem \ref{thm:phimainres}, a more careful chaining is required to balance all the required contributions. We begin with a modified version of \cref{prop:maxfindavy2}.

\begin{prop}
\label{lem:finite_maximal_gamma}
Suppose $\{X_i\}_{i\in\mathbb{N}}$ be a stationary $\gamma$-mixing sequence with marginal distribution $P$ supported on some Polish space $\mcx$. Let $\cF$ 
be a finite collection of functions on $\mcx$ with $\|f\|_\infty \le b$ and $\|f\|_r \le \sigma$ for all $f \in \cF$, for some $\sigma, b > 0$, $r\ge 1$. Define $\tilde{r}:=r\wedge 2$. Then we have: 
\begin{align*}
&\;\;\;\;\;\;\EE\left[\max_{f \in \cF} \left|\frac{1}{\sqrt{n}}\sum_{i=1}^n\left(f(X_i) - Pf\right)\right|\right] \\ 
& \le K\inf_{1 \le q \le n}\left(b^{1-\frac{\tilde{r}}{2}}\sqrt{\sigma^{\tilde{r}}\sum_{k=1}^{q-1} \gamma_k} \sqrt{1+\log{|\cF|}} + b q \frac{1+\log{|\cF|}}{\sqrt{n}} + b \gamma_q\sqrt{n}\right),
\end{align*}
for some absolute constant $K>0$. 
\end{prop}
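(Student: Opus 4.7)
The plan is to mimic the blocking-plus-coupling strategy that underlies Proposition \ref{thm:maximal_finite_davy} (and its $L_\infty$ variant Proposition \ref{prop:maxfindavy2}), but to exploit the extra strength of $\gamma$-mixing, namely the $\rho$-mixing part, so as to obtain a variance bound that is directly controlled by $L_2$ (and, by interpolation, by $L_r$) norms rather than by Orlicz norms. This is exactly where the proof of Proposition \ref{thm:maximal_finite_davy} needed Davydov's inequality together with an Orlicz ball, and where that proof breaks down for $r\le 2$.

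First I would partition $\{1,\dots,n\}$ into $2m$ alternating blocks of length $q$ (plus a remainder of length at most $2q$), where $m=\lfloor n/(2q)\rfloor$. Write the centred sum as a sum of odd-indexed and even-indexed block contributions plus a negligible remainder whose normalised supremum is trivially of order $bq/\sqrt{n}$. Berbee's coupling lemma (as in \cite{Berbee1979,yu1994rates}) gives independent coupled block sequences agreeing with the originals except on an event of probability at most $m\beta_q\le n\beta_q/(2q)$. Since each block sum is bounded in absolute value by $2bq$ and the supremum is over a finite class, the contribution of the coupling error to the expected supremum is bounded by $(2bq)\cdot(n\beta_q/(2q))/\sqrt{n}=b\beta_q\sqrt{n}\le b\gamma_q\sqrt{n}$, which matches the last term in the stated bound.

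For the coupled (now independent) block sums I would apply a standard Bernstein-type maximal inequality for finite classes of independent centred random variables (e.g.\ \cite[Lemma 2.2.10]{van1996weak}). The sub-exponential (boundedness) term yields $bq(1+\log|\cF|)/\sqrt{n}$ from $|T_j(f)|\le 2bq$. The sub-Gaussian (variance) term requires a sharp bound on $\operatorname{Var}(T_j(f))$ where $T_j(f)=\sum_{i\in B_j}(f(X_i)-Pf)$. Here I would invoke the $\rho$-mixing covariance inequality, which gives $|\mathrm{Cov}(f(X_i),f(X_j))|\le \rho_{|i-j|}\operatorname{Var}(f(X_1))$, hence
\begin{equation*}
\operatorname{Var}(T_j(f))\le q\operatorname{Var}(f(X_1))\Bigl(1+2\sum_{k=1}^{q-1}\rho_k\Bigr)\le q\operatorname{Var}(f(X_1))\Bigl(1+2\sum_{k=1}^{q-1}\gamma_k\Bigr).
\end{equation*}
To pass from $\operatorname{Var}(f(X_1))\le \|f\|_2^2$ to an $L_r$ bound with $\tilde r=r\wedge 2$, use interpolation: if $r\ge 2$ then $\|f\|_2^2\le\|f\|_r^2\le\sigma^{\tilde r}$ with $\tilde r=2$; if $1\le r<2$ then $\|f\|_2^2\le\|f\|_\infty^{2-r}\|f\|_r^r\le b^{2-\tilde r}\sigma^{\tilde r}$. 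Summing across the $m\le n/(2q)$ independent blocks gives total variance at most $\tfrac{1}{2}nb^{2-\tilde r}\sigma^{\tilde r}(1+2\sum_{k=1}^{q-1}\gamma_k)$; dividing by $n$ and taking square roots produces precisely the leading term $b^{1-\tilde r/2}\sqrt{\sigma^{\tilde r}\sum_{k=1}^{q-1}\gamma_k}\sqrt{1+\log|\cF|}$ after multiplication by the Bernstein factor $\sqrt{1+\log|\cF|}$.

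The main obstacle is the variance step: unlike $\beta$-mixing, where $\operatorname{Cov}(f(X_i),f(X_j))$ can only be bounded via Davydov (forcing the Orlicz framework of Lemmas \ref{lem:qdbd}--\ref{lem:var_bound_improved_davy}), the $\rho$-mixing part of $\gamma$ buys the direct correlation bound that makes the $L_r$ control possible. Everything else (blocking, Berbee coupling, Bernstein on the finite class, absorbing the remainder, and taking the infimum over $q$) is routine bookkeeping paralleling the proof of Proposition \ref{thm:maximal_finite_davy}.
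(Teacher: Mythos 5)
Your proposal matches the paper's approach essentially exactly: the paper proves this proposition by re-running the proof of Proposition \ref{thm:maximal_finite_davy} (Berbee coupling into independent blocks, then Bernstein on the finite class, then take the infimum over $q$), with the only change being that the Orlicz-based variance bound of Lemma \ref{lem:var_bound_improved_davy} is replaced by the $\rho$-mixing covariance inequality $|\mathrm{Cov}(h(X_i),h(X_j))|\le\gamma_{|i-j|}\lVert h\rVert_{L_2(P)}^2$ together with the interpolation $\int h^2\,dP\le\lVert h\rVert_\infty^{2-\tilde r}\int|h|^{\tilde r}\,dP$. That is precisely the key step you isolate in your second paragraph, and your handling of the coupling error and Bernstein bookkeeping is the same computation carried out in the cited proof.
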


We will begin with the same construction of a nested sequence of brackets as in the proof of Theorem \ref{thm:phimainres}. Fix $S\ge 0$ (to be chosen later). Throughout this proof, the generic constant $C$ and the sign $\lesssim$ will be used to denote changing constants depending only on $\alpha, \gamma$, $r$, and $V$. We begin with some preliminary observations. Define $H_k:=\bbH_2(2^{-k}\sigma,\cF)$ where 
$$\bbH_2(u,\cF):=D\left(\log\left(\frac{B}{u}\right)\right)^V\left(\frac{\theta}{u}\right)^{\alpha},$$
for some $B\ge b\vee e\vee \sigma$, $\theta\ge \sigma$, $V,\alpha\ge 0$, and $D\ge 1$. First note that for $s\in [S]$, we have:
\begin{align}\label{eq:prelim1}
    \sum_{k=0}^s H_k\sim D\left(\frac{\theta}{\sigma}\right)^{\alpha}(2^{s\alpha} \vee s) \left[\left(\log{\left(\frac{B}{\sigma}\right)}\right)^V + s^V\right].
\end{align}
Next, note that whenever $1+\sum_{k=0}^s H_k\le n$, then by \eqref{eq:tq_gamma} and \eqref{eq:prelim1}, we have:
\begin{align}
    \tqg{2^{-s}\sigma}&\le 2\left(\frac{n}{1+\sum_{k=0}^s H_k}\right)^{\frac{1}{\gamma+1}} \label{eq:qbound} \\ &\lesssim 2\left(\frac{n \sigma^{\alpha}}{D\theta^{\alpha} (2^{s\alpha}\vee s)}\right)^{\frac{1}{1+\gamma}}\left[\left(\log{\left(\frac{B}{\sigma}\right)}\right)^{-\frac{V}{1+\gamma}}+s^{-\frac{V}{1+\gamma}}\right]\notag.
\end{align}

As a result, we get:

\begin{align}\label{eq:prelim2}
    \tqg{2^{-s}\sigma} \begin{cases} \lesssim 2\left(\frac{n \sigma^{\alpha}}{D\theta^{\alpha} (2^{s\alpha}\vee s)}\right)^{\frac{1}{1+\gamma}}\left[\left(\log{\left(\frac{B}{\sigma}\right)}\right)^{-\frac{V}{1+\gamma}}+s^{-\frac{V}{1+\gamma}}\right] & \mbox{if}\ 1+\sum_{k=0}^s H_k\le n  \\ = 1 & \mbox{otherwise}.\end{cases}
\end{align}

Note that for any $q\ge 1$ and $\gamma\ne 1$, we have:
\begin{align}\label{eq:prelim7}
    \sum_{k=0}^q \gamma_k\lesssim \sum_{k=0}^q (1+k)^{-\gamma}\lesssim q^{1-\gamma} \mathbf{1}(\gamma<1)+\mathbf{1}(\gamma>1).
\end{align}

Next, we come to one of the main differences between the current proof and the proof of \cref{thm:phimainres}, namely the definition of $K_s$ (see \eqref{eq:defks}) which was used for thresholding the increments in the chaining argument in the proof of \cref{thm:phimainres}. With $\tilde{r}=r\wedge 2$ as before, for $0\le s\le S-1$, we will redefine  
\begin{align}\label{eq:choosek}
K_s := \begin{cases} (2^{-s+1}\sigma)^{\frac{\tilde{r}}{2}} \left(\frac{n}{1 + \sum_{k = 0}^{s+1} H_k}\right)^{\frac{\gamma}{2(1+\gamma)}} & \mbox{if} \ 1+\sum_{k=0}^s H_k\le n \\ (2^{-s}\sigma)^{\frac{\tilde{r}}{2}}\sqrt{\frac{n}{1+\sum_{k=0}^s H_k}} & \mbox{otherwise}.\end{cases}
\end{align}
We redefine $\nu_f$ from \eqref{eq:adaptrun} with $K_s$ chosen as above.

\emph{Main proof}. By \eqref{eq:chaindec}, we have:
\begin{align*}
    f(x)=\tgl{0}(x)+\sum_{s=0}^S (f(x)-\tgl{s}(x))\mathds{1}(\nu_f(x)=s)+\sum_{s=1}^S (\tgl{s}(x)-\tgl{s-1}(x))\mathds{1}(\nu_f(x)\ge s),
\end{align*}
where all the notation bears the same meaning. We will use the definition of $\tqg{\cdot}$ from \eqref{eq:tq_gamma}. The definitions of $\Theta_{f,s}^{(1)}$ and $\Theta_{f,s}^{(2)}$ stay the same as in \eqref{eq:THETAS}. 
Consequently the following bound holds:
\begin{align*}
    &\;\;\;\;\;\EE\left[\frac{1}{\sqrt{n}}\sup_{f\in\cF}\bigg|\sum_{i=1}^n (f(X_i)-\EE f(X_1))\bigg|\right] \\ 
    &\le \EE\left[\frac{1}{\sqrt{n}}\sup_{f\in\cF}\bigg|\sum_{i=1}^n (\tgl{0}(X_i)-\EE \tgl{0}(X_1))\bigg|\right]+\EE\left[\frac{1}{\sqrt{n}}\sup_{f\in\cF}\bigg|\sum_{s=0}^S \sum_{i=1}^n \big(\got(X_i)-\EE\got(X_1)\big)\bigg|\right]\\ 
    & +\EE\left[\frac{1}{\sqrt{n}}\sup_{f\in\cF}\bigg|\sum_{s=1}^S \sum_{i=1}^n \big(\gtt(X_i)-\EE\gtt(X_1)\big)\bigg|\right]\\ 
    & =: T_1+T_2+T_3.
\end{align*}

We need to bound the three terms $T_1$, $T_2$, and $T_3$ individually. Let us begin with some preliminary estimates. 
Note that \eqref{eq:prelim2} implies 
\begin{align}\label{eq:prelim4}
    \frac{\tqg{\sigma}(1+H_0)}{\sqrt{n}}&\lesssim n^{\frac{1}{2}-\frac{\gamma}{1+\gamma}}\left[D\left(\frac{\theta}{\sigma}\right)^{\alpha}\left(\log{\left(\frac{B}{\sigma}\right)}\right)^V\right]^{\frac{\gamma}{1+\gamma}}+n^{-1/2}D\left(\frac{\theta}{\sigma}\right)^{\alpha}\left(\log{\left(\frac{B}{\sigma}\right)}\right)^V.
\end{align}
Now, from \eqref{eq:prelim2} and \eqref{eq:choosek}, we note that whenever $\tqg{2^{-s}\sigma}=1$, we have $K_s=(2^{-s}\sigma)^{\frac{\tilde{r}}{2}}\sqrt{\frac{n}{1+\sum_{k=0}^s H_k}}$. Therefore, by combining \eqref{eq:prelim2} with the choice of $K_s$ in \eqref{eq:choosek}, we have the following for $s\in [S]$,
\begin{align}\label{eq:prelim5}
    &\;\;\;\;K_{s-1}\frac{\tqg{2^{-s}\sigma}\left(1+\sum_{k=0}^s H_k\right)}{\sqrt{n}}\notag \\ &= (2^{-s}\sigma)^{\frac{\tilde{r}}{2}}\left(\frac{n}{1+\sum_{k=0}^s H_k}\right)^{\frac{\gamma}{2(\gamma+1)}}\frac{(n r_n)^{\frac{1}{\gamma+1}}}{\sqrt{n}}\left(1+\sum_{k=0}^s H_k\right)^{\frac{\gamma}{1+\gamma}}+(2^{-s}\sigma)^{\frac{\tilde{r}}{2}}\sqrt{1+\sum_{k=0}^s H_k}\notag \\ & \lesssim (2^{-s}\sigma)^{\frac{\tilde{r}}{2}}n^{\frac{1}{2(1+\gamma)}}\left[D\left(\frac{\theta}{\sigma}\right)^{\alpha}(2^{s\alpha}\vee s)\right]^{\frac{\gamma}{2(\gamma+1)}}\left\{\left(\log{\left(\frac{B}{\sigma}\right)}\right)^{\frac{V\gamma}{2(\gamma+1)}} + s^{\frac{V\gamma}{2(\gamma+1)}} \right\}\notag \\ &\;\;\;\;+(2^{-s}\sigma)^{\frac{\tilde{r}}{2}}\sqrt{D\left(\frac{\theta}{\sigma}\right)^{\alpha}(2^{s\alpha}\vee s)}\left\{\left(\log{\left(\frac{B}{\sigma}\right)}\right)^{\frac{V}{2}} + s^{\frac{V}{2}} \right\}.
\end{align}
In the last inequality above, we have used \eqref{eq:prelim1}.
Finally, we will also use the following: 
\begin{align}
\label{eq:prelim6}
&\;\;\;\sqrt{\sum_{k=0}^{\tqg{2^{-s}\sigma}} \gamma_k}\,\,\,\,\,\sqrt{1+\sum_{k=0}^s H_k}\notag \\ 
&\lesssim \left(\tqg{2^{-s}\sigma}^{\frac{1-\gamma}{2}}\sqrt{1+\sum_{k=0}^s H_k}\right) \ \ \mathds{1}_{\gamma < 1}+ \sqrt{1+\sum_{k=0}^s H_k} \ \ \mathds{1}_{\gamma > 1}\notag \\ 
&\lesssim \left(\left(\frac{n}{1+\sum_{k=0}^s H_k}\right)^{\frac{1- \gamma}{2(\gamma+1)}}\sqrt{1+\sum_{k=0}^s H_k}\right) \ \ \mathds{1}_{\gamma < 1, 1+\sum_{k=0}^s H_k \le n}+ \sqrt{1+\sum_{k=0}^s H_k} \notag \\
& \lesssim n^{\frac{1-\gamma}{2(1 +\gamma)}} \left(1+\sum_{k=0}^s H_k\right)^{\frac{\gamma}{1 + \gamma}} \mathds{1}_{\gamma< 1, 1+\sum_{k=0}^s H_k \le n} + \sqrt{1+\sum_{k=0}^s H_k} \notag \\ 
& \lesssim n^{\frac{1}{2(1 +\gamma)}} \left(1+\sum_{k=0}^s H_k\right)^{\frac{\gamma}{2(1 + \gamma)}} \mathds{1}_{\gamma < 1, 1+\sum_{k=0}^s H_k \le n} + \sqrt{1+\sum_{k=0}^s H_k} \notag \\ 
&\lesssim n^{\frac{1}{2(1+\gamma)}}\left[D\left(\frac{\theta}{\sigma}\right)^{\alpha}(2^{s\alpha}\vee s)\right]^{\frac{\gamma}{2(\gamma+1)}}\left\{\left(\log{\left(\frac{B}{\sigma}\right)}\right)^{\frac{V\gamma}{2(\gamma+1)}} + s^{\frac{V\gamma}{2(\gamma+1)}} \right\}\mathbf{1}(\gamma<1)\notag \\ &\;\;\;\;+\sqrt{D\left(\frac{\theta}{\sigma}\right)^{\alpha}(2^{s\alpha}\vee s)}\left[\left(\log{\left(\frac{B}{\sigma}\right)}\right)^{\frac{V}{2}}+s^{\frac{V}{2}}\right].
\end{align}
The first inequality above follows from \eqref{eq:prelim7}, the second inequality uses \eqref{eq:qbound} and the fact that $\tqg \lesssim 1$ when $n < 1+\sum_{k=0}^s H_k$. The fourth inequality uses the fact that when $n \ge 1+\sum_{k=0}^s H_k$, 
$$
n^{\frac{1-\gamma}{2(1 +\gamma)}} \left(1+\sum_{k=0}^s H_k\right)^{\frac{\gamma}{1 + \gamma}} \le n^{\frac{1}{2(1 +\gamma)}} \left(1+\sum_{k=0}^s H_k\right)^{\frac{\gamma}{2(1 + \gamma)}} \,.
$$
The last inequality used the bound on $\sum_k H_k$, namely \eqref{eq:prelim1}. 
\\\\
\noindent
\textbf{Bound on $T_1$}. By \cref{lem:finite_maximal_gamma}, we then have:
\begin{align*}
   T_1 & = \bbE\left[\sup_{f \in \cF} \frac{1}{\sqrt{n}}\sum_i\left|\tgl{0}(X_i) - \EE\tgl{0}(X)\right|\right] \notag \\
   & \lesssim \inf_{1 \le q \le n}\left(\sqrt{\sigma^{\tilde{r}}\sum_{k=0}^q \gamma_k}\sqrt{1+H_0} + b q \frac{1+H_0}{\sqrt{n}} + b \gamma_q\sqrt{n}\right) \notag \\
   & \lesssim \left(\sqrt{\sigma^{\tilde{r}}\left(\tqg{\sigma}^{1-\gamma} \mathbf{1}(\gamma<1)+\mathbf{1}(\gamma>1)\right)}\sqrt{1+H_0} + 2b \tqg{\sigma} \frac{1+H_0}{\sqrt{n}}\right).
\end{align*}

The first inequality above uses \cref{lem:finite_maximal_gamma}. The second inequality uses \eqref{eq:prelim7} and the choice $q\equiv \tqg{\sigma}$. Therefore, by using \eqref{eq:prelim4} and \eqref{eq:prelim6}, we get: 
\begin{equation*}
\boxed{
\begin{aligned}
T_1&\lesssim n^{\frac{1}{2(1+\gamma)}}\sigma^{\frac{\tilde{r}}{2}}\left[D\left(\frac{\theta}{\sigma}\right)^{\alpha}\right]^{\frac{\gamma}{2(\gamma+1)}}\left(\log{\left(\frac{B}{\sigma}\right)}\right)^{\frac{V\gamma}{2(\gamma+1)}}+\sigma^{\frac{\tilde{r}}{2}}\sqrt{D\left(\frac{\theta}{\sigma}\right)^{\alpha}\left(\log{\left(\frac{B}{\sigma}\right)}\right)^V}\\ &\;\;\;+n^{\frac{1}{2}-\frac{\gamma}{1+\gamma}}\left(D\left(\frac{\theta}{\sigma}\right)^{\alpha}\left(\log{\left(\frac{B}{\sigma}\right)}\right)^V\right)^{\frac{\gamma}{1+\gamma}}+n^{-1/2}D\left(\frac{\theta}{\sigma}\right)^{\alpha}\left(\log{\left(\frac{B}{\sigma}\right)}\right)^V.
\end{aligned}
}
\end{equation*}

\textbf{Bound on $T_2$}. Just as in \eqref{eq:T2-2}, it suffices to bound $\{T_{2,j}\}_{j=1,2,3,4}$. We begin with bounding $T_{2,1}$.

With $K_s$ defined as in \eqref{eq:choosek}, the same calculation as in the proof of \cref{thm:phimainres} yields:
\begin{align*}
    T_{2,1} & =\sum_{s = 1}^S \EE\left[\frac{1}{\sqrt{n}}\sup_{f\in\cF}\bigg|\sum_{i=1}^n \big(\Delta_f^s(X_i)\mathds{1}(\nu_f(X_i)=s) -\EE\Delta_f^s(X_1)\mathds{1}(\nu_f(X_1)=s)\big)\bigg|\right]\notag \\ 
    & \lesssim \sum_{s = 1}^S\inf_{1 \le q \le n}\left(\sqrt{(2^{-s}\sigma)^{\tilde{r}} \sum_{k=0}^q \gamma_k}\sqrt{1+\sum_{k = 0}^s H_k} + K_{s-1} q \frac{1+\sum_{k = 0}^s H_k}{\sqrt{n}} + K_{s-1} \gamma_q\sqrt{n}\right) \\
    & \lesssim \sum_{s = 1}^S \bigg(\sqrt{(2^{-s}\sigma)^{\tilde{r}} \left(\tqg{2^{-s}\sigma}\mathbf{1}(\gamma<1)+\mathbf{1}(\gamma>1)\right)}\sqrt{1+\sum_{k = 0}^s H_k} \notag \\ &+ 2K_{s-1} \tqg{2^{-s}\sigma} \frac{1 + \sum_{k = 0}^s H_k}{\sqrt{n}}\bigg)
\end{align*}
The first inequality above uses   \cref{lem:finite_maximal_gamma}. The second inequality uses \eqref{eq:prelim7} and the choice $q\equiv \tqg{2^{-s}\sigma}$. Therefore, by using \eqref{eq:prelim5} and \eqref{eq:prelim6}, we get: 
\begin{equation*}
\boxed{
\begin{aligned}
    T_{2,1}&\lesssim \sum_{s=1}^S (2^{-s}\sigma)^{\frac{\tilde{r}}{2}}n^{\frac{1}{2(1+\gamma)}}\left[D\left(\frac{\theta}{\sigma}\right)^{\alpha}(2^{s\alpha}\vee s)\right]^{\frac{\gamma}{2(\gamma+1)}}\left\{\left(\log{\left(\frac{B}{\sigma}\right)}\right)^{\frac{V\gamma}{2(\gamma+1)}} + s^{\frac{V\gamma}{2(\gamma+1)}} \right\}\notag \\ &\;\;\;\;+\sum_{s=1}^S (2^{-s}\sigma)^{\frac{\tilde{r}}{2}}\sqrt{D\left(\frac{\theta}{\sigma}\right)^{\alpha}(2^{s\alpha}\vee s)}\left\{\left(\log{\left(\frac{B}{\sigma}\right)}\right)^{\frac{V}{2}} + s^{\frac{V}{2}} \right\}.
\end{aligned}
}
\end{equation*}

We now move on to the bound of $T_{2,2}$. As it only involves the $s=0$-th level of chaining, i.e., a supremum over $\exp(H_0)$ functions, the bound for $T_{2,2}$ will be the exact same as $T_1$. In other words, we have:
\begin{equation*}
\boxed{
\begin{aligned}
T_{2,2}&\lesssim n^{\frac{1}{2(1+\gamma)}}\sigma^{\frac{\tilde{r}}{2}}\left[D\left(\frac{\theta}{\sigma}\right)^{\alpha}\right]^{\frac{\gamma}{2(\gamma+1)}}\left(\log{\left(\frac{B}{\sigma}\right)}\right)^{\frac{V\gamma}{2(\gamma+1)}}+\sigma^{\frac{\tilde{r}}{2}}\sqrt{D\left(\frac{\theta}{\sigma}\right)^{\alpha}\left(\log{\left(\frac{B}{\sigma}\right)}\right)^V}\\ &\;\;\;+n^{\frac{1}{2}-\frac{\gamma}{1+\gamma}}\left(D\left(\frac{\theta}{\sigma}\right)^{\alpha}\left(\log{\left(\frac{B}{\sigma}\right)}\right)^V\right)^{\frac{\gamma}{1+\gamma}}+n^{-1/2}D\left(\frac{\theta}{\sigma}\right)^{\alpha}\left(\log{\left(\frac{B}{\sigma}\right)}\right)^V.
\end{aligned}
}   
\end{equation*}

Next, we go to the bound on $T_{2, 3}$. Our key tool is the following elementary inequality: $\EE[|X|\mathbf{1}(|X|\ge a)]\le a^{-1}\EE X^2$ for $a>0$. Applying this inequality yields:
\begin{align*}
     T_{2,3} &=\sqrt{n}\sum_{s = 0}^{S-1} \sup_{f \in \cF}\EE\big|\Delta_f^s(X_1)\mathds{1}(\nu_f(X_1)=s)\big)\big| \\ 
     &=\sqrt{n}\sum_{s = 0}^{S-1} \sup_{f \in \cF}\EE\big|\Delta_f^s(X_1)\mathds{1}(\Delta_f^s(X_1) > K_s)\big)\big| \\
     & \le \sqrt{n}\sum_{s = 0}^{S-1} \frac{\bbE[(\Delta_f^s(X_1))^2]}{K_s} \le \sqrt{n}\sum_{s=0}^{S-1} \frac{(2^{-s}\sigma)^{\tilde{r}}}{K_s}\\
     & \lesssim \sum_{s=0}^{S-1} (2^{-s}\sigma)^{\frac{\tilde{r}}{2}}\sqrt{1+\sum_{k=0}^{s+1} H_k}+n^{\frac{1}{2(1+\gamma)}}\sum_{s=0}^{S-1} (2^{-s}\sigma)^{\frac{\tilde{r}}{2}}\left(1+\sum_{k=0}^{s+1} H_k\right)^{\frac{\gamma}{2(\gamma+1)}}\\
\end{align*}

By combining the above inequality with \eqref{eq:prelim1}, we get:
\begin{equation*}
\boxed{
\begin{aligned}
    T_{2,3}&\lesssim \sum_{s=1}^S (2^{-s}\sigma)^{\frac{\tilde{r}}{2}}n^{\frac{1}{2(1+\gamma)}}\left[D\left(\frac{\theta}{\sigma}\right)^{\alpha}(2^{s\alpha}\vee s)\right]^{\frac{\gamma}{2(\gamma+1)}}\left\{\left(\log{\left(\frac{B}{\sigma}\right)}\right)^{\frac{V\gamma}{2(\gamma+1)}} + s^{\frac{V\gamma}{2(\gamma+1)}} \right\}\notag \\ &\;\;\;\;+\sum_{s=1}^S (2^{-s}\sigma)^{\frac{\tilde{r}}{2}}\sqrt{D\left(\frac{\theta}{\sigma}\right)^{\alpha}(2^{s\alpha}\vee s)}\left\{\left(\log{\left(\frac{B}{\sigma}\right)}\right)^{\frac{V}{2}} + s^{\frac{V}{2}} \right\}.
\end{aligned}
}
\end{equation*}

Finally we provide an upper bound on $T_{2, 4}$ to complete the task of bounding $T_2$: 
\begin{align*}
     T_{2,4} &=\sqrt{n}\sup_{f \in \cF}\EE\big|\Delta_f^S(X_1)\mathds{1}(\nu_f(X_1)=S)\big)\big| \\
     & \lesssim \sqrt{n}\sup_{f \in \cF}\|\Delta_f^S(X_1)\mathds{1}(\nu_f(X_1)=S)\big)\|_{\tilde{r}, P} \\
     & \le \boxed{\sqrt{n} 2^{-S}\sigma} \,.
\end{align*}
Now that we have bounded $\{T_{2,j}\}_{j=1,2,3,4}$, we can combine all the terms to bound $T_2$. This will yield:

\begin{equation*}
    \boxed{
    \begin{aligned}
        T_2&\lesssim \sum_{s=0}^S (2^{-s}\sigma)^{\frac{\tilde{r}}{2}}n^{\frac{1}{2(1+\gamma)}}\left[D\left(\frac{\theta}{\sigma}\right)^{\alpha}(2^{s\alpha}\vee s)\right]^{\frac{\gamma}{2(\gamma+1)}}\left\{\left(\log{\left(\frac{B}{\sigma}\right)}\right)^{\frac{V\gamma}{2(\gamma+1)}} + s^{\frac{V\gamma}{2(\gamma+1)}} \right\}\notag \\ &\;\;\;\;+\sum_{s=0}^S (2^{-s}\sigma)^{\frac{\tilde{r}}{2}}\sqrt{D\left(\frac{\theta}{\sigma}\right)^{\alpha}(2^{s\alpha}\vee s)}\left\{\left(\log{\left(\frac{B}{\sigma}\right)}\right)^{\frac{V}{2}} + s^{\frac{V}{2}} \right\}+\sqrt{n}2^{-S}\sigma \\ 
        &\;\;\;\;+n^{\frac{1}{2}-\frac{\gamma}{1+\gamma}}\left(D\left(\frac{\theta}{\sigma}\right)^{\alpha}\left(\log{\left(\frac{B}{\sigma}\right)}\right)^V\right)^{\frac{\gamma}{1+\gamma}}+n^{-1/2}D\left(\frac{\theta}{\sigma}\right)^{\alpha}\left(\log{\left(\frac{B}{\sigma}\right)}\right)^V
    \end{aligned}
    }
\end{equation*}
We now move on to bounding $T_3$. 

\textbf{Bound for $T_3$}. 
The initial steps are exactly the same as in the proof of \cref{thm:phimainres}. In particular, 
\begin{align*}
     T_3  & = \EE\left[\frac{1}{\sqrt{n}}\sup_{f\in\cF}\bigg|\sum_{s=1}^S \sum_{i=1}^n \big(\gtt(X_i)-\EE\gtt(X_1)\big)\bigg|\right]\\ &\lesssim  \sum_{s = 1}^S\inf_{1 \le q \le n}\left(\sqrt{(2^{-s}\sigma)^{\tilde{r}} \sum_{k=0}^q \gamma_k}\sqrt{1+\sum_{k = 0}^s H_k} + K_{s-1} q \frac{1+\sum_{k = 0}^s H_k}{\sqrt{n}} + K_{s-1} \gamma_q\sqrt{n}\right)\,.
\end{align*}
It is evident from the above upper bound on $T_3$ that it is similar to that of $T_{2, 1}$. Therefore we obtain: 
\begin{equation*}
\boxed{
\begin{aligned}
    T_{3}&\lesssim \sum_{s=1}^S (2^{-s}\sigma)^{\frac{\tilde{r}}{2}}n^{\frac{1}{2(1+\gamma)}}\left[D\left(\frac{\theta}{\sigma}\right)^{\alpha}(2^{s\alpha}\vee s)\right]^{\frac{\gamma}{2(\gamma+1)}}\left\{\left(\log{\left(\frac{B}{\sigma}\right)}\right)^{\frac{V\gamma}{2(\gamma+1)}} + s^{\frac{V\gamma}{2(\gamma+1)}} \right\}\notag \\ &\;\;\;\;+\sum_{s=1}^S (2^{-s}\sigma)^{\frac{\tilde{r}}{2}}\sqrt{D\left(\frac{\theta}{\sigma}\right)^{\alpha}(2^{s\alpha}\vee s)}\left\{\left(\log{\left(\frac{B}{\sigma}\right)}\right)^{\frac{V}{2}} + s^{\frac{V}{2}} \right\}.
\end{aligned}
}
\end{equation*}
Combining all the bounds obtained so far, we have: 
\begin{equation*}
    \boxed{
    \begin{aligned}
        T_1 + T_2 + T_3 &\lesssim \sum_{s=0}^S (2^{-s}\sigma)^{\frac{\tilde{r}}{2}}n^{\frac{1}{2(1+\gamma)}}\left[D\left(\frac{\theta}{\sigma}\right)^{\alpha}(2^{s\alpha}\vee s)\right]^{\frac{\gamma}{2(\gamma+1)}}\left\{\left(\log{\left(\frac{B}{\sigma}\right)}\right)^{\frac{V\gamma}{2(\gamma+1)}} + s^{\frac{V\gamma}{2(\gamma+1)}} \right\}\notag \\ &\;\;\;\;+\sum_{s=0}^S (2^{-s}\sigma)^{\frac{\tilde{r}}{2}}\sqrt{D\left(\frac{\theta}{\sigma}\right)^{\alpha}(2^{s\alpha}\vee s)}\left\{\left(\log{\left(\frac{B}{\sigma}\right)}\right)^{\frac{V}{2}} + s^{\frac{V}{2}} \right\}+\sqrt{n}2^{-S}\sigma \\ 
        &\;\;\;\;+n^{\frac{1}{2}-\frac{\gamma}{1+\gamma}}\left(D\left(\frac{\theta}{\sigma}\right)^{\alpha}\left(\log{\left(\frac{B}{\sigma}\right)}\right)^V\right)^{\frac{\gamma}{1+\gamma}}+n^{-1/2}D\left(\frac{\theta}{\sigma}\right)^{\alpha}\left(\log{\left(\frac{B}{\sigma}\right)}\right)^V
    \end{aligned}
    }
\end{equation*}
Note that we still have a parameter $S\ge 0$ to choose. Now that we have obtained bounds from $T_1, T_2, T_3$, we now highlight the choice of $S$ for different regimes of $\alpha$, $r$, and $\gamma$. 

\subsubsection{Rates for Donsker class when $0 \le \alpha < \tilde{r}$}
In this subsection, we consider the case when $0 \le \alpha < 2$, which implies $\alpha < \tilde{r}(1+\gamma)/\gamma$. By choosing $S\ge 0$ satisfying 
$$2^S > n^{\frac{\gamma}{2(\gamma+1)}}\sigma^{1-\frac{\tilde{r}}{2}+\frac{\alpha \gamma}{2(\gamma+1)}}\theta^{-\frac{\alpha\gamma}{2(\gamma+1)}},$$
combined with the bounds for $T_1, T_2$, and $T_3$, we get:
    \begin{align*}
        &\;\;\;\;T_1+T_2+T_3\\ &\lesssim \sigma^{\frac{\tilde{r}}{2}}\left[D\left(\frac{\theta}{\sigma}\right)^{\alpha}\right]^{\frac{\gamma}{2(\gamma+1)}}n^{\frac{1}{2(1+\gamma)}}\left(\log{\left(\frac{B}{\sigma}\right)}\right)^{\frac{V\gamma}{2(\gamma+1)}}+\sigma^{\frac{\tilde{r}}{2}}\sqrt{D\left(\frac{\theta}{\sigma}\right)^{\alpha}}\left(\log{\left(\frac{B}{\sigma}\right)}\right)^{\frac{V}{2}}\\ &\;\;\;\;+n^{\frac{1}{2}-\frac{\gamma}{1+\gamma}}\left(D\left(\frac{\theta}{\sigma}\right)^{\alpha}\left(\log{\left(\frac{B}{\sigma}\right)}\right)^V\right)^{\frac{\gamma}{1+\gamma}}+n^{-1/2}D\left(\frac{\theta}{\sigma}\right)^{\alpha}\left(\log{\left(\frac{B}{\sigma}\right)}\right)^V 
    \end{align*}

   As $D(\theta/\sigma)^{\alpha}(\log{(B/\sigma)})^V\lesssim n$, we observe the following inequalities: 
   \begin{align}
   \label{eq:adjust1}
       &\;\;\;\;\left[D\left(\frac{\theta}{\sigma}\right)^{\alpha}\right]^{\frac{\gamma}{2(\gamma+1)}}n^{\frac{1}{2(1+\gamma)}}\left(\log{\left(\frac{B}{\sigma}\right)}\right)^{\frac{V\gamma}{2(\gamma+1)}} \notag \\ 
       &\gtrsim \left[D\left(\frac{\theta}{\sigma}\right)^{\alpha}\right]^{\frac{\gamma}{2(\gamma+1)}+\frac{1}{2(\gamma+1)}}\left(\log{\left(\frac{B}{\sigma}\right)}\right)^{\frac{V\gamma}{2(\gamma+1)}+\frac{V}{2(\gamma+1)}}=\sqrt{D\left(\frac{\theta}{\sigma}\right)^{\alpha}}\left(\log{\left(\frac{B}{\sigma}\right)}\right)^{\frac{V}{2}},
   \end{align}
   and 
   \begin{align}
   \label{eq:adjust2}
       &\;\;\;\;n^{\frac{1}{2}-\frac{\gamma}{1+\gamma}}\left(D\left(\frac{\theta}{\sigma}\right)^{\alpha}\left(\log{\left(\frac{B}{\sigma}\right)}\right)^V\right)^{\frac{\gamma}{1+\gamma}}\notag \\ 
       &\gtrsim n^{-\frac{1}{2}}\left[D\left(\frac{\theta}{\sigma}\right)^{\alpha}\right]^{\frac{\gamma}{1+\gamma}+\frac{1}{\gamma}}\left(\log{\left(\frac{B}{\sigma}\right)}\right)^{\frac{V\gamma}{1+\gamma}+\frac{V}{1+\gamma}}=n^{-1/2}D\left(\frac{\theta}{\sigma}\right)^{\alpha}\left(\log{\left(\frac{B}{\sigma}\right)}\right)^V.
   \end{align}
   Combining the above observations yields a conclusion.
\subsubsection{Non-Donsker class: when $\tilde{r} < \alpha < \tilde{r}(1+\gamma)/\gamma$}
In this subsection, we consider the non-Donsker class when $\tilde{r} < \alpha < \tilde{r}(1+\gamma)/\gamma$. By choosing $S\ge 0$ satisfying 
$$2^S\sim n^{\frac{1}{\alpha+2-\tilde{r}}}\sigma \left(D\theta^{\alpha}\right)^{-\frac{1}{\alpha+2-\tilde{r}}},$$
combined with the bounds for $T_1$, $T_2$, and $T_3$, we get:
\begin{align*}
        &\;\;\;\;T_1+T_2+T_3\\ &\lesssim \sigma^{\frac{\tilde{r}}{2}}\left[D\left(\frac{\theta}{\sigma}\right)^{\alpha}\right]^{\frac{\gamma}{2(\gamma+1)}}n^{\frac{1}{2(1+\gamma)}}\left(\log{\left(\frac{B}{\sigma}\right)}\right)^{\frac{V\gamma}{2(\gamma+1)}}+\left(D\theta^{\alpha}\right)^{-\frac{1}{\alpha+2-\tilde{r}}}\left(\log{\left(\frac{B}{\sigma}\right)}\right)^{\frac{V}{2}}n^{\frac{\alpha-\tilde{r}}{2(\alpha+2-\tilde{r})}}\\ &\;\;\;\;+n^{\frac{1}{2}-\frac{\gamma}{1+\gamma}}\left(D\left(\frac{\theta}{\sigma}\right)^{\alpha}\left(\log{\left(\frac{B}{\sigma}\right)}\right)^V\right)^{\frac{\gamma}{1+\gamma}}+n^{-1/2}D\left(\frac{\theta}{\sigma}\right)^{\alpha}\left(\log{\left(\frac{B}{\sigma}\right)}\right)^V \\ &\;\;\;\;+\left(D\theta^{\alpha}\right)^{-\frac{1}{\alpha+2-\tilde{r}}}n^{\frac{\alpha-\tilde{r}}{2(\alpha+2-\tilde{r})}}\left(\log{\big(n^{\frac{1}{\alpha+2-\tilde{r}}}\sigma \theta^{-\frac{\alpha}{\alpha+2-\tilde{r}}}\big)}\right)^{\frac{V}{2}}+\sigma^{\frac{\tilde{r}}{2}}\sqrt{D\left(\frac{\theta}{\sigma}\right)^{\alpha}\left(\log{\left(\frac{B}{\sigma}\right)}\right)^V}.
    \end{align*}
As $\sigma\gtrsim n^{-\frac{1}{\alpha+2-\tilde{r}}}$, the last term above is non-negative. The conclusion now follows by combining \eqref{eq:adjust1}, \eqref{eq:adjust2}, and noting that 
$$\log{\big(n^{\frac{1}{\alpha+2-\tilde{r}}}\sigma \theta^{-\frac{\alpha}{\alpha+2-\tilde{r}}}\big)}\le \log{\big(n^{\frac{1}{\alpha+2-\tilde{r}}}\sigma \big)} \le  \log{\big(n^{\frac{1}{\alpha+2-\tilde{r}}} B\big)}\le \log{\left(\frac{B}{\sigma}\right)}.$$

\subsubsection{Non-Donsker class: when $\alpha > \tilde{r}(1 + \gamma)/\gamma$}
We end this proof with the final case when $\alpha > \tilde{r}(1+\gamma)/\gamma$. By choosing $S\ge 0$ satisfying 
$$2^S\sim n^{\frac{1}{\alpha+\frac{(2-\tilde{r})(\gamma+1)}{\gamma}}}\sigma \left(D\theta^{\alpha}\right)^{-\frac{1}{\alpha+\frac{(2-\tilde{r})(\gamma+1)}{\gamma}}},$$
combined with the bounds for $T_1$, $T_2$, and $T_3$, we get, as above:
\begin{align*}
        &\;\;\;\;T_1+T_2+T_3\\ &\lesssim n^{\frac{\gamma(\alpha-\tilde{r})+(2-\tilde{r})}{2(\alpha \gamma+(2-\tilde{r})(\gamma+1))}}\left( D \theta^{\alpha}\right)^{\frac{\gamma}{\alpha \gamma+(2-\tilde{r})(\gamma+1)}}\left(\log{\left(\frac{B}{\sigma}\right)}\right)^{\frac{V\gamma}{2(\gamma+1)}}+\sigma^{\frac{\tilde{r}}{2}}\sqrt{D\left(\frac{\theta}{\alpha}\right)^{\alpha}\left(\log{\left(\frac{B}{\sigma}\right)}\right)^V}\\ &+n^{\frac{1}{2}-\frac{\gamma}{1+\gamma}}\left(D\left(\frac{\theta}{\sigma}\right)^{\alpha}\left(\log{\left(\frac{B}{\sigma}\right)}\right)^V\right)^{\frac{\gamma}{\gamma+1}}+n^{\frac{1}{2(1+\gamma)}}\sigma^{\frac{\tilde{r}}{2}}\left(D\left(\frac{\theta}{\sigma}\right)^{\alpha}\left(\log{\left(\frac{B}{\sigma}\right)}\right)^V\right)^{\frac{\gamma}{2(\gamma+1)}}\\ &+n^{\frac{\gamma(\alpha-\tilde{r})}{2(\alpha \gamma + (2-\tilde{r})(\gamma+1))}}\left( D \theta^{\alpha}\right)^{\frac{2\gamma+2-\tilde{r}}{2(\alpha \gamma + (2-\tilde{r})(\gamma+1))}}\left(\log{\left(\frac{B}{\sigma}\right)}\right)^{\frac{V}{2}}+n^{-1/2}D\left(\frac{\theta}{\sigma}\right)^{\alpha}\left(\log{\left(\frac{B}{\sigma}\right)}\right)^V
    \end{align*}
    In the last inequality, we have used the bound $\sigma\gtrsim n^{-\frac{1}{\alpha+(2-\tilde{r})(1+\gamma^{-1})}}$, \eqref{eq:adjust1}, and \eqref{eq:adjust2}. This completes the proof.

    \subsection{Proof of Theorem \ref{thm:roc}}
\label{sec:proof_roc}
The proof of Theorem \ref{thm:roc} follows from standard peeling argument. We provide the proof here for completeness. 

Note that by definition of $\hat f_n$, we have: 
$$
\hat f_n \in \argmin_{f \in \cF} \bbP_n(\ell(Y, f(X))) \,.
$$
Fix $s>1$ large enough to be fixed later. For the rest of the proof, define $A_j := \{f: 2^{2j}s^2\delta_n^2 \le \cE(\hat{f}_n) \le 2^{2j+2}s^2\delta_n^2: f \in \cF\}$. We then have: 
\begin{align*}
    & \bbP\left(\cE(\hat{f}_n) > s\delta_n\right) \\
    & \le \bbP\left(\sup_{f: \cE(f) > s\delta_n} \bbP_n\left(\ell(Y, f_*(X)) - \ell(Y, f(X))\right) \ge 0\right) \\
    & \le \sum_{j = 0}^\infty \bbP\left(\sup_{f \in A_j} \bbP_n\left(\ell(Y, f_*(X)) - \ell(Y, f(X))\right) \ge 0\right) \\
    & \le  \sum_{j = 0}^\infty \bbP\left(\sup_{f \in A_j} (\bbP_n - \bbP)\left(\ell(Y, f_*(X)) - \ell(Y, f(X))\right) \ge \inf_{f \in A_j} \bbP\left(\ell(Y, f(X)) - \ell(Y, f_*(X))\right)\right) \\
    & \le  \sum_{j = 0}^\infty \frac{\bbE\left[\sup_{\cE(f) \le 2^{2j+2}s^2\delta_n^2} \left|(\bbP_n - \bbP)\left(\ell(Y, f_*(X)) - \ell(Y, f(X))\right)\right|\right]}{\inf_{f \in A_j}\cE(f)} \\
    & \le \sum_{j = 0}^\infty \frac{\bbE\sup_{\lVert f-f_\star\rVert_{L_2(P_X)\le c^{-1/2}2^{j+1}s\delta_n}}\big|\mathbb{G}_n(g)\big|}{\sqrt{n} 2^{2j+1}s^2\delta_n^2}
\end{align*}
In the last display, we have used the fact that $\cE(f)\ge c \lVert f-f_\star\rVert^2_{L_2(P_X)}$. Now by the $L$-Lipschitzness of $\ell(\cdot,\cdot)$, we have:
$$\lVert \ell(Y,f(X))-\ell(Y,f_\star(X))\rVert^2_{L_2(P)}\le L^2 \lVert f-f_\star\rVert^2_{L_2(P_X)}\le L^2 c^{-1}2^{2j+2}s^2\delta_n^2.$$
Now by the definition of $\mathcal{G}_{\delta}$ in \cref{thm:roc}, the above displays imply that 
$$\bbP\left(\cE(\hat{f}_n) > s\delta_n\right)\le \sum_{j = 0}^\infty \frac{\bbE\sup_{g\in\mathcal{G}_{\delta}}\big|\mathbb{G}_n(g)\big|}{\sqrt{n} 2^{2j+1}s^2\delta_n^2}.$$
By applying \cref{thm:gamma_mixing} and abbreviating $\Pi_n(\mathcal{G}_{\delta})$ as $\Pi_n(\delta)$ (defined in \cref{thm:gamma_mixing}), we get:
\begin{align*}
\bbP\left(\cE(\hat{f}_n) > s\delta_n\right)\le \sum_{j = 0}^\infty \frac{1}{\sqrt{n} 2^{2j+1}s^2\delta_n^2} \bbE\sup_{g\in\mathcal{G}_{L c^{-1/2}2^{j+1}s\delta_n}}\big|\mathbb{G}_n(g)\big|\le \sum_{j = 0}^\infty \frac{\Pi_n\left(L c^{-\frac{1}{2}}2^{j+1}s\delta_n\right)}{\sqrt{n} 2^{2j+2}s^2\delta_n^2}.
\end{align*}
Choose $s>2\sqrt{c}/L$, then by using the fact that $\Pi_n(\mathcal{G}_{\delta})/\delta^t$ is decreasing for some $t\in (0,2)$ and $\Pi_n(\delta_n)\le \sqrt{n}\delta_n^2$, we have:
\begin{align*}
    \bbP\left(\cE(\hat{f}_n) > s\delta_n\right)\le\sum_{j=0}^{\infty}\frac{(L C^{-\frac12}2^{j+1}s)^{t}\sqrt{n}\delta_n^2}{\sqrt{n}2^{2j+2}s^2\delta_n^2}\lesssim s^{-(2-t)},
\end{align*}
where the $\lesssim$ hides constants free of $s$. The moment bound follows by integrating the above  tail bound.

\section{Proofs of Corollaries from main paper} 

\begin{proof}[Proof of \cref{cor:nondonsk_1}, Part (1)]
With $r>2$, if $\beta>\frac{r}{r-2}$, then note that for $q\ge 1$, by \eqref{eq:piv}, we get:
\begin{align*}
    \Lbp(q) \lesssim \int_0^1 u^{-\frac{2}{r}}\,du + \sum_{i=2}^{q+1} \int_0^{i^{-\beta}} u^{-\frac{2}{r}}\,du \lesssim 1+\sum_{i=1}^{\infty} i^{-\beta\left(1-\frac{2}{r}\right)}\lesssim 1, 
\end{align*}
where all the implicit constants are free of $q$. The above display implies that $\Ld{\cdot}$ can be chosen as a constant function and consequently \cref{assm:bsbd} holds. Further note that from \eqref{eq:tqd}, we have:
$$\tq{\sigma}\lesssim 1\vee \left(\frac{n}{1+\mbH{\sigma}}\right)^{\frac{1}{1+\beta}}.$$
Consequently,
\begin{align}\label{eq:therrterm}
\frac{b \tq{\sigma}(1+\mbH{\sigma})}{\sqrt{n}}&\lesssim \frac{b}{\sqrt{n}}(1+\mbH{\sigma})+ b n^{\frac{1}{2}-\frac{\beta}{1+\beta}}(1+\mbH{\sigma})^{\frac{\beta}{1+\beta}}\nonumber \\ &\lesssim \frac{b \sigma^{-\alpha}}{\sqrt{n}}+b n^{\frac{1}{2}-\frac{\beta}{1+\beta}}\sigma^{-\frac{\alpha \beta}{1+\beta}}\lesssim b n^{\frac{1}{2}-\frac{\beta}{1+\beta}}\sigma^{-\frac{\alpha \beta}{1+\beta}},
\end{align}
where we have used $\sigma\gtrsim n^{-\frac{1}{\alpha}}$ in the last inequality. With these observations, we now split the proof into two cases.

\emph{When $\alpha<2$}. We observe that 
$$\int_{\frac{a}{2^6\sqrt{n}}}^{\sigma} \sqrt{\mbH{u}}\,du\le \int_{\frac{a}{2^6\sqrt{n}}}^{\sigma} u^{-\frac{\alpha}{2}}\,du\lesssim \sigma^{1-\frac{\alpha}{2}}\lesssim \sqrt{n}\sigma.$$
Here we have used $\sigma\gtrsim n^{-\frac{1}{\alpha}}$ in the above display. 
Combining the above observation with \cref{thm:phimainres} and  \eqref{eq:therrterm} completes the proof in this case.

\emph{When $\alpha>2$}. We observe that 
$$\int_{\frac{a}{2^6\sqrt{n}}}^{\sigma} \sqrt{\mbH{u}}\,du\le \int_{\frac{a}{2^6\sqrt{n}}}^{\sigma} u^{-\frac{\alpha}{2}}\,du\lesssim \left(\frac{a}{\sqrt{n}}\right)^{1-\frac{\alpha}{2}}.$$
Therefore, by choosing $a\gtrsim n^{\frac{1}{2}-\frac{1}{\alpha}}$, we get:
$$\int_{\frac{a}{2^6\sqrt{n}}}^{\sigma} \sqrt{\mbH{u}}\,du\lesssim \left(\frac{a}{\sqrt{n}}\right)^{1-\frac{\alpha}{2}}\lesssim a \frac{n^{\left(\frac{1}{2}-\frac{1}{\alpha}\right)\left(-\frac{\alpha}{2}\right)}}{n^{\frac{1}{2}-\frac{\alpha}{4}}}\lesssim a.$$
Note that $n^{\frac{1}{2}-\frac{1}{\alpha}}\lesssim \sqrt{n}\sigma$ whenever $\sigma\gtrsim n^{-\frac{1}{\alpha}}$. 
By combining the above observation with \cref{thm:phimainres} and \eqref{eq:therrterm} completes the proof in this case.

\emph{Proof of Part (2)}. With $r>2$, if $\beta<\frac{r}{r-2}$, then note that for $q\ge 1$, by \eqref{eq:piv}, we get:
\begin{align*}
    \Lbp(q) \lesssim \int_0^1 u^{-2/r} \ dr + \sum_{i = 2}^q \int_0^{i^{-\beta}} u^{-2/r}  du & \lesssim \left(1 + \sum_{i = 1}^q i^{-\beta\left(1 - \frac{2}{r}\right)}\right) \lesssim  q^{1 - \beta\left(1 - \frac{2}{r}\right)} \,.
\end{align*}
Regarding the complexity of $\cF$, we have:  
\begin{align*}
    \sum_{k\ge 0:\ 2^{-k}\sigma \ge u} \mbH{2^{-k}\sigma} \gtrsim \sum_{k=0}^{\lfloor \log_2{(\sigma/u)}\rfloor} (2^{-k}\sigma)^{-\alpha}\gtrsim u^{-\alpha}.
\end{align*}
By \eqref{eq:tqd}, it is now easy to check that 
$$
\tq{u}\lesssim 1+\left(\frac{n}{1+u^{-\alpha}}\right)^{\frac{1}{1+\beta}} \lesssim 1+(n u^\alpha)^{\frac{1}{1 + \beta}},$$ and  $$\Lbp(\tq{u})\lesssim 1+\left(u^{\alpha} n\right)^{\frac{1}{1+\beta}\left(1-\frac{\beta(r-2)}{r}\right)}=:\Ld{u} \,.
$$
As $\beta <r/(r-2)$, the function $\Ld{\cdot}$ is  non-decreasing in $u$ as required in \cref{assm:bsbd}. We also have 
\begin{align*}
\Ld{u}(1+\mbH{u}) & \lesssim n^{\frac{1 - \beta\left(1 - \frac{2}{r}\right)}{1+ \beta}}u^{-\alpha\left(1-\frac{1 - \beta\left(1 - \frac{2}{r}\right)}{1+\beta}\right)} + u^{-\alpha} \\
& = n^{\frac{1 - \beta\left(1 - \frac{2}{r}\right)}{1+ \beta}} \left(u^{\alpha}\right)^{-\frac{2\beta}{1+\beta}\left(1-\frac{1}{r}\right)} + u^{-\alpha}:= \rno{(u)},
\end{align*}
which is non-increasing in $\delta$ as $r>2$. This verifies the other requirement for \cref{assm:bsbd}. We split the rest of the proof into three parts as follows.

\emph{When $\alpha<2$}. Observe that 
$$
\int_{\frac{a}{2^6\sqrt{n}}}^{\sigma} \sqrt{\rno(u)}\,du \lesssim n^{\frac{1-\beta\left(1-\frac{2}{r}\right)}{2(1+\beta)}} \sigma^{1-\frac{\alpha(r-1)\beta}{r(1+\beta)}}+\sigma^{1-\frac{\alpha}{2}}\lesssim \sqrt{n}\sigma \,.
$$
For the last inequality, we have used $\sigma\gtrsim n^{-\frac{1}{\alpha}}$. As \eqref{eq:therrterm} continues to hold, therefore the above display combined with \cref{thm:phimainres} completes the proof in this case.

\emph{When $2<\alpha<\frac{r(1+\beta)}{(r-1)\beta}$}. In this case, we note that 
$$
\int_{\frac{a}{2^6\sqrt{n}}}^{\sigma} \sqrt{\rno(u)}\,du \lesssim n^{\frac{1-\beta\left(1-\frac{2}{r}\right)}{2(1+\beta)}} \sigma^{1-\frac{\alpha(r-1)\beta}{r(1+\beta)}}+\left(\frac{a}{\sqrt{n}}\right)^{1-\frac{\alpha}{2}}\,.
$$
By choosing $a\gtrsim n^{\frac{1-\beta\left(1-\frac{2}{r}\right)}{2(1+\beta)}} \sigma^{1-\frac{\alpha(r-1)\beta}{r(1+\beta)}}$, we have 
\begin{align*}
\int_{\frac{a}{2^6\sqrt{n}}}^{\sigma} \sqrt{\rno(u)}\,du &\lesssim n^{\frac{1-\beta\left(1-\frac{2}{r}\right)}{2(1+\beta)}} \sigma^{1-\frac{\alpha(r-1)\beta}{r(1+\beta)}}+\left(n^{-\frac{\beta(r-1)}{r(1+\beta)}}\sigma^{1-\frac{\alpha(r-1)\beta}{r(1+\beta)}}\right)^{1-\frac{\alpha}{2}}\\ &\lesssim n^{\frac{1-\beta\left(1-\frac{2}{r}\right)}{2(1+\beta)}} \sigma^{1-\frac{\alpha(r-1)\beta}{r(1+\beta)}}\lesssim \sqrt{n}\sigma\,.
\end{align*}
For the last two inequalities, we have used $\sigma\gtrsim n^{-\frac{1}{\alpha}}$. As \eqref{eq:therrterm} continues to hold, therefore the above display combined with \cref{thm:phimainres} completes the proof in this case.

\emph{When $\alpha>\frac{r(1+\beta)}{(r-1)\beta}$}. In this case, we note that 
$$
\int_{\frac{a}{2^6\sqrt{n}}}^{\sigma} \sqrt{\rno(u)}\,du \lesssim n^{\frac{1-\beta\left(1-\frac{2}{r}\right)}{2(1+\beta)}} \left(\frac{a}{\sqrt{n}}\right)^{1-\frac{\alpha(r-1)\beta}{r(1+\beta)}} \,.
$$
Consequently, by choosing $a\gtrsim n^{\frac{1}{2}-\frac{1}{\alpha}}\lesssim \sqrt{n}\sigma$ (as $\sigma\gtrsim n^{-\frac{1}{\alpha}}$), we observe that 
$$\int_{\frac{a}{2^6\sqrt{n}}}^{\sigma} \sqrt{\rno(u)}\,du\lesssim n^{\frac{1-\beta\left(1-\frac{2}{r}\right)}{2(1+\beta)}} \left(\frac{a}{\sqrt{n}}\right)^{1-\frac{\alpha(r-1)\beta}{r(1+\beta)}}\lesssim a n^{\frac{1-\beta\left(1-\frac{2}{r}\right)}{2(1+\beta)}}\big(n^{-\frac{1}{\alpha}}\big)^{-\frac{\alpha \beta (r-1)}{r(1+\beta)}}n^{-\frac{1}{2}}=a.$$
Once again, \eqref{eq:therrterm} continues to hold. Therefore the above display combined with \cref{thm:phimainres} completes the proof in this case.
\end{proof}

\begin{proof}[Proof of \cref{cor:infmainres}]
Given a function $f\in\cF$, let $f_+=f\ \vee\ 0$ and similarly $f_-=(-f) \ \vee \ 0$. Note that $f=f_+\ -\ f_-$. Also define 
$$\cF_+:=\{f_+:\ f\in \cF\},\quad \cF_-=\{f_-:\ f\in \cF\}.$$
Note that 
$$\EE\sup_{f\in\cF}|\mathbb{G}_n(f)|\le \EE\sup_{f\in\cF_+} |\mathbb{G}_n(f)|\ + \ \EE\sup_{f\in\cF_-} |\mathbb{G}_n(f)|.$$
We will only bound the first term in the above inequality as the other can be bounded in exactly the same way. Recall that $\lVert f\rVert_{\infty}\le \sigma$ for all $f\in\cF$. Two key observations will be used in the rest of the proof, namely 
\begin{equation}\label{eq:obs11}
\mathscr{H}_{[ \  ]}(\sigma, \cF_+, \|\cdot \|_\infty)=0=:\mbHi{\sigma},
\end{equation}
and 
\begin{equation}\label{eq:obs12}
\mathscr{H}_{[ \  ]}(\delta, \cF_+, \|\cdot \|_\infty)\le \mathscr{H}_{[ \  ]}(\delta, \cF, \|\cdot \|_\infty)\le C_1\delta^{-\alpha}=:\mbHi{\delta}\quad \forall \ \delta<\sigma.
\end{equation}
To see \eqref{eq:obs11}, note that $\sup_{f\in \cf_+} \sup_{x\in\mcx} f(x)\le \sigma$ and clearly all $f\in\cF_+$ are non-negative functions. Therefore, the bracket formed by the constant functions $g^L(x)\equiv 0$ and $g^U(x)\equiv \sigma$ contains all of $\cF_+$, thereby establishing \eqref{eq:obs1}. 

On the other hand, \eqref{eq:obs12} follows directly by invoking \cref{lem:bracketing_property}, part 4.

Next, we note that, from \eqref{eq:tqd}, we have:
$$\tq{\sigma}\lesssim 1\vee \left(\frac{n}{1+\mbHi{\sigma}}\right)^{\frac{1}{1+\beta}}.$$
Consequently,
\begin{align}\label{eq:therrterm1}
\frac{\sigma \tq{\sigma}(1+\mbHi{\sigma})}{\sqrt{n}}&\lesssim \frac{b}{\sqrt{n}}(1+\mbHi{\sigma})+ \sigma n^{\frac{1}{2}-\frac{\beta}{1+\beta}}(1+\mbHi{\sigma})^{\frac{\beta}{1+\beta}}\lesssim \sigma n^{\frac{1}{2}-\frac{\beta}{1+\beta}}.
\end{align}
The last inequality above uses \eqref{eq:obs1}. 

\emph{Proof of part 1}. When $\beta>1$, note that 
$$\sum_{k=1}^q \beta_k\lesssim \sum_{k=1}^q (1+k)^{-\beta}\lesssim 1,$$
for $q\ge 1$, where all the implicit constants are free of $q$. The above display implies that $\Lambda_{n,\infty}^*(\cdot)$ can be chosen as a constant function and consequently \cref{assm:bsbdinf} holds. 
Therefore, by combining \eqref{eq:therrterm1} with \eqref{eq:obs12} and \cref{thm:infmainres}, we get:

\begin{equation}\label{eq:obs13}
    \EE\sup_{f\in\cF_+} |\mathbb{G}_n(f)|\le C \left( \sigma n^{\frac{1}{2}-\frac{\beta}{1+\beta}}+\inf\left\{0\le a\le 8\sqrt{n}\sigma:\ a\ge C_0\int_{\frac{a}{2^6\sqrt{n}}}^{\sigma} \sqrt{\mbHi{u}}\,du\right\}\right).
\end{equation}
We split the next part of the proof into two further cases.

\emph{When $\alpha<2$}. We observe that 
$$\int_{\frac{a}{2^6\sqrt{n}}}^{\sigma} \sqrt{\mbHi{u}}\,du\le \int_{\frac{a}{2^6\sqrt{n}}}^{\sigma} u^{-\frac{\alpha}{2}}\,du\lesssim \sigma^{1-\frac{\alpha}{2}}\lesssim \sqrt{n}\sigma.$$
Here we have used $\sigma\gtrsim n^{-\frac{1}{\alpha}}$ in the above display. 
Combining the above observation with \eqref{eq:obs13} completes the proof in this case.

\emph{When $\alpha>2$}. We observe that 
$$\int_{\frac{a}{2^6\sqrt{n}}}^{\sigma} \sqrt{\mbHi{u}}\,du\le \int_{\frac{a}{2^6\sqrt{n}}}^{\sigma} u^{-\frac{\alpha}{2}}\,du\lesssim \left(\frac{a}{\sqrt{n}}\right)^{1-\frac{\alpha}{2}}.$$
Therefore, by choosing $a\gtrsim n^{\frac{1}{2}-\frac{1}{\alpha}}$, we get:
$$\int_{\frac{a}{2^6\sqrt{n}}}^{\sigma} \sqrt{\mbHi{u}}\,du\lesssim \left(\frac{a}{\sqrt{n}}\right)^{1-\frac{\alpha}{2}}\lesssim a \frac{n^{\left(\frac{1}{2}-\frac{1}{\alpha}\right)\left(-\frac{\alpha}{2}\right)}}{n^{\frac{1}{2}-\frac{\alpha}{4}}}\lesssim a.$$
Note that $n^{\frac{1}{2}-\frac{1}{\alpha}}\lesssim \sqrt{n}\sigma$ whenever $\sigma\gtrsim n^{-\frac{1}{\alpha}}$. 
By combining the above observation with \eqref{eq:obs13} completes the proof in this case.

\emph{Proof of Part (2)}. For $\beta<1$, we get
\begin{align*}
    \sum_{k=1}^q \beta_k \lesssim \sum_{k=1}^q (1+k)^{-\beta} \lesssim  q^{1 - \beta} \,.
\end{align*}
for $q\ge 1$, where all the implicit constants are free of $q$. By \eqref{eq:obs12}, we can work with $\mbHi{u}=C u^{-\alpha}$ for some constant $C>0$. 
Regarding the complexity of $\cF$, we have:  
\begin{align*}
    \sum_{k\ge 0:\ 2^{-k}\sigma \ge u} \mbHi{2^{-k}\sigma} \gtrsim \sum_{k=0}^{\lfloor \log_2{(\sigma/u)}\rfloor} (2^{-k}\sigma)^{-\alpha}\gtrsim u^{-\alpha}.
\end{align*}
By \eqref{eq:tqd}, it is now easy to check that 
$$
\tq{u}\lesssim 1+\left(\frac{n}{1+u^{-\alpha}}\right)^{\frac{1}{1+\beta}} \lesssim 1+(n u^\alpha)^{\frac{1}{1 + \beta}},$$ and  $$\sum_{k=1}^{\tq{u}}\beta_k\lesssim 1+\left(u^{\alpha} n\right)^{\frac{1-\beta}{1+\beta}}=:\Lambda_{n,\infty}^*(u) \,.
$$
As $\beta < 1$, the function $\Lambda_{n,\infty}^*(\cdot)$ is  non-decreasing in $u$ as required in \cref{assm:bsbdinf}. We also have 
\begin{align*}
\Lambda_{,\infty}^*(u)(1+\mbHi{u}) & \lesssim n^{\frac{1 - \beta}{1+ \beta}}\delta^{-\alpha\left(1-\frac{1 - \beta}{1+\beta}\right)} + u^{-\alpha} \\
& = n^{\frac{1 - \beta}{1+ \beta}} \left(\delta^{\alpha}\right)^{-\frac{2\beta}{1+\beta}} + u^{-\alpha}:= \rto{(u)},
\end{align*}
which is non-increasing in $u$. This verifies the other requirement for \cref{assm:bsbdinf}. We split the rest of the proof into three parts as follows.

\emph{When $\alpha<2$}. Observe that 
$$
\int_{\frac{a}{2^6\sqrt{n}}}^{\sigma} \sqrt{\rto(u)}\,du \lesssim n^{\frac{1-\beta}{2(1+\beta)}} \sigma^{1-\frac{\alpha\beta}{1+\beta}}+\sigma^{1-\frac{\alpha}{2}}\lesssim \sqrt{n}\sigma \,.
$$
For the last inequality, we have used $\sigma\gtrsim n^{-\frac{1}{\alpha}}$. As \eqref{eq:therrterm1} continues to hold, therefore the above display combined with \cref{thm:infmainres} completes the proof in this case.

\emph{When $2<\alpha<\frac{1+\beta}{\beta}$}. In this case, we note that 
$$
\int_{\frac{a}{2^6\sqrt{n}}}^{\sigma} \sqrt{\rto(u)}\,du \lesssim n^{\frac{1-\beta}{2(1+\beta)}} \sigma^{1-\frac{\alpha\beta}{1+\beta}}+\left(\frac{a}{\sqrt{n}}\right)^{1-\frac{\alpha}{2}}\,.
$$
By choosing $a\gtrsim n^{\frac{1-\beta}{2(1+\beta)}} \sigma^{1-\frac{\alpha\beta}{1+\beta}}$, we have 
\begin{align*}
\int_{\frac{a}{2^6\sqrt{n}}}^{\sigma} \sqrt{\rto(u)}\,du &\lesssim n^{\frac{1-\beta}{2(1+\beta)}} \sigma^{1-\frac{\alpha\beta}{1+\beta}}+\left(n^{-\frac{\beta}{1+\beta}}\sigma^{1-\frac{\alpha\beta}{1+\beta}}\right)^{1-\frac{\alpha}{2}}\\ &\lesssim n^{\frac{1-\beta}{2(1+\beta)}} \sigma^{1-\frac{\alpha\beta}{1+\beta}}\lesssim \sqrt{n}\sigma\,.
\end{align*}
For the last two inequalities, we have used $\sigma\gtrsim n^{-\frac{1}{\alpha}}$. As \eqref{eq:therrterm1} continues to hold, therefore the above display combined with \cref{thm:infmainres} completes the proof in this case.

\emph{When $\alpha>\frac{1+\beta}{\beta}$}. In this case, we note that 
$$
\int_{\frac{a}{2^6\sqrt{n}}}^{\sigma} \sqrt{\rto(u)}\,du \lesssim n^{\frac{1-\beta\left(1-\frac{2}{r}\right)}{2(1+\beta)}} \left(\frac{a}{\sqrt{n}}\right)^{1-\frac{\alpha(r-1)\beta}{r(1+\beta)}} \,.
$$
Consequently, by choosing $a\gtrsim n^{\frac{1}{2}-\frac{1}{\alpha}}\lesssim \sqrt{n}\sigma$ (as $\sigma\gtrsim n^{-\frac{1}{\alpha}}$), we observe that 
$$\int_{\frac{a}{2^6\sqrt{n}}}^{\sigma} \sqrt{\rto(u)}\,du\lesssim n^{\frac{1-\beta}{2(1+\beta)}} \left(\frac{a}{\sqrt{n}}\right)^{1-\frac{\alpha\beta}{1+\beta}}\lesssim a n^{\frac{1-\beta}{2(1+\beta)}}\big(n^{-\frac{1}{\alpha}}\big)^{-\frac{\alpha \beta}{1+\beta}}n^{-\frac{1}{2}}=a.$$
Once again, \eqref{eq:therrterm1} continues to hold. Therefore the above display combined with \cref{thm:infmainres} completes the proof in this case.

\end{proof}

\begin{proof}[Proof of \cref{cor:nonpbd}]
We invoke \cref{thm:gamma_mixing} with $r=2$, $V=0$, $K_r=K\sim 1$, $D=\theta=1$, $\sigma\sim 1$. We will show that each of the four terms in the definition of $\Pi_n(\cF)$ in \cref{thm:gamma_mixing}, part (3), is bounded (up to constants) by $n^{\frac12-\frac1\alpha}$. To wit, observe that the first and the fourth term add up to 
$$\lesssim n^{\frac{\gamma(\alpha-2)}{2\alpha\gamma}}=n^{\frac12-\frac1\alpha}.$$
The second and third terms add up to 
$$\lesssim n^{\frac{1}{2}-\frac{\gamma}{1+\gamma}}+n^{\frac{1}{2}-\frac{\gamma}{2(\gamma+1)}}\lesssim n^{\frac{1}{2}-\frac{\gamma}{2(\gamma+1)}}\lesssim n^{\frac{1}{2}-\frac{1}{\alpha}}.$$
Here the last inequality uses the fact that $\alpha\ge 2(1+\gamma)/\gamma$. This completes the proof.
\end{proof}

\begin{proof}[Proof of \cref{cor:vcbd}]
The conclusion follows by invoking \cref{thm:gamma_mixing} with $r=2$, $K_r=1$, $D$, $\alpha=0$, and $V,\sigma$ as is.
\end{proof}

\begin{proof}[Proof of \cref{cor:gen_error}]
Note that by the definition of $\hat{f}_n$, we have $\bbP_n(\ell(Y,\hat{f}_n(X))-\ell(Y,f_\star(X)))\le 0$. Therefore, 
\begin{align*}
\cE(\hat f_n) & = \bbP_n\left(\ell(Y, \hat f_n(X)) - \ell(Y, f^\star(X))\right) + (P - \bbP_n)\left(\ell(Y, \hat f_n(X)) - \ell(Y, f^\star(X))\right) \\
    & \le (P - \bbP_n)\left(\ell(Y, \hat f_n(X)) - \ell(Y, f^\star(X))\right)  \\
    & \le \sup_{f \in \cF}\left|(\bbP_n - P)\left(\ell(Y, \hat f_n(X)) - \ell(Y, f^\star(X))\right) \right|\\
    & =\sup_{g \in \cG}\left|(\bbP_n - P)g(X, Y) \right|
\end{align*}
where $\cG$ is defined as in \eqref{eq:loss_f_def},  
and $P$ is the expectation with respect to the distribution of the new sample $(X,Y)$. Note that all the terms above are functions of $(X_1,Y_1),\ldots , (X_n,Y_n)$. Therefore, to bound the \emph{expected generalization error} we need to bound the expected supremum of the empirical process. An application of \cref{cor:infmainres} with $\sigma\sim 1$ completes the proof.

\end{proof}

\begin{proof}[Proof of \cref{cor:fast_error}]
By invoking \cref{cor:vcbd}, we have 
$$\Pi_n(\mathcal{G}_{\delta})=\delta \big(n D^{\gamma} (\log{(B/\sigma)})^V\big)^{\frac{1}{2(\gamma+1)}}+n^{\frac12-\frac{\gamma}{\gamma+1}}(D(\log{(B/\sigma)})^V)^{\frac{\gamma}{\gamma+1}}.$$
Note that $\Pi_n(\delta)/\delta^t$ is decreasing in $\delta$ for any $t\in (1,2)$. next by choosing 
$$\delta_n=\left(\frac{D}{n}\right)^{\frac{\gamma}{2(\gamma+1)}}(\log{(Bn)})^{V\gamma}{2(\gamma+1)},$$
it is east to check that $\Pi_n(\mathcal{G}_{\delta_n})\le \sqrt{n}\delta_n^2$. This completes the proof.
\end{proof}

\begin{proof}[Proof of \cref{cor:adaptbd}]
As $\alpha>2(1+\gamma^{-1})$, under \eqref{eq:locradent}, by invoking the definition of $\Pi_n(\cdot)$ from \cref{thm:gamma_mixing}, part (3), we get:
\begin{align*}
    \Pi_n(\cG_{\delta})&= n^{\frac12-\frac1\alpha}\delta\left[\left(\log{\left(\frac{B}{\delta}\right)}\right)^{\frac{V\gamma}{2(\gamma+1)}}+\left(\log{\left(\frac{B}{\delta}\right)}\right)^{\frac{V}{2}}\right]+n^{\frac12-\frac{\gamma}{1+\gamma}}\left(\log{\left(\frac{B}{\delta}\right)}\right)^{\frac{V\gamma}{\gamma+1}}\\ &+n^{\frac{1}{2(1+\gamma)}}\delta \left(\log{\left(\frac{B}{\delta}\right)}\right)^{\frac{V\gamma}{2(\gamma+1)}}.
\end{align*}
Once again, we observe that $\Pi_n(\cG_{\delta})/\delta^t$ is decreasing for all $t\in (1,2)$. Therefore, by choosing 
$$\delta_n=n^{-\frac{1}{\alpha}}\left(\log{(Bn)}\right)^{\frac{V}{2}},$$
we observe that $\Pi_n(\cG_{\delta_n})\le \sqrt{n}\delta_n^2$. By invoking \cref{thm:roc}, we complete the proof.
\end{proof}

\begin{proof}[Proof of \cref{cor:fasterl1}]
The proof of this proceeds using a technique from \cite[Theorem 3.6]{han2021set}. Observe that by writing expectations of non-negative functions as tail integrals, given any $f\in\cF$, we have:
\begin{align*}
    |(\mathbb{P}_n-P)f|&\le |(\mathbb{P}_n-P)f_+|+|(\mathbb{P}_n-P)f_-|\\ &\le \bigg|\int_0^{\infty}\left(\bbP_{\bbP_n}(f_+(X)>t)-\bbP_{P}(f_+(X)>t)\right)\,dt\bigg|\\ &\qquad \qquad +\bigg|\int_0^{\infty}\left(\bbP_{\bbP_n}(f_-(X)>t)-\bbP_{P}(f_-(X)>t)\right)\,dt\bigg|\\ &\le 2b \sup_{C\in\mathcal{C}} |(\bbP_n-P)(C)|,
\end{align*}
where we have used the fact that all the level sets of $f_+$ and $f_-$ are contained in $\mathcal{C}$. Note that 
$$\sup_{C\in\mathcal{C}} |(\bbP_n-P)(C)|=\sup_{f\in \{\mathbf{1}(C):\ C\in\mathcal{C}\}} |(\bbP_n-P)(f)|.$$
The condition \eqref{eq:setent} gives the $L_1$ entropy of the class of functions $\{\mathbf{1}(C):\ C\in\mathcal{C}\}$. Invoking \cref{thm:gamma_mixing} with $r=1$ completes the proof.
\end{proof}

\section{Proofs from Applications}\label{sec:apprespf}

This section presents the proofs of all our results from \cref{sec:app} sequentially.

\subsection{Proof of Theorem \ref{thm:dnn_mixing}}\label{sec:nnproof}

We begin with a brief discussion on neural net architecture for the sake of completion. For ease of exposition, we will stick to the ReLU activation function and a feedforward fully connected neural network (also called multi-layer perceptron). Such neural networks have attracted a lot of attention due to their empirical success. We also recall the notation $\cF_{\mathrm{NN}}(d_{\mathrm{in}}, d_{\mathrm{out}}, N, L, B)$ where $d_{\mathrm{in}}, d_{\mathrm{out}}$ are the input and the output dimensions, $N$ and $L$ denote the width and the depth, while $B$ is the bound on the weights. Functions belonging to a  ReLU based neural network take the following form: 
$$f(x)=\mathcal{L}_{L}\ \circ\ \sigma\ \circ\ \mathcal{L}_{L-1}\ \circ\ \sigma\ \circ\ \cdots \mathcal{L}_1\ \circ\ \sigma\ \circ\ \mathcal{L}_0(x),$$
where $\mathcal{L}_{\ell}(x):=W_{\ell}x+b_{\ell}$ for $W_{\ell}\in \R^{d_{\ell}\times d_{\ell-1}}$, $b_{\ell}\in d_{\ell}$, $d_0=d$, $d_{L}=1$ and $\sigma$ is applied coordinatewise. Here $L$ denotes the number of layers and $(d_0,d_1,\ldots ,d_{L})$ is typically called a width vector with the width $W=\max_{\ell=0}^L d_{\ell}$. The bound on the weights implies $\lVert W_{\ell}\rVert_{\infty}\le b$ and $\lVert b_{\ell}\rVert_{\infty}\le b$ for $\ell=0,1,\ldots ,L$. Following \cite[Definition 2]{fan2023factor}, the truncated class $\tilde{\cF}_{\mathrm{NN}}$ is defined as follows:
$$\tilde{\cF}_{\mathrm{NN}}(d_{\mathrm{in}}, d_{\mathrm{out}}, N, L, B):=\{\tilde{f}(x)=\mathrm{sign}(f(x))(|f(x)|\wedge (2b)):\, f\in \cF_{\mathrm{NN}}(d_{\mathrm{in}}, d_{\mathrm{out}}, N, L, B)\}.$$
Note that as we are working with a fully connected network, we have suppressed $W$, the number of active weights, throughout the notation used.

As in general sieve-based estimation procedure for nonparametric regression function, the proof of Theorem \ref{thm:dnn_mixing} consists of two key ingredients: bounding bias/approximation error and bounding variance/statistical error. We will need a preparatory result each for the above two components. 

To quantify the bias/approximation error of a Hölder smooth function by neural nets, we use the following result. It is a simple modification of \cite[Theorem 4]{fan2023factor}.

\begin{prop}
  There exists universal constants $c_1$, $c_2$, $c_3$, $N_0$ depending only on $d$, $s$, $C$, $b$ such that given any $f:\R^d\to\R$, $f\in\Sigma(s,C)$ with $\lVert f\rVert_{\infty}\le b$ and any arbitrary $N\ge N_0$, it holds that 
  \begin{equation}\label{eq:dnn_approx}
\inf_{\phi \in \tilde{\cF}_{\mathrm{NN}}(d, 1, N, c_1, N^{c_2})}\|f - \phi\|_\infty \le c_3 N^{-\frac{2s}{d}} \,,
\end{equation}
\end{prop}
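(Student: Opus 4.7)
The plan is to derive this as a direct corollary of Theorem 4 of Fan et al.\ (2023), with the only new ingredient being a truncation step applied to the output of the approximating network. Since $\Sigma(s,C)$ already implicitly controls $\|f\|_\infty$, and we additionally assume $\|f\|_\infty \le b$, the improvement over the ``off the shelf'' statement comes entirely from exploiting this tighter bound via the clipping map $y \mapsto \mathrm{sign}(y)(|y| \wedge 2b)$.

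First, I would invoke the cited approximation theorem verbatim in the untruncated class: it provides constants $c_1, c_2, c_3, N_0$ depending on $d,s,C$ such that for every $N \ge N_0$ and every $f \in \Sigma(s,C)$ there exists $\phi_0 \in \cF_{\mathrm{NN}}(d,1,N,c_1,N^{c_2})$ with $\|f-\phi_0\|_\infty \le c_3 N^{-2s/d}$. This is a pure approximation statement whose proof lies in the reference. Given such a $\phi_0$, set $\tilde \phi_0(x) := \mathrm{sign}(\phi_0(x))(|\phi_0(x)| \wedge 2b)$; by the very definition of $\tilde{\cF}_{\mathrm{NN}}$ in the paper, we automatically have $\tilde \phi_0 \in \tilde{\cF}_{\mathrm{NN}}(d,1,N,c_1,N^{c_2})$, so no change in architecture parameters is needed.

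The heart of the modification is the pointwise inequality
\[
|f(x) - \tilde \phi_0(x)| \le |f(x) - \phi_0(x)| \qquad \text{for every } x \in \R^d,
\]
which I would verify by a three-case analysis exploiting $\|f\|_\infty \le b < 2b$: on $\{|\phi_0(x)| \le 2b\}$ the two approximants coincide; on $\{\phi_0(x) > 2b\}$, $\tilde \phi_0(x) = 2b$ lies in the interval $[f(x), \phi_0(x)]$, so clipping strictly reduces the error; the case $\{\phi_0(x) < -2b\}$ is symmetric. Taking suprema yields $\|f - \tilde \phi_0\|_\infty \le \|f - \phi_0\|_\infty \le c_3 N^{-2s/d}$, with the dependence of $N_0, c_1, c_2, c_3$ on $b$ entering only through this truncation step (and possibly through enlarging $N_0$ so that $c_3 N^{-2s/d} \le b$, rendering the construction non-vacuous).

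There is no substantive obstacle here, which is consistent with the authors' description of the result as a ``simple modification'' of the Fan et al.\ approximation theorem. The only thing to be careful about is to confirm that truncating at a level $2b$ strictly above the true range $[-b,b]$ of $f$ does not worsen the uniform approximation error, which is exactly the content of the elementary pointwise inequality above.
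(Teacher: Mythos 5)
Your proposal is correct and matches the paper's intent exactly: the paper gives no separate proof, describing the proposition as a simple modification of the cited approximation theorem, and the modification is precisely your truncation argument (the same clipping-does-not-increase-error observation the authors spell out later for the additive-model analogue). The only cosmetic remark is that clipping at any level $\ge b$ (in particular $2b$) is a contraction onto an interval containing the range of $f$, so your three-case check could be stated in one line, but as written it is complete.
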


Concerning the sampling variance, we need a bound on the complexity of the collection of neural networks. Towards that end, a bound on the $L_\infty$ bracketing number of neural networks can also be found in the literature; which we state here in the following proposition, which is taken from \cite[Lemma 5]{schmidt2020nonparametric}. 

\begin{prop}
    With $\tilde{\cF}_{\mathrm{NN}}(d, 1, N, c_1, N^{c_2})$ defined above, there exists some constants $c_4$, $c_5$, $c_6$ depending only on $c_1$, $c_2$ such that 
    \begin{equation}
\label{eq:bracketing_number_DNN}
\benf{\delta} \le c_4 N^2 \log{\left(\frac{2c_5 N^{c_6}}{\delta}\right)} \,.
\end{equation}
\end{prop}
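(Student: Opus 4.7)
The plan is to bound the $L_\infty$ bracketing entropy of $\tilde{\cF}_{\mathrm{NN}}(d, 1, N, c_1, N^{c_2})$ by first constructing a fine cover of the parameter space and then transferring this cover to the function space via a Lipschitz-in-parameters estimate for ReLU networks. Since the depth $c_1$ is fixed, parameter counting gives that any network in $\cF_{\mathrm{NN}}(d, 1, N, c_1, N^{c_2})$ is specified by a vector $\theta \in \mathbb{R}^W$ with $\|\theta\|_\infty \le N^{c_2}$, where $W \le c_7 N^2$ for a constant $c_7$ depending only on $c_1$ and $d$ (since each of the $c_1$ layers contributes at most a $W \times W$ weight matrix plus a bias vector of length at most $N$).

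The central step is to show that two networks $f_\theta, f_{\theta'}$ in the class with $\|\theta - \theta'\|_\infty \le \varepsilon$ satisfy $\|f_\theta - f_{\theta'}\|_\infty \le c_8 N^{c_9} \varepsilon$ on the input domain $[0,1]^d$ for absolute constants $c_8, c_9$. I would prove this by induction on the layer index $\ell$: using that ReLU is $1$-Lipschitz, the output of layer $\ell$ is a composition of affine maps whose operator norms are bounded by $N \cdot N^{c_2}$, while the iterates themselves are bounded in $\ell_\infty$ by a polynomial in $N$ (depending on $c_1, c_2, d$). Perturbing $\theta$ by $\varepsilon$ changes each layer's affine map by $O(\varepsilon)$ in operator norm and perturbs its input (inductively) by $O(N^{c_9'} \varepsilon)$, so the output perturbs by $O(N^{c_9} \varepsilon)$. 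The truncation at $2b$ is a $1$-Lipschitz operation and therefore preserves this bound.

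Given the Lipschitz-in-parameters estimate, I would construct a grid $\mathcal{G}_\varepsilon$ on $\{\theta : \|\theta\|_\infty \le N^{c_2}\}$ with mesh size $\varepsilon$, giving $|\mathcal{G}_\varepsilon| \le (2 N^{c_2}/\varepsilon)^W$. For each $\theta \in \mathcal{G}_\varepsilon$, define the bracket pair $[\tilde f_\theta - c_8 N^{c_9}\varepsilon,\; \tilde f_\theta + c_8 N^{c_9}\varepsilon]$; by construction, every $\tilde f \in \tilde{\cF}_{\mathrm{NN}}$ lies inside one such bracket, and the bracket width in $L_\infty$ is $2 c_8 N^{c_9} \varepsilon$. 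Setting $\varepsilon = \delta/(2 c_8 N^{c_9})$ yields a $\delta$-bracketing in $L_\infty$ of cardinality at most $(4 c_8 N^{c_2 + c_9}/\delta)^W$. Taking logarithms and absorbing constants gives
$$
\benf{\delta} \le W \log\!\left(\frac{4 c_8 N^{c_2 + c_9}}{\delta}\right) \le c_4 N^2 \log\!\left(\frac{2 c_5 N^{c_6}}{\delta}\right),
$$
for constants $c_4, c_5, c_6$ depending only on $c_1, c_2, d$, as claimed.

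The main obstacle is the Lipschitz-in-parameters bound in Step 2, since one must carefully track how small parameter perturbations propagate through the $c_1$ layers without letting the weight bound $N^{c_2}$ blow up the resulting Lipschitz constant beyond a polynomial in $N$. The clean way is to prove, by induction on $\ell$, a pair of bounds $\|h_\ell\|_\infty \le A_\ell$ and $\|h_\ell - h'_\ell\|_\infty \le B_\ell \varepsilon$ where $h_\ell, h'_\ell$ are the layer-$\ell$ outputs under $\theta, \theta'$; the recursions $A_{\ell+1} \le N^{c_2+1} A_\ell + N^{c_2}$ and $B_{\ell+1} \le N^{c_2+1} B_\ell + (A_\ell + 1)$ close with $A_\ell, B_\ell$ polynomial in $N$ because $c_1$ is a fixed constant. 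Everything else is routine: the parameter counting is explicit, the grid construction is standard, and the truncation step only shrinks $L_\infty$ distances.
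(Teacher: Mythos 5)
Your proposal is correct, but it takes a more self-contained route than the paper. The paper disposes of this proposition in two lines: it invokes \cite[Lemma 5]{schmidt2020nonparametric}, which gives exactly the covering number bound $c_4 N^2\log(2c_5N^{c_6}/\delta)$ for this network class, and then observes that with respect to the uniform norm covering numbers and bracketing numbers are equivalent (a $\delta$-ball around $f$ in $\lVert\cdot\rVert_\infty$ is the bracket $[f-\delta, f+\delta]$). You instead re-derive that covering-type bound from scratch: parameter counting ($W\lesssim N^2$ for fixed depth $c_1$), a Lipschitz-in-parameters estimate obtained by propagating perturbations through the layers, an $\varepsilon$-grid on the weight box $[-N^{c_2},N^{c_2}]^W$, and then a direct conversion to brackets $[\tilde f_\theta - c_8N^{c_9}\varepsilon,\ \tilde f_\theta + c_8N^{c_9}\varepsilon]$, with the truncation at $2b$ handled by its $1$-Lipschitzness. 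This is essentially the argument underlying Schmidt--Hieber's lemma, so conceptually the two proofs coincide; what your version buys is explicitness (one sees where the $N^2$ and the $\log(N^{c_6}/\delta)$ come from), at the cost of redoing a known computation. Two small points to tidy up: your perturbation recursion should read $B_{\ell+1}\le N^{c_2+1}B_\ell + N A_\ell + 1$ (the cross term $\lVert(W_\ell-W'_\ell)h'_\ell\rVert_\infty$ picks up a factor of the width $N$, not just $A_\ell$), which changes nothing since everything remains polynomial in $N$ for fixed $c_1, c_2$; and your constants also depend on $d$ through the first-layer parameter count $Nd$, which is harmless here since $d$ is fixed in \cref{thm:dnn_mixing} but is worth stating to match the proposition's claim that the constants depend only on $c_1, c_2$.
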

Note that \cite[Lemma 5]{schmidt2020nonparametric} provides bounds on the covering number, instead of the bracketing number. However, with respect to the uniform norm, the bracketing number and covering number are equivalent. Hence the conclusion follows.

\begin{proof}[Completing the proof of \cref{thm:dnn_mixing}] 
Henceforth, denote by $\cF_n := \tilde{\cF}_{\mathrm{NN}}(d, 1, N, c_1, N^{c_2})$, where we will later choose $N$ depending on $n$. Given $\cF_n$, define the \emph{best approximator} $f_n$ as: 
$$
f_n = \argmin_{f \in \cF_n} \bbE[(f_*(X) - f_n(X))^2] \,.
$$
By \eqref{eq:dnn_approx}, we have $\|f_n - f_*\|_2^2 \le c_3^2 N^{-\frac{4s}{d}}$ \,. Recall that $\hat f_n$ is the least-square estimator over $\cF_n$, i.e.,
$$
\hat f_n = \argmin_{f \in \cF_n} \frac1n \sum_{i = 1}^n (Y_i - f(X_i))^2 \,.
$$
We will prove the result based on the following proposition, which is a slightly adapted version of \cref{thm:roc}. 

\begin{prop}\label{prop:modifyroc}
    Suppose the functions $\phi_{n,1}(\cdot)$ and $\phi_{n,2}(\cdot)$ satisfy the following inequalities: 
    \begin{equation*}
\label{eq:bound_1}
    \phi_{n, 1}(\delta) \ge \sqrt{n}\bbE\left[\sup_{\|f - f_n\|_2 \le \delta} \left|(\bbP_n - \bbP)(f - f_n)^2\right|\right] 
\end{equation*}
\begin{equation*}
    \label{eq:bound_2}
    \phi_{n, 2}(\delta) \ge \sqrt{n}\bbE\left[\sup_{\|f - f_n\|_2 \le \delta} \left|(\bbP_n - \bbP)\xi (f - f_n)\right|\right] 
\end{equation*}
Also assume there exists $\alpha\in (0,2)$ such that both $\phi_{n,1}(\delta)/\delta^{\alpha}$ and $\phi_{n,2}(\delta)/\delta^{\alpha}$ are decreasing functions. Finally let $r_n$ be a sequence satisfying 
\begin{align}
\label{eq:roc_dnn_1}
    & r_n^2 \left(\phi_{n, 1}(r_n^{-1}) + \phi_{n, 2}(r_n^{-1})\right) \le \sqrt{n} \,, \\
\label{eq:roc_dnn_2}
    & r_n \|f_n - f_0\|_2 = r_n \inf_{f \in \cF_n}\|f - f_0\|_2 \le \frac{1}{\sqrt{2}} \,.
\end{align}
Then there exists a constant $C>0$, depending on $\alpha$, such that for any $t>1$, the following holds: 
$$\mathbb{P}(r_n\lVert \hat f_n-f_\star\rVert_2\ge t)\le C t^{2-\alpha}.$$
\end{prop}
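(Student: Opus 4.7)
The plan is to adapt the peeling argument used in the proof of Theorem \ref{thm:roc} (the classical M-estimation rate theorem of van der Vaart and Wellner) to accommodate the two separate moduli $\phi_{n,1}, \phi_{n,2}$ together with the approximation bias $\lVert f_n - f_\star\rVert_2$. The first step is to derive a basic inequality for $\hat f_n$. Starting from $\bbP_n(Y - \hat f_n)^2 \le \bbP_n(Y - f_n)^2$, expanding using $Y = f_\star(X) + \xi$, invoking $\bbE[\xi \mid X] = 0$, and applying $2ab \le \tfrac12 a^2 + 2b^2$ to the drift term $2\bbP(f_\star - f_n)(\hat f_n - f_n)$ coming from the best-approximator gap, one obtains
\begin{align*}
\tfrac12\lVert \hat f_n - f_n\rVert_2^2
&\le 2(\bbP_n-\bbP)\xi(\hat f_n - f_n) + 2(\bbP_n-\bbP)(f_\star-f_n)(\hat f_n - f_n)\\
&\quad + \bigl|(\bbP_n-\bbP)(\hat f_n - f_n)^2\bigr| + 2\lVert f_\star - f_n\rVert_2^2,
\end{align*}
and by \eqref{eq:roc_dnn_2}, the last term is at most $1/r_n^2$.

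Next, I would peel the localized space $\cF_n$ around $f_n$. For a large constant $M$ to be fixed and $j \ge 1$, set $S_{j,M} := \{f \in \cF_n : 2^{j-1}M < r_n\lVert f-f_n\rVert_2 \le 2^j M\}$, so that on the event $\{r_n\lVert \hat f_n - f_n\rVert_2 > M\}$ the minimizer lies in some $S_{j,M}$. Restricting the basic inequality to this shell, taking expectations, and invoking the defining properties of $\phi_{n,1}$ and $\phi_{n,2}$ to bound the supremum of the noise process and of the squared process respectively, one gets via Markov's inequality
\[
\mathbb{P}(\hat f_n \in S_{j,M}) \le \frac{C r_n^2}{2^{2j} M^2}\cdot\frac{\phi_{n,1}(2^j M/r_n) + \phi_{n,2}(2^j M/r_n)}{\sqrt n},
\]
once $M$ is taken large enough that $1/r_n^2$ is dominated by $(2^{j-1}M/r_n)^2/2$ on the left-hand side. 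Using the hypothesis that $\phi_{n,k}(\delta)/\delta^\alpha$ is nonincreasing, $\phi_{n,k}(2^j M/r_n) \le (2^j M)^\alpha \phi_{n,k}(1/r_n)$, and condition \eqref{eq:roc_dnn_1} supplies $r_n^2 (\phi_{n,1}(1/r_n) + \phi_{n,2}(1/r_n))/\sqrt n \le 1$. Plugging in and summing the resulting geometric series over $j\ge 1$ (which converges because $\alpha < 2$) gives
\[
\mathbb{P}(r_n\lVert \hat f_n - f_n\rVert_2 > M) \le C' M^{\alpha-2}\sum_{j\ge 0} 2^{j(\alpha-2)} \le C'' M^{\alpha-2}.
\]
The triangle inequality $\lVert \hat f_n - f_\star\rVert_2 \le \lVert \hat f_n - f_n\rVert_2 + \lVert f_n - f_\star\rVert_2$ together with \eqref{eq:roc_dnn_2} then transfers this to the claimed tail bound on $r_n\lVert \hat f_n - f_\star\rVert_2$ after a harmless rescaling of $M$.

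The main technical obstacle is the cross term $(\bbP_n - \bbP)(f_\star - f_n)(\hat f_n - f_n)$, which is not literally dominated by either $\phi_{n,1}$ or $\phi_{n,2}$ as they are defined. Since $\lVert f_\star - f_n\rVert_\infty$ is uniformly bounded (both functions are truncated at $2b$), the process $\{(\bbP_n-\bbP)(f_\star - f_n)(f - f_n) : \lVert f-f_n\rVert_2 \le \delta\}$ has bracketing entropy of the same order as $\cF_n$ itself, so its expected supremum is controlled by a quantity of the same order as $\phi_{n,2}(\delta)$ up to a multiplicative constant depending on $b$. This extra contribution can either be absorbed into $\phi_{n,2}$ at the hypothesis level or bounded separately using the same bracketing estimate \eqref{eq:bracketing_number_DNN} applied to $\{(f_\star - f_n)(f - f_n) : f \in \cF_n\}$; verifying that the entropy constants remain of the right order is the one place where care is required.
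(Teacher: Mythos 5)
Your proposal takes essentially the same route as the paper's own proof: a peeling argument over dyadic shells, Markov's inequality, the monotonicity of $\phi_{n,k}(\delta)/\delta^\alpha$, the rate equation \eqref{eq:roc_dnn_1}, and a geometric series that converges because $\alpha<2$. The differences are cosmetic bookkeeping — you peel in shells around $f_n$ after first deriving a basic inequality via Young's inequality, whereas the paper peels around $f_\star$ and works directly from $\cR_n(f_n)-\cR_n(\hat f_n)\ge 0$ and then centers by population risks; both are standard and give the same geometric-series bound.

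The substantive part of your write-up is the final paragraph, and you are right to flag it. Expanding $(Y-f_n)^2-(Y-f)^2$ with $Y=f_\star+\xi$ gives $2(f-f_n)(f_\star-f_n)+2\xi(f-f_n)-(f-f_n)^2$, so the centered increment $[\cR_n(f_n)-\cR(f_n)]-[\cR_n(f)-\cR(f)]$ equals $(\bbP_n-P)$ applied to all three pieces; the cross term $(\bbP_n-P)(f_\star-f_n)(f-f_n)$ is genuinely present and is not covered verbatim by either $\phi_{n,1}$ or $\phi_{n,2}$ as stated. The paper's own numerator step silently drops it. Your proposed remedy is the correct one: $f_\star-f_n$ is a fixed function bounded by $2b$, so multiplying by it inflates the bracketing entropy by at most a constant (\cref{lem:bracketing_property}, part 3) and scales the $L_2$-radius by at most $2b$, hence the cross term obeys a bound of the same form as $\phi_{n,2}(\delta)$ up to a multiplicative constant depending on $b$. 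Either folding this into a suitably enlarged $\phi_{n,2}$, or adding a third hypothesis $\phi_{n,3}$ for this process, makes the proposition self-contained. In short, your approach is correct, matches the paper's, and is a bit more careful on this one point.
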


The proof of this proposition requires a slight modification of the peeling argument used in the proof of \cref{thm:roc}. We will first complete the proof of \cref{thm:dnn_mixing} assuming the above claim. We will then prove the proposition itself.

To obtain bounds on $\phi_{n, 1}, \phi_{n, 2}$ we use Theorem \ref{thm:gamma_mixing}. For $\phi_{n, 1}$ we consider the function class $\cF_n^2 = \{(f - f_n)^2: f \in \cF_n\}$. From~\cref{lem:bracketing_property}, part 1, we have: 
$$
\mathscr{H}_{[\,]}(\delta, \cF_n^2, \lVert\cdot\rVert_{\infty}) \le 2 \mathscr{H}_{[\,]}(\delta/8b, \cF_n, \lVert\cdot\rVert_{\infty}) \le c_4 N^2 \log{\left(\frac{16bc_5 N^{c_6}}{\delta}\right)} \,.
$$
Applying part 1. of Theorem \ref{thm:gamma_mixing} (with $\alpha = 0$ and $\tilde r = 2$) and \eqref{eq:bracketing_number_DNN}, we obtain (ignoring the constants): 
\begin{equation}
    \label{eq:moc_1_dnn}
    \phi_{n, 1}(\delta) \lesssim \ \delta n^{\frac{1}{2(1 + \gamma)}} \left(N^2\log{\left(\frac{N}{\delta}\right)}\right)^{\frac{\gamma}{2(\gamma + 1)}} + n^{\frac12 - \frac{\gamma}{1 + \gamma}}\left(N^2\log{\left(\frac{N}{\delta}\right)}\right)^{\frac{\gamma}{1 + \gamma}} \,.
\end{equation}
The analysis for $\phi_{n, 2}$ is similar; as $f \in \cF_n$ is bounded by $2b$ and $\xi$ is bounded by $b_e$, the functions $\xi(f - f_n)$ is bounded by $4bb_e$. Call this collection to be $\cF_{n, \xi}$. By Lemma \ref{lem:bracketing_property} we have: 
\begin{equation}
\label{eq:bracketing_error}
\mathscr{H}_{[\,]}(\delta, \cF_{n,\xi}, \lVert\cdot\rVert_{\infty}) \le \mathscr{H}_{[\,]}(\delta/b_e, \cF_n, \lVert\cdot\rVert_{\infty}) \,.
\end{equation}
Therefore, another application of part 1. of Theorem \ref{thm:gamma_mixing} (with $\alpha = 0, \tilde r = 2$), coupled with \eqref{eq:bracketing_number_DNN}, we get: 
\begin{equation}
    \label{eq:moc_2_dnn}
    \phi_{n, 2}(\delta) \lesssim \ \delta n^{\frac{1}{2(1 + \gamma)}} \left(N^2\log{\left(\frac{N}{\delta}\right)}\right)^{\frac{\gamma}{2(\gamma + 1)}} + n^{\frac12 - \frac{\gamma}{1 + \gamma}}\left(N^2\log{\left(\frac{N}{\delta}\right)}\right)^{\frac{\gamma}{1 + \gamma}} \,.
\end{equation}
To satisfy \eqref{eq:roc_dnn_1} and \eqref{eq:roc_dnn_2}, some  simple algebra yields that the rate $r_n$ satisfies: 
\begin{align}
\label{eq:roc_dnn_new_1}
    r_n & \lesssim N^{\frac{2s}{d}} \\
\label{eq:roc_dnn_new_2}
    r_n \left(\log{(Nr_n)}\right)^{\frac{\gamma} {2(\gamma + 1)}} & \lesssim \left(\frac{n}{N^2}\right)^{\frac{\gamma}{2(\gamma + 1)}} \,.
\end{align}
Taking the width $N$ as: 
$$
N \sim \left(\frac{n}{\log{n}}\right)^{\frac{1}{2} \frac{1}{1 + \frac{2s}{d}\left(\frac{1 + \gamma}{\gamma}\right)}}
$$
we claim that 
$$
r_n \sim \left(\frac{n}{\log{n}}\right)^{\frac{s}{d + 2s\left(\frac{1 + \gamma}{\gamma}\right)}}
$$
satisfies \eqref{eq:roc_dnn_new_1} and \eqref{eq:roc_dnn_new_2}. The first one, i.e. \eqref{eq:roc_dnn_new_1} is immediate from the choice of $N$. For \eqref{eq:roc_dnn_new_2}, observe that, our choice of $N$ yields: 
$$
\left(\frac{n}{N^2}\right)^{\frac{\gamma}{2(\gamma + 1)}}  \sim n^{\frac{s}{d + 2s\left(\frac{1+\gamma}{\gamma}\right)}}\left(\log{n}\right)^{\frac{\frac{\gamma}{2(\gamma + 1)}}{1 + \frac{2s}{d}\left(\frac{1+\gamma}{\gamma}\right)}} \,.
$$
Now, we have: 
\begin{align*}
     r_n \left(\log{(Nr_n)}\right)^{\frac{\gamma} {2(\gamma + 1)}} & \lesssim \left(\frac{n}{\log{n}}\right)^{\frac{s}{d + 2s\left(\frac{1 + \gamma}{\gamma}\right)}} \left(\log{\left(\frac{n}{\log{n}}\right)}\right)^{\frac{\gamma}{2(1 + \gamma)}} \\
     & \lesssim n^{\frac{s}{d + 2s\left(\frac{1 + \gamma}{\gamma}\right)}}(\log{n})^{\frac{\gamma}{2(1 + \gamma)} - \frac{s}{d + 2s\left(\frac{1 + \gamma}{\gamma}\right)}} \\
     & = n^{\frac{s}{d + 2s\left(\frac{1 + \gamma}{\gamma}\right)}}(\log{n})^{\frac{\frac{\gamma}{2(\gamma + 1)}}{1 + \frac{2s}{d}\left(\frac{1+\gamma}{\gamma}\right)}} \sim \left(\frac{n}{N^2}\right)^{\frac{\gamma}{2(\gamma + 1)}} \,.
\end{align*}
This completes the proof. 
\end{proof}

\begin{proof}[Proof of \cref{prop:modifyroc}]
Finally, we present a peeling argument to prove \cref{prop:modifyroc}. For notational simplicity, define $\cR_n(f) = (1/n)\sum_i (Y_i - f(X_i))^2$ to be the empirical risk and $\cR(f) = \bbE[(Y - f(X))^2]$ to be the population risk. Define $A_j := \{f: 2^{j}t r_n^{-1} \le \lVert f-f_\star\rVert_2 \le 2^{j+1}t r_n^{-1}: f \in \cF_n\}$. Then, for all $t>1$, we have: 
\allowdisplaybreaks
\begin{align}
&\;\;\;\bbP\left(r_n \|f_* - \hat f\|_2 \ge t\right) \notag \\ & = \bbP\left(\sup_{\substack{f \in \cF_n \\ r_n \|f_* - \hat f\|_2 \ge t}} \cR_n(f_n) - \cR_n(f) \ge 0 \right) \notag \\
& \le \sum_{j=1}^\infty \bbP\left(\sup_{\phi \in A_j} \cR_n(f_n) - \cR_n(f) \ge 0 \right) \notag \\
 & \le \sum_{j=1}^\infty \bbP\left(\sup_{\phi \in A_j} (\cR_n(f_n) - \cR(f_n)) - (\cR_n(f)  - \cR(f))  \ge \inf_{\phi \in A_j} \cR(f) -\cR(f_n) \right) \notag \\
\label{eq:rate_1} & \le \sum_{j=1}^\infty \frac{\bbE\left[\sup_{\phi \in A_j} \left|(\cR_n(f_n) - \cR(f_n)) - (\cR_n(f)  - \cR(f)) \right|\right]}{\inf_{\phi \in A_j}\cR(f) -\cR(f_n)}
\end{align}
We bound the numerator and denominator separately. For the denominator note that: 
\begin{align*}
    \inf_{f \in A_j} \cR(f) -\cR(f_n)  & \ge \inf_{f: \|f - f_*\|_2 \ge 2^{j-1}tr_n^{-1}}\cR(f) -\cR(f_*) + \cR(f_*) -\cR(f_n) \\
    & \ge  2^{2j-2}t^2r_n^{-2} - \inf_{f \in \cF_n} \|f- f_*\|^2 \triangleq  2^{2j-2}t^2r_n^{-2}  - \omega^2_n \,.
\end{align*}
For the numerator: 
\begin{align*}
    &\bbE\left[\sup_{f \in A_j} \left|(\cR_n(f_n) - \cR(f_n)) - (\cR_n(f)  - \cR(f)) \right|\right] \\
    &\bbE\left[\sup_{f \in A_j} \left|(\bbP_n - P)(f - f_n)^2 + (\bbP_n - P)\eps (f - f_n)\right|\right] \\
    & \le \frac{\phi_{n, 1}\left(2^j tr_n^{-1}\right) + \phi_{n, 2}\left(2^j tr_n^{-1}\right)}{\sqrt{n}} \,.
\end{align*}
Putting this bound in equation \eqref{eq:rate_1} we have: 
\begin{align*}
    & \bbP\left(r_n \|\hat f - f_\star\|_2 \ge t\right) \\
    & \le \sum_{j=1}^\infty \frac{\phi_{n, 1}\left(2^j tr_n^{-1}\right) + \phi_{n, 2}\left(2^j tr_n^{-1}\right)}{\sqrt{n}\left(2^{2j-2}t^2r_n^{-2} - \omega_n^2\right) } \\
    & \le \sum_{j=1}^\infty \frac{r_n^2 \phi_{n, 1}\left(2^j tr_n^{-1}\right) + r_n^2 \phi_{n, 2}\left(2^j tr_n^{-1}\right)}{\sqrt{n}\left(2^{2j-2}t^2 - r_n^2 \omega_n^2\right) }  \\
     & \le \sum_{j=1}^\infty \frac{(2^j t)^\alpha \left(r_n^2 \phi_{n, 1}\left(r_n^{-1}\right) + r_n^2 \phi_{n, 2}\left(r_n^{-1}\right)\right)}{\sqrt{n}\left(2^{2j-2}t^2 - r_n^2 \omega_n^2\right) } \hspace{0.1in}[\because \phi_n(t)/t^\alpha \text{ is decreasing and } 2^j t \ge 1] \\
     & \le  2\sum_{j=1}^\infty \frac{(2^j t)^\alpha}{\left(2^{2j-2}t^2 - r_n^2 \omega_n^2\right)} \hspace{0.1in} [\because r_n^2\phi_{n, 1}(r_n^{-1}) \vee r_n^2 \phi_{n, 2}(r_n^{-1}) \le \sqrt{n}, \text{ equation }\eqref{eq:roc_dnn_1}] \\
     & \le 2\sum_{j=1}^\infty \frac{(2^j t)^\alpha}{\left(2^{2j-3}t^2\right)} \hspace{0.1in} [\because r_n^2w_n^2 \le \frac12 \le 2^{2j - 3}t^2, \text{ equation }\eqref{eq:roc_dnn_2}] \\
     & \le 16 t^{2 - \alpha} \sum_{j = 1}^\infty 2^{-j(2 - \alpha)} \le Ct^{2 - \alpha} \,.
\end{align*}
This completes the proof. 
\end{proof}

\subsection{Proof of Theorem \ref{thm:additive_rate}}
We prove Theorem \ref{thm:additive_rate} in this subsection. Using a DNN-based estimator, our proof is essentially similar to that of Theorem \ref{thm:dnn_mixing}. For brevity, we highlight the key differences here. Recall the definition of the estimator in \eqref{eq:additive_model} and the collection $\cF_{\mathrm{DNN}}$ of DNN. First, we bound the approximation error based on the analysis of \cite{bhattacharya2023deep}. We prove the following lemma: 
\begin{lemma}
    \label{lem:additive_dnn_approx}
    There exists universal constants $c_1$, $c_2$, $c_3$, $N_0$ depending only on $s$, $C$, $b$ such that given any $f:\R^d\to\R$, $f\in\cF_\add([0, 1]^d, d, b, s)$ with $\lVert f\rVert_{\infty}\le b$, and any arbitrary $N\ge N_0$, it holds that 
  \begin{equation}\label{eq:dnn_approx_add}
\inf_{\phi \in \tilde{\cF}_{\mathrm{NN},\mathrm{add}}(d, 1, N, c_1, N^{c_2})}\|f - \phi\|_2 \le c_3 \sqrt{d}N^{-2s} \,,
\end{equation}
    \end{lemma}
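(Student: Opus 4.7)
The plan is to approximate each univariate component $f_{\star,j}$ by its own neural network, sum these to form the candidate $\phi$, and then exploit the identifiability constraint $\int_0^1 f_{\star,j} = 0$ to eliminate cross terms in the $L_2$-error expansion, thereby saving a $\sqrt{d}$ factor over the naive bound. First I would apply the $d=1$ case of the approximation result already used in the proof of \cref{thm:dnn_mixing} (i.e., \eqref{eq:dnn_approx} with $d = 1$) to each $f_{\star,j} \in \Sigma(s, b)$, producing $\tilde\phi_j \in \cF_{\mathrm{NN}}(1,1,N,c_1,N^{c_2})$ with $\|f_{\star,j} - \tilde\phi_j\|_\infty \lesssim N^{-2s}$. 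I would then recenter each subnetwork by setting $\phi_j := \tilde\phi_j - \int_0^1 \tilde\phi_j(t)\,dt$; this amounts to adjusting the bias of the final affine layer, so $\phi_j$ stays in $\cF_{\mathrm{NN}}(1,1,N,c_1,N^{c_2})$ (with $c_2$ possibly slightly enlarged to absorb the bounded constant shift). Since $\int_0^1 f_{\star,j} = 0$ by \cref{asn:growingdim}, this guarantees $\int_0^1 \phi_j = 0$ and at most doubles the $L_\infty$-approximation error.

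Next, with $\phi := \sum_j \phi_j$ and $g_j := f_{\star,j} - \phi_j$, I would compute the $L_2(P_X)$-error. Using \cref{asn:design} to dominate $P_X$ by Lebesgue measure with a density bound $C_{\max}$, the square expands as
\begin{align*}
\|f_\star - \phi\|_{L_2(P_X)}^2
&\le C_{\max}\int_{[0,1]^d}\Big(\sum_{j=1}^d g_j(x_j)\Big)^2 \, dx \\
&= C_{\max}\Big[\sum_{j=1}^d \int_0^1 g_j(t)^2 \, dt \;+\; 2\sum_{j<k}\Big(\int_0^1 g_j\Big)\Big(\int_0^1 g_k\Big)\Big].
\end{align*}
The zero-mean property $\int_0^1 g_j = 0$ (from $\int f_{\star,j} = 0 = \int \phi_j$) eliminates every cross term, leaving $d$ diagonal contributions each of order $N^{-4s}$. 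This yields $\|f_\star - \phi\|_{L_2(P_X)} \lesssim \sqrt{d}\, N^{-2s}$. Finally, since $\|f_\star\|_\infty \le b$, truncating $\phi$ at level $2b$ to land in $\tilde\cF_{\mathrm{NN},\mathrm{add}}(d,1,N,c_1,N^{c_2})$ can only decrease the pointwise error, and the same bound survives.

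The hard part will be the cancellation of the $\binom{d}{2}$ cross terms. Without the centering step, each cross-integral would contribute at order $N^{-4s}$, producing the suboptimal $L_2$-rate $d\, N^{-2s}$ and destroying the $\sqrt{d}$-saving that is essential for matching the expected $d$-dependence in \cref{thm:additive_rate}. The identifiability convention in \cref{asn:growingdim} is exactly what permits a simultaneous constant-bias correction driving every coordinate integral $\int_0^1 g_j$ to zero; any weaker version (for instance, only imposing $\sum_j \int f_{\star,j} = 0$) would not suffice, since a single overall shift cannot annihilate all $d$ coordinate means at once.
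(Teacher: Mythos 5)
Your proof is correct and takes essentially the same route as the paper: apply the univariate approximation result to each $f_{\star,j}$, recenter each subnetwork via a bias shift to have zero integral (using the identifiability constraint $\int_0^1 f_{\star,j} = 0$), exploit the resulting cancellation of cross terms in the $L_2$-expansion to get $d$ diagonal contributions of order $N^{-4s}$, and then truncate at $2b$. The only difference is cosmetic: you spell out the cross-term cancellation explicitly, whereas the paper compresses it into a single ``$=$''.
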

\begin{proof}
    To prove the above lemma we use \eqref{eq:dnn_approx_add} for each of the univariate components. In particular let $\tilde \phi_j\in $ with depth $c_1$, width $N$, and weights bounded by $N^{c_2}$ such that: 
    $$
    \|\tilde \phi_j - f_{j}\|_\infty \le CN^{-2s} \,.
    $$
    Note that changing $\tilde{\phi}_j$ by $\tilde{\phi}_j-\int_{[0,1]}\tilde{\phi}_j(z)\,dz$ only amounts to altering the bias of the last layer of the neural network, without changing its architecture. Further, as $\int f(z)\,dz=0$, we have:
    $$\bigg\lVert \tilde{\phi}_j-\int_{[0,1]}\tilde{\phi}_j(z)\,dz-f_{j}\bigg\rVert_{\infty}\le 2 \lVert \tilde{\phi}_j-f_j\rVert_{\infty}\le c_3 N^{-2s},$$
    by changing $c_3$ appropriately. 
    Therefore, we can assume without loss of generality that $\int_{[0,1]} \tilde{\phi}_j(z)\,dz=0$.
    Now consider $\tilde{\phi}(x) = \sum_{j=1}^d   \tilde{\phi}_j(x_j)$ for $x=(x_1,\ldots ,x_d)$. Then $\phi\in \cF_{\mathrm{NN},\mathrm{add}}(d,1,N,c_1,N^{c_2})$.  
    Furthermore, we have: 
    \begin{align*}
        \bbE[(f(X) - \phi(X))^2] &\lesssim \int (f(x)-\tilde{\phi}(x))^2\,dx=\sum_{j=1}^d \int (\tilde{\phi}_j(x_j)-f(x_j))^2\,dx\,\le c_3 d N^{-4s}, 
    \end{align*}
    by changing $c_3$ as necessary. The conclusion now follows by changing $\phi(x)$ to $\mathrm{sign}(\phi(x))(|\phi(x)|\wedge (2b))$ and noting that $\lVert f\rVert_{\infty}\le b$, and
    $$\int \big(\mathrm{sign}(\phi(x))(|\phi(x)|\wedge (2b))-f(x)\big)^2\,dx\le \int (f(x)-\tilde{\phi}(x))^2\,dx.$$
\end{proof}
To bound the variance/stochastic error, we follow a similar approach as that of the proof of Theorem \ref{thm:dnn_mixing}. In particular, thanks to \cref{prop:modifyroc}, we aim to find $\phi_{n, 1}(\delta)$ and $\phi_{n, 2}(\delta)$ following \eqref{eq:bound_1} and \eqref{eq:bound_2}. 
Recall that we use the collection of neural networks $\cF_{\mathrm{NN},\mathrm{add}}(d,1,N,c_1,N^{c_2})$. 
Therefore, using \eqref{eq:bracketing_number_DNN} we obtain: 
$$
\mathscr{H}_{[ \ ]}(\delta, \cF_{\mathrm{DNN}}, \|\cdot\|_\infty) \lesssim dN^2 \log{\left(\frac{dN}{\delta}\right)} \,.
$$
The rest of the analysis is similar to that of the proof of Theorem \ref{thm:dnn_mixing}, where we use part (1) of Theorem \ref{thm:gamma_mixing} to bound $\phi_{n, 1}$ and $\phi_{n, 2}$. To bound $\phi_{n, 1}$, we first note that: 
$$
\mathscr{H}_{[ \ ]} (\delta, \cF_n^2, \|\cdot\|_\infty) \le 2\mathscr{H}_{[ \ ]}(\delta/8b, \cF_n, \|\cdot\|_\infty)  \lesssim dN^2 \log{\left(\frac{dN}{\delta}\right)} \,.
$$
Noting that $\phi_{n, 1}(\delta) \ge \bbE[\sup_{g \in \cF_n^2}|(\bbP_n - P)g|]$ we conclude via part (1) of Theorem 4.1 (with $\alpha = 0$ and $\tilde r = 2$): 
\begin{equation}
    \label{eq:moc_1_additive}
    \phi_{n, 1}(\delta) \lesssim \ \delta n^{\frac{1}{2(1 + \gamma)}} \left(dN^2\log{\left(\frac{N}{\delta}\right)}\right)^{\frac{\gamma}{2(\gamma + 1)}} + n^{\frac12 - \frac{\gamma}{1 + \gamma}}\left(dN^2\log{\left(\frac{N}{\delta}\right)}\right)^{\frac{\gamma}{1 + \gamma}} \,.
\end{equation}
Similarly, for $\phi_{n, 2}$, using \eqref{eq:bracketing_error} and again with part (1) of Theorem \ref{thm:gamma_mixing} (with $\alpha = 0$ and $\tilde r = 2$) we conclude: 
\begin{equation}
    \label{eq:moc_2_additive}
    \phi_{n, 2}(\delta) \lesssim \ \delta n^{\frac{1}{2(1 + \gamma)}} \left(dN^2\log{\left(\frac{N}{\delta}\right)}\right)^{\frac{\gamma}{2(\gamma + 1)}} + n^{\frac12 - \frac{\gamma}{1 + \gamma}}\left(dN^2\log{\left(\frac{N}{\delta}\right)}\right)^{\frac{\gamma}{1 + \gamma}} \,.
\end{equation}
Finally, to obtain the rate of convergence, we choose $r_n$ by solving \eqref{eq:roc_dnn_1} and \eqref{eq:roc_dnn_2} as follows: 
$$
r_n \sim \min\left\{\frac{N^{2s}}{\sqrt{d}},\ \left(\frac{n}{d N^2 (\log{(N)})}\right)^{\frac{\gamma}{2(\gamma+1)}}\right\}, 
$$
where $$N\sim n^{\frac{1}{2(1+2s(1+\gamma^{-1}))}} \ d^{\frac{1}{2(2s(1+\gamma)+\gamma)}}.$$
This completes the proof.

\subsection{Proof of \cref{thm:shapeconv}}\label{sec:pfshapeconv}

Let us elaborate first on how to extend the BCLS estimator beyond the design points. This discussion is a combination of \cite{han2016multivariate} and \cite{Seijo2011}. An alternate way to formulate the least squares problem in \cref{sec:non_param_reg_application} goes as follows: 
$$\min_{\{y_i\in\R\},\{g_i\in\R^d\}}\sum_{i=1}^n (Y_i-y_i)^2,$$
subject to 
\begin{eqnarray*}
    y_j\ge y_i+g_i^{\top}(X_j-X_i),\\ 
    y_i\le 1,\quad y_i\ge -1, \qquad \qquad & \mbox{for all}\ i,j=1,2,\ldots ,n,\\ 
    1\ge y_i+g_i^{\top}(v-X_i), \, \mbox{for all}\ v\in\partial\Omega.
\end{eqnarray*}
The existence of unique solutions in the above formulation along with the optimality conditions is derived in \cite[Lemmas 2.3 and 2.4]{Seijo2011}; also see \cite[Equations 3.2---3.4]{han2016multivariate}. With a notational abuse, let us call the optimizer from the above formulation as $\{y_i\}$ and $\{g_i\}$. We then define $\hat{f}_n$ on the whole of $\Omega$ as follows: 
$$\hat{f}_n(x):=\max_{i=1,2,\ldots ,n} (y_i+\hat{g}_i^{\top}(x-X_i)).$$
We note that with the above extension $\hat{f}_n(X_i)=\hat{y}_i$, i.e., the extension preserves the functions at the sample points. This follows immediately from the first constraint above. As a result, the following inequality continues to hold 
$$\bbP_n(\ell(Y,\hat{f}_n(X)))\le \bbP_n(\ell(Y,f_\star(X))).$$
The above is the only inequality we need to invoke \cref{thm:roc}. 

Another useful result that we will need from the literature is a bound on the bracketing entropy of the class of convex functions supported on a compact polytope. This is borrowed from \cite[Theorem 4.4]{kur2019optimality}. We reproduce their result below for completeness. 

\begin{prop}\label{prop:kurlemma}
Let $\Omega$ be a convex body in $\R^d$ with bounded volume.  Let $f_0$ be a convex function in $\Omega$ that is bounded by $1$. Also recall that $\cF\equiv \cF(\Omega)$ is the space of convex functions on $\Omega$ bounded by $1$. For a fixed $1\le r<\infty$ and $t>0$, let 
$$B_r(f_0;t;\Omega):=\left\{f\in \cF:\ \int |f(x)-f_0(x)|^r\,dx\le t^r\right\}.$$
Suppose $\Delta_1,\ldots ,\Delta_k\subseteq \Omega$ are $d$-simplices with disjoint interiors such that $\cup_{j=1}^k \Delta_j=\Omega$ and $f_0$ is affine on each $\Delta_i$. Then for every $0<u<1$ and $t>0$, we have 
$$\mathscr{H}_{[\,]}(u,B_r(f_0;t;\Omega),\lVert\cdot\rVert_r)\le C_{d,r}k\left(\log{\left(\frac{1}{u}\right)}\right)^{d+1}\left(\frac{t}{u}\right)^{\frac{d}{2}}$$
for a constant $C_{d,r}$ depending only on $r$, $d$, and the volume of $\Omega$.
\end{prop}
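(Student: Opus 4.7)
The plan is to prove Proposition \ref{prop:kurlemma} by a two-step reduction: first decomposing $\Omega$ into the $k$ simplices on which $f_0$ is affine, and then bounding the local bracketing entropy of bounded convex functions on each simplex via a scaling argument, leveraging classical Bronshtein-type bounds.

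First, I would exploit the partition $\Omega = \bigcup_{j=1}^k \Delta_j$. Because the simplices have disjoint interiors and brackets can be constructed simplex-by-simplex, a straightforward pigeonhole argument shows
\begin{equation*}
\mathscr{H}_{[\,]}(u, B_r(f_0;t;\Omega), \|\cdot\|_r) \le \sum_{j=1}^k \mathscr{H}_{[\,]}(u_j, \mathcal{G}_j(t_j), \|\cdot\|_r),
\end{equation*}
where $\mathcal{G}_j(t_j)$ is the local class on $\Delta_j$, and $(u_j)$, $(t_j)$ are allocations with $\sum u_j^r \le u^r$, $\sum t_j^r \le t^r$. On each $\Delta_j$, since $f_0$ is affine, the map $f \mapsto g := f - f_0$ carries $B_r(f_0;t_j;\Delta_j)$ to the class of convex functions on $\Delta_j$ uniformly bounded by $2$ (as both $f$ and $f_0$ are bounded by $1$) and with $L_r$ norm at most $t_j$. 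Note that bracketing entropy is preserved under this affine shift.

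Next comes the heart of the argument: the local bracketing entropy estimate
\begin{equation*}
\mathscr{H}_{[\,]}\bigl(u, \{g \text{ convex on } \Delta: \|g\|_\infty \le 2, \|g\|_r \le t\}, \|\cdot\|_r\bigr) \lesssim \bigl(\log(1/u)\bigr)^{d+1} (t/u)^{d/2}
\end{equation*}
for any $d$-simplex $\Delta$ of bounded volume. The global version (with $t$ replaced by a constant) is Bronshtein's classical bound $u^{-d/2}$, refined by Gao--Wellner and Guntuboyina--Sen with the $(\log(1/u))^{d+1}$ factor. To obtain the local form with the $t^{d/2}$ scaling, I would cover the target class by a multi-scale scheme: split the simplex into an inner region where $|g|$ is of order $t^{r/(r+d)}$ (which is the typical sup-norm scale of a convex function with $L_r$ mass $t$) and a thin boundary layer where $g$ may still range up to $2$. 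On the inner region, a rescaling $g \mapsto g/t^{r/(r+d)}$ maps into the class of convex functions bounded by a constant, where Bronshtein's bound applies; tracking how the rescaling transforms $u$-brackets to $\tilde u$-brackets in the standardized class yields the $(t/u)^{d/2}$ factor. The boundary layer has vanishing volume, so the $L_r$ cost of brackets there can be absorbed into the overall budget at a mild logarithmic cost.

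Finally, plugging this simplex-level bound into the sum and using concavity of $x \mapsto x^{d/(2r)} = x^{d/2}$ when we sum with the constraint $\sum t_j^r \le t^r$ (choosing, e.g., $u_j = u / k^{1/r}$ and bounding $\sum t_j^{d/2} \le k^{1 - d/(2r)} (\sum t_j^r)^{d/(2r)}$ when $d \le 2r$, or a direct maximization otherwise) gives the $k \cdot (t/u)^{d/2}$ factor. The main obstacle I anticipate is the derivation of the sharp local entropy bound with the correct $(t/u)^{d/2}$ scaling on a single simplex --- Bronshtein's bound gives $u^{-d/2}$ globally, but the local-radius refinement requires genuinely using that a bounded convex function with small $L_r$ mass is forced to be small except near the boundary, which in turn relies on the uniform sup-bound $|g| \le 2$ controlling the boundary behavior of extremal configurations.
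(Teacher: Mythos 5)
You should first note that the paper does not prove this proposition at all: it is quoted verbatim from the literature (Theorem 4.4 of the Kur--Dagan--Rakhlin paper cited as \texttt{kur2019optimality}), so the relevant comparison is between your sketch and that known, rather delicate, proof. Your outer reduction is unobjectionable and matches how the $k$ factor arises there: restrict to each simplex $\Delta_j$, subtract the affine piece of $f_0$ to reduce to convex functions bounded by $2$ with small $L_r$ norm, and add entropies over the $k$ pieces (the allocation bookkeeping, e.g.\ taking $u_j=u k^{-1/r}$, only costs harmless factors). The problem is that everything after that --- the single-simplex local bound $\mathscr{H}_{[\,]}(u,\{g \text{ convex},\ \|g\|_\infty\le 2,\ \|g\|_r\le t\},\|\cdot\|_r)\lesssim (\log(1/u))^{d+1}(t/u)^{d/2}$ --- \emph{is} the theorem, and your argument for it has a genuine gap.

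Concretely: (i) your own scaling computation does not produce the claimed exponent. On the inner region the sup bound you can extract is $s\sim t^{r/(r+d)}$ (that is indeed the correct scale for the negative part, and for the positive part at distance $\rho\sim t^{r/(r+d)}$ from the boundary), and rescaling $g\mapsto g/s$ converts a $u$-bracket into a $(u/s)$-bracket for the unit-bounded class, so Bronshtein-type bounds give entropy of order $(s/u)^{d/2}=t^{\frac{rd}{2(r+d)}}u^{-d/2}$, which is strictly weaker than $t^{d/2}u^{-d/2}$ for small $t$. (ii) The boundary layer cannot be ``absorbed at a mild logarithmic cost'': on a layer of thickness $\rho$ the functions still range over $[-2,2]$, and its $L_r(\Delta)$-entropy at scale $u$ is of order $(\rho^{1/r}/u)^{d/2}$, which stays within the budget only if $\rho\lesssim t^r$, at which point the inner-region sup bound degenerates and the gain from (i) disappears; a single inner/outer split cannot balance the two. (iii) Most tellingly, your argument uses nothing about the simplex beyond the smallness of a boundary collar, so if it worked it would prove the same local bound on any convex body --- but the bound is known to fail in the required sense for smooth domains such as the Euclidean ball, which is precisely why the BCLS rate degrades to $n^{-2/(d+1)}$ there (see the paper's Remark on domain sensitivity) and why the proposition is stated only for simplicial/polytopal decompositions with $f_0$ piecewise affine. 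The actual proof in the cited work requires a genuinely multi-scale decomposition of the simplex adapted to its faces, with the $(\log(1/u))^{d+1}$ factor emerging from summing over the layers; this is the missing idea in your proposal.
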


We are now in position to prove \cref{thm:shapeconv}.

\emph{Proof of part (a)}. Let us split the proof into two cases $\beta<1$ and $\beta>1$. 

\emph{$\beta<1$ case}. As $\beta>2/(d-2)$ with $d>4$, there exists $r$ large enough such that $$d>2r(1+\beta)/((r-1)\beta).$$ This is possible because in this regime $d\beta>2(1+\beta)$ which implies that there exists $r>2$ such that 
$$r-1>\frac{2(1+\beta)}{\beta(d-2)-2}.$$ Such a choice of $r$ satisfies the first inequality above. Further as $\beta<1$, it automatically satisfies $\beta<r/(r-2)$. 

Now, just as in the proof of \cref{thm:dnn_mixing}, it suffices to bound the local complexity of the two terms
\begin{equation}
\label{eq:boundcon_1}
    \phi_{n, 1}(\delta) \ge \bbE\left[\sup_{\|f - f_\star\|_{L_2} \le \delta} \left|\mathbb{G}_n(f - f_\star)^2\right|\right], 
\end{equation}
and
\begin{equation}
    \label{eq:boundcon_2}
    \phi_{n, 2}(\delta) \ge \bbE\left[\sup_{\|f - f_\star\|_{L_2} \le \delta} \left|\mathbb{G}_n(\xi (f - f_\star))\right|\right]. 
\end{equation}
For obtaining $\phi_{n, 1}(\sigma)$, we need to consider the function class $\cF_{\delta}^2 := \{(f - f_n)^2: f \in \cF_\delta\}$ where $\cF_{\delta}:=\{f:\cF:\ \lVert f-f_\star\rVert_{L_2}\le \delta\}$. From~\cref{lem:bracketing_property}, part 1, and \cref{prop:kurlemma} with $f_0=0$, $\Delta_1=\Omega$, we have: 
$$
\mathscr{H}_{[\,]}(u,\cF_{\delta}^2,\lVert\cdot\rVert_r) \lesssim C_{d,r}\left(\log{\left(\frac{1}{u}\right)}\right)^{d+1}u^{-\frac{d}{2}} \,,
$$
where the $\lesssim$ hides constants depending on $d$, $r$, and the volume of $\Omega$. 

Therefore, by invoking \cref{cor:nondonsk_1}, part 2, we have: 
$$ \bbE\left[\sup_{\|f - f_\star\|_{L_2} \le \delta} \left|\mathbb{G}_n(f - f_\star)^2\right|\right]\lesssim n^{\frac12-\frac2d}\left(\log(n)\right)^{d+1}=:\phi_{n,1}(\delta).$$
We note that \cref{cor:nondonsk_1} does not take into account logarithmic factors in the entropy bound but that can be incorporated to get the above bound with minor technical modifications. We leave the details to the reader.

Moving on to $\phi_{n,2}(\cdot)$, we consider the function space $\cF_{\delta,\xi}:=\{\xi(f(x)-f_\star(x)):\ f\in\cF_{\delta}\}$. By \cref{lem:bracketing_property}, part 3, and \cref{prop:kurlemma} with $f_0=0$, $\Delta_1=\Omega$, we have: 
$$
\mathscr{H}_{[\,]}(u, \cF_{\delta,\xi}, \lVert\cdot\rVert_r) \lesssim C_{d,r}\left(\log{\left(\frac{1}{u}\right)}\right)^{d+1}u^{-\frac{d}{2}} \,.
$$
The rest of the argument is same as that for $\phi_{n,1}(\cdot)$. Therefore, we again get: 
$$ \bbE\left[\sup_{\|f - f_\star\|_{L_2} \le \delta} \left|\mathbb{G}_n(\xi(f - f_\star))\right|\right]\lesssim n^{\frac12-\frac2d}\left(\log(n)\right)^{d+1}=:\phi_{n,2}(\delta).$$
Therefore $(\phi_{n,1}+\phi_{n,2})(\delta)$ is constant in $\delta$ and therefore both $\phi_{n,1}(\delta)/\delta^t$ and $\phi_{n,2}(\delta)/\delta^t$ are decreasing for $t\in (0,2)$. The conclusion follows by invoking \cref{thm:roc}.

\emph{$\beta>1$ case}. The proof is similar as in the $\beta<1$ case, except in the choice of $r$. Note that as $\beta>1$, there exists $r$ such that $\beta>r/(r-2)$. As $d>4$, we are now in the regime of \cref{cor:nondonsk_1}, part 1, instead pf \cref{cor:nondonsk_1}, part 2. The rest of the computation is the same.

\emph{Proof of part (b)}. The conclusion follows from \cref{cor:adaptbd} choosing $\alpha=d/2$. Once again \cref{cor:adaptbd} does not take into account logarithmic factors in the entropy but that can be handled with minor technical adjustments which we leave to the reader.

\subsection{Proofs of \cref{th:otbalance}}\label{sec:pfotbalance}
We begin with some preliminary notation. Given $\mu$ with Lebesgue density $f_{\mu}$, its Fisher information is denoted by 
$$I_{\mu}:=\int \lVert \nabla \log{f_{\mu}(x)}\rVert^2 f_{\mu}(x)\,dx.$$
Note that by the conditions imposed in \cref{th:otbalance}, part (ii), we have:
\begin{equation}\label{eq:assn1}
    I_{\mu}<\infty \quad , \quad I_{\nu}<\infty.
\end{equation}

Next, we recall the Brenier-McCann polar factorization theorem from \cite{brenier1991polar,mccann1995}.

\begin{prop}\label{prop:bmpfthm}
    Suppose $\mu$ and $\nu$ are absolutely continuous probability measures on $\R^d$. Then there exists a $\mu$ almost everywhere unique map $T_0:\R^d\to\R^d$ which is the gradient of a $d$-variate real-valued convex function, such that $T_0\#\mu=\nu$, i.e., $T_0(X)\sim \nu$ if $X\sim\mu$.
\end{prop}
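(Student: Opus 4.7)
The plan is to recover the Brenier--McCann result by combining Kantorovich duality with Rockafellar's characterization of cyclically monotone sets. First I would recast the quadratic OT problem in ``inner product'' form: writing $\tfrac{1}{2}\lVert x-y\rVert^2 = \tfrac12\lVert x\rVert^2+\tfrac12\lVert y\rVert^2-\langle x,y\rangle$ and noting that $\mu,\nu$ have finite second moments (which I would first argue, say via compact support or an absolute continuity + moment assumption), the problem
$\inf_{\pi\in\Gamma(\mu,\nu)}\int \lVert x-y\rVert^2\,d\pi$
is equivalent to $\sup_{\pi\in\Gamma(\mu,\nu)}\int\langle x,y\rangle\,d\pi$. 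Existence of a maximizer $\pi^\star$ then follows from tightness of $\Gamma(\mu,\nu)$ (Prokhorov) together with weak upper semicontinuity of $\pi\mapsto\int\langle x,y\rangle\,d\pi$ after a standard truncation/moment argument.

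Next I would invoke Kantorovich duality for the cost $c(x,y)=-\langle x,y\rangle$ to produce a pair of $c$-conjugate potentials. Equivalently, and more directly, I would argue that $\mathrm{supp}(\pi^\star)$ is \emph{cyclically monotone}: if not, one could rearrange finitely many mass-transport pairs to strictly improve the objective, contradicting optimality. By Rockafellar's theorem, every cyclically monotone subset of $\R^d\times \R^d$ is contained in the graph of the subdifferential $\partial\phi$ of some lower semicontinuous proper convex function $\phi:\R^d\to\R\cup\{+\infty\}$.

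With $\phi$ in hand, I would use the absolute continuity of $\mu$. Convex functions are differentiable Lebesgue-a.e.\ (they are locally Lipschitz on the interior of their domain, so Rademacher applies; in fact this follows from the standard result that the set of non-differentiability of a convex function has Lebesgue measure zero). Since $\mu\ll \mathrm{Leb}$, $\phi$ is differentiable $\mu$-a.e., and the subdifferential $\partial\phi(x)$ is the singleton $\{\nabla\phi(x)\}$ for $\mu$-a.e.\ $x$. Defining $T_0(x):=\nabla\phi(x)$, the inclusion $\mathrm{supp}(\pi^\star)\subset\mathrm{graph}(\partial\phi)$ forces $\pi^\star=(\mathrm{Id},T_0)\#\mu$, so $T_0\#\mu=\nu$ by projecting onto the second marginal.

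For uniqueness, suppose $T_0=\nabla\phi$ and $\tilde T_0=\nabla\tilde\phi$ both push $\mu$ to $\nu$ and both are gradients of convex functions. Then $(\mathrm{Id},T_0)\#\mu$ and $(\mathrm{Id},\tilde T_0)\#\mu$ are both admissible couplings with cyclically monotone support, and the strict convexity of the quadratic cost (or equivalently, a convexity argument on $\phi+\tilde\phi^*$ via the Legendre transform) forces $T_0=\tilde T_0$ $\mu$-a.e. The main technical obstacle I anticipate is the clean passage from optimality of $\pi^\star$ to cyclical monotonicity of its support, and then controlling the null sets where $\phi$ fails to be differentiable: both are handled by carefully exploiting that $\mu$ charges no Lebesgue-null set, but they are where the proof earns its keep.
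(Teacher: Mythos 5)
The paper does not prove this proposition; it is an imported result, cited to Brenier (1991) and McCann (1995), so there is no in-paper proof to compare against. Your sketch is a correct outline of the \emph{Brenier} route: rewrite the quadratic cost in inner-product form, extract an optimal coupling by tightness and semicontinuity, deduce cyclical monotonicity of its support from optimality (a finite-rearrangement argument), invoke Rockafellar's theorem to produce a convex potential $\phi$, and use $\mu\ll\mathrm{Leb}$ together with Lebesgue-a.e.\ differentiability of convex functions to collapse $\partial\phi$ to a single-valued gradient $T_0=\nabla\phi$; the inclusion $\mathrm{supp}(\pi^\star)\subset\mathrm{graph}(\partial\phi)$ then forces $\pi^\star=(\mathrm{Id},T_0)\#\mu$ and $T_0\#\mu=\nu$. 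The genuine gap is the one you flagged yourself in passing: this entire machinery, starting with the equivalence between minimizing $\int\|x-y\|^2\,d\pi$ and maximizing $\int\langle x,y\rangle\,d\pi$ and continuing through the Knott--Smith optimality criterion used in your uniqueness step, presupposes that $\mu$ and $\nu$ have finite second moments. Absolute continuity alone does not give that, and the proposition is stated with no moment or compactness hypothesis --- removing them is exactly McCann's 1995 contribution, and his proof deliberately bypasses the variational transport problem, working directly with cyclically monotone set-valued maps via a discrete approximation and a compactness argument on graphs of subdifferentials, never integrating the cost at all. So as written, your proposal proves the weaker Brenier statement, not the quoted one. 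Within the paper's actual use of the proposition in the proof of Theorem~\ref{th:otbalance}, $\mu$ and $\nu$ are compactly supported, so the moment issue never arises and your Brenier-style argument would suffice there; but to justify the proposition at the stated level of generality you would need to switch to McCann's argument (or explicitly add a second-moment hypothesis to the statement).
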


With $T_0$ as in \cref{prop:bmpfthm}, let $\rho_t$ denote the probability density of the measure $((1-t)\mathrm{Id}+t T_0)\#\mu$. The Fisher information of the Wasserstein geodesic between $\mu$ and $\nu$ is then defined as 
$$I_{\mu,\nu}:=\int_{t\in [0,1]}\int_x \lVert \nabla \log\rho_t(x)\rVert^2\rho_t(x)\,dx\,dt.$$
Under the assumptions of \cref{th:otbalance}, we then have:
\begin{equation}\label{eq:assn2}
I_{\mu,\nu}<\infty.
\end{equation}
The following proposition taken from \cite{chizat2020faster} which will be useful in the sequel.
\begin{prop}\label{prop:debiasregwass}
    Assume $\mu$, $\nu$ are compactly supported, absolutely continuous distributions on $\R^d$. Also suppose that \eqref{eq:assn1} and \eqref{eq:assn2} hold. Then there exists a constant $C_{\mu,\nu}$ depending on $I_{\mu}$, $I_{\nu}$, and $I_{\mu,\nu}$, such that 
   $$\bigg|\mathcal{S}_{\vep,2}(\mu,\nu)-\mathcal{W}_2^2(\mu,\nu)\bigg|\le C_{\mu,\nu}\vep^2.$$
\end{prop}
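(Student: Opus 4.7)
\textbf{Proof plan for \cref{prop:debiasregwass}.} The plan is to derive the bound through a sharp asymptotic expansion of the entropic OT cost $\mathcal{T}_{\vep,2}$ about $\vep=0$ and then exhibit the precise cancellation of all sub-quadratic contributions in the debiased Sinkhorn divergence $\mathcal{S}_{\vep,2}$. Concretely, I would work with the Schr\"odinger system: for any compactly supported, absolutely continuous pair $(\alpha,\beta)$, the optimizer in the dual formulation of $\mathcal{T}_{\vep,2}(\alpha,\beta)$ is a pair of potentials $(\phi_\vep^{\alpha\beta},\psi_\vep^{\alpha\beta})$ that converges as $\vep\downarrow 0$ to the Brenier potentials coming from \cref{prop:bmpfthm}. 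Under the finite Fisher information hypotheses \eqref{eq:assn1}--\eqref{eq:assn2}, this convergence can be made quantitative, yielding an expansion
$$\mathcal{T}_{\vep,2}(\alpha,\beta)=\mathcal{W}_2^2(\alpha,\beta)+\vep\cdot L(\alpha,\beta)+R(\alpha,\beta;\vep),$$
where $L(\alpha,\beta)$ is a linear-in-$\vep$ coefficient depending only on the marginal entropies (and a dimensional $\log\vep$ correction that is independent of the particular pair), while the remainder satisfies $|R(\alpha,\beta;\vep)|\le C(I_\alpha,I_\beta,I_{\alpha,\beta})\,\vep^2$.

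The key algebraic observation is that $L$ splits additively as $L(\alpha,\beta)=\ell(\alpha)+\ell(\beta)$ for a universal functional $\ell$ of a single measure. This structure is a consequence of Schr\"odinger duality applied to the heat kernel and can be verified by inspection of the small-noise limit of one Sinkhorn fixed-point iteration on $(\alpha,\beta)$, $(\alpha,\alpha)$, and $(\beta,\beta)$ simultaneously. Substituting the expansion into the definition $\mathcal{S}_{\vep,2}(\mu,\nu)=\mathcal{T}_{\vep,2}(\mu,\nu)-\tfrac{1}{2}\mathcal{T}_{\vep,2}(\mu,\mu)-\tfrac{1}{2}\mathcal{T}_{\vep,2}(\nu,\nu)$ and using $\mathcal{W}_2^2(\mu,\mu)=\mathcal{W}_2^2(\nu,\nu)=0$, the additive decomposition $L(\alpha,\beta)=\ell(\alpha)+\ell(\beta)$ makes the linear-in-$\vep$ contributions cancel exactly, leaving
$$\mathcal{S}_{\vep,2}(\mu,\nu)-\mathcal{W}_2^2(\mu,\nu)=R(\mu,\nu;\vep)-\tfrac{1}{2}R(\mu,\mu;\vep)-\tfrac{1}{2}R(\nu,\nu;\vep),$$
which is immediately of order $\vep^2$ with a constant $C_{\mu,\nu}$ depending only on $I_\mu$, $I_\nu$, and $I_{\mu,\nu}$.

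The main obstacle is establishing the quantitative second-order remainder estimate $|R(\alpha,\beta;\vep)|\le C(I_\alpha,I_\beta,I_{\alpha,\beta})\,\vep^2$ underlying the expansion above. For this I would linearize the Schr\"odinger potentials around the Brenier map $T_0$ by writing $\phi_\vep^{\alpha\beta}=\phi_0+\vep\phi_1+\vep^2\phi_2+\cdots$ and analogously for $\psi_\vep^{\alpha\beta}$, substitute into the coupled Schr\"odinger equations, and show that the first-order correction $\phi_1$ solves an elliptic PDE whose source term has $L^2$-norm controlled by $\sqrt{I_{\alpha,\beta}}$ via the McCann displacement interpolation densities $\rho_t$ introduced just before \eqref{eq:assn2}. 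Compact support and absolute continuity then yield uniform ellipticity and an $H^1$-bound on $\phi_1$ in terms of $\sqrt{I_\alpha}$, $\sqrt{I_\beta}$, and $\sqrt{I_{\alpha,\beta}}$. Plugging this into the strictly convex dual functional and applying a quadratic Taylor expansion converts the pointwise bound on the potentials into the desired $\vep^2$ estimate on the Sinkhorn debiasing error, completing the proof.
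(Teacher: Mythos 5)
You should first note that the paper does not prove \cref{prop:debiasregwass} at all: it is imported verbatim from \cite{chizat2020faster}, so the ``paper's proof'' is a citation, and your task amounts to reproving that theorem from scratch. Your high-level skeleton is the right one and matches the spirit of the cited argument: expand $\mathcal{T}_{\vep,2}$ around $\vep=0$, observe that the first-order (and $\vep\log(1/\vep)$) contributions are of an additive form that cancels exactly in the debiased quantity \eqref{eq:sinkwass}, and control the remainder by $\vep^2$ times Fisher-information quantities. The additive splitting you posit is indeed correct for the quadratic cost (it follows, e.g., from the Monge--Amp\`ere identity $\int\log\det D^2\varphi\,d\mu=-H(\mu)+H(\nu)$ for the Brenier potential $\varphi$), though ``inspection of one Sinkhorn fixed-point iteration'' is not a justification of it.

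The genuine gap is in the only hard step, the two-sided remainder bound $|R(\alpha,\beta;\vep)|\le C(I_\alpha,I_\beta,I_{\alpha,\beta})\,\vep^2$, which is precisely the content of the theorem and cannot be asserted. Your proposed route---a formal power-series expansion $\phi_\vep=\phi_0+\vep\phi_1+\vep^2\phi_2+\cdots$ of the Schr\"odinger potentials, linearization of the Schr\"odinger system, and elliptic estimates---does not go through under the stated hypotheses: for merely compactly supported, absolutely continuous marginals with finite Fisher information, the Brenier potential need not be $C^2$, the linearized Monge--Amp\`ere operator need not be uniformly elliptic (``compact support and absolute continuity yield uniform ellipticity'' is false; densities may vanish and supports need not even give Caffarelli-type regularity), and the validity of the $\vep$-expansion of the potentials is itself an open-ended regularity question, not an input. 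The argument in \cite{chizat2020faster} avoids potentials' asymptotics entirely: the upper bound on $\mathcal{T}_{\vep,2}(\mu,\nu)$ is obtained by plugging into the primal problem \eqref{eq:enwass} an explicit competitor built by heat-flow mollification of the optimal plan along the displacement interpolation, where a de Bruijn-type identity converts the $O(\vep^2)$ cost increase into the geodesic Fisher information \eqref{eq:assn2}, while the lower bound and the self-terms $\mathcal{T}_{\vep,2}(\mu,\mu)$, $\mathcal{T}_{\vep,2}(\nu,\nu)$ are handled by duality with smoothed Kantorovich potentials, using \eqref{eq:assn1}. Unless you replace your PDE-perturbation step by such primal/dual competitor constructions (or simply cite the result, as the paper does), the proof is incomplete at its central point.
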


The next proposition (see \cite[Theorem 1.42]{santambrogio2015optimal}) provides the dual representation of the $2$-Wasserstein distance $\mathcal{W}_2^2(\mu,\nu)$. 

\begin{prop}
\label{prop:OTdual}
    Consider $\mu$, $\nu$ which are compactly supported  on subsets of $\R^d$, say $\mathcal{X}$ and $\mathcal{Y}$ respectively (note that absolute continuity is not necessary). Then, the following holds:
    $$\frac{1}{2}\mathcal{W}_2^2(\mu,\nu)=\frac{1}{2}\int \lVert x\rVert^2\,d\mu(x)+\frac{1}{2}\int \lVert y\rVert^2\,d\nu(y)-\min_{f\in\mathfrak{f}}\mathfrak{S}_{\mu,\nu}(f),$$
    where 
    $$\mathfrak{S}_{\mu,\nu}(f):=\int f(x)\,d\mu(x)+\int f^*(y)\,d\nu(y),\quad f^*(y):=\max_{x\in\mathcal{X}}(\langle x,y\rangle-f(x)),\ y \in \mathcal{Y},$$
    and $\mathfrak{f}$ denotes the class of convex functions $f$ on $\mathcal{X}$.
\end{prop}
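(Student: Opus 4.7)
The plan is to derive this dual representation from the classical Kantorovich duality combined with the algebraic identity $\|x-y\|^2 = \|x\|^2 + \|y\|^2 - 2\langle x,y\rangle$. First I would expand the cost in the primal formulation \eqref{eq:wass} to obtain
\begin{equation*}
\tfrac{1}{2}\mathcal{W}_2^2(\mu,\nu) = \tfrac{1}{2}\!\int\!\|x\|^2 d\mu(x) + \tfrac{1}{2}\!\int\!\|y\|^2 d\nu(y) - \max_{\pi\in\Gamma(\mu,\nu)} \int\!\langle x,y\rangle\, d\pi(x,y),
\end{equation*}
since the first two marginal integrals are constants depending only on $\mu$ and $\nu$ (both finite by compact support). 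Thus it suffices to prove that the maximization of the bilinear cost equals $\min_{f\in\mathfrak{f}}\mathfrak{S}_{\mu,\nu}(f)$.

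Next I would invoke the Kantorovich duality for the continuous cost $c(x,y)=-\langle x,y\rangle$ on the compact product $\mathcal{X}\times\mathcal{Y}$ (see \cite[Theorem 1.3]{santambrogio2015optimal}). This yields
\begin{equation*}
\max_{\pi\in\Gamma(\mu,\nu)}\int\langle x,y\rangle\,d\pi = \inf_{\substack{f,g\\ f(x)+g(y)\ge \langle x,y\rangle}}\Bigl\{\int f\,d\mu+\int g\,d\nu\Bigr\},
\end{equation*}
where the infimum ranges over bounded continuous (or, equivalently, integrable) pairs $(f,g)$ satisfying the pointwise constraint. Existence of an optimizer and attainment follow from compactness of the supports and continuity of the cost.

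The key remaining step is to show that the infimum can be restricted to pairs of the form $(f,f^*)$ with $f$ convex. Given any admissible pair $(f,g)$, I would apply the $c$-transform (which for this bilinear cost coincides with the Legendre–Fenchel transform): replacing $g$ by $\tilde g(y):=\sup_{x\in\mathcal{X}}(\langle x,y\rangle-f(x))=f^*(y)$ only decreases the objective while preserving admissibility, and then similarly replacing $f$ by $f^{**}$ produces a convex function without increasing the objective. Since $f^{**}$ is convex and $(f^{**})^* = f^*$ (on compact $\mathcal{X}$), the infimum is attained over $\mathfrak{f}$ with the paired Legendre conjugate, giving exactly $\min_{f\in\mathfrak{f}}\mathfrak{S}_{\mu,\nu}(f)$.

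The main obstacle is the rigorous justification of the $c$-transform reduction to convex potentials and the exchange of infimum with attainment — in particular, verifying measurability/integrability of $f^*$ under $\nu$, and showing that one may restrict to $f$ that are Lipschitz on $\mathcal{X}$ (with Lipschitz constant controlled by $\mathrm{diam}(\mathcal{Y})$) so that both $f$ and $f^*$ are automatically $\mu$- and $\nu$-integrable. This is standard in the optimal transport literature and is handled by the double-conjugation trick combined with compactness of the supports; I would reference \cite[Theorem 1.17 and Proposition 1.21]{santambrogio2015optimal} for these technical points.
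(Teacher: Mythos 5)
The paper does not prove this proposition; it is quoted directly from the literature, with a pointer to \cite[Theorem 1.42]{santambrogio2015optimal}, so there is no ``paper proof'' to compare against. Your sketch is a correct reconstruction of the standard argument underlying that reference: expand $\|x-y\|^2 = \|x\|^2 + \|y\|^2 - 2\langle x,y\rangle$ to reduce the primal to a bilinear cost, invoke Kantorovich duality for $c(x,y) = -\langle x,y\rangle$ on the compact product $\mathcal{X}\times\mathcal{Y}$ (your rewritten form with the constraint $f(x)+g(y)\ge \langle x,y\rangle$ is the correct sign-flipped version of the usual statement), and then use the $c$-transform — which here is the Legendre--Fenchel conjugate — twice to pass from an arbitrary admissible pair $(f,g)$ to a pair $(f^{**}, f^*)$ with $f^{**}$ convex and without increasing the objective. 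You correctly identify the only nontrivial technical points (attainment of the infimum, and integrability/Lipschitzness of the potentials, both handled by compactness of the supports) and refer to the appropriate results in \cite{santambrogio2015optimal}. The sketch has no gaps and matches the argument in the cited source.
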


\begin{proof}[Proof of \cref{th:otbalance}, part (I)]
The proof proceeds begin with similar steps as the proofs of \cite[Lemmas 3 and 4]{chizat2020faster}, \cite[Prop 2]{mena2019statistical}. Finally of course we will need to use the maximal inequalities developed here, in particular \cref{thm:infmainres}. 

Without loss of generality, let us assume that both $\mathcal{X}$ and $\mathcal{Y}$ are contained in the unit ball of diameter $R\in (0,\infty)$. Also note that $\mathfrak{S}_{\mu,\nu}(f)=\mathfrak{S}_{\mu,\nu}(f+c)$ for any constant $c$. Therefore, we can invoke \cite[Step II in the proof of Theorem 2.2]{deb2021rates} and further replace $\mathfrak{f}$ in \cref{prop:OTdual} with $\mathfrak{f}_R:=\{f\in\mathfrak{f}:\ \lVert f\rVert_{\infty}\le C_R\}$, for some fixed constant $C_R>0$. 

Now given any $y_0\in \mathcal{Y}$, $f\in\mathfrak{f}_R$, choose $x_0$ such that $f^*(y_0)=\langle x_0,y_0\rangle - f(x_0)$. Given any other $y\in\mathcal{Y}$, note that $f^*(y)\geq \langle x_0,y\rangle - f(x_0)$. Therefore, 
$$f^*(y_0)-f^*(y)\leq \langle x_0,y_0-y\rangle \leq R\lVert y_0-y\rVert.$$
By reversing the roles of $y_0$ and $y$, we can now conclude that $f^*$ is $R$-Lipschitz. As $(f^*)^*=f$ (by the convexity of $f$), by reversing the roles of $f$ and $f^*$, we again have that $f$ is $R$-Lipschitz. For the sequel, let us write 
$\mathcal{F}_R$ to denote the class of convex functions which are $R$-Lipschitz, uniformly bounded by $C_R$, and supported inside the unit ball of diameter $R$. The above argument shows that 
\begin{align*}
\frac{1}{2}\mathcal{W}_2^2(\mu,\nu)&=\frac{1}{2}\int \lVert x\rVert^2\,d\mu(x)+\frac{1}{2}\int \lVert y\rVert^2\,d\nu(y)-\min_{f\in\mathcal{F}_R}\mathfrak{S}_{\mu,\nu}(f)\\ &=\frac{1}{2}\int \lVert x\rVert^2\,d\mu(x)+\frac{1}{2}\int \lVert y\rVert^2\,d\nu(y)-\mathfrak{S}_{\mu,\nu}(f_0),
\end{align*}
where $f_0\in \argmin_{f\in\mathcal{F}_R} \mathfrak{S}_{\mu,\nu}(f)$. Similarly, 
\begin{align*}
\frac{1}{2}\mathcal{W}_2^2(\hum,\hun)&=\frac{1}{2}\int \lVert x\rVert^2\,d\hum(x)+\frac{1}{2}\int \lVert y\rVert^2\,d\hun(y)-\min_{f\in\mathcal{F}_R}\mathfrak{S}_{\hum,\hun}(f)\\ &=\frac{1}{2}\int \lVert x\rVert^2\,d\hum(x)+\frac{1}{2}\int \lVert y\rVert^2\,d\hun(y)-\mathfrak{S}_{\hum,\hun}(\hat{f}_n),
\end{align*}
where $\hat{f}_n\in \argmin_{f\in\mathcal{F}_R} \mathfrak{S}_{\hum,\hun}(f)$. By the optimality of $\hat{f}_n$, we then get:
$$\frac{1}{2}W_2^2(\hum,\hun)\geq \frac{1}{2}\int \lVert x\rVert^2\,d\hum(x)+\frac{1}{2}\int \lVert y\rVert^2\,d\hun(y)-\mathfrak{S}_{\hum,\hun}(f_0).$$
Consequently, 
\begin{small}
\begin{align}\label{eq:tobound}
    &\;\;\;\;\frac{1}{2}W_2^2(\mu,\nu)-\frac{1}{2}W_2^2(\hum,\hun)\nonumber\\ &\le \bigg|\frac{1}{2}\int \lVert x\rVert^2\,d(\hum-\mu)(x)\bigg|+\bigg|\frac{1}{2}\int \lVert y\rVert^2\,d(\hun-\nu)(y)\bigg|+\sup_{f\in\mathcal{F}_R}\bigg|\int f\,d(\hum-\mu)\bigg|+\sup_{g\in\mathcal{F}_R}\bigg|\int g\,d(\hun-\nu)\bigg|\nonumber \\ &\le 2\sup_{f\in\mathcal{F}_R}\bigg|\int f\,d(\hum-\mu)\bigg|+2\sup_{g\in\mathcal{F}_R}\bigg|\int g\,d(\hun-\nu)\bigg|.
\end{align}
\end{small}
Next by using the optimality of $f_0$ instead of $\hat{f}_n$, we can replace the left-hand side above with absolute values. It therefore suffices to bound the right-hand side of \eqref{eq:tobound} above.

Towards this direction, note that by \cite{bronshtein1976varepsilon}, \cite[Theorem 1]{guntuboyina2012l1}, there exists $\delta_0$ such that $\mathscr{H}_{[ \ ]}(\delta,\mathcal{F}_R,\lVert\cdot\rVert_{\infty})\le C \delta^{-\frac{d}{2}}$ for all $\delta\le \delta_0$, and some $C$ depending on $R$. We can now invoke \cref{cor:infmainres} with $\alpha=d/2$, for $d\ge 4$, to get: 
$$\EE\sup_{f\in\mathcal{F}_R}\bigg|\int f\,d(\hum-\mu)\bigg|\lesssim \begin{cases} n^{-\frac{2}{d}} & \mbox{if}\ \beta>\frac{2}{d-2}\\ n^{-\frac{\beta}{\beta+1}} & \mbox{if}\ \beta<\frac{2}{d-2}\end{cases}.$$
Here we have used the fact that $d/2 > (1+\beta)/\beta$ is equivalent to $\beta>2/(d-2)$ for $d\ge 4$. We can bound the second term in the right hand side of \eqref{eq:tobound} in the same way and recover the same bound as in the above display. This completes the proof of \eqref{eq:wassproof}.
\end{proof}

\begin{proof}[Proof of \cref{th:otbalance}, part (II)]
    Recall from part (I) that we can assume without loss of generality $\mathcal{X}$ and $\mathcal{Y}$ are supported in the ball of diameter $R$. By rescaling, if necessary, let us assume $R=1$. The proof of this result is motivated from \cite[Proposition 4]{chizat2020faster} and \cite[Theorem 3.3]{groppe2023lower}. We will of course need to use the maximal inequalities for dependent observations developed in our paper, in particular \cref{cor:infmainres}. 

    We begin the proof with some notation. We write $$\exp_{\vep}:=\exp\left(\frac{\cdot}{(\vep/2)}\right).$$
    For $\tilde{\mu}\in\mathcal{P}(\mcx)$, define the set
    $$L_{\vep}(\tilde{\mu}):=\{\phi:\mathcal{X}\to [-\infty,\infty):\ \phi\ \mbox{is measurable with}\ 0< \int \exp_{\vep}(\phi)\,d\tilde{\mu}<\infty\}.$$
    Given any $\phi\in L_{\vep}(\tilde{\mu})$, we define 
    $$\phi^{(\vep,\tilde{\mu})}(y):=-\vep \log\int \exp_{\vep}(\phi(x)-\lVert x-y\rVert^2)\,d\tilde{\mu}(x),\qquad y\in\mathcal{Y}.$$
    Given $\tilde{\nu}\in\mathcal{Y}$, we define $L_{\vep}(\tilde{\nu})$ and $\psi^{(\vep,\tilde{\nu})}$ for $\psi\in L_{\vep}(\tilde{\nu})$ analogously. We further need two function classes, namely 
    $$\mathfrak{F}_{\vep}:=\cup_{\xi\in\mathcal{P}(\mathcal{Y})}\{\phi:\mcx\to\R \ \mbox{such that}\ \exists \ \psi:\mathcal{Y}\to\R \ \mbox{with}\ \phi=\psi^{(\vep,\xi)},\ \lVert \phi\rVert_{\infty}\vee \lVert\psi\rVert_{\infty}\le 3/2\},$$
    and
    $$\mathfrak{F}_{\vep}^{\vep,\tilde{\mu}}:=\{\phi^{\vep,\tilde{\mu}}:\ \phi\in\mathfrak{F}_{\vep}\},\qquad \tilde{\mu}\in\mathcal{P}(\mcx).$$

    Recall the definition of the entropic optimal transport cost $\mathcal{T}_{\vep,2}$ and the associated Sinkhorn divergence $\mathcal{S}_{\vep,2}$ from \eqref{eq:enwass} and \eqref{eq:sinkwass} respectively. Now, by \cite[Proposition 2]{mena2019statistical} (also see \cite[Lemma 2.4]{groppe2023lower}), we have: 
    \begin{equation*}
        \EE\big|\mathcal{T}_{\vep,2}(\hum,\hun)-\mathcal{T}_{\vep,2}(\mu,\nu)\big|\le 2\EE\bigg|\sup_{\phi\in\mathfrak{F}_{\vep}}\int \phi\,d(\hum-\mu)\bigg|+2\EE\bigg|\sup_{\psi\in \mathfrak{F}_{\vep}^{(\vep,\hum)}}\int \psi\,d(\hun-\nu)\bigg|.
    \end{equation*}
    By \cite[Theorem 3.3]{groppe2023lower}, $\mathfrak{F}_{\vep}\subseteq \mathfrak{F}_{\mcx}$ and $\mathfrak{F}_{\vep}^{(\vep,\hum)}\subseteq \mathfrak{F}_{\mathcal{Y}}$, where 
    $$\mathfrak{\mcx}:=\{\phi:\mcx\to\R,\ \lVert \phi\rVert_{\infty}\le 3/2, \ \phi \ \mbox{is}\ 1-\mbox{Lipschitz and}\ 1-\mbox{concave}\}.$$
    $\mathfrak{F}_{\mathcal{Y}}$ is defined analogously. Consequently, 
    \begin{equation}\label{eq:callat1}
        \EE\big|\mathcal{T}_{\vep,2}(\hum,\hun)-\mathcal{T}_{\vep,2}(\mu,\nu)\big|\le 2\EE\bigg|\sup_{\phi\in\mathfrak{F}_{\mcx}}\int \phi\,d(\hum-\mu)\bigg|+2\EE\bigg|\sup_{\psi\in \mathfrak{F}_{\mathcal{Y}}}\int \psi\,d(\hun-\nu)\bigg|.
    \end{equation}
    Importantly, the bound in the right hand side of \eqref{eq:callat1} is free of $\vep$. We now bound the first term on the right hand side above and the second term can be bounded similarly. By \cite[Theorem 6]{bronshtein1976varepsilon} and \cite[Theorem 3.2]{guntuboyina2012l1}, there exists $\delta_0>0$ such that we have:
    $$\mathscr{H}_{[ \ ]}(\delta,\mathcal{F}_{\mcx},\lVert \cdot \rVert_{\infty})\le C \delta^{-d/2},$$
    for $\delta\le\delta_0$, and $C$ only depends on $\mcx$. By \cref{cor:infmainres} with $\alpha=d/2$  and \eqref{eq:callat1}, we then have for $d\ge 4$:
    $$\EE\big|\mathcal{T}_{\vep,2}(\hum,\hun)-\mathcal{T}_{\vep,2}(\mu,\nu)\big|\lesssim \begin{cases} \ \ n^{-\frac{2}{d}} & \mbox{if} \ \beta>\frac{2}{d-2} \\ n^{-\frac{\beta}{\beta+1}} & \mbox{if} \ \beta<\frac{2}{d-2}\end{cases}.$$
    We can repeat the same argument to get 
    $$\EE\big|\mathcal{T}_{\vep,2}(\hum,\hum)-\mathcal{T}_{\vep,2}(\mu,\mu)\big|+\EE\big|\mathcal{T}_{\vep,2}(\hun,\hun)-\mathcal{T}_{\vep,2}(\nu,\nu)\big|\lesssim \begin{cases} \ \ n^{-\frac{2}{d}} & \mbox{if} \ \beta>\frac{2}{d-2} \\ n^{-\frac{\beta}{\beta+1}} & \mbox{if} \ \beta<\frac{2}{d-2}\end{cases}.$$
    Combining the above observations with the definition of the Sinkhorn divergence in \eqref{eq:sinkwass}, we get:
    $$\EE\bigg|\mathcal{S}_{\vep,2}(\hum,\hun)-\mathcal{S}_{\vep,2}(\mu,\nu)\bigg|\lesssim \begin{cases} \ \ n^{-\frac{2}{d}} & \mbox{if} \ \beta>\frac{2}{d-2} \\ n^{-\frac{\beta}{\beta+1}} & \mbox{if} \ \beta<\frac{2}{d-2}\end{cases}.$$
    Finally by choosing $\vep\equiv\vep_n$ as in the statement of \cref{th:otbalance}, 
    we have by \cref{prop:debiasregwass}, 
    $$\EE\bigg|\mathcal{S}_{\vep_n(\beta,d),2}(\mu,\nu)-W_2^2(\mu,\nu)\bigg|\lesssim \begin{cases} \ \ n^{-\frac{2}{d}} & \mbox{if} \ \beta>\frac{2}{d-2} \\ n^{-\frac{\beta}{\beta+1}} & \mbox{if} \ \beta<\frac{2}{d-2}\end{cases}.$$
    Finally by using the choice of $\vep\equiv \vep_n$ and $k\equiv k_n$ as in the statement of \cref{th:otbalance}, coupled with \cite[Proposition 2]{chizat2020faster}, we get:
    $$\EE\bigg|\mathcal{S}_{\vep_n,2}(\hum,\hun)-\mathcal{S}^{(k_n)}_{\vep_n,2}(\hum,\hun)\bigg|\lesssim \begin{cases} \ \ n^{-\frac{2}{d}} & \mbox{if} \ \beta>\frac{2}{d-2} \\ n^{-\frac{\beta}{\beta+1}} & \mbox{if} \ \beta<\frac{2}{d-2}\end{cases}.$$
    Combining the three displays above establishes \eqref{eq:sinkproof}.

    Finally, as each step of the Sinkhorn algorithm (as described in \cref{sec:OT}) has $\tilde{O}(n^2)$ complexity, the total time complexity for obtaining $\mathcal{S}_{\vep_n,2}^{(k_n)}$ is $\tilde{O}(n^2 k_n)$. This completes the proof of \cref{th:otbalance}, part (II).
\end{proof}

\subsection{Proof of \cref{prop:classerr}}
The proof proceeds along the same lines as \cite[Theorem 7.1]{koltchinskii2010sparsity} and \cite[Theorem 3.5]{han2021set}. Choose $r_n^2$ such that $r_n^2 n^{\frac{1}{\alpha+1+\gamma^{-1}}}\to\infty$.

First, let us introduce some relevant notation. For $g\in\cG$, write $f_g(x,y)=\mathbf{1}(y\neq g(x))$. Let $\cG_{\delta}:=\{g\in\cG:\ \cE(\cG)\le \delta\}$. Let $K$ be the smallest integer such that $r_n^2 2^K\ge 1$. For any $1\le j\le K$, let $\cF_j:=\{f_{g_1}-f_{g_2}:\ g_1,g_2\in \cG(r_n^2 2^j)\}$. Fix $M>0$ and consider the event 
$$E_{n}:=\left\{\max_{1\le j\le K} \frac{\sup_{f\in\cF_j}|(\bbP_n-P)(f)|}{r_n^2 2^j}\ge \frac{1}{4}\right\},$$
We will bound 
\begin{align}\label{eq:trunclass}
    \bbP(\cE(\hat{g}_n)\ge r_n^2)\le \bbP(\cE(\hat{g}_n)\ge r_n^2, \ E_{n})+\bbP(\cE(\hat{g}_n)\ge r_n^2, \ E_{n}^c).
\end{align}
We will bound the two terms above one by one. For the first term of \eqref{eq:trunclass}, note that by an application of Markov's inequality, we get:
\begin{align*}
    \bbP(\cE(\hat{g}_n)\ge r_n^2, \ E_{n})\le \bbP(E_{n})\le 4\sum_{j=1}^K \frac{\bbE\sup_{f\in\cF_j}|(\bbP_n-P)(f)|}{r_n^2 2^j}.
\end{align*}
Next we bound the expected supremum term above. To wit, recall that for every $g\in\cG$, there exists $C\in\mathcal{C}$ such that $g=\mathbf{1}(C)$. Next, let $\mathcal{S}:=\{S:f_g=\mathbf{1}(S)\}$. For $g_1=\mathbf{1}(C_1)$ and $g_2=\mathbf{1}(C_2)$, we then have $f_{g_1}=\mathbf{1}(S_1)$ and $f_{g_2}=\mathbf{1}(S_2)$ for some $S_1,S_2\in\mathcal{S}$. Therefore, 
$$\bbP(S_1\Delta S_2)=\bbP(f_{g-1}-f_{g_2})^2\le \bbP(g_1-g_2)^2=\bbP(C_1\Delta C_2).$$
Therefore, $\mathscr{H}_{[\,]}(u,\mathcal{S},\lVert\cdot\rVert_1)\le \mathscr{H}_{[\,]}(u,\mathcal{C},\lVert\cdot\rVert_1)$ for $0<u<1$. Further given any $g\in\cG_{r_n^2 2^j}$, let $S\in\mathcal{S}$ be such that $f_g=\mathbf{1}(S)$. Recall that $g_\star(x)=\mathbf{1}(\bbE[Y|X=x]\ge 1/2)$. Then similar to the above display, we get:
$$\bbP(S\Delta S_\star)\le \bbP(g-g_\star)^2\le c^{-1}\cE(g)\le c^{-1}r_n^2 2^j.$$
Note that we have used the margin condition \eqref{eq:margincon} above. Therefore the following inequalities are immediate for $1\le j\le K$: 
$$\bbE\sup_{f\in\cF_j}|\mathbb{G}_n(f)|\lesssim \bbE\sup_{g\in \cG(r_n^2 2^j)} |\mathbb{G}_n(f_g-f_{g_\star})|\lesssim \bbE\sup_{S\in\mathcal{S}:\ \bbP(S\Delta S_\star)\le c^{-1}r_n^2 2^j}|\mathbb{G}_n(S\setminus S_\star)|.$$
Now we use \cref{thm:gamma_mixing} to get: 
$$\bbE\sup_{f\in\cF_j}|\mathbb{G}_n(f)|\lesssim n^{\frac{\gamma(\alpha-1)+1}{2(\alpha \gamma+\gamma+1)}}+n^{\frac12-\frac{\gamma}{\gamma+1}}(r_n^2 2^j)^{-\frac{\alpha \gamma}{1+\gamma}}+n^{\frac12-\frac{\gamma}{2(\gamma+1)}}(r_n^2 2^j)^{\frac12-\frac{\alpha\gamma}{2(1+\gamma)}}.$$
Combining everything, we get:
\begin{align*}
    \sum_{j=1}^K \frac{\bbE\sup_{f\in\cF_j}|(\bbP_n-P)(f)|}{r_n^2 2^j}&\lesssim \frac{1}{r_n^2 n^{\frac{1}{\alpha+1+\gamma^{-1}}}}\sum_{j=1}^K 2^{-j}\, + \frac{1}{(r_n^2)^{1+\frac{\alpha\gamma}{1+\gamma}}n^{\frac{\gamma}{1+\gamma}}}\sum_{j=1}^K (2^{-j})^{1+\frac{\alpha\gamma}{1+\gamma}}\\ &+\frac{1}{r_n^{1+\frac{\alpha\gamma}{1+\gamma}} n^{\frac{\gamma}{2(\gamma+1)}}}\sum_{j=1}^K 2^{-\frac{j}{2}\left(\frac{\alpha\gamma}{1+\gamma}-1\right)}\\ &\lesssim \frac{1}{r_n^2 n^{\frac{1}{\alpha+1+\gamma^{-1}}}}+\frac{1}{(r_n^2)^{1+\frac{\alpha\gamma}{1+\gamma}}n^{\frac{\gamma}{1+\gamma}}}+\frac{1}{r_n^{1+\frac{\alpha\gamma}{1+\gamma}} n^{\frac{\gamma}{2(\gamma+1)}}},
\end{align*}
where we have used $\alpha>1+\gamma^{-1}$ in the last display. Finally as $r_n^2 n^{\frac{1}{\alpha+1+\gamma^{-1}}}\to\infty$, we also have $r_n^{1+\frac{\alpha\gamma}{1+\gamma}} n^{\frac{\gamma}{2(\gamma+1)}}\to\infty$, and  $r_n^{2+2\frac{\alpha\gamma}{1+\gamma}} n^{\frac{\gamma}{\gamma+1}}\to\infty$. As a result, we get: 
$$\lim\limits_{n\to\infty}\bbP(\cE(\hat{g}_n)\ge r_n^2,\ E_n)=0.$$

Now we move on to the second term of \eqref{eq:trunclass}. We will show that on $E_n$, we have $\cE(\hat{g}_n)\le r_n^2$ almost surely. This will clearly complete the proof. Towards this direction, pick any $g\in\cG$ such that $\cE(g)>r_n^2$. Then there exists $j\in [K]$ such that $g\in \cG_{r_n^2 2^j}\setminus \cG_{r_n^2 2^{j-1}}$. Note that this implies the following important observation:
\begin{equation}\label{eq:impob}
    \cE(g)\le r_n^2 2^j \le 2\cE(g).
\end{equation}
Let us define 
$$\hat{\cE}_n(g)=\frac{1}{n}\sum_{i=1}^n \mathbf{1}(Y_i\neq g(X_i))\, - \, \frac{1}{n}\sum_{i=1}^n \mathbf{1}(Y_i\neq \hat{g}_n(X_i)(.$$
Now pick an arbitrary $u>0$ such that $0<u<r_n^2 2^j$ and fix $g'\in\cG(u)$. Then we have:
\begin{align*}
    \cE(g)&=\bbP(f_g-f_{g'})+\bbP(f_{g'}-f_{g_\star})\\ &=\bbP(f_g-f_{g'})+\cE(g')\\ &\le \bbP(f_g-f_{g'})+u\\ &\le \bbP_n(f_g-f_{g'})+u+\sup_{f\in\cF_j}|(\bbP_n-P)(f)|\\ &\le \hat{\cE}_n(g)+u+\frac{1}{4}r_n^2 2^j\\ &\le \hat{\cE}_n(g)+u+\frac{1}{2}\cE(g).
\end{align*}
As $u>0$ can be arbitrarily small, we observe that on the event $E_n$, it holds that 
$$\frac{\hat{\cE}_n(g)}{\cE(g)}\ge \frac12,$$
for all $g$ such that $\cE(g)>r_n^2$. As $\hat{\cE}_n(\hat{g}_n)=0$, clearly $\hat{g}_n$ does not satisfy the above inequality on $E_n$. Therefore we must have $\cE(\hat{g}_n)\le r_n^2$ on the even $E_n$. This completes the proof.
\section{Proofs of supplementary results}\label{sec:auxres}
\begin{lemma}
\label{lem:bracketing_property}
Given a function class $\cF$ and a norm $\lVert \cdot\rVert$, suppose $\bbH_{\| \cdot\|}(\delta, \cF)$ denotes the $\delta$-bracketing entropy of $\cF$ w.r.t. norm $\| \cdot \|$. Then, we have the following: 
\begin{enumerate}
    \item For any monotone and $L$-Lipschitz function $\psi$, we have $\bbH_{\| \cdot\|}(\delta, \psi \circ \cF) \le \bbH_{\| \cdot\|}(\delta/L, \cF)$. 
    \item For any two function class $\cF$ and $\cG$, $\bbH_{\| \cdot\|}(\delta, \cF + \cG) \le \bbH_{\| \cdot\|}(\delta/2, \cF) + \bbH_{\| \cdot\|}(\delta/2, \cG)$. 
    \item For any fixed function $g$, we have $\bbH_{\| \cdot\|}(\delta/\|g\|_\infty, g\cF) \le \bbH_{\| \cdot\|}(\delta, \cF)$.
    \item Let $f_+=f\vee 0$ and $\cF_+=\{f_+:\ f\in\cF\}$. Then $\bbH_{\| \cdot\|}(\delta,\cF_+)\le \bbH_{\| \cdot\|}(\delta,\cF)$.
\end{enumerate}
Here $\psi \circ \cF = \{\psi \circ f, f \in \cF\}$, $g\cF = \{gf: f \in \cF\}$ and $\cF + \cG = \{f + g: f \in \cF, g \in \cG\}$. 
\end{lemma}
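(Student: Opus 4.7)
The plan is to prove each of the four parts by an explicit bracket construction: for each item I would fix a minimal bracketing of the ``input'' class at a suitable scale, push it through the operation in question, verify that the resulting pairs still enclose every element of the ``output'' class, and control the resulting norm widths by triangle/Lipschitz arguments. All four statements reduce to transporting brackets along a mapping, so none requires real new ideas beyond picking the right scale on the right-hand side.

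For part (1), I would take a minimal bracket system $\{[f_i^L, f_i^U]\}_{i=1}^N$ of $\cF$ with $\|f_i^U - f_i^L\| \le \delta/L$ and define $\tilde f_i^L = \psi \circ f_i^L$, $\tilde f_i^U = \psi \circ f_i^U$ when $\psi$ is nondecreasing (and swap the roles when $\psi$ is nonincreasing). Monotonicity of $\psi$ implies that $\tilde f_i^L \le \psi \circ f \le \tilde f_i^U$ whenever $f_i^L \le f \le f_i^U$, and the $L$-Lipschitz property gives the pointwise bound $|\psi(f_i^U) - \psi(f_i^L)| \le L|f_i^U - f_i^L|$, so each new bracket has size at most $\delta$ in the ambient norm. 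The count of brackets is unchanged, and part (4) is just the special case $\psi(x) = x_+$, which is nondecreasing and $1$-Lipschitz.

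For part (2), I would tensorize: take minimal brackets of $\cF$ and $\cG$ at scale $\delta/2$, say $\{[f_i^L, f_i^U]\}$ and $\{[g_j^L, g_j^U]\}$, and form the pairs $[f_i^L + g_j^L,\, f_i^U + g_j^U]$ for all $(i,j)$. Any element $f+g \in \cF + \cG$ lies in exactly one such pair, and the triangle inequality gives $\|(f_i^U + g_j^U) - (f_i^L + g_j^L)\| \le \delta/2 + \delta/2 = \delta$. The total cardinality is a product, which gives the additive bound on the log. For part (3) I would decompose $g = g_+ - g_-$ with $g_\pm \ge 0$ and $\|g_\pm\|_\infty \le \|g\|_\infty$, and from any bracket $[f_i^L, f_i^U]$ of $\cF$ produce the pair $[g_+ f_i^L - g_- f_i^U,\, g_+ f_i^U - g_- f_i^L]$, which encloses $g f$ for every $f$ with $f_i^L \le f \le f_i^U$ (this is the step where the sign decomposition is used). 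The pointwise width equals $(g_+ + g_-)(f_i^U - f_i^L) = |g|(f_i^U - f_i^L) \le \|g\|_\infty (f_i^U - f_i^L)$, so brackets of $\cF$ at scale $\delta/\|g\|_\infty$ yield brackets of $g\cF$ at scale $\delta$, which is the stated inequality.

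I do not expect any real obstacle here; the only points worth being careful about are the handling of nonmonotone $\psi$ in part (1) (where one must simply swap upper and lower envelopes), the positive/negative split of $g$ in part (3) (needed because $g$ can change sign, so direct multiplication does not preserve the bracket order), and the cardinality bookkeeping in part (2) which turns a product of counts into a sum of log-counts. Everything else is routine pointwise inequalities followed by one application of the ambient norm.
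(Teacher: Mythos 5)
Your proposal is correct and follows essentially the same route as the paper: push brackets through the relevant operation, check enclosure via monotonicity, and bound the width via the Lipschitz or triangle inequality (with the $g_\pm$ split in part (3) to handle sign changes). The only differences are two small refinements on your side: you explicitly handle nonincreasing $\psi$ in part (1) by swapping the envelope roles (the paper only checks the nondecreasing case), and you obtain part (4) as the special case $\psi(x)=x_+$ of part (1) rather than redoing the construction as the paper does. Both are valid and slightly streamline the argument; there is no gap.
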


\begin{proof}
    For the entire proof, first fix some $\delta > 0$. 
    \begin{enumerate}
        \item Let $\psi$ be a monotone Lipschitz function on the range of $\cF$ with Lipschitz constant $L$. Let $N = \bbH_{\left\| \cdot \right\|}(\delta/L, \cF)$ with the brackets being $\left\{(l_1,  u_1), \dots, (l_N, u_N)\right\}$. By definition we have $\|u_j - l_j\| \le \delta/L$ for all $1 \le j \le N$. Now given any $f$, there exists $1 \le j \le N$ such that $l_j(x) \le f(x) \le u_j(x)$ for all $x$. Therefore, the monotonicity of $\phi$ implies $\phi \circ l(x) \le \phi \circ f(x) \le \phi \circ u(x)$. Furthermore as $\phi$ is $L$-Lipschitz, we have: 
        $$
        \|\phi \circ u - \phi \circ l\| \le L \|u - l\| \le L(\delta/L) = \delta. 
        $$
        Therefore the first claim is established. 

        \item Let $\{(l_1, u_1), \dots, (l_{N_1}, u_{N_1})\}$ be a $\delta/2$ cover of $\cF$ and $\{(\tilde l_1, \tilde u_1), \dots, (\tilde l_{N_2}, \tilde u_{N_2})\}$ be a $\delta/2$ cover of $\cG$. Then pick any function $f + g$ from $\cF + \cG$. There exists $1 \le j_1 \le N_1$ and $1 \le j_2 \le N_2$ such that: 
        \begin{align*}
        l_{j_1}(x) \le f(x) \le u_{j_1}(x), & \ \ \tilde l_{j_2}(x) \le g(x) \le \tilde u_{j_2}(x)  \\
        & \implies l_{j_1}(x) + \tilde l_{j_2}(x) \le f(x) + g(x) \le u_{j_1}(x) + \tilde u_{j_2}(x) \,. 
        \end{align*}
        Furthermore, we have: 
        $$
        \|u_{j_1} + \tilde u_{j_2} - l_{j_1} - \tilde l_{j_2}\| \le \|u_{j_1} - l_{j_1}\| + \|\tilde u_{j_2} - \tilde l_{j_2}\| \le \delta/2 + \delta/2 \le \delta \,.
        $$
        This completes the proof. 

        \item The proof is a combination of proof of (1.) and (2.). Any fixed function $g$ can be written as $g_+ - g_-$ where both $g_+$ and $g_-$ are non-negative. In particular $g_+(x) = \max\{g(x), 0\}$ and $g_-(x) = \max\{-g(x), 0\}$. As before, let $\{(l_1, u_1), \dots, (l_{N_1}, u_{N_1})\}$ be a $\delta/2$ cover of $\cF$. Now for any $f \in \cF$, there exists $1 \le j \le N$ such that $l(x) \le f(x) \le u(x)$ for all $x$ almost surely. Therefore, we have: 
        $$
        l(x)g_+(x) \le g_+(x) f(x) \le u(x) g_+(x), \ \ \text{ and } \ \ l(x)g_-(x) \le g_-(x) f(x) \le u(x) g_-(x) \,,
        $$
        for all $x$. Therefore: 
        $$
        l(x)g_+(x) - u(x)g_-(x) \le g(x)f(x) \le  u(x) g_+(x) -  l(x)g_-(x) \,.
        $$
        Furthermore: 
        \begin{align*}
            \left\|u(x) g_+(x) -  l(x)g_-(x)  - l(x)g_+(x) + u(x)g_-(x) \right\| & = \left\|(u(x)- l(x))g(x)\right\| \\
            & \le \|g\|_\infty \frac{\delta}{\|g\|_\infty} \le \delta \,.
        \end{align*}
        This completes the proof. 

    \item Given $\delta>0$, let $[l_1,u_1],\ldots , [l_N,u_N]$ denote the $\delta$-bracketing set of $\cF$ where $N=\exp(\bbH_{\| \cdot\|}(\delta,\cF))$. It suffices to show that $[l_{1+},u_{1+}], \ldots , [l_{N+},u_{N+}]$ is a $\delta$-bracket of $\cF_+$. To see this, given an $f\in\cF$, let $1\le i_f\le N$ be such that  $l_{i_f}\le f\le u_{i_f}$ and $\lVert u_{i_f}-l_{i_f}\rVert_{\infty}\le \delta$. Then $l_{i_f+}\le f_+\le u_{i_f+}$ and $\lVert u_{i_f+}-l_{i_f+}\rVert_{\infty}\le \lVert u_{i_f}-l_{i_f}\rVert_{\infty}\le \delta$. This completes the proof.
    \end{enumerate}
\end{proof}

\begin{proof}[Proof of \cref{lem:basefn}]
Observe that the above function is well-defined as the function 
$$\beta_q-\frac{q}{n}\left(1+\sum_{k\ge 0\ :\ 2^{-k}\sigma\ge \delta}\mathbb{H}_{\phi}(2^{-k}\sigma,\cF)\right)$$
is $> 0$ at $q=0$ and $\le 0$ for $q=n$. This also implies $\tq{\delta}\ge 1$. Further note that if $\delta_1\le \delta_2$, then
$\mbH{\delta_1}\ge \mbH{\delta_2}$. Consequently,
\begin{align*}
\beta(\tq{\delta_2})&\le \frac{\tq{\delta_2}}{n} \left(1+\sum_{k\ge 0\ :\ 2^{-k}\sigma\ge \delta_2}\mathbb{H}_{\phi}(2^{-k}\sigma,\cF)\right)\\ &\le \frac{\tq{\delta_1}}{n} \left(1+\sum_{k\ge 0\ :\ 2^{-k}\sigma\ge \delta_2}\mathbb{H}_{\phi}(2^{-k}\sigma,\cF)\right).
\end{align*}
Therefore $\tq{\delta_1}\le \tq{\delta_2}$. As $\Lbp(q)$ is non-decreasing in $q$, the conclusion follows.
\end{proof}

\begin{proof}[Proof of \cref{lem:qdbd}]
Choose any arbitrary $t> \ornm{h}$ and sample $X\sim P$. By using the inequality $$\phi^c(1)+\phi\left(\frac{h^2(x)}{t^2}\right)\ge \frac{h^2(x)}{t^2},$$
we get:
\begin{align*}
    \EE h^2(X) \le t^2\left(\phi^c(1)+1\right).
\end{align*}
As $\phi\in\Phi$, we have $\phi^c(1)\in [0,\infty)$. Consequently, setting $\cph:=\sqrt{1+\phi^c(1)}$ and taking infimum over $t\ge \ornm{h}$  completes the proof.
\end{proof}

\begin{proof}
We will use the covariance inequality from \cite[Theorem 1.1]{rio1993covariance} which implies that 
$$
|\cov(h(X_i),h(X_j))|\le \int_0^{\beta(|i-j|)} Q_h^2(u)\,du.$$
Note that $Q_h(U)$, $U\sim \mathrm{Unif}[0,1]$ has the same distribution as $|h(X_1)|$. Therefore, by \cref{lem:qdbd}, the right hand side of the above display is finite. Also note that by \cref{lem:qdbd}, it follows that:
$$
\sum_{i=1}^q \var(h(X_i))\le q\cph^2 \ornm{h}^2.
$$
Combining the two displays above, we get:
\begin{align}\label{eq:davy1}
    \var\left(\sum_{i=1}^q h(X_i)\right) & = \sum_i \var(h(X_i)) + 2 \sum_{i < j} \cov(h(X_i), h(X_j)) \notag \\
    & \le q\cph^2\ornm{h}^2 + 2 \sum_{i < j}\int_0^{\beta(|i-j|)} Q_h^2(u)\,du \notag \\
    & = q\cph^2\ornm{h}^2 + 2 \sum_{i < j}\int_0^{1} \mathds{1}\left(u \le \beta(|i-j|)\right) Q_h^2(u)\,du \notag \\
    & \le q\cph^2\ornm{h}^2+2q\sum_{k=1}^{q-1}\int_{0}^1 \mathds{1}(u\le \beta(k))Q_h^2(u)\,du.
\end{align}
Note that by Markov's inequality, given any $s>0$ and any $t> \ornm{h}$, we have:
\begin{align*}
    P(|h(X)|\ge s)&\le \frac{\EE\phi\left(\frac{h^2(X)}{t^2}\right)}{\phi\left(\frac{s^2}{t^2}\right)}\le \frac{1}{\phi\left(\frac{s^2}{t^2}\right)}.
\end{align*}
Next, given any $u>0$, we choose $$s=t\sqrt{\phi^{-1}\left(\frac{1}{u}\right)}$$
in which case,
$$P(|h(X)|\ge s)\le u \implies Q_h(u)\le t\sqrt{\phi^{-1}\left(\frac{1}{u}\right)}.$$
Now by taking infimum over $t>\ornm{h}$, we get:
\begin{equation}\label{eq:quanbd}
Q_h^2(u)\le \ornm{h}^2\phi^{-1}\left(\frac{1}{u}\right).
\end{equation}
The above observation implies
$$
\sum_{k=1}^{q-1}\int_{0}^1 \mathds{1}(u\le \beta(k))Q_h^2(u)\,du\le \ornm{h}^2\sum_{k=1}^{q-1} \int_0^{\beta(k)}\phi^{-1}\left(\frac{1}{u}\right)\,du\le \ornm{h}^2\Lbp(q).
$$
Plugging the above observation into \eqref{eq:davy1} completes the proof.
\end{proof}

\begin{proof}[Proof of \cref{thm:maximal_finite_davy}]
In order to prove \cref{thm:maximal_finite_davy}, we will need two existing results from the literature which we state here without proof. The first result is a variant of Berbee's coupling Lemma, which was proved independently in \cite{Berbee1979} and \cite{Goldstein1979}.

\begin{prop}\label{lem:berbee}
Let $\{X_i\}_{i\ge 1}$ be a countable sequence of random elements taking values in some Polish space $\mcx$. For any $j\ge 1$, let $b_j:=\beta(X_j,\ \{X_i\}_{i>j})$ (see \cref{def:beta_mixing}). Then there exists a sequence $\{X_i^*\}_{i\ge 1}$ of independent random variables such that for any $j\ge 1$, $X_j^*$ has the same distribution as $X_j$ and $P(X_j\neq X_j^*)\le b_j$.
\end{prop}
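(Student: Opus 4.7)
The plan is to reduce the sequential statement to the well-known pairwise Berbee--Goldstein coupling lemma and then iterate along the sequence. The pairwise version I will need says: given random elements $X$ and $Y$ with $Y$ in a Polish space, together with a $U\sim\text{Unif}[0,1]$ that is independent of $(X,Y)$ (after possibly enlarging the probability space), there exists $Y^*$ measurable with respect to $\sigma(X,Y,U)$, independent of $X$, with the same law as $Y$, and with $P(Y\ne Y^*)=\beta(\sigma(X),\sigma(Y))$. Given the pairwise lemma, the sequential statement will follow by applying it at each coordinate $j$, with $Y=X_j$ and $X=(X_{j+1},X_{j+2},\dots)$, which itself lives in the Polish space $\mcx^{\mathbb{N}}$.

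To prove the pairwise lemma, I would use regular conditional distributions together with an explicit maximal coupling. Let $K(x,\cdot)$ be a regular version of $P(Y\in\cdot\mid X=x)$, which exists because $Y$ is Polish-valued. Take the reference measure $\lambda(x,\cdot):=K(x,\cdot)+P_Y$ and set $g(x,y)=dK(x,\cdot)/d\lambda(x,\cdot)$, $h(x,y)=dP_Y/d\lambda(x,\cdot)$; both densities can be chosen jointly measurable in $(x,y)$ by standard arguments on Polish spaces. Define the ``acceptance'' indicator $A:=\mathbf{1}\{U\cdot g(X,Y)\le g(X,Y)\wedge h(X,Y)\}$. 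On the event $\{A=1\}$, set $Y^*=Y$; on $\{A=0\}$, use the remaining randomness in $U$ to draw $Y^*$ (via a measurable quantile transform) from the residual probability measure with density $(h-g\wedge h)/(1-\alpha(x))$ where $\alpha(x):=\int g\wedge h\,d\lambda(x,\cdot)$. A short computation then gives $P(A=1\mid X=x)=\alpha(x)=1-\|K(x,\cdot)-P_Y\|_{\mathrm{TV}}$ and, combining the accepted and residual parts, $Y^*\mid X=x\sim P_Y$; hence $Y^*$ is independent of $X$. The identity $\beta(\sigma(X),\sigma(Y))=\int\|K(x,\cdot)-P_Y\|_{\mathrm{TV}}\,P_X(dx)$ (valid on Polish spaces) finally yields $P(Y\ne Y^*)=E[1-\alpha(X)]=\beta(\sigma(X),\sigma(Y))$.

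For the sequential statement, I would enlarge the probability space to support a sequence $\{U_j\}_{j\ge 1}$ of i.i.d.\ $\text{Unif}[0,1]$ variables independent of $\{X_i\}_{i\ge 1}$, and then apply the pairwise lemma inductively in $j$. At step $j$ I apply it with $Y=X_j$, $X=(X_{j+1},X_{j+2},\dots)$, and auxiliary uniform $U_j$ to produce $X_j^*$ that is $\sigma(X_j,\{X_i\}_{i>j},U_j)$-measurable, independent of $\{X_i\}_{i>j}$, with law of $X_j$, and satisfying $P(X_j\ne X_j^*)=b_j$. The induction invariant I will maintain is: after step $j$, the tuple $(X_1^*,\dots,X_j^*)$ consists of mutually independent variables, and is jointly independent of $\{X_i\}_{i>j}$. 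This propagates because $(X_1^*,\dots,X_{j-1}^*)$ is a function of $(\{X_i\}_{i<j},U_1,\dots,U_{j-1})$, which is independent of $(X_j,\{X_i\}_{i>j},U_j)$ under the invariant and the independence of the $U$'s; combining this with $X_j^*\perp\{X_i\}_{i>j}$ yields the invariant for step $j+1$. The main technical obstacle is therefore not the iteration itself but the careful measurable implementation of the pairwise coupling --- in particular, verifying that the densities $g,h$ and the quantile transform used to sample the residual can be chosen jointly measurable so that $Y^*$ is a genuine $\sigma(X,Y,U)$-measurable random variable; once that is in place, the rest of the argument reduces to bookkeeping.
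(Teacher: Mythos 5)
The paper does not prove this proposition; it is invoked as a known result, with references to Berbee (1979) and Goldstein (1979), so there is no internal proof to compare against. Your overall strategy --- prove a pairwise maximal coupling via regular conditional distributions and then iterate along the sequence --- is the standard route to such statements, and your pairwise construction is correct: the acceptance probability is $\alpha(x)=1-\lVert K(x,\cdot)-P_Y\rVert_{\mathrm{TV}}$, the residual re-draw makes $Y^*\mid X=x\sim P_Y$ so that $Y^*\perp X$, and the identity $\beta(\sigma(X),\sigma(Y))=\int\lVert K(x,\cdot)-P_Y\rVert_{\mathrm{TV}}\,dP_X$ gives the stated bound (with equality, in fact, since a maximal coupling saturates the total-variation bound).

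The gap is in the iteration, precisely the part you dismiss as ``mere bookkeeping.'' You justify propagation of the invariant by asserting that $(X_1^*,\dots,X_{j-1}^*)$ is a function of $(\{X_i\}_{i<j},U_1,\dots,U_{j-1})$. This is false. The pairwise lemma, applied at step $k$ with $Y=X_k$ and $X=\{X_i\}_{i>k}$, produces $X_k^*$ as a $\sigma(X_k,\{X_i\}_{i>k},U_k)$-measurable variable; each $X_k^*$ with $k<j$ therefore depends on the entire future tail $\{X_i\}_{i>k}\supseteq\{X_i\}_{i\ge j}$, and the tuple $(X_1^*,\dots,X_{j-1}^*)$ is a function of the whole sequence $\{X_i\}_{i\ge 1}$ (together with $U_1,\dots,U_{j-1}$), not of the past alone. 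The secondary claim in the same sentence, that this generating $\sigma$-field ``is independent of $(X_j,\{X_i\}_{i>j},U_j)$ under the invariant,'' is also wrong: $\{X_i\}_{i<j}$ is not independent of $\{X_i\}_{i\ge j}$ --- that dependence is the whole point of the proposition --- and your invariant speaks only of the starred variables. The invariant does in fact propagate, but by a different argument: the inductive hypothesis directly gives $(X_1^*,\dots,X_{j-1}^*)\perp\{X_i\}_{i\ge j}$; since $U_j$ is independent of $\sigma(\{X_i\}_{i\ge 1},U_1,\dots,U_{j-1})$ it is independent of $\bigl((X_1^*,\dots,X_{j-1}^*),\{X_i\}_{i\ge j}\bigr)$, and combining these yields $(X_1^*,\dots,X_{j-1}^*)\perp\bigl(\{X_i\}_{i\ge j},U_j\bigr)$. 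As both $X_j^*$ and $\{X_i\}_{i>j}$ are measurable with respect to $\sigma(\{X_i\}_{i\ge j},U_j)$, it follows that $(X_1^*,\dots,X_{j-1}^*)\perp(X_j^*,\{X_i\}_{i>j})$, which together with $X_j^*\perp\{X_i\}_{i>j}$ gives both the mutual independence of $(X_1^*,\dots,X_j^*)$ and their joint independence from $\{X_i\}_{i>j}$. So the result is recoverable, but the measurability claim you rely on must be removed and replaced by the argument above; the iteration, not the pairwise coupling, is where the care is actually needed.
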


The next result is the celebrated Bernstein's inequality (see \cite{bernstein1924modification}).

\begin{prop}\label{prop:bern}
Suppose $Y_1, \dots, Y_n$ are centered independent random variables with $|Y_i| \le b$ and $\var(Y_i) \le \sigma^2_Y$. Then we have: 
$$
\bbP\left(\left|\sum_{i=1}^n Y_i\right| \ge t\right) \le 2\exp{\left(-\frac{t^2}{2\left(n\sigma^2_Y + \frac13 b t\right)}\right)}
$$
for all $t>0$.
\end{prop}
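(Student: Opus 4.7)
The plan is to prove \cref{prop:bern} by the standard Chernoff/moment-generating function argument, which reduces everything to (i) bounding the MGF of a single centered, bounded, variance-controlled random variable and (ii) optimizing the resulting exponent in the dual variable. This is classical and I will lay out the scaffolding rather than grind through the arithmetic.

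First, by the union bound it suffices to prove the one-sided tail $\bbP(\sum_i Y_i \ge t) \le \exp(-t^2/(2(n\sigma_Y^2 + bt/3)))$; the lower tail follows by applying the same argument to $\{-Y_i\}$, which satisfy identical hypotheses, and then summing. For the one-sided bound I will use the Chernoff inequality
\[
\bbP\Big(\sum_{i=1}^n Y_i \ge t\Big) \le e^{-\lambda t}\, \EE\Big[\exp\Big(\lambda \sum_{i=1}^n Y_i\Big)\Big] = e^{-\lambda t}\, \prod_{i=1}^n \EE[e^{\lambda Y_i}],
\]
valid for any $\lambda>0$, where the factorization in the last step is the independence assumption.

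The key step is the single-variable MGF bound. Expanding $e^{\lambda Y_i}$ as a power series, using $\EE[Y_i]=0$ and the domination $|Y_i|^k \le b^{k-2} Y_i^2$ for $k\ge 2$, I would write
\[
\EE[e^{\lambda Y_i}] \;\le\; 1 + \sigma_Y^2 \sum_{k\ge 2} \frac{\lambda^k b^{k-2}}{k!}.
\]
The elementary inequality $k! \ge 2\cdot 3^{k-2}$ for $k\ge 2$ then gives $\sum_{k\ge 2} \lambda^k b^{k-2}/k! \le \tfrac{1}{2}\lambda^2/(1-\lambda b/3)$ whenever $\lambda b < 3$, so
\[
\EE[e^{\lambda Y_i}] \le \exp\!\Big(\frac{\sigma_Y^2 \lambda^2}{2(1-\lambda b/3)}\Big),
\]
using $1+x\le e^x$. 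Multiplying across $i$ yields $\prod_i \EE[e^{\lambda Y_i}] \le \exp(n\sigma_Y^2\lambda^2/(2(1-\lambda b/3)))$.

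The remaining step is the optimization. Plugging into Chernoff I would set $\lambda = t/(n\sigma_Y^2 + bt/3)$, which clearly satisfies $\lambda b < 3$. A direct computation shows $1-\lambda b/3 = n\sigma_Y^2/(n\sigma_Y^2+bt/3)$, so the exponent becomes $-\lambda t + n\sigma_Y^2\lambda^2/(2(1-\lambda b/3)) = -t^2/(2(n\sigma_Y^2+bt/3))$, giving the claimed one-sided bound. Combining the two tails with a factor of $2$ completes the proof.

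There is no real obstacle: the only nontrivial check is the combinatorial inequality $k!\ge 2\cdot 3^{k-2}$ (easy induction) and the algebra of the final optimization, both of which are routine. Since this is a textbook result, in the write-up I would state the steps compactly and could even replace the detailed derivation with a citation to, e.g., \cite{boucheron2013concentration}, keeping the self-contained one-page argument above as the default.
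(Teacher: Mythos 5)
Your proof is correct. Note, however, that the paper does not prove this proposition at all: it is stated as the classical Bernstein inequality and simply cited (to Bernstein's 1924 paper), since it plays the role of an off-the-shelf tool inside the proof of the finite-class maximal inequality. Your self-contained Chernoff/MGF derivation is the standard one and every step checks out: the domination $\EE[|Y_i|^k]\le b^{k-2}\sigma_Y^2$ from boundedness, the factorial bound $k!\ge 2\cdot 3^{k-2}$ giving $\EE[e^{\lambda Y_i}]\le \exp\bigl(\sigma_Y^2\lambda^2/(2(1-\lambda b/3))\bigr)$ for $\lambda b<3$, and the choice $\lambda=t/(n\sigma_Y^2+bt/3)$, which indeed satisfies $\lambda b<3$ and makes the exponent collapse to $-t^2/(2(n\sigma_Y^2+bt/3))$; the two-sided bound with the factor $2$ then follows by symmetry. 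In the final write-up either the one-page argument or a citation to a standard reference would be acceptable, matching what the paper itself does.
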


We are now in position to prove \cref{thm:maximal_finite_davy}.

Fix $q > 0$. By Berbee's coupling \cref{lem:berbee}, we can construct another sequence $\{X_i^0\}_{i\ge 1}$ of random elements in $\mcx$ such that: 
\begin{enumerate}
    \item The sub-sequence $\bX^0_k := \{X^0_{qk + 1}, \dots, X^0_{q(k+1)}\} \overset{\mathscr{L}}{=} \{X_{qk + 1}, \dots, X_{q(k+1)}\} =: \bX_k$.
    \item The sequence $\{\bX^0_{2k}\}_{k\ge 1}$ (respectively  $\{\bX^0_{2k - 1}\}_{k\ge 1}$) are independent. 
    \item $\bbP\left(\bX^0_k \neq \bX_k\right) \le \beta_q$. 
\end{enumerate}
For the rest of the calculation, assume without loss of generality $Pf = 0$ for all $f \in \cF$. At the expense of potentially changing the universal constant $K$, we can assume without loss of generality that $n$ is an even multiple of $q$. For $k\le (n/q)-1$, define: 
\begin{enumerate}
    \item $Y_k(f) := \sum\limits_{i=q(k-1) + 1}^{qk} f(X_i)$
    \item  $Y^0_k(f) = \sum\limits_{i=q(k-1) + 1}^{qk} f(X^0_i)$
\end{enumerate}
A simple application of triangle inequality yields: 
\begin{align}
\EE\left[\max_{f \in \cF} \left|\frac{1}{\sqrt{n}}\sum_{i=1}^nf(X_i)\right|\right] & \le \EE\left[\max_{f \in \cF} \left|\frac{1}{\sqrt{n}}\sum_{i=1}^nf(X^0_i)\right|\right] + \EE\left[\max_{f \in \cF} \left|\frac{1}{\sqrt{n}}\sum_{i=1}^n\left(f(X_i) - f(X^0_i)\right)\right|\right] \nonumber \\
& \le \EE\left[\max_{f \in \cF} \left|\frac{1}{\sqrt{n}}\sum_{i=1}^nf(X^0_i)\right|\right] + \frac{2b}{\sqrt{n}}\EE\left[\sum_{i=1}^n \mathds{1}_{X_i \neq X_i^0}\right] \nonumber\\
\label{eq:break_1_maximal_finite}
& \le \EE\left[\max_{f \in \cF} \left|\frac{1}{\sqrt{n}}\sum_{i=1}^nf(X^0_i)\right|\right] + 2b\sqrt{n}\beta_q \,.
\end{align}
We next bound the first term of the above inequality. 

From Lemma \ref{lem:var_bound_improved_davy} we have: 
\begin{equation}
\label{eq:var_bound_finite_beta}
\var\left(\frac{Y_k^0}{\sqrt{q}}\right) = \var\left(\frac{Y_k}{\sqrt{q}}\right) \le \sigma^2\left(\cph^2+2(1+\Lbp(q))\right) = \sigma^2 \pi^2_{\phi, \beta}(q) := 2\tau_q^2\, ,
\end{equation}
where $\pi_{\phi,\beta}(q)$ is defined in \eqref{eq:piq}. Further $$\max_{k\ge 1}\max_{f\in\cF} |Y_k(f)|\vee |Y_k^0(f)|\le qb\, .$$ 
Therefore, for any $f \in \cF$ we have by the Bernstein's inequality \cref{prop:bern}: 
\begin{align*}
    \bbP\left(\left|\frac{1}{\sqrt{n}}\sum_{k=1}^{n/2q}Y^0_{2k}\right| > t\right) & \le 2\exp{\left(-\frac12\frac{nt^2}{\frac{n}{2q}2q\tau_q^2 + \frac13 qb \sqrt{n}t}\right)} \\
    & = 2\exp{\left(-\frac12\frac{t^2}{\tau_q^2 + \frac13 \frac{qb}{\sqrt{n}} t}\right)} 
    \\
    & = 2\exp{\left(-\frac{t^2}{2\tau_q^2 + \frac23 \frac{qb}{\sqrt{n}} t}\right)} \\
    & \le 2\exp{\left(-\frac{t^2}{2 \max\left\{2\tau_q^2, \frac23 \frac{qb}{\sqrt{n}} t\right\}}\right)} \\
    & =  2\exp{\left(-\min\left\{\frac{t^2}{4\tau_q^2}, \frac{3t\sqrt{n}}{4qb}\right\}\right)} \\
    & \le 2\exp{\left(-\frac{t^2}{4\tau_q^2}\right)} + 2 \exp{\left(-\frac{3t\sqrt{n}}{4qb}\right)}
\end{align*}
Therefore, taking union bound, we have: 
$$
\bbP\left(\max_{f \in \cF}\left|\frac{1}{\sqrt{n}}\sum_{k=1}^{n/2q}Y^0_{2k}\right| > t\right) \le 2\exp{\left(-\left[\frac{t^2}{4\tau_q^2}-\log{|\cF|}\right]_+\right)} + 2\exp{\left(-\left[\frac{3t\sqrt{n}}{4qb}-\log{|\cF|}\right]_+\right)}
$$
Same calculation will go through for the odd sequence $\{Y^0_{2k-1}\}$. Therefore we have: 
$$
\bbP\left(\max_{f \in \cF}\left|\frac{1}{\sqrt{n}}\sum_{k=1}^{n/q}Y^0_{k}\right| > t\right) \le 4\exp{\left(-\left[\frac{t^2}{16\tau_q^2}-\log{|\cF|}\right]_+\right)} + 4\exp{\left(-\left[\frac{3t\sqrt{n}}{8qb}-\log{|\cF|}\right]_+\right)}
$$
Integrating with respect to $t$, we have: 
\begin{align*}
    & \EE\left[\max_{f \in \cF}\left|\frac{1}{\sqrt{n}}\sum_{k=1}^{n/q}Y^0_{k}\right|\right] \\
    & = \int_0^\infty \bbP\left(\max_{f \in \cF}\left|\frac{1}{\sqrt{n}}\sum_{k=1}^{n/q}Y^0_{k}\right| > t\right) \ dt \\
    & \le 4 \int_0^\infty \exp{\left(-\left[\frac{t^2}{16\tau_q^2}-\log{|\cF|}\right]_+\right)} \ dt + 4\int_0^\infty \exp{\left(-\left[\frac{3t\sqrt{n}}{8qb}-\log{|\cF|}\right]_+\right)} \ dt \\
    & \le 4 \int_0^{4\tau_q\sqrt{\log{|\cF|}}} \exp{\left(-\left[\frac{t^2}{16\tau_q^2}-\log{|\cF|}\right]_+\right)} \ dt + 4\int_{4\tau_q\sqrt{\log{|\cF|}}}^\infty \exp{\left(-\left[\frac{t^2}{16\tau_q^2}-\log{|\cF|}\right]_+\right)} \ dt \\
    & \qquad + 4\int_0^{\frac{8qb\log{|\cF|}}{3\sqrt{n}}}\exp{\left(-\left[\frac{3t\sqrt{n}}{8qb}-\log{|\cF|}\right]_+\right)} \ dt + 4\int_{\frac{8qb\log{|\cF|}}{3\sqrt{n}}}^\infty\exp{\left(-\left[\frac{3t\sqrt{n}}{8qb}-\log{|\cF|}\right]_+\right)} \ dt \\
    & \le 16\tau_q\sqrt{\log{|\cF|}} + 4|\cF|\int_{4\tau_q\sqrt{\log{|\cF|}}}^\infty \exp{\left(-\frac{t^2}{16 \tau_q^2}\right)} \ dt + \frac{32qb\log{|\cF|}}{3\sqrt{n}} \\ &\qquad + 4|\cF|\int_{\frac{8qb\log{|\cF|}}{3\sqrt{n}}}^\infty \exp{\left(-\frac{3t\sqrt{n}}{8qb}\right)} \ dt \\
    & \le 16\tau_q\sqrt{\log{|\cF|}} + 16\tau_q + \frac{32qb\log{|\cF|}}{3\sqrt{n}} + \frac{32qb}{3\sqrt{n}} \\
    & = 32\tau_q\sqrt{1+\log{|\cF|}} + \frac{32qb}{3\sqrt{n}}\left(1 + \log{|\cF|}\right) \,.
\end{align*}
This along with \eqref{eq:break_1_maximal_finite} completes the proof. 
\end{proof}

\begin{proof}[Proof of \cref{lem:l1phin}]
We begin by noting that for $t\le \vep$, we have:
\begin{align}\label{eq:phin101}
    Q_h(t)\le \frac{\omega_{q,h}(\vep)}{\sqrt{B_q(t)}}.
\end{align}
As $Q_h^{-1}(Q_h(\vep))\le \vep$, we consequently get,
\begin{align*}
    \lVert h\mathds{1}(|h|> Q_h(\vep))\rVert_{L_1(P)}=\int_0^{Q_h^{-1}(Q_h(\vep))} Q_h(t)\,dt \le \omega_{q,h}(\vep)\int_0^{\vep} \frac{1}{\sqrt{B_q(t)}}\,dt.
\end{align*}
The last inequality above follows from \eqref{eq:phin101}. 
Next we observe that $B_q$ is an integral of a bounded decreasing function and hence, is concave, with $B_q(0)=0$. Hence we have: 
$$
B_q(t) = B_q\left(\vep \frac{t}{\vep} + \left(1 - \frac{t}{\vep}\right) 0\right) \ge \frac{t}{\vep}B_q(\vep) \,.
$$
Using this we obtain:  
$$\lVert h\mathds{1}(|h|> Q_h(\vep))\rVert_{L_1(P)}\le \frac{\omega_{q,h}(\vep)\sqrt{\vep}}{\sqrt{B_q(\vep)}}\int_0^{\vep}\frac{\,dt}{\sqrt{t}}=\frac{2\omega_{q,h}(\vep)\vep}{\sqrt{B_q(\vep)}}.$$
This proves \eqref{eq:phin1}. The proof of \eqref{eq:phin3} follows from \eqref{eq:phin1} and the following observations. Note that as $Q_h$ is decreasing:  
\begin{align*}
    \omega_{q,h}^2(\vep)\le \sup_{t\le \vep}\  Q_h^2(t)B_q(t)\le \sup_{t\le \vep}\ \int_0^t Q_h^2(u)\left(\sum_{k=0}^q \mathds{1}(u\le \beta(k))\right)\,du.
\end{align*}
By invoking \eqref{eq:quanbd}, we then have for all $\vep \in [0, 1]$:
\begin{align}\label{eq:phin102}
\omega_{q,h}(\vep)\le \lVert h\rVert_{\phi,P}\sqrt{\Lbp(q)}.
\end{align}
Furthermore,  
\begin{align}\label{eq:phin103}
B_q(\vep)\ge \vep \sum_{k=0}^q \mathds{1}(\vep\le \beta(k)).
\end{align}
Using \eqref{eq:phin103} and \eqref{eq:phin2}, we then get:
$$
v\sqrt{B_{q}(\vep)}\ge \upsilon\sqrt{\vep}\sqrt{\sum_{k=0}^q \mathds{1}(\vep\le \beta(k))}\ge \lVert h\rVert_{\phi,P}\sqrt{\Lbp(q)}\ge \omega_{q,h}(\vep)\ge Q_h(\vep)\sqrt{B_q(\vep)}\,.
$$
Here the first inequality follows from \eqref{eq:phin103}, second inequality from the condition \eqref{eq:phin2}, third inequality from \eqref{eq:phin102} and the last one from the definition of $\omega_{q,h}$. Consequently, for any $v\ge Q_h(\vep)$ we have: 
$$
\lVert h\mathds{1}(|h|> v)\rVert_{L_1(P)}\le \lVert h\mathds{1}(|h|> Q_h(\vep))\rVert_{L_1(P)}\,.
$$
The conclusion then follows by combining \eqref{eq:phin102}, \eqref{eq:phin103}, and \eqref{eq:phin1}.
\end{proof}

\begin{proof}[Proof of \cref{prop:maxfindavy2}]
    The proof is similar to that of \cref{thm:maximal_finite_davy} except that, instead of using \cref{lem:var_bound_improved_davy} in \eqref{eq:var_bound_finite_beta}, we will use the following:
    \begin{align}\label{eq:davyinf}
        \var\left(\sum_{i=1}^q h(X_i)\right) & = \sum_i \var(h(X_i)) + 2 \sum_{i < j} \cov(h(X_i), h(X_j)) \notag \\
    & \le q\lVert h\rVert_{\infty,\mcx}^2 + 2 \sum_{i < j}\int_0^{\beta(|i-j|)} Q_h^2(u)\,du\notag \\
    & \le q\lVert h\rVert_{\infty,\mcx}^2+2q\lVert h\rVert_{\infty,\mcx}^2\sum_{k=1}^{q-1}\beta_k.
    \end{align}
    Here the second inequality follows from \cite[Theorem 1.1]{rio1993covariance} and the third inequality follows from the fact that $\lVert Q_h\rVert_{\infty,(0,1)}\le \lVert h\rVert_{\infty,\mcx}$. 
    The rest of the proof follows verbatim from the proof of \cref{thm:maximal_finite_davy}.
\end{proof}

\begin{proof}[Proof of \cref{lem:liphin2}]
    The proof is is the same as that of \cref{lem:l1phin} except that we will replace \eqref{eq:phin102} with the following bound:
    $$\omega_{q,h}(\vep)\le \sup_{t\le \vep}\sqrt{Q_h^2(t)B_q(t)}\le \lVert h\rVert_{\infty,\mcx}\sqrt{B_q(1)}\le \lVert h\rVert_{\infty,\mcx}\sqrt{\sum_{k=0}^q \beta_k},$$
    for all $\vep\in [0,1]$. The rest of the proof follows verbatim from the proof of \cref{lem:l1phin}.
\end{proof}

\begin{proof}[Proof of \cref{lem:finite_maximal_gamma}]
    The proof is similar to that of \cref{thm:maximal_finite_davy} except that, instead of using \cref{lem:var_bound_improved_davy} in \eqref{eq:var_bound_finite_beta}, we will use the following:
    \begin{align*}
        \var\left(\sum_{i=1}^q h(X_i)\right) & = \sum_i \var(h(X_i)) + 2 \sum_{i < j} \cov(h(X_i), h(X_j)) \notag \\
    & \le q\lVert h\rVert_{L_2(P)}^2 + 2 \lVert h\rVert_{L_2(P)}^2\sum_{i < j}\gamma_{|i-j|}\notag \\
    & \le q b ^{2-\tilde{r}}\sigma^{\tilde{r}}+2q b^{2-\tilde{r}}\sigma^{\tilde{r}}\sum_{k=1}^{q-1}\gamma_k.
    \end{align*}
    Here the second inequality follows from the definition of $\rho$-mixing (see \cref{def:rho_mixing}) and the third inequality follows from the elementary inequality $\int h^2\,dP\le \lVert h\rVert_{\infty,\mcx}^{2-\tilde{r}}\int |h|^{\tilde{r}}\,dP$.  
    The rest of the proof follows verbatim from the proof of \cref{thm:maximal_finite_davy}.
\end{proof}
\end{document}